\documentclass{amsart}
\usepackage[british,UKenglish,USenglish,english,american]{babel}
\usepackage{newlfont}
\usepackage{amssymb}
\usepackage{amsmath}
\usepackage{amsfonts}
\usepackage{latexsym}
\usepackage{amsthm}
\usepackage{mathrsfs}
\usepackage[all,cmtip]{xy}
\usepackage{caption}
\usepackage{enumerate}
\usepackage{hyperref}
\usepackage{stmaryrd}
\usepackage{tabu}
\usepackage{longtable,booktabs}
\setlength{\textwidth}{\paperwidth}
\addtolength{\textwidth}{-2.5in}
\calclayout

\usepackage[usenames,dvipsnames]{xcolor}
\hypersetup{
	colorlinks,
	citecolor=Green,
	linkcolor=blue,
	urlcolor=Blue}

\theoremstyle{plain}                    
\newtheorem{teo}{Theorem}[section]     
\newtheorem{theoremalpha}{Theorem}

\newtheorem{coralpha}[theoremalpha]{Corollary}
\newtheorem{prop}[teo]{Proposition}
\newtheorem{cor}[teo]{Corollary}       
\newtheorem{lem}[teo]{Lemma}            
\theoremstyle{definition}               
\newtheorem{notations}{Notations}[]
\newtheorem{defin}[teo]{Definition}
\theoremstyle{remark}
\newtheorem{rmk}[teo]{Remark}

\numberwithin{equation}{section}

\newcommand{\oo}{\mathcal{O}}
\newcommand{\mt}{\mathcal}

\newcommand{\BG}{\mathbf{Bun}}

\newcommand{\thig}{\mathcal{T}^*\mathcal{M}}
\newcommand{\thigd}{\mathcal{T}^*\mathcal{M}^{\delta}}
\newcommand{\hit}{\mathbf{Higgs}}
\newcommand{\jator}[1]{\mathbf{Bun}_{J_{#1}}}
\newcommand{\japtor}[1]{\operatorname{Gr}_{J_{#1}}}
\newcommand{\spr}[1]{\operatorname{Spr}_{#1}(a)}
\newcommand{\spreg}[1]{\operatorname{Spr}^{\reg}_{#1}(a)}

\newcommand{\hitgd}{\mathcal{H}}
\newcommand{\nc}{\mathcal{N}}
\newcommand{\M}{\mathcal{M}}
\newcommand{\Md}{\mathcal{M}^{\delta}}
\newcommand{\ad}[1]{\operatorname*{ad}(#1)\otimes\omega}

\newcommand{\g}{\mathfrak g}
\newcommand{\tg}{\mathfrak t}
\newcommand{\cg}{\mathfrak c}
\newcommand{\Dg}{\mathfrak D}
\newcommand{\AG}{\mathbb A}
\newcommand{\DG}{\mathbb D}

\newcommand{\ab}{\operatorname*{ab}}
\newcommand{\sm}{\operatorname*{sm}}
\newcommand{\sing}{\operatorname*{sing}}
\newcommand{\red}{\operatorname*{red}}
\newcommand{\reg}{\operatorname*{reg}}
\newcommand{\Spec}{\operatorname{Spec}}
\newcommand{\ads}{\operatorname*{ad}}
\newcommand{\Id}{\operatorname{Id}}
\newcommand{\Out}{\operatorname{Out}}
\newcommand{\Aut}{\operatorname{Aut}}

\newcommand{\smt}{\overline{\mathcal M}}
\newcommand{\op}[1]{\operatorname{#1}}

\newcommand{\scr}{\mathscr}
\newcommand{\SO}{\operatorname{SO}}
\newcommand{\Spin}{\operatorname{Spin}}
\newcommand{\PSO}{\operatorname{PSO}}
\newcommand{\bbZ}{\mathbb{Z}}

\newcommand{\disk}[1]{k\llbracket #1\rrbracket}

\newcommand{\Pic}{\operatorname{Pic}}

\begin{document}
\title{Automorphisms of moduli spaces of principal bundles over a smooth curve}
	\author{Roberto Fringuelli}
	\address{Dipartimento di Matematica, Universit\`a di Roma ``La Sapienza'',\\ Piazzale Aldo Moro 5, I-00185, Roma, Italy,\\E-mail: r.fringuelli@uniroma1.it}
	\maketitle
	\begin{abstract}For any almost-simple group $G$ over an algebraically closed field $k$ of characteristic zero, we describe the automorphism group of the moduli space of semistable $G$-bundles over a connected smooth projective curve $C$ of genus at least $4$. The result is achieved by studying the singular fibers of the Hitchin fibration. As a byproduct, we provide a description of the irreducible components of two natural closed subsets in the Hitchin basis: the divisor of singular cameral curves and the divisor of singular Hitchin fibers.
	\end{abstract}

\section*{Introduction}
Let $k$ be an algebraically closed field of characteristic zero. The moduli space $\overline{\mt M}_G(C)$ of semistable $G$-bundles over a connected smooth projective $k$-curve $C$ of genus $g\geq 2$, where $G$ is a connected reductive $k$-group, is a proper $k$-scheme. These moduli spaces have been studied extensively, due to their connections with non-abelian Hodge theory, conformal field theory, geometric Langlands program and, more recently, mirror symmetry.

The automorphisms of these moduli spaces are interesting objects, especially those ones arising from the geometry of the curve $C$ and the group $G$. For instance, some of these automorphisms are constructed starting from automorphisms of the group $G$ itself and their loci of fixed points may be described as subvarieties in $\overline{\mt M}_G(C)$ of $G$-bundles admitting reductions to suitable subgroups of $G$, see \cite{SancSpin},  \cite{SancOut}, \cite{OrRa}, \cite{OrBa}, \cite{ScSc}. Furthermore, the group of the automorphisms of these moduli spaces has been already described for some groups, see \cite{Sanc}, \cite{BGMsl}, \cite{BGMsym}, \cite{HR}, \cite{KP95},
The aim of this paper is to provide a complete description of the automorphism group of the connected components of the moduli space $\overline{\mt M}_G(C)$, where $G$ is almost-simple and $C$ has genus at least $4$. 

Before presenting the main theorem, we fix some notations. There are three natural ways to produce automorphisms for the moduli space $\overline{\mt M}_G(C)$ of semistable $G$-bundles.
\begin{enumerate}[(a)]
	\item Pull-back along an automorphism $\sigma:C\to C$ of the curve, i.e.
	$
		E\mapsto \sigma^*E.
$
	\item Action of an outer automorphism $\rho\in \Out(G)$ of the group, i.e.
$
		E\mapsto \rho( E):=E\times^{\rho,G}G,
$
	where $E\times^{\rho,G}G$ denotes the product $E\times G$ modulo the equivalence relation $(eh,g)\sim (e,\rho(h)g)$, for any $e\in E$ and $g,h\in G$.
	\item Twisting by a $\scr Z(G)$-bundle $Z$ over $C$ (with $\mathscr Z(G)\subset G$ center), i.e.
$
E\mapsto	E\otimes Z,
$
	where $E\otimes Z$ denotes the fibered product $E\times_C Z$ modulo the equivalence relation $(et,z)\sim (e,zt)$, for any $e\in E$, $z\in Z$ and $t\in\scr Z(G)$.
\end{enumerate}
For more details, see Section \ref{Sec:taut-aut}. The connected components $\overline{\mt M}^{\delta}_G(C)$ of the moduli space $\overline{\mt M}_G(C)$ are in bijection with the elements of the fundamental group $\pi_1(G)$. In general, the automorphisms described in (a), (b) and (c) may move these components. We are only interested in those automorphisms, which fix a given component. When $G$ is semisimple, the automorphisms of type (a) and (c) fix the connected components. In particular, they descend to automorphisms of the irreducible variety $\overline{\mt M}_G^{\delta}(C)$, for any $\delta\in\pi_1(G)$. On the other hand, the automorphisms of $\overline{\mt M}_G(C)$ of type (b), preserving the connected component $\overline{\mt M}_G^{\delta}(C)$, are given by elements in the subgroup of elements in $\Out(G)$ fixing $\delta\in\pi_1(G)$, i.e. $$\Out(G,\delta):=\{\rho\in\Out(G)\vert\pi_1(\rho)(\delta)=\delta\}\subset\Out(G).$$
The main result of this paper is the following.
\begin{theoremalpha}[= Proposition \ref{P:aut-m} and Theorem \ref{T:T}]\label{T:A}Let $C$ be a connected smooth projective $k$-curve of genus $g\geq 4$, $G$ be an almost-simple $k$-group and $\delta\in\pi_1(G)$. The morphism
	$$
	\def\arraystretch{1.5}\begin{array}{ccl}
		H^1(C,\mathscr Z(G))\rtimes \Big(\operatorname{Out}(G,\delta)\times\operatorname{Aut}(C)\Big)&\xrightarrow{\sim}&\operatorname{Aut}\left(\overline{\mt M}_G^{\delta}(C)\right)\\
		(Z,\rho,\sigma)&\mapsto& \Big(E\mapsto \rho((\sigma^{-1})^*E)\otimes Z\Big)
	\end{array}$$
is an isomorphism of groups, where:
\begin{enumerate}[(i)]
	\item $\Aut(C)$ acts on $H^1(C,\mathscr Z(G))$ by pull-back, i.e. $\sigma.Z=(\sigma^{-1})^*Z$, for any $\sigma\in\Aut(C)$,
	\item $\Out(G,\delta)$  acts on $H^1(C,\mathscr Z(G))$ as follows: $\rho.Z=Z\times^{\rho,\mathscr Z(G)}\mathscr Z(G)$, for any $\rho\in\Out(G)$.
\end{enumerate}
The same holds for the open locus $\mt M^{\delta}_G(C)$ of stable and simple $G$-bundles.
\end{theoremalpha}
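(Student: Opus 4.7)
The plan is to verify well-definedness and injectivity directly and to establish surjectivity via the Hitchin fibration. For well-definedness: since $G$ is semisimple, pull-back by $\sigma\in\Aut(C)$ and twisting by a $\scr Z(G)$-bundle act trivially on $\pi_1(G)$, while $\rho\in\Out(G,\delta)$ preserves $\delta$ by hypothesis, so each operation fixes the component $\overline{\mt M}^{\delta}_G(C)$; that the semidirect-product structure together with the actions in (i)--(ii) packages these three operations into a genuine group homomorphism is a direct computation with the defining equivalence relations.

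For injectivity, suppose a triple $(Z,\rho,\sigma)$ induces the identity automorphism. Restricting to the open stable-and-simple locus $\mt M^{\delta}_G(C)$ and evaluating on suitable families of bundles, the three factors can be disentangled: the classical faithfulness of the $\Aut(C)$-action on $\mt M^{\delta}_G(C)$ is well known for $g\geq 2$; a non-trivial $\scr Z(G)$-twist sends a generic stable bundle to a non-isomorphic one; and distinct outer automorphisms act distinctly on simple bundles (for instance by comparing associated vector bundles or characteristic classes).

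For surjectivity, let $\phi\in\Aut(\overline{\mt M}^{\delta}_G(C))$. The idea is to lift $\phi$ by means of the identification of $T^*\mt M^{\delta}_G(C)$ on the stable-and-simple locus with an open subset of the Higgs moduli space $\hitgd$; this produces an automorphism of $\hitgd$ compatible with the Hitchin map $h: \hitgd\to\mt B$, inducing an automorphism $\bar\phi$ of the Hitchin base that necessarily preserves both natural divisors mentioned in the abstract. The crucial rigidity statement is that any such $\bar\phi$ is induced by a pair $(\rho,\sigma)\in\Out(G,\delta)\times\Aut(C)$; after composing $\phi$ with the automorphism coming from $(\rho,\sigma)$, one reduces to the case $\bar\phi=\Id_{\mt B}$. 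Then $\phi$ acts fibrewise on the Hitchin fibers, and on the dense open locus of regular fibers (each a torsor under an abelian variety naturally attached to the cameral cover) the only automorphisms preserving this torsor structure are translations by a global section, which is controlled by a class in $H^1(C,\scr Z(G))$ and produces the desired $Z$.

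The main obstacle is the rigidity of the Hitchin base: proving that the only automorphisms of $\mt B$ compatible with the discriminant divisor and the divisor of singular cameral curves come from $\Aut(C)$ and $\Out(G,\delta)$. This is the central technical input supplied by the analysis of the two divisors advertised in the abstract; the hypothesis $g\geq 4$ makes the combinatorial incidence of their irreducible components rigid enough to uniquely recover the curve $C$ and the outer-automorphism datum from the base alone, ruling out low-genus pathologies.
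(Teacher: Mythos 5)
Your well-definedness and injectivity outline is broadly consistent with the paper: the paper disentangles the three factors exactly as you suggest, using the Torelli-type analysis of the Hitchin base to handle $\sigma$, the effectiveness of the $H^1(C,\mathscr Z(G))$-action on $\mathbf{Bun}_G$ for the twist, and (less trivially than your parenthetical suggests) the Ortega--Ramanan description of the fixed locus of an outer automorphism as a union of images of moduli spaces for proper subgroups $G^{\theta,s}$. Your surjectivity argument, however, has two genuine gaps.

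First, the ``rigidity of the Hitchin base'' you posit as the central input is both unproved and, as stated, not what is true. The only divisor in $\AG$ that is intrinsically characterized by the geometry of the fibers is $\DG^{\ab}$ (its preimage is the closure of the union of rational curves, since abelian fibers contain no rational curves while the generic non-abelian fiber is uniruled); there is no analogous intrinsic characterization forcing $\varphi_{\AG}$ to preserve the divisor $\DG^{\diamondsuit}$ of singular cameral curves, because singular cameral curves can still have abelian Hitchin fibers. More importantly, the group of automorphisms of $\AG$ compatible with these divisors is much larger than $\Out(G,\delta)\times\Aut(C)$: what the paper actually proves is only that the restriction of $\varphi_{\AG}$ to the top-weight piece $\AG_r$ equals $\lambda\cdot\sigma^*$ (via the dual variety of $C\subset\mathbb P(\AG_r^*)$), and that globally $\varphi_{\AG}=H^0(\psi)\circ\sigma^*$ for some automorphism $\psi$ of the vector bundle $\cg_\omega=\oplus_i\omega^{d_i}$ --- a group containing, e.g., arbitrary scalings of each summand and lower-triangular perturbations. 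No reduction to $\varphi_{\AG}=\Id$ coming from a pair $(\rho,\sigma)$ is available or needed.

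Second, your final step conflates two different torsor structures. The generic Hitchin fiber is a torsor under $\jator{a}$, a Prym-type group attached to the cameral cover, which is vastly larger than $H^1(C,\mathscr Z(G))$; even granting that $\varphi$ acts fibrewise over the identity of $\AG$, a fibre-preserving automorphism would a priori be given by a varying section of the relative $\jator{a}$-action, and nothing in your argument forces it to be constant, let alone valued in $H^1(C,\mathscr Z(G))$. The actual mechanism by which the class $Z$ and the outer automorphism $\rho$ appear is entirely different: one shows that for generic $E$ the codifferential $d\varphi^*_E$ is induced by an isomorphism of vector bundles $\Psi:\ads(E)\to\ads(\varphi^{-1}(E))$ which preserves the bundles of nilpotent cones (this uses the factorization of the invariant polynomials through tensor powers and the identification of $(h_E^\delta)^{-1}(H^0(C,\cg_\omega(-p)))$ with sections valued in the nilpotent cone at $p$); the Springer resolution then upgrades $\Psi$ to an isomorphism of flag bundles $E/B\cong\varphi^{-1}(E)/B$, whose relative differential is an isomorphism of \emph{Lie-algebra} bundles by Demazure's computation of $H^0(G/B,\mt T(G/B))=[\g,\g]$; only at that point does the purely group-theoretic Lemma \ref{L:ad-tau} produce $\rho$ and $Z$. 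Without this chain your argument does not identify $\varphi(E)$ with $\rho(\sigma^*E)\otimes Z$.
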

 In particular, the automorphism group of the moduli space $\overline{\mt M}_G^{\delta}(C)$ is generated by the transformations of type (a), (b) and (c). 
 
 Using the classification of almost-simple groups, we are able to give a more explicit description of these groups. For any $l\in\mathbb N$, we denote by $\Pic(C)[l]$ the group of $l$-torsion line bundles over $C$. Note that $\Pic(C)[l]\cong (\bbZ/l\bbZ)^{2g}$ as groups. We then have the following corollary.

\begin{coralpha}[= Theorems \ref{T:An}, \ref{T:BnCn}, \ref{T:Dn}, \ref{T:E6} and \ref{T:resto}]\label{T:conti} Let $C$ be a connected smooth projective $k$-curve of genus $g\geq 4$. Then, the isomorphism of Theorem \ref{T:A} gives the following table.
{\small
\setlength\tabcolsep{1.5pt}
	\begin{longtable}{|>{$}l <{$}|>{$}l <{$}| >{$} l<{$}|>{$} l<{$}|}
\hline \multicolumn{4}{r}{\textit{Continued on next page}}
\endfoot
\multicolumn{4}{r}{}
\endlastfoot
\hline
	 \g	&	G	& \delta\in\pi_1(G) & \Aut(\overline{\mt M}^\delta_G(C))=\Aut(\mt M^\delta_G(C))\\\midrule
	 \endhead\hline
	 A_1&\op{SL}_2& \delta\in\{0\}& \Pic(C)[2]\rtimes \Aut(C)\\*
	 	 &\op{PSL}_2& \delta\in\bbZ/2\bbZ& \Aut(C)\\*
	 	 \hline
		A_{n-1},\;\scriptstyle{n\geq 3} & \op{SL}_n/\mu_r,\; \scriptstyle{\text{s.t. } r|n}& 2\delta=0\in\mathbb Z/r\bbZ & \Pic(C)[n/r]\rtimes \left(\mathbb Z/2\times\Aut(C)\right)\\*
		&\op{SL}_n/\mu_r,\; \scriptstyle{\text{s.t. } r|n}	& 2\delta\neq 0\in\mathbb Z/r\bbZ &\Pic(C)[n/r]\rtimes \Aut(C)\\
		\hline
		B_{n},\;\scriptstyle{n\geq 2} & \Spin_{2n+1} &\delta\in\{0\} &\Pic(C)[2]\rtimes\Aut(C)\\*
		& \SO_{2n+1} &\delta\in\bbZ/2\bbZ &\Aut(C)\\
		\hline
		C_{n},\;\scriptstyle{n\geq 3} & \op{Sp}_{2n} &\delta\in\{0\} &\Pic(C)[2]\rtimes\Aut(C)\\*
		& \op{PSp}_{2n} &\delta\in\bbZ/2\bbZ &\Aut(C)\\
		\hline
		D_4	&	\op{Spin}_8 & \delta\in\{0\} &(\Pic(C)[2])^2\rtimes (S_3\times\Aut(C))\\*
		&	\op{SO}_8 & \delta\in\bbZ/2\bbZ &\Pic(C)[2]\rtimes (\bbZ/2\bbZ\times\Aut(C))\\*
		&	\op{PSO}_8 & \delta=(0,0)\in(\bbZ/2\bbZ)^2 &S_3\times\Aut(C)\\*
		&	\op{PSO}_8 & \delta\neq(0,0)\in(\bbZ/2\bbZ)^2 &\bbZ/2\bbZ\times\Aut(C)\\
		\hline
		D_{2n},\;\scriptstyle{n\geq 3} &\op{Spin}_{4n}&\delta\in\{0\}&(\Pic(C)[2])^2\rtimes (\bbZ/2\bbZ\times \Aut(C))\\*
		&\op{SemiSpin}_{4n}&\delta\in\mathbb Z/2\mathbb Z&\Pic(C)[2]\rtimes \Aut(C)\\*
		&\op{SO}_{4n}&\delta\in\mathbb Z/2\mathbb Z&\Pic(C)[2]\rtimes (\bbZ/2\bbZ\times\Aut(C))\\*
		&\op{PSO}_{4n}&\delta=(0,0),(1,1)\in(\mathbb Z/2\mathbb Z)^2&\mathbb Z/2\mathbb Z\times \Aut(C)\\*
		&\op{PSO}_{4n}&\delta= (1,0),(0,1)\in(\mathbb Z/2\mathbb Z)^2&\Aut(C)\\
		\hline
		D_{2n+1},\;\scriptstyle{n\geq 2} &\op{Spin}_{4n+2}&\delta\in\{0\}&\Pic(C)[4]\rtimes (\mathbb Z/2\mathbb Z\times \Aut(C))\\*
		&\op{SO}_{4n+2}&\delta\in\mathbb Z/2\mathbb Z&\Pic(C)[2]\rtimes (\bbZ/2\bbZ\times\Aut(C))\\*
		&\op{PSO}_{4n+2}&\delta=0,2\in\mathbb Z/4\mathbb Z&\mathbb Z/2\mathbb Z\times \Aut(C)\\*
		&\op{PSO}_{4n+2}&\delta=1,3\in\mathbb Z/4\mathbb Z&\Aut(C)\\
		\hline
		E_{6} &\mathbb{E}_6^{\operatorname{sc}}&\delta\in\{0\}&\Pic(C)[3]\rtimes\left(\mathbb Z/2\mathbb Z\times \Aut(C)\right)\\
		&\mathbb{E}_6^{\operatorname{ad}}&\delta=0\in\mathbb Z/3\mathbb Z&\mathbb Z/2\mathbb Z\times \Aut(C)\\
		&\mathbb{E}_6^{\operatorname{ad}}&\delta\neq 0\in\mathbb Z/3\mathbb Z&\Aut(C)\\
		\hline
		E_7& \mathbb{E}_7^{\operatorname{sc}}&\delta\in\{0\}&\Pic(C)[2]\rtimes\Aut(C)\\
		&\mathbb{E}_7^{\operatorname{ad}}&\delta\in\mathbb Z/2\mathbb Z&\Aut(C)\\
		\hline
		E_8&\mathbb{E}_8&\delta\in\{0\}&\Aut(C)\\
		\hline 
		F_4&\mathbb{F}_4&\delta\in\{0\}&\Aut(C)\\
		\hline 
		G_2&\mathbb{G}_2&\delta\in\{0\}&\Aut(C)\\
		\hline 
	\end{longtable}}
\end{coralpha}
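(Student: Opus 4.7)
By Theorem A, the automorphism group of $\overline{\mt M}^{\delta}_G(C)$ is identified with $H^1(C,\scr Z(G))\rtimes(\Out(G,\delta)\times\Aut(C))$, so Corollary B amounts to computing, case by case along the Dynkin classification of almost-simple $k$-groups, three ingredients: the finite abelian $k$-group scheme $\scr Z(G)$ (hence the cohomology $H^1(C,\scr Z(G))$), the stabilizer $\Out(G,\delta)$ of $\delta$ under the natural action of $\Out(G)$ on $\pi_1(G)$, and the induced action of $\Out(G,\delta)$ on $H^1(C,\scr Z(G))$. With these three inputs, the semidirect product unwinds into the expressions displayed in the table.

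Two uniform preparations handle most of the work. Cohomologically, $\scr Z(G)$ is a finite diagonalizable $k$-group, hence a product of $\mu_m$'s, and the Kummer sequence together with the multiplicativity of $H^1(C,-)$ identifies $H^1(C,\mu_m)\cong\Pic(C)[m]$ and $H^1(C,\prod_i\mu_{n_i})\cong\prod_i\Pic(C)[n_i]$ in an $\Out(G)$-equivariant way. Structurally, for each simple root system the centre of the simply connected form is $\scr Z(G^{sc})=P/Q$, the group $\Out(G)$ coincides with the automorphism group of the Dynkin diagram, and for any central isogeny $G^{sc}\twoheadrightarrow G^{sc}/H=G$ one has $\scr Z(G)\cong\scr Z(G^{sc})/H$ and $\pi_1(G)\cong H^{\vee}$ as $\Out(G)$-modules. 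I would collect the actions of the diagram automorphisms on weight lattices and on centres into a short preliminary lemma, and then feed it into Theorem A row by row.

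With this dictionary in place, most rows become mechanical: $E_8$, $F_4$, $G_2$ are immediate because $\scr Z(G)$ and $\Out(G)$ are both trivial, while $B_n$, $C_n$, $E_7$ reduce to $\mu_2$-cohomology together with $\Out(G)=1$; for types $A_{n-1}$ and $E_6$, the non-trivial outer automorphism acts as inversion on the cyclic centre and on $\pi_1$, so $\Out(G,\delta)=\bbZ/2\bbZ$ precisely when $2\delta=0$, which gives the listed splits. The real obstacle is type $D_n$, where the action of $\Out(G)$ on $\scr Z(G^{sc})$ is genuinely non-trivial and, for $D_4$, non-abelian. Concretely: for $D_{2m+1}$, $m\geq 2$, the outer involution acts by inversion on $\scr Z(\Spin_{4m+2})\cong\mu_4$; for $D_{2m}$, $m\geq 3$, it swaps the two $\mu_2$-factors of $\scr Z(\Spin_{4m})$ corresponding to the half-spin representations while fixing the vector-representation factor; and for $D_4$ the triality group $S_3=\Out(\Spin_8)$ acts as $\op{GL}_2(\mathbb F_2)$ on $\scr Z(\Spin_8)\cong\mu_2\times\mu_2$, permuting transitively the three order-two subgroups. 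From these actions I read off each $\Out(G,\delta)$ as a point stabilizer in $\pi_1(G)$ and transfer it to $H^1(C,\scr Z(G))$ through the Kummer identification, producing the semidirect-product expressions in the $D_4$, $D_{2n}$ and $D_{2n+1}$ rows.
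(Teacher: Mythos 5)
Your plan is correct and follows essentially the same route as the paper's Section~\ref{Sec:comp}: reduce the table to the case-by-case computation of $\scr Z(G)$, $\pi_1(G)$, $\Out(G,\delta)$ and the induced actions, identify $H^1(C,\scr Z(G))$ with torsion in $\Pic(C)$, and isolate type $D$ (inversion on $\mu_4$ for $D_{\mathrm{odd}}$, swap of the half-spin factors for $D_{\mathrm{even}}$, triality acting as $\op{GL}_2(\mathbb Z/2\mathbb Z)$ for $D_4$) as the only non-mechanical case. The specific actions you list agree with those the paper extracts from the root data, so the plan fills in to the same proof.
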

We also prove a Torelly-type theorem for these moduli spaces.

\begin{theoremalpha}[= Theorem \ref{T:torelli} and Corollary \ref{C:torelli}]\label{T:B}Let $C_1$ and $C_2$ be two connected smooth projective $k$-curves of genus $g\geq 2$ and $G$ be an almost-simple $k$-group, with the only exception of the cases where $g=2$ and $G$ is either $\op{SL_2}$ or $\op{PGL}_2$, which are excluded.
	
	If $\overline{\mt M}_G^{\delta_1}(C_1)$ is isomorphic to $\overline{\mt M}_G^{\delta_2}(C_2)$, for some $\delta_1,\delta_2\in\pi_1(G)$, then $C_1$ is isomorphic to $C_2$. The same holds for the locus of stable and simple bundles.
\end{theoremalpha}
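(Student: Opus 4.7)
My plan is to reduce the Torelli statement to the intrinsic geometry of the Hitchin fibration, using the structural results on the Hitchin basis announced in the abstract. Suppose $\phi\colon \overline{\mt M}_G^{\delta_1}(C_1)\xrightarrow{\sim}\overline{\mt M}_G^{\delta_2}(C_2)$ is an isomorphism. Restricting to the smooth stable-and-simple loci $\mt M_G^{\delta_i}(C_i)$, the map $\phi$ induces an isomorphism between the corresponding Higgs moduli spaces (open subschemes of the cotangent bundles). Because the Hitchin base is intrinsically the affinization $\Spec H^0(\thig,\oo)$ of the Higgs moduli space, this gives a canonical isomorphism $\bar\phi\colon B_1\xrightarrow{\sim} B_2$ of Hitchin bases commuting with the two Hitchin maps. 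In particular, a comparison of dimensions already forces $g_1=g_2=:g$.

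Being compatible with the Hitchin fibrations, $\bar\phi$ carries the two distinguished divisors studied in the paper to their counterparts on the other side: the divisor of singular cameral curves and the divisor of singular Hitchin fibers. The explicit description of the irreducible components of these divisors, obtained as a byproduct of the analysis leading to Theorem \ref{T:A}, exhibits at least one component which is naturally fibered over $C$ via the branch divisor of the cameral cover (equivalently, the support of the singularities of the spectral curve). The projection to $C$ can be reconstructed from the intrinsic incidence stratification of this component, so $\bar\phi$ translates the projection for $C_1$ into that for $C_2$, producing a birational, hence biregular, map $C_1\xrightarrow{\sim}C_2$. Corollary \ref{C:torelli} follows at once, since the whole argument only uses the smooth stable locus.

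The main obstacle will be to pin down a \emph{canonical} component of the discriminant whose projection to $C$ is preserved by an arbitrary isomorphism $\bar\phi$, rather than merely by automorphisms of the form covered by Theorem \ref{T:A}; this is where the explicit description of irreducible components has to be leveraged most carefully. A secondary difficulty concerns the excluded cases $g=2$ with $G\in\{\op{SL}_2,\op{PGL}_2\}$: there the Hitchin base $H^0(C,\omega_C^{\otimes 2})$ has dimension only three, the discriminant divisor is correspondingly low-dimensional, and the combinatorial data needed for the reconstruction is simply not present, so the argument genuinely breaks down and these cases must be excluded from the statement. Outside these exceptions, the combinatorics of the roots of $G$ together with the variation of branch data on $C$ are rich enough to make the reconstruction rigid, and the theorem follows uniformly.
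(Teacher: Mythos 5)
Your setup is right — passing to the stable--simple locus, identifying the Hitchin base intrinsically as $\Spec H^0(\thig,\oo_{\thig})$ (Lemma \ref{L:glob-func}), and obtaining an isomorphism $\bar\phi$ of Hitchin bases commuting with the Hitchin maps — but the heart of the argument is missing. The step ``the projection to $C$ can be reconstructed from the intrinsic incidence stratification of this component'' is exactly the point that needs a proof, and you flag it yourself as the main obstacle without resolving it. The paper's mechanism is different and more concrete: the codifferential is linear on the fibers of the cotangent bundle, hence $\bar\phi$ is equivariant for the $\mathbb G_m$-action with weights $d_1\le\cdots\le d_r$ on $\AG=\oplus_i H^0(C,\omega^{d_i})$, so it preserves the top-weight summand $\AG_r=H^0(C,\omega^{d_r})$ and restricts to a \emph{linear} automorphism there. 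One then shows (Proposition \ref{P:dual-curve}, via Remark \ref{R:tang-cone-disc}) that $\DG^{\ab}\cap\AG_r$ is the affine cone over the dual variety of $C\hookrightarrow\mathbb P(\AG_r^*)$ embedded by $\omega^{d_r}$, and concludes by biduality of curves. Without the $\mathbb G_m$-equivariance and the dual-variety identification, no reconstruction of $C$ is actually carried out.

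Two further points. First, you assert that $\bar\phi$ carries over both ``the divisor of singular cameral curves and the divisor of singular Hitchin fibers''; only the second claim is justified, and only via the dichotomy of Proposition \ref{T:geom-fiber}/Corollary \ref{C:geom-fiber} (fibers over $\AG^{\ab}$ are abelian varieties, fibers over generic points of $\DG^{\ab}$ are uniruled, and an abelian variety is not birational to a uniruled variety). The divisor $\DG^{\diamondsuit}$ of singular cameral curves has extra components $\DG^{\operatorname*{extra}}_i$ with no intrinsic fiberwise characterization given here, so there is no reason an arbitrary isomorphism preserves it; fortunately it is not needed. Second, on the excluded cases: the genuine failure for $g=2$ and $G=\op{SL}_2,\op{PGL}_2$ is that $d_r=2$ and $\omega^2$ is not very ample on a genus-$2$ curve (the bicanonical map factors through the hyperelliptic involution), so the dual-variety/biduality argument breaks down — it is not merely that the base is low-dimensional.
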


We remark that Theorem \ref{T:B} has been already proved in greater generality by Biswas and Hoffmann \cite{BH12}, with different methods. In particular, they showed that the same statement holds if $G$ is a non-abelian reductive group. Furthermore, Theorem \ref{T:A} and Corollary \ref{T:conti} have been already proved in some cases. When $G=\op{SL}_n$, the computation has been carried out by Kouvidakis and Pantev \cite{KP95}. An alternative proof has been provided by Hwang and Ramanan \cite{HR}. Later, Biswas, G\'omez and Mu\~noz, after simplifying the proof for $G=\op{SL}_n$ in \cite{BGMsl}, extended the result to the symplectic group $\op{Sp}_{2n}$ \cite{BGMsym}. The cases $G$ simply-connected of type $E_6$ and $F_4$ have been studied by Ant\'on-Sancho \cite{Sanc}, by adapting the Biswas-G\'omez-Mu\~noz strategy. All the proofs rely on the study of the singular fibers of the Hitchin fibration. 
\vspace{0.1cm}

Our contribution consists in providing a unified version of Biswas-G\'omez-Mu\~noz proof, which holds for any almost-simple group $G$. The main difference between our paper and the other ones consists in the study of the Hitchin fibration. Instead of using the theory of spectral covers, we will use the theory of cameral covers. 
\vspace{0.1cm}

The main advantage of the cameral covers is that the picture is more uniform with respect to the spectral covers, as the group varies. Just for having an idea: it is well-known that, when $G=\op{GL}_n,\op{SL}_n$, a spectral curve is smooth if and only if the corresponding Hitchin fiber is an abelian variety. For other groups, this statement is false. For example, when $G=\op{SO}_n$, all the spectral curves are singular but the generic Hitchin fiber is an abelian variety (see \cite[\S 5.15, 5.17]{Hi}). Hence, the study of the relation between spectral curves and Hitchin fibers requires a case-by-case analysis. On the other hand, the generic cameral cover is smooth and the corresponding Hitchin fiber is smooth, for any reductive group $G$. However, unless $G$ has semi-simple rank $1$ (i.e. the derived subgroup $\mathscr D(G)\subset G$ is equal to either $\op{SL_2}$ or $\op{PGL_2}$), there are singular cameral covers, whose corresponding Hitchin fiber is an abelian variety. Despite this complication, the theory of cameral covers allows us to provide a uniform description of the locus in the Hitchin basis corresponding to abelian Hitchin fibers, for any reductive group $G$, avoiding the case-by-case analysis. For more details, see Section \ref{Sec:hit-disc} and references therein.

\vspace{0.1cm}

We now briefly explain the proof of Theorem \ref{T:A}. We only treat the surjectivity, which is the most delicate part. We start with an isomorphism of the moduli space $$\varphi:\overline{\mt M}_G^{\delta}(C)\xrightarrow{\sim}\overline{\mt M}_G^{\delta}(C).$$ 
The codifferential $d\varphi^{*}$ gives a $\mathbb G_m$-equivariant birational morphism of the moduli space $\mt H^{\delta}_G(C)$ of semistable $G$-Higgs bundles, where $\mathbb G_m$ acts on $\mt H^{\delta}_G(C)$ by scaling the Higgs field. Furthermore, the morphism $d\varphi^*$ sits in a commutative diagram of $\mathbb G_m$-equivariant morphisms of the following shape
\begin{equation*}\label{E:iso-cart}
	\xymatrix{
		\mt H^{\delta}_G(C)\ar[d]^h\ar@{-->}[r]^{d\varphi^*}&	\mt H^{\delta}_G(C)\ar[d]^h\\
		\AG\ar[r]^{\varphi_{\AG}}&\AG
	}
\end{equation*}
where $h$ is the Hitchin fibration and $\mathbb G_m$ acts on the Hitchin basis $\mathbb A=\oplus_{i=1}^{\operatorname{rank}(G)}\AG_i$ with weight $d_i$ on the subspace $\AG_i:=H^0(C,\omega^{d_i})$ (in the paper $\omega$ is the cotangent bundle of the curve).
In particular, the isomorphism $\varphi_{\AG}$ preserves:
\begin{enumerate}[(i)]
	\item the eigenspace $\AG_{\max}$ in $\AG$ of highest weight (i.e. $\max$ is the integer such that $d_{\max}$ is the highest number among $d_1,\ldots,d_{\op{rank}(G)}$),
	\item the closed subset $\DG^{\ab}$ of $\AG$, where the fibers of the Hitchin fibration $h$ are not abelian varieties.
\end{enumerate}The projectivization $\mathbb P(\DG^{\ab}\cap \AG_{\max})$ of the intersection between these two loci is equal to the dual variety of $C$ with respect to the linear system $\AG_{\max}$. Since the curve is equal to its double dual, the isomorphism $\varphi_{\AG}$, restricted to the intersection $\DG^{\ab}\cap \AG_{\max}$, produces an isomorphism $\sigma$ of the curve $C$. Using this fact, after some work, we are able to give a more explicit description of $\varphi_{\AG}$. In particular, we get:
$$
\varphi_{\AG}\left(\bigoplus_{i=1}^rH^0(C,\omega^{d_i}_C(-\sigma(p))\right)=\bigoplus_{i=1}^rH^0(C,\omega^{d_i}_C(-p)),\text{ for any }p\in C.
$$
From this, we are able to show that for a generic $G$-bundle $E$ with image $\varphi(E)$, there exists a canonical isomorphism of adjoint bundles $$\ads(E)\xrightarrow[\Psi]{\sim}\ads(\varphi(E))\Longrightarrow \mt N_E\xrightarrow[\Psi]{\sim}\mt N_{\varphi(E)},$$ which preserves the subsets $\mt N_E$ and $\mt N_{\varphi(E)}$ of nilpotent elements of $\ads(E)$ and $\ads(\varphi(E))$, respectively. Using the Springer resolution of the nilpotent cone in the Lie algebra $\g$, we then show that $\Psi$ induces an isomorphism of $G/B$-fibrations over $C$
$$
E/B\xrightarrow[\widehat \Psi]{\sim} \varphi(E)/B,\quad\text{ where }B \text{ Borel subgroup of }G.
$$
Thanks to a result of Demazure, there exists a canonical isomorphism between the relative tangent bundle of the $G/B$-fibration $E/B$ (resp. $\varphi(E)/B$) and the adjoint bundle $\ads(E)$ (resp. $\ads(\varphi(E))$) and it preserves the Lie-brackets. Hence, the (relative) differential 
$$
\ads(E)=\mt T_{(E/B)/C}\xrightarrow[d\widehat{\Psi}]{\sim}\mt T_{(\varphi(E)/B)/C}=\ads(\varphi(E))
$$
is an isomorphism of Lie algebra bundles. If this happens the $G$-bundles $E$ and $\varphi(E)$ differ by a combination of the transformations of type (a), (b) and (c). Since $E$ was chosen generic and it sits in the same connected component of $\overline{\mt M}_G(C)$ containing $\varphi(E)$, the isomorphism $\varphi$ must be in the image of the homomorphism described in Theorem \ref{T:A}.\\

\noindent\textbf{Further generalizations.} There are (at least) four ways of generalizing this problem. (1) What happens for a general reductive group? In this generality, the picture is more complicated. As an example, when $G=G_1\times G_2$ is the product of two almost-simple groups, the automorphism group of $\overline{\mt M}_G(C)$ should contain two copies of $\Aut(C)$. Note that case $G=\op{GL}_n$ has been computed in \cite{KP95}. (2) Another version of the problem would be to study the automorphism group for the so-called twisted moduli spaces of $G$-bundles (e.g. moduli spaces of vector bundles of fixed rank with determinant isomorphic to a fixed, non-trivial, line bundle). For instance, the twisted case has been treated in \cite{KP95} and \cite{BGMsl} for $G=\op{SL}_n$ and in \cite{BGMsym} for $G=\op{Sp}_{2n}$. (3) Another generalization is the case of parabolic (or, more generally, parahoric) $G$-bundles. When $G=\op{GL}_n,\op{SL}_n$, the problem has been studied in \cite{BHK}, \cite{CFK}, \cite{AG}. (4) An alternative path could be the study of the automorphisms of the moduli stack $\BG_G$ of all $G$-bundles. In this case, the automorphisms form a category (more precisely a $2$-group). We hope to come back on these problems in future works.\\

\noindent\textbf{Outline of the paper.} In Section \ref{S:LieAlg}, we collect some results about reductive groups and their adjoint representations on their own Lie algebras. In Section \ref{Sec:hit-fib}, we introduce the moduli stacks of (Higgs) bundles on curves and we recall their main properties, following the works of Faltings \cite{Fa93} and Ng\^o \cite{NGO10}. In section \ref{Sec:hit-disc}, we give a description of the closed subset $\DG^{\diamondsuit}$, resp. $\DG^{\ab}$, of the Hitchin basis $\AG$ parameterizing singular cameral curves, resp. non-abelian Hitchin fibers. Then, we derive some consequences on the Hitchin discriminant. In particular, we show that generic Hitchin fiber is either an abelian variety or birational to a $\mathbb P^1$-fibration over an abelian variety. In section \ref{Sec:torelli}, we prove the Torelli Theorem (= Theorem \ref{T:B}) for $G$-bundles. In Section \ref{Sec:taut-aut}, we show that the homomorphism of Theorem \ref{T:A} is well-defined and injective. In Section \ref{Sec:nilp}, we recall the necessary facts about the nilpotent cone of a Lie algebra and the tangent bundle of the flag variety. The surjectivity of the homomorphism of Theorem \ref{T:A} is proved in Section \ref{Sec:last}. In Section \ref{Sec:comp}, we compute the explicit presentations of the automorphism group $\Aut(\overline{\mt M}_G(C))$ contained in Corollary \ref{T:conti}.

\begin{notations}In the paper, $k$ will be always an algebraically closed field of characteristic zero. We usually denote by $G$ a (connected) reductive group over $k$, by $T_G$ a maximal torus of $G$ and by $B_G$ a Borel subgroup containing $T_G$. We denote by $\mathscr Z(G)$ the center of $G$, by $\mathscr D(G)$ the derived subgroup of $G$, and by $G^{\operatorname{ad}}$ the adjoint group $G/\mathscr Z(G)$. As usual, we denote by $W_G:=N(T_G)/T_G$ its Weyl group. The rank of $G$ (i.e. $\dim T_G$) will be denoted by the letter $r$.
\end{notations}

\begin{notations}We denote by $\g$, $\mathfrak b_G$ and $\tg_G$ the Lie algebras of $G$, $B_G$ and $T_G$, respectively. The set of roots (resp. coroots) will be denoted by $\Phi_G$ (resp. $\Phi^{\vee}_G$).  For any root $\alpha$, we denote by $\g_{\alpha}$ the 1-dimensional sublinear space of $\g$, where the torus $T_G$ acts with weight $\alpha$. For any root $\alpha:T_G\to\mathbb G_m$, we denote by $d\alpha:\tg_G\to k$ its Lie-algebra counterpart. We denote by $\g^{\reg}$ the subset of regular elements of $\g$, i.e. those elements $x\in\g$ such that the kernel of the adjoint homomorphism $[x,-]:\g\to\g$ has (minimal) dimension equal to $r$.
\end{notations}

\begin{notations}As usual $\pi_1(G):=\op{Hom}(\mathbb G_m,T_G)/\langle\Phi^{\vee}\rangle$ denote the fundamental group of $G$, which is defined as the quotient of the cocharacter lattice of $T_G$ by the sublattice $\langle\Phi^{\vee}\rangle$ generated by integral linear combinations of coroots.
\end{notations}

When it is clear from the context, we will remove the reference to the group $G$ from the notation (e.g. $T$ instead of $T_G$, $\tg$ instead of $\tg_G$, $\Phi$ instead of $\Phi_G$, etc).

\begin{notations}All the schemes and stacks will be supposed over $k$. We usually denote by $C$ a (connected) smooth projective curve over $k$, by $g:=g(C)$ its geometric genus and by $\omega_C$ (or $\omega$, when it is clear from the context) the cotangent bundle of $C$. A $G$-bundle over $C$ is a $G$-torsor over $C$, where $G$ acts on the right. Given a $G$-bundle $E$ over $C$ and a group-homorphism $\phi:G\to H$, we denote by $$E\times^{\phi,G}H:=(E\times H)/\sim$$ the associated $H$-bundle over $C$, where $\sim$ is the equivalence relation $(eg,h)\sim (e,\phi(g)h)$. Similarly, given a $G$-bundle $E$ over $C$ and a group-homorphism $\phi:G\to \op{GL}(V)$, we denote by $$E\times^{\phi,G}V:=(E\times V)/\sim$$ the associated vector bundle over $C$, where $\sim$ is the equivalence relation $(eg,v)\sim (e,\phi(g)v)$. When it is clear from the context, we remove the reference to the homomorphism $\phi$ from the notation (e.g. $E\times^GH$ instead of $E\times^{\phi,G}H$).
\end{notations}

\section{Lie algebras.}\label{S:LieAlg}
In this section, we collect some facts about the adjoint action of a reductive group $G$ on its Lie algebra $\g$.

\begin{teo}\label{T:konstant}Let $G$ be a reductive group over $k$. The following facts are true.
	\begin{enumerate}[(i)]
		\item\label{T:konstant1} The inclusion $\tg\subset\g$ of Lie algebras induces an isomorphism of rings $k[\g]^G\cong k[\tg]^W$, where $G$ acts on $\g$ via the adjoint representation.
		\item\label{T:konstant2} The ring $k[\g]^G$, resp. $k[\tg]^W$, is freely generated by homogeneous polynomials $p_1,\ldots,p_r$ in $k[\g]$, resp. in $k[\tg]$, of degrees $d_1,\ldots,d_r$. Moreover, the degrees do not depend on the choice of the polynomials.
		\item\label{T:konstant3} The morphism of affine varieties
		$$
	\begin{array}{cccl}
			\chi:&\g&\longrightarrow &\mathfrak{c}:=\Spec(k[\tg]^W)\\
			&x&\longmapsto& (p_1(x),\ldots,p_r(x))
	\end{array}
$$
is $\mathbb G_m$-equivariant, where $\mathbb G_m$ acts on $\g$ with weight $1$ and it acts on $\cg$ as follows: $t.(u_1,\ldots,u_r):=(t^{d_1}u_1,\ldots,t^{d_r}u_r)$.
\item\label{T:konstant4} The morphism of affine varieties
\begin{equation*}
	\begin{array}{cccl}
		\pi:&\tg&\longrightarrow &\mathfrak{c}\\
		&t&\longmapsto& (p_1(t),\ldots,p_r(t))
	\end{array}
\end{equation*}
is a Galois cover with Galois group $W$. The discriminant (or branch divisor) $\Dg$ is given by the zeroes of the $W$-invariant function
$
\prod_{\alpha\in\Phi}d\alpha.
$
\item\label{T:konstant5} For any simple root $\alpha$, we fix a non-zero vector $x_{\alpha}\in\g_{\alpha}$, we set $x_+:=\sum_{\alpha\text{ simple }}x_{\alpha}$. Then, there exists a unique $\mathfrak sl_2$-triple $\{x_-,h,x_+\}$ in $\g$, with $h\in\tg$. 
\item\label{T:kostant5bis} With the notations of Point $(\ref{T:konstant5})$. Let $\g^{x_+}$ be the Lie algebra of the centraliser of $x_+$. Then, the affine subspace $x_-+\g^{x_+}\subset\g$ is contained in the open subset $\g^{\reg}$. Furthermore, the restriction of $\chi$ to this subspace
$$
x_-+\g^{x_+}\to \cg
$$
is an isomorphism. Its inverse $\epsilon:\cg\to \g^{\reg}$ is called \emph{Kostant section}.
\item\label{T:konstant6} Let $P\subset G$ be a parabolic subgroup with Levi subgroup $M$. Let $\mathfrak p\subset \mathfrak g$ be the corresponding parabolic subalgebra and $\mathfrak m$ be its Levi subalgebra. Then the inclusions $\tg\subset\mathfrak m\subset\mathfrak p$ induce the isomorphisms of rings $k[\mathfrak p]^P\cong  k[\mathfrak m]^M\cong k[\mathfrak t]^{W_M}$, where $W_M$ is the Weyl group of $M$.
\end{enumerate}
\end{teo}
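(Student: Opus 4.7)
The plan is to assemble Theorem \ref{T:konstant} from a sequence of classical results, with no genuinely new argument required: each item follows from standard Lie-theoretic facts, and the task is mainly to organise the citations and verify that normalisations agree. Parts (i), (ii) and (iv) are really statements about the action of $W$ on $\tg$. I would first prove (i) by the Chevalley restriction theorem: restriction $k[\g]^G \to k[\tg]$ lands in $k[\tg]^W$ because $W = N_G(T)/T$ acts through inner automorphisms of $G$, is injective because in characteristic zero the semisimple orbits are dense in $\g$ and every semisimple element is $G$-conjugate into $\tg$, and is surjective by an argument of Chevalley (see e.g.\ Bourbaki, \emph{Groupes et algèbres de Lie}, Ch.~VIII). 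Part (ii) then follows by applying Chevalley--Shephard--Todd to the faithful reflection representation of $W$ on $\tg$; the degrees $d_i$ are intrinsic, since they are determined by the graded Poincaré series of $k[\tg]^W$. Part (iii) is immediate once the $p_i$ are chosen homogeneous of degree $d_i$. For (iv), $\pi : \tg \to \cg$ is, by construction, the geometric quotient $\tg \to \tg/W$, hence a Galois cover with group $W$, étale outside the locus where the $W$-stabiliser is non-trivial; that locus is the union of the reflection hyperplanes $\{d\alpha = 0\}$ for $\alpha\in\Phi$, so the branch divisor $\Dg$ is cut out by the $W$-invariant function $\prod_{\alpha\in\Phi} d\alpha$.

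For (v), I would apply Jacobson--Morozov to the principal nilpotent $x_+ = \sum_{\alpha\text{ simple}} x_\alpha$: the semisimple element $h\in\tg$ is uniquely determined by the equations $d\alpha(h) = 2$ for every simple root $\alpha$, and then $x_-$ is uniquely determined by $[h, x_-] = -2 x_-$ together with $[x_+, x_-] = h$. Part (vi) is Kostant's theorem on transverse slices to the principal orbit (Kostant, \emph{Lie group representations on polynomial rings}, Amer.\ J.\ Math.\ 85 (1963)): the affine subspace $x_- + \g^{x_+}$ is transverse to each regular adjoint orbit it meets, consists entirely of regular elements, and $\chi$ restricts to an algebraic isomorphism $x_- + \g^{x_+} \xrightarrow{\sim} \cg$. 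I expect this to be the least immediate step: one needs the weight decomposition of $\g$ under the $\mathfrak{sl}_2$-triple to see that $\dim \g^{x_+} = r = \dim \cg$ and that the differential of $\chi$ along the slice is an isomorphism at $x_-$. I would either quote Kostant's original computation directly or invoke the more recent treatment in Ngô \cite{NGO10}.

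Finally, for (vii), I would use the Levi decomposition $P = M \ltimes U$ with Lie-algebra splitting $\mathfrak p = \mathfrak m \oplus \mathfrak u$, where $\mathfrak u$ is the nilpotent radical. The $P$-equivariant projection $\mathfrak p \twoheadrightarrow \mathfrak m$ induces $k[\mathfrak m]^M \hookrightarrow k[\mathfrak p]^P$; conversely any $P$-invariant polynomial on $\mathfrak p$ is in particular $U$-invariant, and because $U$ is unipotent and $\mathfrak m = \mathfrak p / \mathfrak u$ as an $M$-representation, every $U$-invariant polynomial on $\mathfrak p$ factors through the Levi quotient, giving $k[\mathfrak p]^P = k[\mathfrak m]^M$. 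The second identification $k[\mathfrak m]^M = k[\tg]^{W_M}$ is then (i) applied to the reductive group $M$, which has maximal torus $T$ and Weyl group $W_M$ by definition.
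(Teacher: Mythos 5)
You assemble the theorem from exactly the classical sources that the paper itself invokes: the paper's entire proof is one sentence, citing \cite{Ko63} (and \cite[Theorem 2.1]{NGO6}) for items (i)--(vi) and \cite[Lemme 2.7.1]{CL1} for the isomorphism $k[\mathfrak p]^P\cong k[\mathfrak m]^M$ in (vii). So for (i)--(vi) you are simply unpacking the same citations, and your sketches (Chevalley restriction, Chevalley--Shephard--Todd, the branch divisor as the image of the reflection hyperplanes, Jacobson--Morozov applied to the principal nilpotent with $h$ pinned down by $d\alpha(h)=2$ on simple roots, Kostant's transverse slice) are all accurate.

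The only place you replace a citation by an argument of your own is (vii), and there the decisive step is asserted rather than proved. It is not true that unipotence of $U$ together with $\mathfrak m\cong\mathfrak p/\mathfrak u$ forces every $U$-invariant polynomial on $\mathfrak p$ to factor through $\mathfrak p/\mathfrak u$: if $U$ acted trivially on $\mathfrak p$ the conclusion would be false, so ``because $U$ is unipotent'' is not a reason. What you actually need is that the generic fibre of the projection $\mathfrak p\to\mathfrak m$ is a single $U$-orbit. Concretely, write $x=x_{\mathfrak m}+x_{\mathfrak u}$ and take $x_{\mathfrak m}$ regular semisimple in $\g$; then no root occurring in $\mathfrak u$ vanishes on $x_{\mathfrak m}$, so $\operatorname{ad}(x_{\mathfrak m})$ is invertible on $\mathfrak u$, the orbit map $U\to x_{\mathfrak m}+\mathfrak u$, $u\mapsto\operatorname{Ad}(u)x_{\mathfrak m}$, is \'etale with closed image (Kostant--Rosenlicht) of dimension $\dim\mathfrak u$, hence surjective. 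Therefore any $f\in k[\mathfrak p]^P$ satisfies $f(x)=f(x_{\mathfrak m})$ on this dense set of $x$, hence everywhere, which is the missing surjectivity of $k[\mathfrak m]^M\to k[\mathfrak p]^P$. With that sentence added your (vii) is complete and recovers \cite[Lemme 2.7.1]{CL1}; the rest of the proposal needs no change.
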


\begin{proof}All the facts are proved in \cite{Ko63} (see also \cite[Theorem 2.1]{NGO6}), with the only exception of the isomorphism $k[\mathfrak p]^P\cong k[\mathfrak m]^M$, which has been proved in \cite[Lemme 2.7.1]{CL1}.
\end{proof}

The Weil group $W$ acts on the set of roots $\Phi$. We denote by $\Phi_1,\ldots,\Phi_m$ the sets of $W$-orbits in $\Phi$. This will give us the irreducible decomposition of the discriminant
$$
\Dg=\Dg_1\cup\cdots\cup\Dg_m,
$$
where $\Dg_i$ is the zero-locus of the $W$-invariant function $\prod_{\alpha\in\Phi_i}d\alpha$. Furthermore, the singular locus $\Dg^{\sing}$ is the zero loci of the polynomial system
$$
\begin{cases}
	\prod_{\alpha\in\Phi}d\alpha=0,\\
	\sum_{\alpha\in\Phi}\left(\prod_{\beta\in\Phi\setminus \{\alpha\}}d\beta\right)=0.
\end{cases}
$$
In other words, it may be described as the image of points in $\tg$ contained in at least two distinct hyperplane-roots. In particular, it is a union of irreducible closed subsets in $\cg$ of dimension $r-2$. We denote these components by $\Dg^{\sing}_1,\ldots,\Dg^{\sing}_n$. It follows from the discussion that the irreducible components of $\Dg^{\sing}$ are in bijection with orbits of the diagonal action of the Weil group $W$ on the product $\Phi\times\Phi$.

\begin{rmk}\label{R:h}Let $G$ be a reductive group with Lie algebra $\g$. Here, we collect some facts about the degrees $d_1,\ldots,d_r$ introduced in Theorem \ref{T:konstant}.
\begin{enumerate}[(i)]
	\item $G$ is semisimple if and only if all the degrees $d_1,\ldots,d_r$ are greater or equal than $2$.
	\item  If $G$ almost-simple, the highest degree among $d_1,\ldots,d_r$ is the so-called Coxeter number $h$, which is equal to the number of roots divided by the rank of the maximal torus, i.e. $h:=\frac{|\Phi|}{r}$.
	\item $G=T$ is abelian if and only if all the degrees are equal to $1$. Furthermore, we have $\pi_G=\Id:\tg\to\tg$.
\end{enumerate}	
\end{rmk}

\begin{rmk}\label{R:decom-pi} In general, a reductive group $G$ admits a canonical isogeny $\mathscr Z(G)^o\times\mathscr D(G)\to G$, where $\mathscr Z(G)^o\subset G$ is the neutral component of the center $\mathscr Z(G)$ and $\mathscr D(G)\subset G$ is the derived subgroup. Then, we have the a commutative diagram of affine schemes
	\begin{equation}
		\xymatrix{
			\tg_G\ar@{=}[d]\ar[rr]^{\pi_G}&\quad&\cg_G\ar@{=}[d]\\
			\tg_{\mathscr Z(G)}\oplus \tg_{\mathscr D(G)}\ar[rr]^{(\Id,\pi_{\mathscr D(G)})}&\quad&\tg_{\mathscr Z(G)}\oplus \cg_{\mathscr D(G)}\\
		}
	\end{equation}
	where the horizontal rows are the morphisms in Theorem \ref{T:konstant}$(\ref{T:konstant4})$.
\end{rmk}

We do not know an explicit description of the function $\prod_{\alpha\in\Phi}d\alpha$ as a polynomial in $k[\tg]^W$. However, as the next result shows, we have a partial description, which is enough for our purposes (e.g. see Proposition \ref{P:dual-curve})

\begin{lem}\label{R:tang-cone-disc} Assume $\g$ simple. If we order the degrees such that $d_1\leq\ldots\leq d_r$, we have the following equality
	$$
	\prod_{\alpha\in\Phi}d\alpha = c p_r^{r} +f_1(p_1,\ldots,p_r)+\ldots+f_m(p_1,\ldots,p_r),
	$$ 
where $c\in k\setminus\{0\}$ and $f_i$ are monomials in $k[\tg]^W=k[p_1,\ldots,p_r]$ of total degree stricly greater than $r$.
\end{lem}

\begin{proof}Since we assume $\g$ simple, there exists an isomorphism of rings
	$$
	k[\tg]\cong k[d\alpha_1,\ldots,d\alpha_r],
	$$
	where $\alpha_1,\ldots,\alpha_r$ are the simple roots. The polynomial $\prod_{\alpha\in\Phi}d\alpha$ is an homogeneous polynomial of degree $|\Phi|$ in $k[\tg]$. On the other hand, $p_r$ is an homogeneous polynomial in $k[\tg]$ of degree $|\Phi|/r$ (see Remark \ref{R:h}). Hence we must have,
	$$
	\prod_{\alpha\in\Phi}d\alpha = c p_r^{r} +f_1(p_1,\ldots,p_r)+\ldots+f_m(p_1,\ldots,p_r),
	$$ 
	where $c\in k$ and $f_i$ are monomials in $k[\tg]^W=k[p_1,\ldots,p_r]$. Without loss of generality, we may assume that $f_i\neq d p_r^r$ for some constant $d\in k\setminus\{0\}$, for any $i$. We need to show that $c\neq 0$ and $\deg(f_i)>r$.
	
	\noindent\fbox{$c\neq 0$} We prove it by contradiction. Assume that $c=0$. Then, any element $t\in\tg$ such that $\pi(t)=(0,\ldots,0,a)\in\cg$, for some $a\in k$, is contained in some hyperplane-root $d\alpha:\tg\to k$. In particular, for any $t$ of this shape, the kernel of the adjoint homomorphism $[t,-]:\g\to\g$ contains the sublinear space $\g_{\alpha}\oplus\tg\oplus \g_{-\alpha}$. In particular, the kernel of $[t,-]$ have dimension at least $r+2$, i.e. $t$ is not regular. However, if $a\neq 0$, the element $t$ must be cyclic (then regular), see \cite[Corollary 9.2]{Ko59}.	
	
	\noindent\fbox{$\deg(f_i)>r$} We prove it by contradiction. Assume that $\deg(f_i)\leq r$, i.e. there exist $(a_1,\ldots,a_r)\in \mathbb N^r$, with $a_i\neq 0$ for some $i\neq r$, such that $
		f_i=p_1^{a_1}\cdots p_r^{a_r}$, $a_1+\cdots+a_r\leq r$ and $a_1d_1+\cdots+a_rd_r=|\Phi|$.
	Since $\g$ simple, the degree $d_i$ is stricly smaller than $d_r=|\Phi|/r$ for any $i< r$. In particular, since there is at least one $a_i\neq 0$ for some $i< r$, we get the contradiction
	$$
	|\Phi|=a_1d_1+\ldots+a_rd_r<(a_1+\cdots+a_r)|\Phi|/r\leq |\Phi|.
	$$	
\end{proof}

\section{Hitchin fibration.}\label{Sec:hit-fib}
Here, we recall some fundamental facts about (Higgs) bundles on curves and their moduli stacks following the works of Faltings \cite{Fa93} and Ng\^o \cite{NGO10}. Unless otherwise stated, in this section, $G$ is a (connected) reductive group and $C$ is a smooth projective curve of genus $g\geq 2$.

\subsection{The moduli stack $\hit_G$ of $G$-Higgs bundles.}\label{SS:hihh}
For any $G$-bundle $E$ over $C$, we denote by $\ads(E):=E\times^{\operatorname{Ad},G}\g$ the adjoint bundle on $C$ of attached to $E$ via the adjoint representation $\text{Ad}:G\to \op{GL}(\g)$.

\begin{defin}\label{D:Higgs}A \emph{$G$-Higgs bundle} is a pair $(E,\theta)$, where $E$ is a $G$-bundle over $C$ and $\theta$ is a global section of the vector bundle $\ad{E}$. A \emph{regular $G$-Higgs bundle} is a $G$-Higgs bundle $(E,\theta)$ such that, for any point $p\in C$, the kernel of the linear morphism of vector spaces
$$
[\theta_p,-]:\operatorname*{ad}(E_p)\to\ad{E_p}_p
$$
has dimension $r$.
\end{defin}

We denote by $\hit_G$ the moduli stack of $G$-Higgs bundles over $C$ and by $\BG_G$ the moduli stack of $G$-bundles over $C$. They are algebraic stacks locally of finite type over $k$. They are related by the forgetful morphism $$\hit_G\to\BG_G:(E,\theta)\mapsto E.$$ Consider the affine space $\AG$ of sections of the vector bundle $\cg_\omega:=\cg\times^{\mathbb{G}_m}\omega^*$, where $\omega^*:=\omega\setminus\{\operatorname{zero-section}\}$ is the $\mathbb G_m$-bundle attached to $\omega$. After fixing a set of generators of $k[\g]^G=k[\tg]^W$, we get the $\mathbb{G}_m$-equivariant isomorphisms
\begin{equation*}\label{E:dec-eq}
	\cg_\omega\cong\omega^{d_1}\oplus\ldots\oplus\omega^{d_r}\quad\text{ and }\quad\AG\cong H^0(C,\omega^{d_1})\oplus\ldots\oplus H^0(C,\omega^{d_r}).
\end{equation*}
For the rest of the paper, we will fix such a set of free generators, we also assume that $d_1\leq \ldots\leq d_r$ and we set $\AG_i:=H^0(C,\omega^{d_i})$. From this decomposition, we deduce that:
\begin{align*}
\dim\AG&=\sum_{i=1}^rd_i(2g-2)+r(1-g)+\#\{i|d_i=1\}=\\
&=\sum_{i=1}^rd_i(2g-2)+r(1-g)+\dim\mathscr Z(G)=\\&=\left(r+\frac{|\Phi|}{2}\right)(2g-2)+r(1-g)+\dim\mathscr Z(G)=\\&=\dim G(g-1)+\dim \mathscr Z(G).
\end{align*}
Indeed, the first two equalities follows by using Riemann-Roch and Remarks \ref{R:h} and \ref{R:decom-pi}. We recall that the integers $d_1,\ldots,d_r$ are the degrees of a set of free generators for the ring of $W$-invariants $k[\tg]^W$. Since the Weyl group $W$ is generated by $|\Phi|/2$ reflections, we have $\sum_{i=1}^{r}(d_i-1)=|\Phi|/2$ (see \cite{ST}) and, so, the third equality follows immediately. The last equality follows because $\dim G=\dim\g=r+|\Phi|$.

 We call \emph{Hitchin morphism} the following map:
\begin{equation*}\label{E:hit-map}
\begin{array}{cccl}
h:&\hit_G&\longrightarrow&\AG\\
& (E,\theta)&\longmapsto& \left(p_1(\theta),\ldots,p_r(\theta)\right).
\end{array}
\end{equation*}
Given a section $a\in\AG$, we call \emph{Hitchin fiber over $a$} the inverse image $\hit_G(a):=h^{-1}(a)$. The choice of a theta-characteristic $\kappa$ (i.e. a line bundle $\kappa$ such that $\kappa^2=\omega$) and the Kostant section (see Theorem \ref{T:konstant}(v)) gives a section of the Hitchin morphism
\begin{equation}\label{E:Hit-kos}
\epsilon_\kappa:\AG\to \hit_G,
\end{equation}
called the \emph{Hitchin-Kostant section}, for more details see \cite[Proposition 2.5]{NGO6} and \cite[Lemma 2.2.5]{NGO10}. The image of $\epsilon_\kappa$ is contained in the locus $\hit_G^{\reg}$ of regular $G$-Higgs bundles, see \cite[\S 4.3.4]{NGO10}. In particular, we get a family of regular $G$-Higgs bundles over $\AG$. For any $a\in \AG$, we denote by $(E_a,\theta_a):=\epsilon_\kappa(a)$, the corresponding regular $G$-Higgs bundle given by the image of the Hitchin-Kostant section.

\vspace{0.1cm}

We conclude the subsection with some basic facts about these moduli stacks. These facts are probably well-known to the experts. However, we did not find a reference in this generality. For this reason, we added a sketch of the proof.

\begin{prop}\label{P:hit-prop}The following facts hold.
\begin{enumerate}[(i)] 
	\item\label{P:hit-prop1} The connected components of the moduli stacks $\hit_G$ and $\BG_G$ are in bijection with $\pi_1(G)$.
	\item\label{P:hit-prop2} For any $\delta\in\pi_1(G)$, the corresponding connected component $\BG_G^{\delta}$ of the moduli stack $\BG_G$ is a smooth algebraic stack locally of finite type over $k$. Furthermore, it has dimension $\dim G(g-1)$.
	\item\label{P:hit-prop3} For any $\delta\in\pi_1(G)$, the corresponding connected component $\hit_G^{\delta}$ of the moduli stack $\hit_G$ is an irreducible algebraic stack locally of finite type over $k$. Furthermore, it is a locally complete intersection of pure dimension $2\dim G(g-1)+\dim\mathscr Z(G)$.
	
	\item\label{P:hit-prop5} The Hitchin morphism $h^{\delta}:\hit_G^{\delta}\to\AG$ is faithfully flat with fibers of pure dimension $\dim G(g-1)$.
\end{enumerate}
\end{prop}

\begin{proof}\fbox{$(\ref{P:hit-prop2})$} The moduli stack $\BG_G$ is locally of finite type over $k$ by \cite[Proposition 4.4.5]{BK} and smooth by \cite[Proposition 4.5.1]{BK}. For the dimension see \cite[Corollary 8.1.9]{BK}.\\
\fbox{$(\ref{P:hit-prop1})$+$(\ref{P:hit-prop3})$} The statement of $(i)$ for $\BG_G$ can be found in \cite{Ho10}. We now focus on the moduli stack $\hit_G$. When $G$ is semi-simple, a proof can be found in \cite{BD}. More precisely, by \cite[Proposition 1.1.1]{BD}, the moduli stack $\mathbf{Bun}_G$ is \emph{very good}, i.e. for any integer $n>0$, we have that the substack
	\begin{equation}\label{E:very-good}
	\{E\in\mathbf{Bun}_G\vert \dim \operatorname{Aut}(E)=n\}=\{E\in\mathbf{Bun}_G\vert, \dim H^1(C,\ad{E})=n\}
	\end{equation}
has codimension strictly greater than $n$ in $\BG_{G}$. We remark that the two sets above coincide by Serre duality and by the equality $\dim H^0(C,\ads(E))=\dim \operatorname{Aut}(E)$. The property of being very good implies that the forgetful morphism $\hit_G\to\mathbf{Bun}_G$ induces a bijection of irreducible/connected components and also that $\hit_G$ is an algebraic stack locally of finite type and a locally complete intersection of pure dimension $2\dim G(g-1)$, see \cite[Section 1]{BD}. Now, we assume $G$ reductive. Observe that the moduli stack $\hit_G$ sits in the following cartesian diagram
	\begin{equation}\label{E:cart-higg}
		\xymatrix{
			\hit_{G}\ar[d]\ar[r]&\hit_{G^{\operatorname{ad}}}\ar[d]\\
			H^0(C,\tg_{\mathscr Z(G)}\otimes\omega)\times\mathbf{Bun}_G\ar[r]& \mathbf{Bun}_{G^{\operatorname{ad}}},	
		}
	\end{equation}
where the horizontal map on the bottom sends the pair $(s,E)$ to the $G^{\ads}$-bundle $E\times^GG^{\ads}=E/\scr Z(G)$. By \cite[Corollary 4.2]{Ho10}, the horizontal arrows are smooth of relative dimension 
\begin{align*}
\dim H^0(C,\tg_{\mathscr Z(G)}\otimes\omega)+\dim \BG_{\scr Z(G)}&=\dim\scr Z(G)g+ \dim \mathscr Z(G)(g-1)=\\&= \dim \mathscr Z(G)(2g-1).
\end{align*}
 Since $G^{\ads}$ is semi-simple, we have the equalities
\begin{align*}
\dim \hit_G&=\dim \hit_{G^{\ads}}+\dim \mathscr Z(G)(2g-1)=\\&=2\dim G^{\ads}(g-1)+\dim \mathscr Z(G)(2g-1)=\\&=2\dim G(g-1)+\dim \mathscr Z(G).
\end{align*}
Furthermore, the smoothness of the horizontal maps implies that $\hit_G$ is a locally complete intersection of pure dimension $2\dim G(g-1)+\dim \mathscr Z(G)(g-1)$. Now we compute the irreducible/connected components of $\hit_G$. From the semi-simple case (see \eqref{E:very-good}), we deduce that, for any $n>0$, the substack
$$
\mathbf{Bun}_{G,= n}:=\{E\in\mathbf{Bun}_G\vert \,H^1(\ad{E})= n+\dim \mathscr Z(G)\}\subset \mathbf{Bun}_G
$$
has codimension strictly greater than $n$ in $\mathbf{Bun}_G$. From this fact, we deduce that the locus $\hit_G^o$ of $G$-Higgs bundles such that $\dim H^1(C,\ad{E})=\dim\mathscr{Z}(G)$ (i.e. of minimal dimension) is dense in $\hit_G$. We proceed by contradiction. Assume that $\hit_G^o$ is not dense in $\hit_G$. Then, there exists an irreducible component of $\hit_G$ with an open subset $\hit_G^{\#}$ mapping dominantly onto the substack $\mathbf{Bun}_{G,= n}$, for some $n>0$. In particular, there exists a $G$-bundle $E$ in $\mathbf{Bun}_{= n}$ such that
\begin{align*}
\dim \hit_G^*&=\dim \mathbf{Bun}_{G,= n}+\dim H^0(C,\ad{E})<\\
&<(-n+\dim\mathbf{Bun}_G)+\chi(\ad{E})+\dim H^1(C,\ad{E})=\\
&=(-n+\dim\mathbf{Bun}_G)+\dim G(g-1)+(n+\dim \mathscr Z(G))=\\
&=2\dim G(g-1)+\dim \mathscr Z(G)=\dim\hit_G.
\end{align*}
Since all the irreducible components of $\hit_G$ have the same dimension, we get a contradiction. Hence, the connected components of $\hit_G$ are in bijection with those of $\hit_G^o$. The point $(i)$ follows by observing that the forgetful morphism $\hit_G^o\to\mathbf{Bun}_G$ is a vector bundle. This also show that all the connected components are irreducible, concluding the proof of the point $(iii)$.\\
\fbox{$(\ref{P:hit-prop5})$} When $G$ semi-simple, a proof of the assertion about the flatness and the dimension of the fibers can be found in \cite[Theorem 2.2.4(iii)]{BD}. When $G$ is reductive, the assertion can be deduced from the observation that the statement holds for the adjoint (and semi-simple) group $G^{\ads}$, together with the cartesian diagram \eqref{E:cart-higg}.
\end{proof}

\subsection{Cameral covers.}\label{SS:cameralcovers} To any element $a=(a_1,\ldots,a_r)\in\AG$, the corresponding \emph{cameral curve} $C_a$ is the curve in the total space of the vector bundle $\tg_\omega:=\tg\times\omega$ defined by the equations
\begin{equation*}
\begin{cases}
	p_1(t)=a_1(x),\\
	\vdots\\
	p_r(t)=a_r(x),
\end{cases}
\end{equation*}
where $x$ is a (local) coordinate for $C$ and $t$ are the tautological coordinates $(t_1\otimes dx,\ldots,t_r\otimes dx)$ along the fibers of the projection $\tg_\omega\to C$. 
In other words, the cameral curve sits in the following cartesian diagram
\begin{equation}\label{E:cam-cov}
\xymatrix{
	C_a\ar[d]^{\pi_a}\ar[r]&\tg_{\omega}\ar[d]^{\pi}\\
	C\ar[r]^<<<<<<a&\cg_\omega.
}
\end{equation}
In the paper, we use the same symbol $\pi$ for the morphism $\tg\to \cg$ in Theorem \ref{T:konstant}$(\ref{T:konstant4})$ and its $\omega$-twisted version $\pi:\tg_{\omega}\to \cg_{\omega}$ in \eqref{E:cam-cov}. We call \emph{cameral cover} the morphism $\pi_a:C_a\to C$ in \eqref{E:cam-cov}.
\vspace{0.1cm}

The \emph{discriminant divisor }$\Dg_{\omega}:=\Dg\times^{\mathbb G_m}\omega^*$ is an hypersurface in $\cg_{\omega}$. The image $a(C)$ of a section $a\in \AG$ always meets the divisor $\Dg_{\omega}$, because $\omega$ is ample. So, we may define some subsets of sections $a\in\AG$ depending on the position of the curve $a(C)$ with respect to the discriminant divisor $\Dg_{\omega}$. For instance, let $\AG^{\heartsuit}$ be the subset of $\AG$ whose sections are not contained in $\Dg_{\omega}$, i.e.
\begin{equation*}\label{E:ag-hearth}
	\AG^{\heartsuit}:=\{a\in\AG\,| a(C)\nsubseteq \Dg_{\omega}\}.
\end{equation*}
The next proposition resumes the main facts about this set.
\begin{prop}\label{P:heart}The open subset $\AG^{\heartsuit}$ is not-empty. Moreover, if $a\in\AG^{\heartsuit}$, then the cameral cover $C_a$ is reduced and connected.
\end{prop}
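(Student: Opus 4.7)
My plan addresses the three assertions in order, with connectedness being the most delicate.

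\textit{Non-emptiness.} Since $g\geq 2$, the canonical bundle $\omega$ is globally generated, so each $\omega^{d_i}$ is as well. For any fixed $x_0 \in C$ the evaluation $\AG = \bigoplus_i H^0(C,\omega^{d_i}) \twoheadrightarrow \cg_\omega|_{x_0}$ is surjective, while $\Dg_\omega|_{x_0} \subset \cg_\omega|_{x_0}$ is a proper hypersurface (the zero locus of the non-vanishing polynomial $\prod_{\alpha\in\Phi}d\alpha$). I can therefore choose $a$ with $a(x_0) \notin \Dg_\omega|_{x_0}$, placing $a$ in $\AG^{\heartsuit}$.

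\textit{Reducedness.} By Theorem~\ref{T:konstant}$(\ref{T:konstant4})$, $\pi:\tg\to\cg$ is a finite surjective morphism between smooth affine spaces of the same dimension $r$, hence flat by miracle flatness. Its base-change $\pi_a:C_a\to C$ is then finite flat of degree $|W|$, making $C_a$ Cohen--Macaulay of pure dimension one without embedded points. Since $a\in\AG^{\heartsuit}$, the open subset $U:=a^{-1}(\cg_\omega\setminus\Dg_\omega)\subset C$ is non-empty and hence dense, and $\pi_a$ is étale over $U$ because $\pi$ is étale off $\Dg_\omega$. Thus $C_a$ is generically smooth; a Cohen--Macaulay scheme that is generically reduced is reduced.

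\textit{Connectedness.} Since flat base-change commutes with the quotient $\tg/W=\cg$, the pushforward $(\pi_a)_*\oo_{C_a}$ is a rank-$|W|$ locally free sheaf of $\oo_C$-algebras carrying a $W$-action whose invariants equal $\oo_C$. Taking global sections, $H^0(C_a,\oo_{C_a})^W=k$. Reducedness of $C_a$ together with the algebraic closure of $k$ give $H^0(C_a,\oo_{C_a})\cong k^n$, with $W$ permuting the $n$ idempotents transitively; hence the connected components of $C_a$ are indexed by the cosets of a subgroup $W_1\subset W$, and the problem is reduced to showing $W_1=W$.

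\textit{The main obstacle} is this final step. My plan is to argue via monodromy. First, $\tg_\omega^{\reg}:=\tg_\omega\setminus\pi^{-1}(\Dg_\omega)$ is the complement of a divisor in the smooth irreducible total space of the vector bundle $\tg_\omega\to C$, hence connected, so $\tg_\omega^{\reg}\to\cg_\omega\setminus\Dg_\omega$ is a connected étale $W$-torsor and the universal monodromy $\pi_1(\cg_\omega\setminus\Dg_\omega)\twoheadrightarrow W$ is surjective. Second, for any $a\in\AG^{\heartsuit}$ and any $W$-orbit $\Phi_i\subset\Phi$, the pullback $a^*\Dg_{\omega,i}$ is a non-empty effective divisor on $C$: the defining $W$-invariant $\prod_{\alpha\in\Phi_i}d\alpha$ restricts to a non-zero section of a positive power of $\omega$ (non-zero because $a(C)\not\subset\Dg_{\omega,i}$), which must have zeros since $\omega$ has positive degree. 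At a generic such branch point, the local monodromy of $\pi_a$ is a reflection $s_\alpha$ with $\alpha\in\Phi_i$, and this reflection identifies two sheets of $C_a$ which therefore lie in the same connected component. Combining these identifications over all orbits $\Phi_i$ and using the irreducibility of the Coxeter group $W$ (for $G$ almost-simple; the general reductive case reduces factorwise via Remark~\ref{R:decom-pi}), I conclude $W_1=W$, and hence $n=1$.
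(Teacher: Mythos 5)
Your non-emptiness and reducedness arguments are correct. They are essentially self-contained versions of what the paper obtains by reducing to the semi-simple case and citing Ng\^o: the reducedness proof of \cite[Lemme 4.5.1]{NGO10} is exactly your Cohen--Macaulay-plus-generic-\'etaleness argument, and your evaluation-at-a-point argument for non-emptiness is simpler than the paper's appeal to Proposition \ref{P:gen-discr}.

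The connectedness argument, however, has a genuine gap at its final step. Everything up to ``$H^0(C_a,\oo_{C_a})^W=k$, hence the components of $C_a$ are the cosets of a subgroup $W_1\subset W$'' is fine. But the implication ``$W_1$ contains a reflection $s_\alpha$ with $\alpha\in\Phi_i$ for each $W$-orbit $\Phi_i$, and $W$ is irreducible, therefore $W_1=W$'' is false: in type $A_2$ all reflections form a single $W$-orbit, yet $\langle s_\alpha\rangle\subsetneq S_3$ already contains a representative of it. The monodromy argument of this flavour that the paper does run (the proof of Corollary \ref{C:irr-cam}) only closes because there the subgroup in question is shown to be \emph{normal} in $W$ — via the exact sequence of fundamental groups of the universal cameral cover over $\AG^*$ — and a normal subgroup meeting every conjugacy class of reflections contains all reflections. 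For a single, arbitrary $a\in\AG^{\heartsuit}$ you have no normality (the stabilizers of the various components are merely conjugate to one another), and the cameral curve genuinely can be reducible for such $a$, so no purely group-theoretic upgrade of ``$W_1$ contains some reflections'' to ``$W_1=W$'' is available. A secondary issue: for arbitrary $a\in\AG^{\heartsuit}$ there may be no point at which $a(C)$ meets $\Dg_{\omega}$ transversally in its smooth locus, so ``at a generic such branch point the local monodromy is a reflection'' is not justified either (an even-order tangency has trivial local monodromy), although one can still place reflections in the component stabilizers using the stabilizers $W_t$ of points of $C_a$ lying over the branch locus. The proof the paper relies on (\cite[Prop. 4.6.1]{NGO10}) is cohomological and is only one step beyond where your argument already stands: since $k[\tg]$ is a free graded $k[\tg]^W$-module with a homogeneous basis of degrees $e_1=0<e_2\leq\dots\leq e_{|W|}$, flat base change gives $\pi_{a*}\oo_{C_a}\cong a^*\pi_*\oo_{\tg_\omega}\cong\bigoplus_{j}\omega^{-e_j}$, and as $\deg\omega^{-e_j}<0$ for $j\geq2$ and $g\geq2$ one gets $h^0(C_a,\oo_{C_a})=1$ outright, not merely $\dim H^0(C_a,\oo_{C_a})^W=1$.
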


The proof of this fact is due to Ng\^o \cite{NGO10} under different hypotheses on the line bundle $\omega$. We briefly explain how to adapt the strategy on \emph{loc.cit.} in our situation.

\begin{proof}By Remark \ref{R:decom-pi}, the morphism $\pi_G:\tg_G\to\cg_G$ can be re-written
$$
		\left(\operatorname{Id},\pi_{\mathscr{D}(G)}\right):	\tg_{\mathscr Z(G)}\oplus\tg_{\mathscr D(G)}\to \tg_{\mathscr Z(G)}\oplus \cg_{\mathscr D(G)}.
$$
Hence, we may assume $G$ semi-simple. The non-emptiness will follow from Proposition \ref{P:gen-discr}. The cameral curve is reduced by \cite[Lemme 4.5.1]{NGO10} and connected by \cite[Prop. 4.6.1]{NGO10}. We remark that the canonical bundle $\omega$ does not satisfy the hypothesis $\deg(\mt L)\geq 2g$ in \emph{loc.cit.} However, the same result, with the exact same proof, still holds if we assume $\mt L^{n}$ very ample for $n=d_1,\ldots,d_r$. Since we are assuming $G$ semi-simple, we have $d_1,\ldots,d_r\geq 2$ (see Remark \ref{R:h}). Hence, we get the assertion, because the cotangent bundle $\omega^n$ of a smooth curve of genus at least two is very ample for any $n\geq 2$.
\end{proof}	

\subsection{Regular centralizer.}
Here, we define a moduli stack of torsors under an abelian group scheme $J_a$ on $C$, which depends on the choice of the section $a\in \AG$. It turns out that these moduli stacks completely describe the regular locus of the Hitchin fibers.

\begin{defin}\label{D:centralizer-gr}Let $\pi:\tg_{\omega}\to \cg_{\omega}$ be the $\omega$-twisted quotient in \eqref{E:cam-cov}. The \emph{group of $W$-equivariant maps over $\cg_{\omega}$}, denoted by $J^1$, is the smooth and commutative group scheme over $\cg_{\omega}$
	$$
	\begin{array}{cccl}
		J^1:&\operatorname*{Sch}/\cg_{\omega}&\longrightarrow&(\operatorname*{Grps})\\
		&(S\to \cg_{\omega})&\longmapsto&\pi_{*}\left(\tg_{\omega}\times T\right)^W(S):=\text{Hom}_W(S\times_{\cg_{\omega}}\tg_{\omega},T).
	\end{array}
	$$
	The smooth and commutative group scheme $J$ over $\cg_{\omega}$ is the subfunctor of $J^1$, which associates to any $\cg_{\omega}$-scheme $S$, the subset $J(S)\subset J^1(S)$ of the $W$-equivariant morphisms 
	$$
	f:S\times_{\cg_{\omega} }\tg_{\omega}\to T
	$$
	such that for any geometric point $x\in S\times_{\cg_{\omega}} \tg_{\omega}$ fixed by the hyperplane-root reflection $s_{\alpha}$ for some $\alpha$, we have $\alpha(f(x))\neq -1$.\\	
	For any section $a\in\AG$, we denote by $J_a=a^*J$ the group scheme over $C$ obtained as a pull-back of $J$ along the section $a:C\to \cg_{\omega}$.
\end{defin}

\begin{rmk}The quotient $J^1/J$ is a $\mathbb Z_2$-torsion sheaf. Furthermore, the condition of being in $J$ always holds for those roots whose corresponding coroots are primitive (i.e. they are injective as cocharacters). In particular, if the derived subgroup of $G$ does not contain the odd special orthogonal group $\op{SO}_{2n+1}$ (note that $\op{PGL}_2=\op{SO}_3$) as a direct factor, the groups $J$ and $J^1$ coincide, see \cite[Remark at page 123]{BD}.
\end{rmk}
For any $a\in\AG$, we denote by \emph{$\jator{a}$ the moduli stack of $J_a$-torsors over $C$}. It is a group-stack and it acts on the Hitchin fiber $\hit_G(a)$, by twisting the $G$-Higgs bundle. 

\begin{rmk}To be honest, the action is defined using a moduli stack of torsors with respect to an another group scheme: the universal centralizer group scheme, see \cite[\S 2]{NGO10}. However, these two groups (and so the moduli stacks of torsors under these groups) are isomorphic, see \cite[Proposition 2.4.7]{NGO10}. We have chosen this interpretation, because it shows more explicitly the role of the cameral curve in the geometry of the moduli stack, see for instance \cite[\S 4.5.2]{NGO10}. 
\end{rmk}	
	The next proposition shows the relationship between these group-stacks and the Hitchin fibers.

\begin{prop}Fix $a\in\AG$. The substack $\hit_G^{\reg}(a)\subset \hit_G(a)$ of regular $G$-Higgs bundles is a smooth open substack of the Hitchin fiber over $a$. Furthermore, $\hit_G^{\reg}(a)$ is a trivial $\jator{a}$-torsor.
\end{prop}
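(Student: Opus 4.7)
The proof has three ingredients: openness of the regular locus, non-emptiness via the Kostant section, and the torsor structure, all of which ultimately rest on Ng\^o's theory of the regular centralizer \cite[\S 2]{NGO10}. For openness, the condition that $(E,\theta)$ be regular is that at every point $p\in C$ the kernel of $[\theta_p,-]\colon\ads(E)_p\to\ads(E)_p\otimes\omega_p$ have dimension exactly $r$. Since $r$ is the minimal possible value for such a centralizer and this dimension is a lower-semicontinuous function of $(p,(E,\theta))$, the locus of regular Higgs bundles is open in $\hit_G(a)$.

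For non-emptiness, the Hitchin-Kostant section from \eqref{E:Hit-kos} produces, for every $a\in\AG$, a canonical $G$-Higgs bundle $(E_a,\theta_a):=\epsilon_\kappa(a)$ which is regular by construction: the underlying Kostant section $\epsilon\colon\cg\to\g^{\reg}$ from Theorem \ref{T:konstant}(vi) lands pointwise in the regular locus, and this property is preserved by the $\omega$-twist that defines $\epsilon_\kappa$. This supplies the distinguished point which, once the torsor structure is in place, will exhibit the torsor as trivial.

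For the torsor structure, the action of $\jator{a}$ on $\hit_G(a)$ preserves regularity, and hence restricts to an action on $\hit_G^{\reg}(a)$. The plan is to show that the orbit morphism
$$
\Phi\colon\jator{a}\longrightarrow\hit_G^{\reg}(a),\qquad T\longmapsto T\cdot(E_a,\theta_a)
$$
is an isomorphism of algebraic stacks. At the level of objects, I would verify \'etale-locally on $C$ that any regular $(E,\theta)$ with $h(E,\theta)=a$ is isomorphic to $(E_a,\theta_a)$; the sheaf of such isomorphisms is then a $J_a$-torsor whose associated twist of $(E_a,\theta_a)$ reconstructs $(E,\theta)$, yielding essential surjectivity. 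At the level of automorphisms, one identifies the automorphism sheaf of any regular $G$-Higgs bundle of characteristic $a$ canonically with $J_a$, giving full faithfulness. Smoothness of $\hit_G^{\reg}(a)$ then follows immediately from the smoothness of $\jator{a}$, which is in turn inherited from the smoothness of $J_a$ as a commutative group scheme over $C$.

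The main obstacle is the canonical identification of the automorphism sheaf of a regular $G$-Higgs bundle with $J_a$ rather than with the slightly larger group scheme $J_a^1$. This is precisely Ng\^o's regular centralizer theorem \cite[Proposition 2.4.7]{NGO10}, and it is exactly where the condition $\alpha(f(x))\neq-1$ in Definition \ref{D:centralizer-gr} intervenes: it enforces compatibility with the $W$-equivariant descent from the cameral cover $C_a$ to $C$ at points above the ramification divisor, which is the subtle step that cannot be handled by naive gluing of stabilizers from the generic fibre.
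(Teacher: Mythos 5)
Your proposal is correct and follows essentially the same route as the paper, which simply defers to Ng\^o by citing \cite[Proposition 4.3.3]{NGO10}; your sketch (openness by semicontinuity, the Hitchin--Kostant section as base point, and the identification of the automorphism sheaf of a regular Higgs bundle with $J_a$ via \cite[Proposition 2.4.7]{NGO10}) is precisely the argument behind that citation. One small slip: the kernel dimension of $[\theta_p,-]$ is \emph{upper}-semicontinuous (the rank is lower-semicontinuous), which combined with minimality of $r$ is what makes the regular locus open.
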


\begin{proof}See \cite[Proposition 4.3.3]{NGO10}.
\end{proof}

As consequence of the above proposition, the moduli stack $\jator{a}$ gives us some information about the geometry of the Hitchin fiber $\hit_G(a)$. The next two propositions collect the main facts about this group-stack.

\begin{prop}\label{P:aff-ab} Fix $a\in\mathbb{A}^{\heartsuit}$. Then the following facts hold.
	\begin{enumerate}[(i)]
		\item There exists a surjective morphism of stacks $\jator{a}\to \jator{a}^{\ab}$, where $\jator{a}^{\ab}$ is a group-stack, whose neutral component is an abelian stack (i.e. quotient stack of an abelian variety by the trivial action of a diagonalizable group).
		\item The fiber over the identity of the morphism $\jator{a}\to \jator{a}^{\ab}$ is an affine group scheme, which will be denoted by $\jator{a}^{\operatorname{aff}}$.
	\end{enumerate}
\end{prop}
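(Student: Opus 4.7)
The strategy is to perform a Chevalley-type devissage of the group scheme $J_a$ over $C$ and then pass to torsors. First I would construct a short exact sequence
$$ 1 \longrightarrow J_a^{\operatorname{aff}} \longrightarrow J_a \longrightarrow J_a^{\ab} \longrightarrow 1$$
of commutative group schemes on $C$, where $J_a^{\operatorname{aff}}$ has affine fibers and $J_a^{\ab}$ is semi-abelian. The key point is that $J_a$ sits inside $J_a^1 = \pi_{a,*}(C_a \times T)^W$ as a subgroup with finite quotient supported on $a^{-1}(\Dg_{\omega})$, so it suffices to perform the devissage for $J_a^1$. Since $a \in \AG^{\heartsuit}$, the cameral cover $C_a$ is reduced and connected by Proposition \ref{P:heart}, hence its Picard scheme is an extension of an abelian variety by an affine group. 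Taking $W$-invariants and twisting by the cocharacter lattice of $T$ yields the desired devissage, with $J_a^{\ab}$ corresponding fiberwise to the abelian quotient of the Chevalley decomposition of each $(J_a)_x$.

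Second, I would apply $\mathbf{Bun}$. Any short exact sequence $1 \to K \to J \to Q \to 1$ of smooth commutative group schemes on $C$ yields, by non-abelian cohomology, an exact sequence of group-stacks
$$\mathbf{Bun}_K \longrightarrow \mathbf{Bun}_J \longrightarrow \mathbf{Bun}_Q,$$
with the last arrow surjective (because $H^2$ on a curve vanishes for smooth commutative group schemes). Applied to the devissage above, this produces the surjection $\jator{a} \to \jator{a}^{\ab} := \mathbf{Bun}_{J_a^{\ab}}$ and identifies the fiber over the identity with $\jator{a}^{\operatorname{aff}} := \mathbf{Bun}_{J_a^{\operatorname{aff}}}$. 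The latter is affine because $J_a^{\operatorname{aff}}$ has affine fibers and $C$ is a projective curve. The neutral component of $\jator{a}^{\ab}$ is an abelian stack: since $J_a^{\ab}$ is semi-abelian, $\mathbf{Bun}_{J_a^{\ab}}^0$ is a trivial gerbe, under the diagonalizable group $H^0(C, J_a^{\ab})$ of global automorphisms, over the abelian variety obtained from $\Pic^0(C_a)^W$ twisted by the cocharacter lattice, which is exactly an abelian stack in the sense required by the statement.

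The main obstacle is the global construction of $J_a^{\operatorname{aff}}$ and $J_a^{\ab}$ as a flat closed subgroup scheme and a flat quotient, respectively. Fiberwise the Chevalley decomposition is automatic, but one must check that the affine parts fit into a flat family; this requires analyzing how $J_a$ degenerates along $a^{-1}(\Dg_{\omega})$. Over the transverse intersection points, where $a$ meets the smooth locus of $\Dg_{\omega}$ in a single hyperplane root, the local model is explicit: a torus extended by $\mathbb G_a$ along the root direction, and the filtration is manifest. The delicate points are those of $a^{-1}(\Dg^{\sing})$, where several root hyperplanes collide; here one ultimately appeals to the uniform analysis carried out by Ng\^o in \cite{NGO10}.
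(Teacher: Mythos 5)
The paper offers no argument of its own here (the proof is a pointer to \cite[Proposition 4.8.2]{NGO10}), so your sketch has to stand on its own, and it does not: the devissage you propose is set up in the wrong direction. Every fiber of $J_a$ over $C$ is an \emph{affine} commutative algebraic group of dimension $r$ (a centralizer of a regular element), so the fiberwise Chevalley decomposition is vacuous and there is no short exact sequence $1\to J_a^{\operatorname{aff}}\to J_a\to J_a^{\ab}\to 1$ of group schemes over $C$ with $J_a^{\ab}$ semi-abelian. The abelian variety inside $\jator{a}^{\ab}$ is a global object, essentially the $W$-invariants of $\Pic^0(\widetilde C_a)\otimes X_*(T)$ for the normalization $\widetilde C_a\to C_a$; it lives in degree one, not as $\BG$ of an abelian-fibered quotient of $J_a$. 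The devissage actually used by Ng\^o goes the other way: one embeds $J_a$ into a larger smooth group scheme $\widetilde J_a$ (the one attached to the normalized cameral curve, or a N\'eron-type model), the quotient $\widetilde J_a/J_a$ is a skyscraper of affine groups supported on the branch locus, the induced map $\jator{a}\to\jator{a}^{\ab}:=\BG_{\widetilde J_a}$ is surjective simply because $H^1$ of a skyscraper vanishes, and the affine kernel is $H^0(C,\widetilde J_a/J_a)$ modulo the image of $H^0(C,\widetilde J_a)$ --- exactly the group whose dimension is computed by the $\delta$-invariant of Proposition \ref{P:delta-for}, via the sheaf $\rho_{a*}(\oo_{\widetilde C_a})/\pi_{a*}(\oo_{C_a})$.

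This inversion is not cosmetic: it breaks both justifications in your second paragraph. The claim that $\BG_{J_a^{\operatorname{aff}}}$ is affine ``because $J_a^{\operatorname{aff}}$ has affine fibers and $C$ is projective'' cannot be a valid principle, since $J_a$ itself has affine fibers while $\jator{a}$ contains an abelian variety of large dimension; in the correct argument the affine part arises as an $H^0$ of a skyscraper quotient, not as an $H^1$ of a subgroup scheme. Similarly, surjectivity of $\BG_J\to\BG_Q$ does not follow from vanishing of $H^2$ for arbitrary smooth commutative group schemes on a curve (already $H^2(C,\mu_n)\neq 0$ in characteristic zero); it is the skyscraper nature of the quotient that makes the relevant obstruction vanish. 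Your underlying intuition --- $C_a$ reduced and connected by Proposition \ref{P:heart}, hence $\Pic(C_a)$ is an extension of an abelian variety by an affine group --- is exactly the right one, but it must be implemented at the level of Picard stacks by comparing $C_a$ with $\widetilde C_a$, not by decomposing $J_a$ fiberwise over $C$.
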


\begin{proof}See \cite[Proposition 4.8.2]{NGO10}.
\end{proof}
The next formula gives the dimension of the affine part $\jator{a}^{\operatorname{aff}}$ of the moduli stack $\jator{a}$. The result has been obtained by Ng\^o (see \cite[Proposition 4.9.5]{NGO10}) based upon a result of Bezrukavnikov \cite{Bez}. Here, we present the formula as stated in \cite[Lemme 9.12.1]{CL2}, where they also provide an alternative proof. 
\begin{prop}[Ng\^o-Bezrukavnikov formula]\label{P:delta-for}Fix $a\in \AG^{\heartsuit}$. Let
	\begin{enumerate}[(i)]
		\item	$\pi_{a}:C_a\to C$ be the corresponding cameral cover,
		\item $B_a\subset C$ be the branch locus of the cameral cover,
		\item $\rho_a:\widetilde C_a\to C_a$ be a normalization of $C_a$.
	\end{enumerate} We then have
	\begin{equation}\label{E:delta}
	\delta(a):=\dim\jator{a}^{\operatorname{aff}}=\dim H^0\Big(C,\tg\otimes_{\oo_C}\big(\rho_{a*}(\oo_{\widetilde C_a})/\pi_{a*}(\oo_{C_a}\big)\Big)^W=\sum_{p\in B_a}\delta_{p}(a),
	\end{equation}
	where
	$$
	\delta_{p}(a)=\frac{1}{2}\left\{\deg_{p}(a^*\Dg)-\dim\tg+\dim\tg^{W_x}\right\}.
	$$
	Here, $W_x$ is the stabilizer of a point $x$ in the normalized curve $\widetilde C_a$ over the point $p\in C$.
\end{prop}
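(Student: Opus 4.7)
The plan is to prove the two equalities in turn, starting with the cohomological description of $\delta(a)$ via a normalization argument, then localizing to the branch points.

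For the middle equality, the key observation is that outside $B_a$ the cameral cover $\pi_a$ is étale and $J_a$ restricts to a torus; therefore the affine part of $\jator{a}$ must be concentrated at $B_a$. Using the $W$-equivariant description from Definition~\ref{D:centralizer-gr}, I would identify the Lie algebra of $J_a$ with $\mathfrak{j}_a := (\pi_{a*}\oo_{C_a}\otimes\tg)^W$ and introduce its ``normalized'' counterpart $\widetilde{\mathfrak{j}}_a := (\rho_{a*}\oo_{\widetilde C_a}\otimes\tg)^W$, which is the Lie algebra of a smooth commutative group scheme whose torsor stack is purely abelian (since $\widetilde C_a$ is smooth). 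The natural inclusion $\mathfrak{j}_a \hookrightarrow \widetilde{\mathfrak{j}}_a$ has cokernel supported on $B_a$, and a Picard-stack analysis of the associated short exact sequence identifies $\dim \jator{a}^{\operatorname{aff}}$ with the length of this cokernel, which is precisely $\dim H^0\bigl(C,\tg\otimes(\rho_{a*}\oo_{\widetilde C_a}/\pi_{a*}\oo_{C_a})\bigr)^W$.

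For the sum-over-points formula, since the quotient sheaf is a skyscraper supported on $B_a$, the global $H^0$ decomposes as a sum $\sum_{p\in B_a}\delta_p(a)$ of local contributions computed on completed local rings. At each $p\in B_a$, I would choose a point $x\in\widetilde C_a$ above $p$ with stabilizer $W_x\subset W$ (a reflection subgroup generated by the hyperplane-roots containing the image of $x$ in $\tg_\omega$). The stalk of $(\tg\otimes(\rho_{a*}\oo_{\widetilde C_a}/\pi_{a*}\oo_{C_a}))^W$ at $p$ can then be analyzed via the $W$-orbit decomposition of the fiber combined with the $W_x$-action on $\tg$. The term $\dim\tg-\dim\tg^{W_x}$ captures the ``non-invariant'' part of $\tg$ under $W_x$, while $\deg_p(a^*\Dg)$ encodes the total ramification of $\pi_a$ over $p$ through the local intersection with the discriminant divisor.

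The main obstacle will be the last step: deriving the clean formula $\delta_p(a)=\tfrac{1}{2}\{\deg_p(a^*\Dg)-\dim\tg+\dim\tg^{W_x}\}$. The cameral curve can have arbitrarily complicated singularities at $p$ depending on the arrangement of $a(p)$ with respect to the reflection hyperplanes, so a careful Riemann--Hurwitz type calculation on each branch of $C_a$ near $p$ is required, tracking how the $W_x$-orbit structure on $\widetilde C_a$-points over $p$ interacts with the weight decomposition of $\tg$. The factor of $1/2$ arises because reflection hyperplanes correspond to pairs $\{\alpha,-\alpha\}$ of opposite roots contributing symmetrically to the discriminant. One could alternatively follow the trace-formula approach of \cite[Lemme~9.12.1]{CL2}, which bypasses some of the case-by-case analysis at the singular points of the cameral cover.
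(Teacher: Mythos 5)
The paper does not actually prove this proposition: it is quoted from the literature, with the dimension formula attributed to Ng\^o \cite[Proposition 4.9.5]{NGO10} (resting on Bezrukavnikov's theorem \cite{Bez}) and stated in the form given in \cite[Lemme 9.12.1]{CL2}. Your outline faithfully reconstructs the architecture of Ng\^o's argument: identify $\operatorname{Lie}(J_a)$ with $(\pi_{a*}\oo_{C_a}\otimes\tg)^W$, compare it with the corresponding sheaf built from the normalization, observe that the cokernel is a skyscraper on $B_a$ whose length computes $\dim\jator{a}^{\operatorname{aff}}$, and then localize at the branch points. In that sense it is consistent with the proof the paper is pointing to.

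Two caveats, one of which is a genuine gap. First, the step ``a Picard-stack analysis of the associated short exact sequence identifies $\dim\jator{a}^{\operatorname{aff}}$ with the length of this cokernel'' conceals a real argument: one must show that the torsor stack of the group scheme attached to the normalized cameral curve has trivial affine part, which is where the smoothness of $\widetilde C_a$ is actually used and which occupies a nontrivial portion of \cite[\S 4.8--4.9]{NGO10}; asserting it ``since $\widetilde C_a$ is smooth'' is not yet a proof. Second, and more seriously, the closed local formula $\delta_p(a)=\tfrac{1}{2}\{\deg_p(a^*\Dg)-\dim\tg+\dim\tg^{W_x}\}$ is precisely Bezrukavnikov's theorem; it is the entire mathematical content of the third equality, and your proposal explicitly names it as ``the main obstacle'' without carrying it out, ultimately deferring to \cite{CL2} --- the same reference the paper itself invokes. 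So your text is a correct proof plan whose skeleton matches the cited proof, but the heart of the statement remains outsourced; as written it should be regarded as a reduction to the results of \cite{NGO10} and \cite{Bez}, not as an independent proof.
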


\begin{rmk}\label{R:delta-upsc}The previous proposition allows us to define a function
	\begin{equation}\label{E:delta-funct}
		\delta:\AG^{\heartsuit}\to\mathbb Z_{\geq 0}:a\mapsto\delta(a)=\dim\jator{a}^{\operatorname{aff}}.
	\end{equation}
By the results in \cite[\S 5.6]{NGO10}, the function $\delta$ is upper-semicontinuous.
\end{rmk}

\subsection{Product Formula.}

In this section, we will present the \emph{product formula}, due to Ng\^o. Roughly speaking, it describes the Hitchin fiber in terms of the group-stack of the previous subsection and certain affine Springer fibers. 

Before presenting the formula, we need some notations. Fix an element $a\in\AG^{\heartsuit}$. Let $\pi_a:C_a\to C$ be the corresponding cameral cover. We denote by $B_a\subset C$ the branch locus of the cameral cover. For any point $p\in B_a$, we define the following objects:
\begin{itemize}
	\item the formal disc $\Delta_p:=\operatorname*{Spec}(\widehat\oo_{C,p})$ centered at $p$ and the punctured formal disc $\Delta_p^*:=\Delta_p\setminus\{p\}$,
	\item the group scheme $J_{a,p}\to \Delta_p$ obtained from the group scheme $J_a\to C$ (see Definition \ref{D:centralizer-gr}) via pull-back along the natural morphism $\Delta_p\to C$.
	\item the group-functor $\japtor{a,p}$, which assigns, to any noetherian affine $k$-scheme $S$, the isomorphism classes of pairs $(P,\varphi)$, where $P$ is a $J_{a,p}$-torsor over $\Delta_p\times S$ and $\varphi$ is an isomorphisms of $J_{a,p}$-torsors between $P$ and the trivial one over $\Delta_p^*\times S$.
\end{itemize}

\begin{defin}\label{D:springer}Let $a\in\AG^{\heartsuit}$ and let $(E_a,\theta_a)$ be the regular $G$-Higgs bundle over $C$ given by the Hitchin-Kostant section, see \eqref{E:Hit-kos}. The \emph{affine Springer fiber over $a$} is the functor $\spr{p}$, which assigns, to any noetherian affine $k$-scheme $S$, the isomorphism classes of triples $(E,\theta,\varphi)$, where
	\begin{enumerate}[(i)]
		\item $E$ is a $G$-bundle over $\Delta_p\times S$,
		\item $\theta$ is a section of the adjoint bundle $\ads(E)$ over $\Delta_p\times S$,
		\item $\varphi$ is an isomorphism of $G$-bundles between $E$ and $E_a$ over $\Delta^*_p\times S$, which commutes with the sections $\theta$ and $\theta_a$.
	\end{enumerate}
We denote by $\spreg{a}$ the subfunctor of $\spr{a}$ of regular elements, i.e. those elements where the kernel of the morphism $[\theta_p,-]:\operatorname*{ad}(E)\to \operatorname*{ad}(E)$ at the fiber over $p$ has dimension $r$.
\end{defin}

The group-functor $\japtor{a,p}$ acts on the affine Springer fiber $\spr{p}$. Both functors are representable by (possibly non-reduced) schemes. We denote by $\japtor{a,p}^{\operatorname{red}}$ and $\operatorname{Spr}^{\operatorname{red}}_p(a)$ their reduced counterparts, respectively. The next theorem clarifies the relation between the Hitchin fibers and the affine Springer fibers.

\begin{teo}[Product Formula]\label{T:prod} Let $a\in\AG^{\heartsuit}$ and $B_a$ be the branch locus of the corresponding cameral cover. We have a homeomorphism of stacks
	\begin{equation*}\label{E:prod}
		\prod_{p\in B_a}\operatorname{Spr}^{\operatorname{red}}_p(a)\stackrel{\prod_{p\in B_a}\japtor{a,p}^{\operatorname{red}}}{\times}\jator{a}\longrightarrow \hit_G(a).
	\end{equation*}
Furthermore, the restriction to the open substacks of regular elements
$$
\prod_{p\in B_a}\operatorname{Spr}^{\operatorname{red,reg}}_p(a)\stackrel{\prod_{p\in B_a}\japtor{a,p}^{\operatorname{red,reg}}}{\times}\jator{a}\subset \prod_{p\in B_a}\operatorname{Spr}^{\operatorname{red}}_p(a)\stackrel{\prod_{p\in B_a}\japtor{a,p}^{\operatorname{red}}}{\times}\jator{a}$$
is an isomorphism onto the open locus of regular elements $\hit_G^{\reg}(a)$.
\end{teo}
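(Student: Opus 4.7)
My strategy is the one originally devised by Ng\^o for affine Springer theory over a global curve: use the Hitchin--Kostant section as a reference point, compare an arbitrary Higgs bundle in $\hit_G(a)$ to it locally at the branch locus and globally on the complement, and then glue via Beauville--Laszlo.

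First, I would construct the map explicitly. Fix $a \in \AG^{\heartsuit}$ and the reference regular Higgs bundle $(E_a,\theta_a)=\epsilon_\kappa(a)$ from \eqref{E:Hit-kos}. Given a triple $(E,\theta,\varphi)$ in $\spr{p}$ for each $p\in B_a$ and a $J_a$-torsor $P$ on $C$, I would glue: on $C \setminus B_a$ the Higgs bundle is forced to be $(E_a,\theta_a)$ twisted by $P$, since on this open subset $(E_a,\theta_a)$ is regular and two regular Higgs bundles with the same Hitchin image differ by a canonical $J_a$-torsor (this is the defining property of the regular centralizer, cf.\ the trivial $J_a$-torsor structure on $\hit_G^{\reg}(a)$). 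Over each formal disk $\Delta_p$ the Higgs data is the one prescribed by the affine Springer triple. The isomorphism $\varphi$ on the punctured disk $\Delta_p^*$ provides the gluing datum that makes Beauville--Laszlo applicable. The equivariance under $\prod_p \japtor{a,p}^{\red}$ is built in by construction, since an element of $\japtor{a,p}^{\red}$ can be interpreted either as a modification of the local Higgs bundle (acting on $\spr{p}$) or, via the identification of the trivializations on $\Delta_p^*$, as a modification of the global $J_a$-torsor.

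Next I would prove that the induced map on the quotient is a homeomorphism. For essential surjectivity, given any $(E,\theta) \in \hit_G(a)$, the complement $(E,\theta)|_{C \setminus B_a}$ is regular (since away from the branch locus the cameral cover is \'etale, so every Higgs element is regular semisimple in its fiber) and hence differs from $(E_a,\theta_a)|_{C\setminus B_a}$ by a unique $J_a$-torsor on $C\setminus B_a$; choosing trivializations at the branch points produces the affine Springer data and extends the torsor to all of $C$. For injectivity on quotients, any two such presentations differ precisely by simultaneous modifications at the branch points, which is the $\prod_p \japtor{a,p}^{\red}$-action. The statement that the map is only a homeomorphism (and not an isomorphism) accounts for non-reducedness of $\japtor{a,p}$ and $\spr{p}$.

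Finally, for the regular locus, I would observe that the open substack $\spreg{p} \subset \spr{p}$ is itself a $\japtor{a,p}^{\reg}$-torsor (this is the local analogue of the simply transitive action and reduces to the Kostant section over $\Delta_p$). Hence on regular elements the contracted product collapses, giving
\[
\prod_{p\in B_a}\spreg{p}^{\red}\stackrel{\prod_{p\in B_a}\japtor{a,p}^{\red,\reg}}{\times}\jator{a} \;\cong\; \jator{a},
\]
which is precisely the $J_a$-torsor structure on $\hit_G^{\reg}(a)$. The main obstacle I foresee is the Beauville--Laszlo step itself: one must carefully verify that the local-to-global gluing identifies $J_a$-torsor modifications (global) with affine Grassmannian elements (local) in a way compatible with the $G$-bundle structure, not merely the underlying principal bundles. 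This is where working with Ng\^o's regular centralizer formalism (rather than $T$-torsors over the cameral cover) is essential, since the gluing must respect the Higgs field $\theta$ and not only the bundle.
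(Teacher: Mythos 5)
Your proposal is correct and follows essentially the same route as the paper, which likewise sketches Ng\^o's argument from \cite[\S 4.15]{NGO10}: glue the local affine Springer data to the Hitchin--Kostant section, twist by $J_a$-torsors, quotient by the diagonal action of $\prod_p\japtor{a,p}^{\red}$, and deduce the second assertion from the torsor structure of the regular loci. The only difference is that the paper delegates the bijectivity on $k$-points to \cite[Theorem 4.6]{NGO10} where you spell out the surjectivity/injectivity argument directly, but the content is the same.
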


\begin{proof} We sketch the proof as explained in \cite[\S 4.15]{NGO10}. By gluing along the Hitchin-Kostant section, we get a morphism 
	\begin{equation}\label{E:prodproof1}
	\prod_{p\in B_a}\operatorname{Spr}^{\operatorname{red}}_p(a)\to \hit_G(a),
\end{equation}
which sends regular elements in regular $G$-Higgs bundles. With a similar argument, we get a homomorphism of groups $\prod_{p\in B_a}\japtor{a,p}^{\operatorname{red}}\to\jator{a}$, compatible with \eqref{E:prodproof1}. Note that the morphism induces an action of the source on the target. Furthermore, twisting the Hitchin-Kostant section by a $J_a$-torsor gives a morphism
	\begin{equation}\label{E:prodproof2}
\jator{a}\to \hit_G^{\reg}(a)\hookrightarrow\hit_G(a).
\end{equation}
Combining the morphisms \eqref{E:prodproof1} and \eqref{E:prodproof2} together and then doing the quotient by the diagonal action of the group $\prod_{p\in B_a}\japtor{a,p}^{\operatorname{red}}$, we get the desired morphism \eqref{E:prod}. By \cite[Theorem 4.6]{NGO10}, it induces an equivalence over the $k$-points. Hence, it is a homeomorphism. The second assertion follows by construction.
\end{proof}

\subsection{Twisted $G$-Higgs bundles}\label{SS:twisted}

In this subsection, we present a slight generalization of the moduli stacks of $G$-Higgs bundles introduced in Subsection \ref{SS:hihh}. The results of this part are used only once in the paper and under a particular assumption (see Remark \ref{R:Warn} below). However, since we believe that the statements contained here could be of independent interest, we decided to write them in full generality.

Let $\mathcal L$ be a line bundle over the curve $C$. Let $\hit_{G,\mathcal L}$ be the moduli stack of $\mathcal L$-twisted $G$-Higgs bundles $(E,\theta)$, where $E$ is a $G$-bundle and $\theta$ is a section in $H^0(C,\mathrm{ad}(E)\otimes\mathcal L)$. It is an algebraic stack locally of finite type over $k$ and it comes equipped with its own Hitchin morphism
$$
\hit_{G,\mathcal L}\xrightarrow{h_\mathcal L} \AG_{\mathcal L}=H^0(C,\mathfrak c_\mathcal L)=\oplus_{i=1}^rH^0(C,\mathcal L^{d_i}):(E,\theta)\mapsto (p_1(\theta),\ldots,p_r(\theta)) ,
$$
where $\mathfrak c_\mathcal L:=\mathfrak c\times^{\mathbb G_m}\mathcal L^*$ and $\mathcal L^*:=\mathcal L\setminus\{\text{zero-section}\}$. Observe that when $\mathcal L=\omega$, we recover the definition of $G$-Higgs bundles given in Subsection \ref{SS:hihh} (i.e. $\hit_{G,\omega}=\hit_G$). For more details about $\mathcal L$-twisted $G$-Higgs bundles, see \cite[\S 4]{NGO10}.

\begin{rmk}\label{R:Warn} We warn the reader that, in this paper, the stack of $\mathcal L$-twisted $G$-Higgs bundles $\hit_{G,\mathcal L}$ and its properties (see Lemma \ref{L:twisted} and Corollary \ref{C:twisted} below) are used only in the proof of Lemma \ref{L:bohi}, for the case $\mathcal L=\omega(-p)$ with $p$ point in $C$. 
\end{rmk}
Following Definition \ref{D:Higgs}, we denote by $\hit_{G,\mathcal L}^{\mathrm{reg}}$ the open locus of regular $\mathcal L$-twisted $G$-Higgs bundles, i.e. $(E,\theta)$ such that, for any point $q\in C$, the kernel of the linear morphism of vector spaces $[\theta_q,-]:\mathrm{ad}(E_q)\to\mathrm{ad}(E_q)\otimes\mathcal L_q$ has dimension $r$.

The main result of this subsection is the existence of sections for the $\mathcal L$-twisted Hitchin morphism, when $G$ is adjoint.

\begin{lem}\label{L:twisted}Let $G$ be an adjoint reductive group and $\mathcal L$ be a line bundle on $C$. Then, there exists a section $\sigma:\AG_{\mathcal L}\to \hit_{G,\mathcal L}^{\mathrm{reg}}$ of the Hitchin morphism $h_\mathcal L$, whose image is contained in the regular locus.
\end{lem}

\begin{proof}We follow the same argument contained in the proof of \cite[Proposition 2.5]{NGO6} (for more details about the differences between our proof and the one in \emph{loc.cit.}, see Remark \ref{R:kostant} below). Recall that the center $\mathscr Z(G)$ of a reductive group $G$ is the intersection of the kernels of all the roots. In particular, we have $\mathrm{Hom}(\mathscr Z(G),\mathbb G_m)=\mathrm{Hom}(T,\mathbb G_m)/\langle\Phi\rangle$, where $\langle\Phi\rangle$ denotes the sublattice generated by the roots. In particular, since we assumed $G$ adjoint (i.e. $\mathscr Z(G)=\{1\}$), the character lattice $\mathrm{Hom}(T,\mathbb{G}_m)=\langle\Phi\rangle$ is generated by the roots. Let $\Delta=\{\alpha_1,\ldots,\alpha_r\}$ be a set of simple roots for $G$ and $\omega_1,\ldots,\omega_r$ be the dual basis in $\mathrm{Hom}(\mathbb G_m, T)$ with respect to $\Delta$, i.e. $\alpha_j\circ\omega_i=t^{\delta_{ij}}$. Then the cocharacter $\rho=\sum_{i=1}^r \omega_i$ acts by conjugation on the Lie algebra $\g$ as it follows
	\begin{equation}\label{E:rho(t)}
		\rho(t).x=\begin{cases}
			x&\text{if }x\in \mathfrak t;\\
			t^{ht(\alpha)}x&\text{if }x\in \mathfrak g_\alpha;
		\end{cases}
	\end{equation}
	where $ht(\alpha)=\sum m_i$, if $\alpha=\sum_{i=1}^rm_i\alpha_i$. By \cite[Proposition 2.2]{NGO6},
	the Kostant section $\epsilon:\cg\to \g^{\mathrm{reg}}$ (defined in Therorem \ref{T:konstant}$(\ref{T:konstant6})$) is $\mathbb{G}_m$-equivariant, where $\mathbb G_m$ acts on $\cg$ with weights $d_1,\ldots,d_r$ and on $\g$ in the following way: $\rho_+(t).x:=\rho(t).(tx)$, for any $x\in\g$ and $t\in\mathbb G_m$. The homomorphism $\mathbb G_m\to \mathbb G_m\times G:t\mapsto (t,\rho(t))$ and the Kostant section $\epsilon$ induce a commutative diagram of quotient stacks
	$$
	\xymatrix@R=2pt{
		[\mathfrak c/\mathbb G_m]\ar[dr]\ar[rr]&& [\g^\mathrm{reg}/\mathbb G_m\times G]\ar[dl]\\
		&\mathcal B\mathbb G_m&
	}
	$$
	Pulling-back the above diagram, via the morphism $C\to \mathcal B\mathbb G_m$ classifying the line bundle $\mathcal L$, we get a section $\epsilon_{\mathcal L}:\mathfrak c_{\mathcal L}\to [\g^\mathrm{reg}_{\mathcal L}/G]$ of the natural morphism $[\g^\mathrm{reg}_{\mathcal L}/G]\to \mathfrak c_{\mathcal L}$. In particular, we get the section of the Hitchin morphism
	\begin{equation}
		\begin{array}{crcl}
			\sigma :&\AG_{\mathcal L}=H^0(C,\cg_\mathcal L)&\longrightarrow& Hom_C(C,[\g^\mathrm{reg}_\mathcal L/G])=\hit^{\mathrm{reg}}_{G,\mathcal L}\\
			&C\xrightarrow{a}\cg_{\mathcal L}&\mapsto &	C\xrightarrow{a}\cg_{\mathcal L}\xrightarrow{\kappa_\mathcal L}[\g^{\mathrm{reg}}_{\mathcal L}/G] 
		\end{array}
	\end{equation}
\end{proof}

\begin{rmk}\label{R:kostant} The proof of the existence of the section $\sigma$ in Lemma \ref{L:twisted} is a modification of the proof of \cite[Proposition 2.5]{NGO6}. In \emph{loc.cit.} the author constructs a section for arbitrary reductive groups under the assumption that $\mathcal L=\mathcal M^{\otimes 2}$ (condition not satisfied by $\omega(-p)$, because it has odd degree).

	The reason of the assumption is that, for an arbitrary reductive group, the action of $\mathbb G_m$ on $\g$, with weights as in \eqref{E:rho(t)}, is not coming from the adjoint action of a cocharacter of the group. But it is true if we take its square: the corresponding cocharacter is the sum of the positive coroots. Taking the square of the action implies that, if we want to define a section of the Hitchin morphism properly, we need the choice of a root for $\mathcal L$.

	However, if we assume $G$ adjoint, the $\mathbb G_m$-action described in \eqref{E:rho(t)} is coming from a cocharacter (the sum of fundamental coweights) and, so, we do not need to assume that $\mathcal L$ is a square.
\end{rmk}

Following the notation of Subsection \ref{SS:cameralcovers}, we denote by $\AG_{\mathcal L}^\heartsuit$ the locus of sections $a$ such that $a(C)\nsubseteq \mathfrak D_{\mathcal L}:=\mathfrak D\times^{\mathbb G_m}\mathcal L^*$. We set $\hit_{G,\mathcal L}^{ \heartsuit}:=(h_\mathcal L)^{-1}(\AG_\mathcal L^\heartsuit)$ and $\hit_{G,\mathcal L}^{ \mathrm{reg}, \heartsuit}:=\hit_{G,\mathcal L}^{ \mathrm{reg}}\cap \hit_{G,\mathcal L}^{ \heartsuit}$.

\begin{rmk}Since the choice of the line bundle $\mathcal L$ is arbitrary, the affine space $\AG_\mathcal L$ may be a point (i.e. there is only the zero-section). Furthermore, even if $\AG_\mathcal L$ has positive dimension, the open subset $\AG_\mathcal L^\heartsuit$ could be empty.
\end{rmk}

The existence of a section for the Hitchin morphism allows us to deduce some important properties about the regular locus over $\AG_\mathcal L^\heartsuit$.

\begin{cor}\label{C:twisted}Let $G$ be an adjoint reductive group and $\mathcal L$ be a line bundle on $C$. Assume that $\AG^\heartsuit_\mathcal L\neq \emptyset$. Then, the following facts hold true.
	\begin{enumerate}[(i)]
		\item\label{i:bohi1} The morphism $h_\mathcal L:\hit_{G,\mathcal L}^{\mathrm{reg},\heartsuit}\to \AG^{\heartsuit}_{\mathcal L}$ is smooth and surjective.
		\item\label{i:bohi2} The locus $\hit_{G,\mathcal L}^{\mathrm{reg},\heartsuit}$ is open and dense in $\hit_{G,\mathcal L}^{\heartsuit}$.
	\end{enumerate}
\end{cor}
	
\begin{proof}	\fbox{\eqref{i:bohi1}} By Lemma \ref{L:twisted}, the locus of regular $\mathcal L$-twisted $G$-Higgs bundles surjects onto $\AG_{\mathcal L}$. Arguing as in the proof of \cite[Proposition 4.3.3]{NGO10}, we have that $\hit^{\mathrm{reg}}_{\mathcal L}$ is a trivial torsor under a group-stack $\BG_{J_{\mathcal L}}$ (denoted by $\mathcal P$ in \cite[\S 4.3.1]{NGO10}) over $\AG_\mathcal L$. Since the group-stack is smooth over $\AG_\mathcal L$ (see \cite[Proposition 4.3.5]{NGO10}), the same holds for the moduli stack of regular $\mathcal L$-twisted $G$-Higgs bundles.\\
	\fbox{\eqref{i:bohi2}} There exists a Product Formula (= Theorem \ref{T:prod}) for any fiber of the Hitchin morphism over $\AG^{\heartsuit}_{\mathcal L}$. Indeed, the statements (and their proofs) of \emph{loc.cit.} still works if we replace the group stack $\BG_{J_a}$ with the fiber of the group stack $\BG_{J_{\mathcal L}}$ and the Kostant-Hitchin section with the section $\sigma$ constructed in Lemma \ref{L:twisted}. One should also re-define the functor $\spr{p}$ by replacing the local isomorphism with the Hitchin-Kostant section (see Definition \ref{D:springer}$(iii)$) with a local isomorphism with the section $\sigma$. Similarly as for the functors $\spr{p}$, these new functors are represented by the schemes $\mathcal M_p(a)$ defined in \cite[\S 3.2, p. 40]{NGO10}.
	
	By the second statement of Product Formula and the fact that the regular locus of each scheme $\spr{p}$ ($=\mathcal M_p(a)$) is open and dense (see \cite[Proposition 3.10.1]{NGO10}), we see that the locus of regular $\mathcal L$-twisted $G$-Higgs bundles $\hit_{G,\mathcal L}^{\mathrm{reg}}$ is dense in each fiber over $\AG_{\mathcal L}^\heartsuit$. In particular, the locus of regular $\mathcal L$-twisted $G$-Higgs bundles over $\AG_{\mathcal L}^{\heartsuit}$ is dense in  $\hit_{G,\mathcal L}^{\heartsuit}$.
\end{proof}

\section{The Hitchin discriminant.}\label{Sec:hit-disc}

In this section, we study the geometry of the Hitchin fibers with respect to their image in the Hitchin basis $\AG$ (see Propositions \ref{P:uni-stack} and \ref{T:geom-fiber}). As a consequence, we show that the locus of non-abelian Hitchin fibers is covered by rational curves (see Theorem \ref{T:hit-disc}).

To achieve this, we focus our attention on two subsets of $\AG$. The first one, denoted by $\AG^{\diamondsuit}$, has been already introduced and studied in \cite{NGO10}. It parameterizes sections, whose corresponding cameral curves are smooth (see Proposition \ref{P:gen-discr}). By the results in \emph{loc.cit.}, the Hitchin fibers over $\AG^{\diamondsuit}$ are always abelian stacks. However, unless $G$ has semi-simple rank $1$, there are sections, whose Hitchin fibers are abelian stacks and their cameral curves are singular. For these reasons, we will introduce another open subset, denoted by $\AG^{\operatorname{ab}}$, which will parameterize sections, whose corresponding Hitchin fiber is an abelian stack (see Proposition \ref{P:gen-ab}).

Unless otherwise stated, in this section, $G$ is a reductive group and $C$ is a smooth projective curve of genus $g\geq2$.

\subsection{A characterization of the regular Hitchin fibers.} In this subsection, we present a criterion to check when a Hitchin fiber (over $\AG^{\heartsuit}$) contains non regular $G$-Higgs bundles.
	
\begin{prop}\label{P:descr-reg}Let $a\in \AG^{\heartsuit}$. The following facts are equivalent
	\begin{enumerate}[(i)]
		\item\label{P:descr-reg1} $\hit_G^{\reg}(a)=\hit_G(a)$.
		\item\label{P:descr-reg2} the connected components of the Hitchin fiber $\hit_G(a)$ are abelian stacks.
		\item\label{P:descr-reg3} the dimension of the affine part of the moduli stack $\jator{a}$ of $J_a$-torsors is zero, i.e. $$\delta(a)=\dim \jator{a}^{\operatorname{aff}}=0.$$
		\item\label{P:descr-reg4} for any point $p\in C$ such that $a(p)\in\Dg_{\omega}$, the image of the differential of the section $a:C\to\cg_{\omega}$ at the point $p$, i.e.
		$$
		da_p:\mt T_pC\to\mt  T_{a(p)}\cg_{\omega},
		$$ 
		is not contained in the tangent cone $\mt C_{a(p)}\Dg_{\omega}$ of $\Dg_{\omega}$ at the point $a(p)$.
	\end{enumerate}
\end{prop}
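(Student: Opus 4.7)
The plan is to organize the argument around condition (iii), since $\delta(a) = \dim \jator{a}^{\operatorname{aff}}$ governs both the structure of $\jator{a}$ (via Proposition \ref{P:aff-ab}) and the geometry of the affine Springer fibers in the product formula (Theorem \ref{T:prod}). I would prove (iii) $\Leftrightarrow$ (iv) as a local intersection-theoretic calculation, and (iii) $\Leftrightarrow$ (i) $\Leftrightarrow$ (ii) as consequences of the product formula together with Proposition \ref{P:aff-ab}.

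For (iii) $\Leftrightarrow$ (iv), I would start from Proposition \ref{P:delta-for}: $\delta(a) = \sum_{p \in B_a}\delta_p(a)$ with every local term non-negative, so (iii) amounts to $\deg_p(a^*\Dg) = \dim\tg - \dim\tg^{W_x}$ at each branch point. The essential ingredient is the identity
$$
\operatorname{mult}_{a(p)}\Dg_\omega = \dim\tg - \dim\tg^{W_x}.
$$
I would verify this by choosing a $W_x$-stable decomposition $\tg = \tg^{W_x} \oplus (\tg^{W_x})^{\perp}$: étale-locally around $\pi(x)$, the map $\pi$ is the quotient by $W_x$, and the branch divisor accordingly splits as $\tg^{W_x}$ times the discriminant of the reflection representation of $W_x$ on $(\tg^{W_x})^{\perp}$. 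Applying Remark \ref{R:tang-cone-disc} to each irreducible factor of $W_x$ shows that the resulting multiplicity equals the rank of $W_x$, which is precisely $\dim\tg - \dim\tg^{W_x}$. Once this identity is known, standard intersection theory gives $\deg_p(a^*\Dg) \geq \operatorname{mult}_{a(p)}\Dg_\omega$ with equality iff $da_p(\mt T_p C) \nsubseteq \mt C_{a(p)}\Dg_\omega$.

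For (iii) $\Leftrightarrow$ (i), the implication (iii) $\Rightarrow$ (i) follows by observing that $\delta_p(a) = 0$ forces the reduced affine Springer fiber $\operatorname{Spr}^{\operatorname{red}}_p(a)$ to be zero-dimensional, and thus equal to its regular locus $\operatorname{Spr}^{\operatorname{red,reg}}_p(a)$ (since the Hitchin--Kostant point is regular and lies in it); Theorem \ref{T:prod} then yields $\hit_G(a) = \hit_G^{\reg}(a)$. For the converse, (i) realises $\hit_G(a)$ as a trivial $\jator{a}$-torsor, and a positive-dimensional non-regular part in any affine Springer fiber would, via the product formula, contribute points of $\hit_G(a)$ outside $\hit_G^{\reg}(a)$, contradicting (i). For (ii) $\Leftrightarrow$ (iii), Proposition \ref{P:aff-ab} writes $\jator{a}$ as an extension of an abelian stack by $\jator{a}^{\operatorname{aff}}$, so its neutral component is an abelian stack iff $\delta(a) = 0$; since $\hit_G^{\reg}(a) \subset \hit_G(a)$ is an open dense $\jator{a}$-torsor on each connected component, the components of the Hitchin fiber are abelian stacks iff those of $\jator{a}$ are.

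I expect the hardest step to be the multiplicity identity for $\Dg_\omega$ at a general singular branch point: the stabilizer $W_x$ is a parabolic reflection subgroup of $W$ that need not be irreducible, so the almost-simple statement of Remark \ref{R:tang-cone-disc} must be combined with a product argument over the irreducible factors of the root system of $W_x$, and one has to carefully distinguish between the vanishing order of $\prod_{\alpha} d\alpha$ on $\tg_\omega$ and the multiplicity of the image divisor $\Dg_\omega$ on $\cg_\omega$.
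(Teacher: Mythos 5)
Your reduction of (i), (ii), (iii) to the product formula is broadly in the spirit of the paper's proof, but two steps do not hold up. The more serious one is the ``essential ingredient'' of your (iii) $\Leftrightarrow$ (iv): the identity $\operatorname{mult}_{a(p)}\Dg_{\omega}=\dim\tg-\dim\tg^{W_x}$ is false, because the group $W_x$ appearing in the Ng\^o--Bezrukavnikov formula is the stabilizer of a point of the \emph{normalization} $\widetilde C_a$, i.e.\ of a branch of the cameral curve, whereas the \'etale-local structure of $\pi$ at a point $t\in\tg_{\omega}$ over $a(p)$ is the quotient by the full point stabilizer $W_t\supseteq W_x$, and the multiplicity of $\Dg_{\omega}$ at $a(p)$ equals $\dim\tg-\dim\tg^{W_t}$ (the rank of $W_t$). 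The two groups differ exactly in the multibranch case: for the local model $t^2=x^2$, which governs the generic point of $\DG^{\ab}_i$, one has $\operatorname{mult}_{a(p)}\Dg_{\omega}=1$ while $W_x=\{1\}$, so your right-hand side is $0$. With the corrected identity one gets
$$
2\delta_p(a)=\bigl(\deg_p(a^*\Dg)-\operatorname{mult}_{a(p)}\Dg_{\omega}\bigr)+\bigl(\dim\tg^{W_x}-\dim\tg^{W_t}\bigr),
$$
a sum of two non-negative terms; this still yields (iii) $\Rightarrow$ (iv) by your intersection-theoretic argument, but for (iv) $\Rightarrow$ (iii) you must in addition show that transversality to the tangent cone forces $\tg^{W_x}=\tg^{W_t}$, i.e.\ you have to control the branches of the cameral curve over such a point, and that is not ``standard intersection theory''. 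The paper sidesteps this: it quotes \cite[Lemma C.0.1(3)]{Li20} for (iv) $\Rightarrow$ (i), and proves (iii) $\Rightarrow$ (iv) by computing $\delta_p$ explicitly on the local model $t^2-x^m$ and then reducing the singular-locus case to the smooth-locus case via a degeneration (Lemma \ref{L:claim}) and upper semicontinuity of $\delta$.

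The second gap is in (i) $\Rightarrow$ (iii). You argue that a positive-dimensional non-regular part of some affine Springer fiber would contradict (i); but what needs proving is precisely that $\delta_p(a)>0$ forces non-regular elements to exist, and since the regular locus $\operatorname{Spr}^{\operatorname{red},\reg}_p(a)$ is a torsor under $\japtor{a,p}^{\operatorname{red},\reg}$ and therefore also has dimension $\delta_p(a)$, positivity of $\delta_p(a)$ does not by itself produce any non-regular point. The missing idea, which the paper uses, is that $\operatorname{Spr}^{\operatorname{red}}_p(a)$ modulo a suitable finite subgroup $\Gamma\subset\japtor{a,p}$ is projective, while its regular locus modulo $\Gamma$ is a torsor under an affine group scheme; so if the fiber coincides with its regular locus it is simultaneously affine and projective, hence finite, forcing $\delta_p(a)=0$. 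Your (iii) $\Rightarrow$ (i) is essentially the paper's (both ultimately rest on Ng\^o's Corollary 3.7.2, and ``zero-dimensional plus one regular point'' does not by itself give regularity everywhere), and your (ii) $\Leftrightarrow$ (iii) matches the paper's appeal to Propositions \ref{P:aff-ab} and \ref{P:delta-for}.
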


\begin{proof}
	
\fbox{$(\ref{P:descr-reg1})\Leftrightarrow(\ref{P:descr-reg3})$}
By Product Formula (= Theorem \ref{T:prod}), the point $(\ref{P:descr-reg1})$ is equivalent to the condition
$$
	\spr{p}=\spreg{p},\quad\forall p\in B_a.
$$
Let $\delta_p(a)$ be the local delta-invariant defined in the Ng\^o-Bezrukavnikov formula (= Proposition \ref{P:delta-for}).  Then, the assertion is equivalent to proving the following
\begin{equation*}
	\spr{a}=\spreg{a}\iff\delta_p(a)=0.
\end{equation*} It is known that $\delta_p(a)=\dim\spr{p}=\dim\japtor{a,p}$ (see \cite[Proposition 3.7.5]{NGO10}).
The implication $\Leftarrow$ is exactly the content of \cite[Corollary 3.7.2]{NGO10}. Conversely, assume that $\spr{p}=\spreg{p}$. By \cite[Proposition 3.4.1]{NGO10}, there exists a finite abelian subgroup $\Gamma\subset\japtor{a,p}$ such that the quotient $\operatorname{Spr}^{\red}(a)/\Gamma$ is a projective variety. On the other hand, the scheme  $\operatorname{Spr}^{\red, \reg}(a)/\Gamma$ of regular elements is a trivial $\japtor{a,p}^{\operatorname{red}, \operatorname{reg}}/\Gamma$-torsor (see \cite[Lemma 3.3.1]{NGO10}). Since the group scheme $\japtor{a,p}^{\operatorname{red}, \operatorname{reg}}/\Gamma$ is affine (see \cite[Lemma 3.8.1]{NGO10}), we have that $\operatorname{Spr}^{\operatorname{red}}(a)/\Gamma=\operatorname{Spr}^{\operatorname{red,reg}}(a)/\Gamma$ is an affine and projective variety. In particular, it is zero-dimensional and, so, $\delta_p(a)=\dim\spr{p}=0$.\\
\fbox{($\ref{P:descr-reg2})\Leftrightarrow(\ref{P:descr-reg3})$} It follows from Propositions \ref{P:aff-ab} and \ref{P:delta-for}.\\
\fbox{$(\ref{P:descr-reg4})\Rightarrow(\ref{P:descr-reg1})$} See \cite[Lemma C.0.1(3)]{Li20}.\\
\fbox{$(\ref{P:descr-reg3})\Rightarrow(\ref{P:descr-reg4})$} First, assume there exists a point $p\in C$ such that the curve $a(C)$ meets the smooth locus of $\Dg_{\omega}$ at $a(p)$ and the image of the differential of the section $a:C\to\cg_{\omega}$ is contained in the tangent space of $\Dg_{\omega}$ at $a(p)$. In other words, the curve $a(C)$ meets the smooth locus of the discriminant with multiplicity at least two. Arguing as in the proof of \cite[Lemme 4.7.3]{NGO10}, we may show that the cameral cover $\pi_a: C_a\to C$, locally at a point $v$ over $p$, is given by the following morphism
$$
\operatorname*{Spec}(k\llbracket t,x\rrbracket/(t^2-x^m))\to \operatorname*{Spec}(k\llbracket x\rrbracket), \quad \text{for some }m>1.
$$
The stabilizer $W_x$ of a point $x$ into the normalization of $C_a$ over $v$ is trivial, if $m$ is even and it is cyclic of order two, and generated by a hyperplane-root reflection $s_\alpha\subset W$ for some root $\alpha$, if $m$ odd. By Proposition \ref{P:delta-for}, we get
$$
\delta(a)\geq \delta_{p}(a)=\frac{1}{2}\left(\deg(mp)-\dim\tg+\dim\tg^{W_x}\right)=\begin{cases}
	{m/2,}&\text{if }m\text{ even},\\
	(m-1)/2,&\text{if }m\text{ odd}. 
\end{cases}
$$
Since $m>1$, we get $\delta(a)>0$ and so the assertion follows. Now, assume that the curve $a(C)$ never meets the smooth locus of the discriminant. By hypothesis, there exists $p\in C$ such that the curve $a(C)$ meets the singular locus of $\Dg_{\omega}$ at $a(p)$ and the image of $da_p(\nu)\in\mt  T_{a(p)}\cg_{\omega}$ is contained in the tangent cone $\mt C_{a(p)}\Dg_{\omega}$. By Lemma \ref{L:claim} below, there exists a family of sections $a_R:C_R:=C\times \operatorname{Spec}(R)\to \cg_{\omega}$ over a complete DVR $R$, such that
\begin{itemize}
	\item the image of the generic fiber $a_\eta:C_\eta\to \cg_{\omega}$ meets the smooth locus of the discriminant $\Dg_{\omega}$ at $a_{\eta}(p)$ with multiplicity at least two,
	\item the special fiber $a_s:C_s\to\cg_\omega$ coincides with the section $a$ (after the pull-back along $s\to \operatorname{Spec}k$).
\end{itemize} 
By upper-semicontinuity of the function $\delta:\AG^{\heartsuit}\to\mathbb Z_{\geq 0}$, we have $\delta(a)=\delta(a_s)\geq \delta(a_\eta)$ (see Remark \ref{R:delta-upsc}). Arguing as above, we get $\delta(a_\eta)>0$ and, so, $\delta(a)>0$.
\end{proof}

\begin{lem}\label{L:claim} Let $a\in\AG$ be a section such that $a(C)$ meets the discriminant divisor $\Dg_{\omega}$ at $a(p)$, for some point $p$, and the image of the differential $da_p:\mt T_pC\to \mt T_{a(p)}\cg_{\omega}$ is contained in the tangent cone $\mt C_{a(p)}\Dg_{\omega}$ of $\Dg_{\omega}$ at $a(p)$. Then, there exists a complete DVR $R$ and a family of sections 
	\begin{equation*}
		a_R:C_R:=C\times \operatorname{Spec}(R)\to \cg_{\omega},
	\end{equation*}
	with the following properties:
	\begin{enumerate}[(i)]
		\item if $\eta$ is the generic point of $\Spec(R)$, the image of the section $a_\eta:C_\eta\to \cg_{\omega}$ meets the smooth locus of the discriminant $\Dg_{\omega}$ at $a_{\eta}(p)$ with multiplicity at least two;
		\item if $s$ is the special point of $\Spec(R)$, the section $a_s:C_s\to\cg_\omega$ coincides with the section $a$ (after doing pull-back along $s\to \operatorname{Spec}k$).
	\end{enumerate}	
\end{lem}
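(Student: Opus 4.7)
The plan is to lift the problem to the $1$-jet space of sections at $p$, construct an algebraic arc there from $(a(p), da_p)$ into the locus of smooth tangent $1$-jets, and then globalize using $1$-jet surjectivity of $\cg_{\omega}$ at $p$. After choosing a local trivialization of $\omega$ near $p$ and translating coordinates so $a(p) = 0$, the local equation of $\Dg_{\omega}$ becomes $f(z) = 0$ with $f$ independent of $x$, and the hypothesis becomes $f(0) = 0$ together with $f_d(v) = 0$, where $v = a'(0)$ and $f_d$ is the leading form of $f$. Introduce
\[
W^\circ := \{(y,w)\in \cg\times \cg : y\in \Dg^{\sm},\ w\in \mt T_y \Dg\}, \qquad W := \overline{W^\circ},
\]
interpreted as conditions on the $1$-jet of a section at $p$. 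The hypothesis says $(0,v)$ lies in the closed locus $\{(y,w): y\in\Dg,\, w\in \mt C_y\Dg\} \subset W$. The problem reduces to producing a morphism $\phi:\Spec R\to W$ sending the closed point to $(0,v)$ and the generic point into $W^\circ$, and then lifting $\phi$ to a family of global sections of $\cg_\omega$.

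To construct $\phi$, the arc characterization of the tangent cone gives a formal curve $y:\Spec k[[t]]\to \Dg$ with $y(0)=0$ and leading expansion $y(t) = t^m v + O(t^{m+1})$ for some $m\geq 1$; taking $y$ corresponding to a local analytic branch of $\Dg$ tangent to $v$ that is not contained in $\Dg^{\sing}$ ensures $y(t)\in \Dg^{\sm}$ generically. For the second coordinate, write $w(t)=v+\epsilon(t)$ and impose $\nabla f(y(t))\cdot w(t)=0$, so that $\nabla f(y(t))\cdot \epsilon(t) = -\nabla f(y(t))\cdot v$. Euler's identity yields $\nabla f_d(v)\cdot v = d\,f_d(v)=0$, and the expansion $\nabla f(y(t)) = t^{m(d-1)}\nabla f_d(v) + O(t^{m(d-1)+1})$ shows that $\nabla f(y(t))\cdot v$ vanishes to strictly higher order than $\nabla f(y(t))$ itself does, provided $\nabla f_d(v)\neq 0$ (a mild genericity condition on $v\in \mt C_0\Dg$, which may be secured by perturbation). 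The explicit correction
\[
\epsilon(t):= -\,\frac{\nabla f(y(t))\cdot v}{\nabla f(y(t))\cdot \nabla f(y(t))}\,\nabla f(y(t))
\]
then lies in $\cg\otimes t\cdot k[[t]]$, giving $w(t)=v+\epsilon(t)$ with $w(0)=v$ and $w(t)\in \mt T_{y(t)}\Dg$ for all $t$; the pair $\phi(t):=(y(t),w(t))$ is the required morphism.

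To globalize $\phi$ to a family of sections: under the assumption $g\geq 2$, the $1$-jet evaluation map $\AG\to J^1(\cg_{\omega})|_p$ is surjective, since for each $d_i\geq 2$ we have $H^1(\omega^{d_i}(-2p))\cong H^0(\omega^{1-d_i}(2p))^{\vee}=0$ by Serre duality and degree considerations, while the possible $d_i=1$ factor (occurring only when $G$ is not semisimple) does not affect $\Dg$ after invoking the decomposition of Remark \ref{R:decom-pi}. Choose a $k$-linear splitting $\sigma$ of this surjection and set $b_R:=\sigma\bigl(y(t),\, w(t)-v\bigr)\in \AG\otimes_k R$; since its $1$-jet argument vanishes at $t=0$, so does $b_R$. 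The family $a_R:=a+b_R$ then satisfies $a_R|_{s}=a$ and has $1$-jet $(y(t),w(t))$ at $p$, so the generic fiber $a_R|_{\eta}$ meets $\Dg_{\omega}$ at $y(\eta)\in \Dg_{\omega}^{\sm}$ with image of $d(a_R|_\eta)_p$ in $\mt T_{y(\eta)}\Dg_{\omega}$, i.e., with multiplicity $\geq 2$, as required. The principal obstacle lies in the middle step---arranging both $y(t)\in \Dg^{\sm}$ at the generic point and $\nabla f_d(v)\neq 0$, and verifying that the formal Euler-based correction really lifts to $k[[t]]$-coefficients rather than merely $k((t))$; after that, the globalization is essentially formal.
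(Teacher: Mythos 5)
Your overall strategy coincides with the paper's: both proofs reduce the lemma to producing an arc $t\mapsto(y(t),w(t))$ in the closure of $\{(y,w)\,:\,y\in\Dg^{\sm},\ w\in\mt T_y\Dg\}$ whose closed point is $(a(p),v)$ and whose generic point is a tangent vector at a smooth point of $\Dg$, and then both lift this arc to a family of global sections via surjectivity of the second-order evaluation $\AG\to\cg\otimes\oo_{C,p}/\mathfrak m_{C,p}^2$, after reducing to $\mathscr D(G)$ by Remark \ref{R:decom-pi}. The only real difference is how the arc is obtained: the paper simply invokes \cite[Proposition 20.2]{GH} (the tangent cone is a limit of tangent spaces at nearby smooth points), whereas you try to build it explicitly.

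The gap sits precisely in that explicit construction. First, the condition $\nabla f_d(v)\neq 0$ cannot be ``secured by perturbation'': $v=a'(p)$ is determined by the given section $a$, and property (ii) forces the special fibre of your family to be exactly $a$, so you may not move $v$ off the singular locus of the projectivized tangent cone $\{f_d=0\}$, which can be nonempty. Second, your corrector $\epsilon(t)$ carries $\nabla f(y(t))\cdot\nabla f(y(t))$ in the denominator; over an algebraically closed field this can vanish identically along the arc (isotropic gradient) or vanish to order strictly greater than $2\operatorname{ord}_t\nabla f(y(t))$, in which case $\epsilon(t)\notin\cg\otimes t\disk{t}$. Both defects admit a single clean repair that also makes the Euler identity unnecessary: since $f(y(t))\equiv 0$, differentiation gives $\nabla f(y(t))\cdot y'(t)\equiv 0$, and writing $y'(t)=mt^{m-1}\big(v+O(t)\big)$ yields $\operatorname{ord}_t\big(\nabla f(y(t))\cdot v\big)>\operatorname{ord}_t\nabla f(y(t))$ unconditionally. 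One may then take $w(t):=y'(t)/(mt^{m-1})$ outright, or use $\epsilon(t)=-\frac{\nabla f(y(t))\cdot v}{\nabla f(y(t))\cdot u}\,u$ for a fixed $u$ not orthogonal to the leading coefficient of $\nabla f(y(t))$. With that repair (and a word on why the arc $y(t)$ with leading direction $v$ can be chosen to avoid $\Dg^{\sing}$ generically, via the blow-up of $\Dg$ at $a(p)$), your argument closes and amounts to a self-contained proof of the fact the paper quotes from Griffiths--Harris; the globalization step is the same as the paper's.
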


\begin{proof}Let $\Delta:=\Spec(\disk{t})$ be the formal disc with special point $s$ and generic point $\eta$. Set $q:=a(p)\in \Dg$. By \cite[Proposition 20.2]{GH}, for any vector $\nu$ in the tangent cone $\mt C_q\Dg$ of $\Dg$ in $q$, there exists a morphism $\varphi:\Delta\to\mt  T\Dg\subset\mt  T\cg$ such that $\varphi(s)=(q,v)$ and $\varphi(\eta)$ is a tangent vector over the smooth locus $\Dg^{\sm}$ of $\Dg$.

Fix such a morphism $\varphi:\Delta\to\mt  T\Dg\subset\mt  T\cg$. Consider the morphism of vector bundles $C\times\AG\to \cg_\omega$. By Remark \ref{R:decom-pi}, the morphism $\pi_G:\tg_{G}\to\cg_{G}$ decomposes as follows
	$$
	(\Id,\pi_{\mathscr D(G)}):\tg_{\mathscr Z(G)}\oplus\tg_{\mathscr D(G)}\to\cg_{\mathscr Z(G)}\oplus\cg_{\mathscr D(G)}.
	$$
	Furthermore, the discriminant divisor $\Dg_G$ is the zero set of a polynomial defined by the hyperplane-roots. This means that $\Dg_{G}$ is the pull-back (along the obvious projection) of the discriminant divisor $\Dg_{\mathscr D(G)}\subset \cg_{\mathscr D(G)}$. In particular, the tangent cone decomposes as $\mt C_q\Dg_G=\mt T_{q_1}\cg_{\mathscr Z(G)}\oplus\mt  C_{q_2}\Dg_{\mathscr D(G)}$. Consider the composition
	$$
	\AG_G\xrightarrow{D} \AG_{\mathscr D(G)}\xrightarrow{\operatorname{ev_{2p}}}\cg_{\mathscr D(G)}\otimes\oo_{C,p}/\mathfrak m_{C,p}^2=\mt T\cg_{\mathscr D(G)}:b\mapsto (D(b)(p),D(b')(p)).
	$$
	It is a surjective linear morphism of vector spaces. In particular, it is smooth. Thus, using the morphism $\varphi:\Delta\to\mt T\cg$, we may lift $a:C\to \cg_\omega$ to a family of sections $C\times \Delta\to\cg_\omega$ with the properties requested by the lemma.
\end{proof}

\subsection{The subset $\AG^{\diamondsuit}\subset\AG$.}
We denote by $\AG^{\diamondsuit}$ the subset of $\AG^{\heartsuit}$, whose sections cut the divisor $\Dg_{\omega}$ transversally, i.e. 
\begin{equation*}
	\AG^{\diamondsuit}:=\{a\in\AG\,| a(C) \text{ intersects }\Dg_{\omega}\text{outside of the singular locus, with multiplicity }1 \}.
\end{equation*}
Consider its complementary subset $\DG^{\diamondsuit}:=\AG\setminus\AG^{\diamondsuit}$. It turns out that $\DG^{\diamondsuit}$ is a divisor in $\AG$. Furthermore, its irreducible components are naturally bijection with the irreducible components of the discriminant divisor and its singular locus. More precisely, set
$$
\def\arraystretch{1.2}\begin{array}{llll}
	\Dg_{\omega}&=&\Dg_{1,\omega}\cup\cdots\cup\Dg_{m,\omega},& \text{ such that  }\Dg_{i,\omega}:=\Dg_{i}\times^{\mathbb G_m}\omega^*,\\
	\Dg^{\sing}_{\omega}&=&\Dg^{\sing}_{1,\omega}\cup\cdots\cup\Dg^{\sing}_{n,\omega},& \text{ such that  }\Dg^{\sing}_{i,\omega}:=\Dg^{\sing}_{i}\times^{\mathbb G_m}\omega^*,
\end{array}
$$
where $\Dg_1,\ldots\Dg_m$, resp. $\Dg^{\sing}_1,\ldots,\Dg^{\sing}_n$, are the irreducible components of the discriminant divisor $\Dg$, resp. of the singular locus $\Dg^{\sing}$ of the discriminant divisor (see Section \ref{S:LieAlg}). The main result of this subsection is the following.

\begin{prop}\label{P:gen-discr} A section $a\in \AG$ is contained in $\DG^{\diamondsuit}$ if and only if the corresponding cameral curve $C_a$ is singular. 	
Furthermore, the set $\DG^{\diamondsuit}$ is the union of the irreducible hypersurfaces in $\AG$ 
	$$
	\DG^{\ab}_{1},\ldots, \DG^{\ab}_{m},\,\DG^{\operatorname*{extra}}_{1},\ldots,\DG^{\operatorname*{extra}}_{n},
	$$
	where:
	\begin{enumerate}[(i)]
		\item\label{P:gen-discr1} a section $a\in \mathbb A$ is contained in $\DG_i^{\ab}$ if and only if there exists a point $p\in C$ such that $a(p)\in\Dg_{i,\omega}$ and the image of the differential $da_p:\mt T_pC\to \mt T_{a(p)}\cg_{\omega}$ is contained in the tangent cone $\mt C_{a(p)}\Dg_{\omega}$ of $\Dg_{\omega}$ at $a(p)$. 
		\item\label{P:gen-discr2} a section $a\in \AG$ is contained in $\DG_i^{\operatorname*{extra}}$ if and only if there exists a point $p\in C$ such that $a(p)\in \Dg^{\sing}_{i,\omega}$.
	\end{enumerate}
\end{prop}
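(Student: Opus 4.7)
The plan is to exploit the cartesian diagram \eqref{E:cam-cov} together with the fact (Theorem \ref{T:konstant}(\ref{T:konstant4})) that $\pi\colon \tg_\omega \to \cg_\omega$ is finite and Galois with Galois group $W$, branched exactly along $\Dg_\omega$. Since $C_a = C \times_{\cg_\omega} \tg_\omega$, the cameral cover is étale over $C$ outside $a^{-1}(\Dg_\omega)$, so $C_a$ can only be singular over points $p\in C$ with $a(p) \in \Dg_\omega$. For the first assertion I would do a local analysis at such a $p$: étale-locally at a smooth point $q$ of $\Dg_\omega$ the stabilizer $W_q$ is generated by a single reflection $s_\alpha$, and $\pi$ admits a normal form of the shape $(y,u_1,\ldots,u_{r-1}) \mapsto (y^2, u_1,\ldots,u_{r-1})$; pulling back along $a$, the local equation of $C_a$ becomes $y^2 = a^*(f)$, where $f$ is a local defining equation of $\Dg_\omega$, and this is smooth iff $a^*f$ has a simple zero, i.e. iff $a(C)$ meets $\Dg_\omega$ transversally at $a(p)$. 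At a singular point of $\Dg_\omega$ the stabilizer $W_q$ contains at least two reflections, and one sees directly that $C_a$ is always singular at the fibre over $p$. Combining, $C_a$ is singular if and only if $a\notin \AG^{\diamondsuit}$.

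For the decomposition, a section in $\DG^{\diamondsuit}$ either meets some $\Dg_{j,\omega}^{\sing}$ or meets some $\Dg_{i,\omega}$ at a smooth point of $\Dg_{i,\omega}$ with multiplicity at least two. In the latter situation, at a smooth point of $\Dg_{i,\omega}$ not lying on any other irreducible component, the tangent cone $\mt C_{a(p)}\Dg_\omega$ coincides with the tangent space $\mt T_{a(p)}\Dg_{i,\omega}$, and so non-transversality is precisely $da_p(\mt T_pC) \subset \mt C_{a(p)}\Dg_\omega$; on the codimension-$2$ locus of intersection of two components, the section is already in some $\DG^{\text{extra}}_j$. Sorting sections by which irreducible component triggers non-membership in $\AG^{\diamondsuit}$, we obtain $\DG^{\diamondsuit} = \bigcup_{i=1}^m \DG^{\ab}_i \cup \bigcup_{j=1}^n \DG^{\text{extra}}_j$ as sets.

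To show each piece is an irreducible hypersurface, I would use the incidence-variety approach. Set
$$
\widetilde{\DG}^{\text{extra}}_j := \bigl\{(p,a) \in C \times \AG \,\big|\, a(p) \in \Dg_{j,\omega}^{\sing}\bigr\},\quad \widetilde{\DG}^{\ab}_i := \bigl\{(p,a) \in C \times \AG \,\big|\, a(p)\in \Dg_{i,\omega},\, da_p(\mt T_pC) \subset \mt C_{a(p)}\Dg_\omega\bigr\},
$$
with their two projections to $C$ and to $\AG$. The positivity of $\omega$ guaranteed by $g\geq 2$ (which makes $\omega^{d_i}$ globally generated and even $1$-jet generated for every $d_i\geq 1$) implies that the evaluation map $\AG \to \cg_\omega|_p$, respectively the jet evaluation $\AG \to J^1(\cg_\omega)|_p$, is surjective for every $p\in C$. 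Since $\Dg_j^{\sing}$ is irreducible of codimension $2$ in $\cg$, the fibre of the first projection of $\widetilde{\DG}^{\text{extra}}_j$ over $p$ is irreducible of codimension $2$ in $\AG$, so $\widetilde{\DG}^{\text{extra}}_j$ is irreducible of dimension $\dim \AG - 1$; dimensional considerations ($\Dg_{j,\omega}^{\sing}$ has codimension $2$ in $\cg_\omega$ while $a(C)$ is a curve) show that the second projection is generically finite onto its image, so $\DG^{\text{extra}}_j$ is an irreducible hypersurface. An analogous argument handles $\DG^{\ab}_i$: over the open stratum of $\Dg_{i,\omega}$ lying outside all other components, the condition cutting out $\widetilde{\DG}^{\ab}_i$ in $J^1(\cg_\omega)$ is irreducible of codimension $2$, and its preimage in $C \times \AG$ is irreducible of dimension $\dim \AG - 1$.

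The main obstacle will be controlling the irreducibility of $\widetilde{\DG}^{\ab}_i$: one must check that the defining closed condition in the $1$-jet space $J^1(\cg_\omega)$ is irreducible, which requires understanding $\mt C_{a(p)}\Dg_\omega$ at smooth points of $\Dg_{i,\omega}$ (where it equals the tangent space of $\Dg_{i,\omega}$) and then verifying that the closures of the incidence restricted to this open stratum of $\Dg_{i,\omega}$ captures the whole of $\widetilde{\DG}^{\ab}_i$. This relies on the irreducibility of each $\Dg_i$ (which itself follows from transitivity of $W$ on each orbit $\Phi_i$ of roots), together with the global generation of jets of $\omega^{d_i}$ needed to conclude that the fibres of the evaluation maps really have the expected codimension.
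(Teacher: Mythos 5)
Your overall strategy --- reduce the first assertion to a local normal form for the cameral cover near the discriminant, and obtain the irreducible components via incidence varieties in $C\times\AG$ whose fibres over $p\in C$ are controlled by surjectivity of (jet) evaluation maps --- is exactly the route the paper takes: the first assertion is quoted from Ng\^o, and the incidence computations are the paper's Lemmas \ref{L:int-sect} and \ref{L:mult-sect}. However, the step you flag as ``the main obstacle'' is precisely the nontrivial content of the proposition, and your proposal does not resolve it. The set $\DG^{\ab}_i$ is \emph{defined} by the pointwise tangent-cone condition, which also makes sense at singular points of $\Dg_{\omega}$ and even for sections with $a(C)\subset\Dg_{\omega}$; what must be proved is that this set coincides with the closure $\widetilde U_i$ of the locus of sections meeting $\Dg^{\sm}_{\omega}$ along $\Dg_{i,\omega}$ with multiplicity $\geq 2$. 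Neither inclusion is formal. For $\DG^{\ab}_i\subset\widetilde U_i$ the paper uses a deformation argument (Lemma \ref{L:claim}): any vector in the tangent cone at a point of $\Dg$ is a limit of tangent vectors at smooth points (\cite[Proposition 20.2]{GH}), and surjectivity of the $2$-jet evaluation lets one lift such a one-parameter family of tangent vectors to a family of sections specializing to $a$. For the reverse inclusion the tangent-cone condition is not visibly closed (the degree of the leading form of the local equation of $\Dg_{\omega}$ jumps along $\Dg_{\omega}^{\sing}$), so one cannot simply ``take closures''; the paper instead invokes the equivalence of Proposition \ref{P:descr-reg} between the tangent-cone condition and $\delta(a)>0$, together with upper-semicontinuity of $\delta$ on $\AG^{\heartsuit}$ (Remark \ref{R:delta-upsc}) and, for the degenerate case $a(C)\subset\Dg_{\omega}$, Lemma \ref{L:abincuore}. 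None of this machinery appears in your sketch.

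A second, smaller gap: generic finiteness of the projections of your incidence varieties to $\AG$ is not a pure dimension count. One must rule out that a generic section in the image meets the relevant stratum along a positive-dimensional subset of $C$, i.e. that $a(C)$ is entirely contained in $\Dg_{\omega}$; this is again Lemma \ref{L:abincuore}, which shows the locus of such sections has codimension at least three and requires its own argument using the genus hypothesis and the decomposition of $G$ into almost-simple factors.
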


Before proving the proposition, we present some consequences. 

\begin{cor}
	\label{C:gen-discr} The generic section of $\DG^{\diamondsuit}$ meets $\Dg_{\omega}$ transversally, with the exception of a unique point $p\in C$, where either it meets the smooth locus of $\Dg_{\omega}$ with multiplicity $2$ at $a(p)$ or it meets singular locus $\Dg_{\omega}^{\sing}$ transversally at $a(p)$. More precisely:
	\begin{enumerate}[(i)]
		\item\label{C:gen-discr1} the generic section in $\DG^{\ab}_i$ meets $\Dg_{\omega}$ transversally, with the exception of a unique point in $p\in C$, where it meets the smooth locus $\Dg^{\sm}_{\omega}$ along the irreducible component $\Dg_{i,\omega}$ with multiplicity 2 at the point $a(p)$;
		\item\label{C:gen-discr2} the generic section in $\DG^{\operatorname*{extra}}_i$ meets $\Dg_{\omega}$ transversally, with the exception of a unique point $p\in C$, where it meets the irreducible component $\Dg^{\sing}_{i,\omega}$ at $a(p)$ and it meets the singular locus $\Dg_{\omega}^{\sing}$ transversally at the point $a(p)$.
	\end{enumerate}
\end{cor}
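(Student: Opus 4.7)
The strategy is to argue via incidence varieties inside $\AG \times C$ combined with dimension counts. The key technical input is that, since the central factors give trivial summands in $\cg$ (so only indices with $d_i \geq 2$ contribute to the discriminant), and $\omega^{d_i}$ is very ample for $d_i \geq 2$ on a curve of genus $g \geq 2$ (cf.\ Proposition \ref{P:heart} and its proof), the $k$-jet evaluation map
$$
j^k : \AG \times C \longrightarrow J^k(\cg_\omega)
$$
is a smooth surjection for the small values of $k$ needed below. Consequently, if $Z \subset J^k(\cg_\omega)$ is locally closed of codimension $c$, the set of pairs $(a,p) \in \AG \times C$ with $j^k_p(a) \in Z$ has codimension $c$, and its image in $\AG$ has codimension at least $c-1$.

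For part $(\ref{C:gen-discr1})$, I would consider the incidence variety
$$
I_i := \{(a,p) \in \AG \times C \mid a(p) \in \Dg^{\sm}_{i,\omega} \text{ and } da_p(\mt T_p C) \subset \mt T_{a(p)} \Dg_{\omega}\}.
$$
At a point $(a,p)$ of $\Dg^{\sm}_{i,\omega}$ the divisor is smooth, so the two conditions are independent linear conditions on $j^1_p(a)$; hence $I_i$ has codimension $2$ in $\AG \times C$, and Proposition \ref{P:gen-discr}$(\ref{P:gen-discr1})$ gives that its projection to $\AG$ dominates the irreducible divisor $\DG^{\ab}_i$. It then suffices to show that each of the following degenerate configurations defines an incidence variety of dimension at most $\dim\AG - 2$, so has codimension at least $1$ inside $\DG^{\ab}_i$: (a) simultaneous tangency at two distinct points of $C$; (b) third-order contact with $\Dg^{\sm}_{i,\omega}$ at $p$ (an additional linear condition on $j^2_p(a)$, codim $3$ at fixed $p$); (c) an extra intersection with $\Dg^{\sing}_\omega$ (already codim $1$ in $\AG$, independent of the tangency condition at $p$); (d) an extra non-transversal hit of some $\Dg_{j,\omega}$ with $j \neq i$, which is contained in $\DG^{\ab}_j \cap \DG^{\ab}_i$, automatically of codim $\geq 2$.

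For part $(\ref{C:gen-discr2})$, I would replace $I_i$ by
$$
J_i := \{(a,p) \in \AG \times C \mid a(p) \in \Dg^{\sing}_{i,\omega}\}.
$$
Since $\Dg^{\sing}_i$ has codimension $2$ in $\cg$, the set $J_i$ has codim $2$ in $\AG \times C$ and dominates $\DG^{\operatorname*{extra}}_i$ by Proposition \ref{P:gen-discr}$(\ref{P:gen-discr2})$. Transversality of $a(C)$ to $\Dg^{\sing}_{i,\omega}$ at $a(p)$ is an open condition on $j^1_p(a)$, and surjectivity of $j^1$ produces at least one pair $(a,p)$ for which it holds, so it holds generically. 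The remaining failure modes --- a second hit of $\Dg^{\sing}_\omega$, a hit lying on a deeper stratum $\Dg^{\sing}_i \cap \Dg^{\sing}_j$ (of codim $\geq 3$ in $\cg$), or a non-transversal hit of $\Dg_\omega$ away from $p$ --- are codim $\geq 2$ in $\AG$ by the same count.

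The main obstacle is the systematic bookkeeping of the degenerate configurations and, in particular, checking the codim $c$ claim for configurations combining several incidence conditions on the same jet; however each such step reduces to verifying that the relevant linear conditions on $j^k_p(a)$ are independent and then quoting surjectivity of the jet-evaluation map, which is ultimately a consequence of the positivity of $\omega$ ensured by $g \geq 2$.
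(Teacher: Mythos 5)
Your argument is essentially the paper's own proof: the incidence varieties and codimension counts you describe are exactly what Lemmas \ref{L:abincuore}, \ref{L:int-sect} and \ref{L:mult-sect} provide, combined with the surjectivity of the jet evaluation maps $\operatorname{ev}_{2p}$, $\operatorname{ev}_{3p}$ coming from the very ampleness of $\omega^{d_i}$ for $d_i\geq 2$, and with the residual degenerate configurations disposed of by intersecting the distinct irreducible divisors of Proposition \ref{P:gen-discr}. The only point to watch is that for $g=2$ the higher jet evaluations are surjective only for generic $p\in C$ (the paper notes this explicitly), which still suffices for the dimension counts.
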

By the previous corollary, we also get the following result.
\begin{cor}\label{C:irr-cam}The following facts hold
	\begin{enumerate}[(i)]
		\item\label{C:irr-cam1} For any section $a\in \AG^{\diamondsuit}$, the corresponding cameral curve is integral.
		\item\label{C:irr-cam2} For a generic section $a\in \DG^{\diamondsuit}$, the corresponding cameral curve is integral.
	\end{enumerate}
\end{cor}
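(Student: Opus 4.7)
Proposition~\ref{P:gen-discr} says that $a\in\AG^{\diamondsuit}$ is equivalent to $C_a$ being smooth, and Proposition~\ref{P:heart} (which applies because $\AG^{\diamondsuit}\subseteq\AG^{\heartsuit}$: a section transverse to $\Dg_{\omega}$ cannot be contained in it) says $C_a$ is reduced and connected. A smooth connected projective curve is integral, which proves~(i).

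\textbf{Part (ii).} Let $a\in\DG^{\diamondsuit}$ be generic. By Corollary~\ref{C:gen-discr} the singularities of $C_a$ are concentrated in $\pi_a^{-1}(p)$ for a single point $p\in C$; Proposition~\ref{P:heart} gives reducedness and connectedness, so only irreducibility is in question. Since the étale part $C_a|_{C\setminus B}\to C\setminus B$ of the Galois $W$-cover $C_a\to C$ has $|W|/|\operatorname{im}(\rho)|$ components (where $\rho\colon\pi_1(C\setminus B)\to W$ is the monodromy representation and $B$ is the branch locus), and each extends uniquely to an irreducible component of $C_a$, the curve $C_a$ is integral if and only if $\rho$ is surjective.

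My plan is to combine an openness argument with a genericity argument. The locus
\[
U:=\{a\in\AG\,|\,C_a\text{ is geometrically integral}\}
\]
is open in $\AG$: the universal cameral family $\mathcal C_{\AG}\to\AG$ is proper and flat, and its total space is irreducible (it fibres over $\tg_{\omega}$ with linear fibres of constant dimension $\dim\AG - r$), so openness of the geometrically integral locus follows from a standard constructibility result (EGA IV.12.2.4) once part~(i) is invoked to identify the generic fibre as integral. Part~(i) also gives $\AG^{\diamondsuit}\subseteq U$, so $\AG\setminus U$ is a closed subset contained in the divisor $\DG^{\diamondsuit}$. The claim reduces to producing one section with integral cameral curve inside each irreducible component of $\DG^{\diamondsuit}$. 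For this I would use a smoothing pencil $\{a_t\}$ with $a_t\in\AG^{\diamondsuit}$ for $t\neq 0$ and $a_0$ generic in the given component $D$: for $t\neq 0$ the monodromy is all of $W$ by part~(i), generated by the reflections $s_{\beta_i(t)}$ at the branch points; as $t\to 0$, two (in case $\DG^{\ab}_i$) or more (in case $\DG^{\operatorname*{extra}}_i$) branch points collide at $p$, and their combined contribution to $\rho$ can be removed---in case $\DG^{\ab}_i$, because the colliding reflections square to the identity, and in case $\DG^{\operatorname*{extra}}_i$, after an explicit analysis using the dihedral stabilizer structure attached to $\Dg^{\sing}_{i,\omega}$. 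One then argues that the remaining reflections, coming from the undisturbed transverse intersections, still generate $W$ for a generic choice of $a_0$.

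\textbf{Main obstacle.} The crux is this last verification: a Weyl group is not generated by just one reflection per $W$-orbit of roots, so one must use that, for generic $a_0$, the transverse intersections furnish reflections in sufficiently many directions within each orbit. This is a genuine genericity statement, to be established either by a combinatorial analysis of the reflection subgroups of $W$ that can arise as limits of the monodromy in a one-parameter family, or equivalently by a dimension count showing that the locus of sections with non-surjective monodromy has codimension at least~$1$ inside each component of $\DG^{\diamondsuit}$.
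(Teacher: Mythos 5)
Part (i) of your proposal is correct: $\AG^{\diamondsuit}\subset\AG^{\heartsuit}$, so Proposition \ref{P:heart} gives reducedness and connectedness, and Proposition \ref{P:gen-discr} gives smoothness, whence integrality. (The paper notes this is essentially Ng\^o's argument and instead gives a uniform proof of (i) and (ii) together.)

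Part (ii), however, has a genuine gap, and it is exactly the one you flag as the ``main obstacle.'' Your reduction to the surjectivity of the monodromy $\rho:\pi_1(C\setminus B_a)\to W$ is fine, and the openness-plus-specialization framework is plausible, but you never actually prove that the reflections surviving the degeneration generate $W$; you only observe that one gets at least one reflection per $W$-orbit of roots, which, as you correctly note, is not enough (a single transposition does not generate $S_{n+1}$). Neither of the two routes you sketch (a combinatorial classification of limiting reflection subgroups, or a codimension count for the non-surjective locus) is carried out, and both look substantially harder than the problem deserves. The missing idea in the paper is a normality trick that makes the ``one reflection per orbit'' information sufficient: one works with the universal cameral cover over the open locus $\AG^{*}\subset\AG$ of sections that are either in $\AG^{\diamondsuit}$ or generic in $\DG^{\diamondsuit}$ in the sense of Corollary \ref{C:gen-discr}. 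The total space $C_{\AG}$ of \eqref{E:univ-cam} is connected (it fibers over $\tg_{\omega}$ with affine-space fibers), so the global monodromy $\pi_1(U,u)\to W$ is surjective, where $U\subset C\times\AG^{*}$ is the \'etale locus; and the complement of $U$ is a relative normal crossing divisor over $\AG^{*}$, so the homotopy exact sequence of \cite[XIII, 4.1+4.4]{SGA1},
$$
\pi_1(C\setminus B_a,u)\xrightarrow{\iota}\pi_1(U,u)\to\pi_1(\AG^{*},a)\to 1,
$$
shows that the image of $\iota$ is normal. Hence $W_a=f_u(\operatorname{im}\iota)$ is a \emph{normal} subgroup of $W=f_u(\pi_1(U,u))$. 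Since $a(C)$ meets the smooth locus of every component $\Dg_{i,\omega}$ transversally at some point $q\neq p$, the subgroup $W_a$ contains a reflection from every conjugacy class of hyperplane-root reflections; being normal, it contains all reflections, and since $W$ is generated by reflections, $W_a=W$. This single observation replaces the entire genericity analysis you were proposing, so as written your argument is incomplete at its decisive step.
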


Note that the integrality of the smooth cameral curves has been already proved in \cite{NGO10}. Indeed, as in \emph{loc.cit.}, it is a consequence of Propositions \ref{P:heart} and \ref{P:gen-discr}. Here we present an alternative proof, which holds for both the hypotheses of Corollary \ref{C:irr-cam}. A similar argument can be found in the proof of \cite[Theorem III.2(i)]{Fa93}.

\begin{proof}[Proof of Corollary \ref{C:irr-cam}] Consider the universal cameral cover
	\begin{equation}\label{E:univ-cam}
		\xymatrix{
			C_{\AG}\ar[d]^{\pi_{\AG}}\ar[r]&\tg_{\omega}\ar[d]^{\pi}\\
			C\times \AG\ar[r]&\cg_{\omega}
		}
	\end{equation}
	The horizontal maps in \eqref{E:univ-cam} are smooth and surjective and their fibers are affine spaces. In particular, the scheme $C_\AG$ is a connected smooth variety. Let $\AG^*$ be the open subset in $\AG$, whose sections are either in $\AG^{\diamondsuit}$ or generic in $\DG^{\diamondsuit}$ in the sense of Corollary \ref{C:gen-discr}. Let $U\subset C\times \AG^{*}$ be the maximal open subset such that the restriction $\pi_{\AG}^{-1}(U)\to U$ is \'etale. Since $C_{\AG}$ is connected, the restriction of $\pi_{\AG}$ over $U$ is a Galois cover with Galois group $W$. Equivalently, the induced homomorphism $\pi_1(U,u)\xrightarrow{f_u} W$, for some $u\in U$, from the (\'etale) fundamental group to the Weyl group is surjective. Note that the projection on the first factor $U\hookrightarrow C\times \AG^{*}\xrightarrow{\operatorname{pr}_1}\AG^{*}$ is surjective and the complement of $U$ in $C\times\AG^*$ is a relative normal crossing divisor over $\AG^*$. By \cite[XIII, 4.1+4.4]{SGA1}, we get an exact sequence of fundamental groups
	\begin{equation}\label{E:fund-grSGA}
		\pi_1(C\setminus B_a,u)\xrightarrow{\iota} \pi_1(U,u)\to \pi_1(\AG^*,a)\to 1,
	\end{equation}
	where $\operatorname{pr}_1(u)=a\in\AG^*$. To conclude the proof, it is enough to show that the composition
	\begin{equation}\label{E:fund-gr}
		\pi_1(C\setminus B_a,u)\xrightarrow{\iota} \pi_1(U,u)\xrightarrow{f_u} W
	\end{equation}
	is surjective. Indeed, the surjectivity implies that the \'etale cover $V_a:=C_a\setminus\pi_a^{-1}(B_a)\to C\setminus B_a$ is Galois. In particular, $V_a$ (and so $C_a$) must be irreducible. First of all, we prove the following
	
	\noindent\underline{Claim}: for each irreducible component $\mathfrak D_{i,\omega}$, there exists a point $q\in B_a$ such that $a(q)\in \mathfrak D_{i,\omega}\cap \mathfrak D_{\omega}^{\sm}$.
	
	Indeed, by Lemma \ref{L:a-meet-d} below, the curve $a(C)$ meets each irreducible component $\Dg_{i,\omega}$ in at least $4g-4$ points, counted with multiplicity. On the other hand, by definition of $\AG^\star$, the curve $a(C)$ always meets $\Dg_\omega$ on the smooth locus with multiplicity one, with exception of at most one point $a(p)$ where it meets $\Dg_\omega$ with multiplicity two. Hence, $a(C)$ meets each irreducible component of $\Dg_{i,\omega}\cap\Dg_\omega^{\sm}$ in at least $4g-6$ distinct points. Since $g\geq 2$, the claim follows.

	Fix $q\in B_a$ such that $a(q)\in \mathfrak D_{i,\omega}\cap \mathfrak D_{\omega}^{\sm}$. By definition, the image $a(q)\in \cg_\omega$ is a smooth point of an irreducible component $\Dg_{\omega,i}$ of the discriminant divisor $\Dg_{\omega}$, which corresponds to a $W$-orbit of roots (see Theorem \ref{T:konstant}$(\ref{T:konstant4})$). Note that the homomorphism \eqref{E:fund-gr} maps the loop around the point $q\in B_a$ to a reflection $s_{\alpha}\in W$ with respect to a root $\alpha$ in the $W$-orbit met by $a(q)$. 
	
	By the claim above, $a(C)$ meets all the connected components of the smooth locus of $\Dg_{\omega}$, namely all the $W$-orbits of roots. In particular, the image $W_a:=(f_u\circ \iota)(\pi_1(C\setminus B_a,u))$ contains at least one representative for each conjugacy class of hyperplane-root reflections in $W$. Since the image of $\pi_1(C\setminus B_a,u)$ in $\pi_1(U,u)$ is normal (by sequence \eqref{E:fund-grSGA}) and $f_u$ is surjective, the image $W_a$ is a normal subgroup of $W$. Hence, $W_a$ is a normal subgroup of $W$ and it contains all the hyperplane-roots reflections. So, $W_a=W$.
\end{proof}

\begin{lem}\label{L:a-meet-d}For any $a\in \AG^{\heartsuit}$, the curve $a(C)$ meets each irreducible component of the discriminant divisor $\mathfrak D_{\omega}$ in at least $4g-4$ points, counted with multiplicity.
\end{lem}

\begin{proof}Let $\mathfrak{D}_i$ be an irreducible component of $\mathfrak D$. We have that $$\mathfrak{D}_i=\Spec(k[\mathfrak t]^W/F_i)\subset \cg=\Spec(k[\mathfrak t]^W),$$ where $F_i=\prod_{\alpha\in\Phi_i}d\alpha$ and $\Phi_i$ is a $W$-orbit in the set of roots $\Phi$ (see Section \ref{S:LieAlg}). In particular, the divisor $\mathfrak{D}_{i,\omega}$ may be described as the inverse image of the zero-section of $\omega^{N_i}$ along the morphism
$$
F_i:\cg_\omega\to \omega^{N_i},
$$
where $N_i=\deg(F_i)=|\Phi_i|$ is the total degree of $F_i$ as a homogeneous polynomial in $k[\mathfrak t].$ Thus, any section $a\in \AG^{\heartsuit}$ produces a non-zero section $F_i(a)\in H^0(C,\omega^{N_i})$ of the line bundle $\omega^{N_i}$. By construction, $a(C)$ meets $\mathfrak{D}_{i,\omega}$ in $n$ points, counted with multiplicity, if and only if the section $F_i(a)$ vanishes at $n$ points of $C$, counted with multiplicity. Since $F_i(a)$ is a non-zero section of the line bundle $\omega^{N_i}$, the curve $a(C)$ meets $\mathfrak{D}_{i,\omega}$ at $\deg(\omega^{N_i})=N_i(2g-2)$ points, counted with multiplicity. Then, the assertion follows by observing that $N_i$ is even, because if the $W$-orbit $\Phi_i$ contains a root $\alpha$, it contains also its negative $-\alpha$.
\end{proof}

It remains to prove Proposition \ref{P:gen-discr} and Corollary \ref{C:gen-discr}. Before we do this, we need some auxiliary results.
\begin{lem}\label{L:abincuore}Let $a$ be a section such that $a(C)\subset\Dg_{\omega}$, i.e. $a\in\DG^{\heartsuit}:=\AG\setminus\AG^{\heartsuit}$. Then, the image of the differential $da_p:\mt T_pC\to \mt T_{a(p)}\Dg_{\omega}$ is contained in the tangent cone $\mt C_{a(p)}\Dg_{\omega}$ for any point $p\in C$. Furthermore, the closed subset $\DG^{\heartsuit}$ has codimension at least three in $\AG$.
\end{lem}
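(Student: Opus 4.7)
The statement has two independent parts, which I would address separately.

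\textbf{First claim.} Since $a\colon C \to \cg_\omega$ is a section of a vector bundle, it is a closed immersion, so the image $a(C) \cong C$ is a smooth curve contained in $\Dg_\omega$. At any $p \in C$ the tangent space $T_{a(p)} a(C)$ equals $\operatorname{Im}(da_p)$. The general tangent-cone inclusion $\mt C_x Y \subseteq \mt C_x X$ for a closed immersion $Y \subseteq X$, applied with $Y = a(C)$ and $X = \Dg_\omega$, combined with the smoothness of $a(C)$, yields $\operatorname{Im}(da_p) = \mt C_{a(p)} a(C) \subseteq \mt C_{a(p)} \Dg_\omega$.

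\textbf{Reduction.} For the codimension bound I would first apply Remark \ref{R:decom-pi}. The decomposition $\pi_G = (\operatorname{Id}, \pi_{\mathscr D(G)})$ under $\cg_G \cong \tg_{\mathscr Z(G)^o} \oplus \cg_{\mathscr D(G)}$, together with the vanishing of every root on $\tg_{\mathscr Z(G)^o}$, shows that $\Dg_{G,\omega}$ is the preimage of $\Dg_{\mathscr D(G),\omega}$ along the obvious projection. Hence $\DG^\heartsuit_G = H^0(C, \tg_{\mathscr Z(G)^o} \otimes \omega) \times \DG^\heartsuit_{\mathscr D(G)}$, so the two codimensions agree and I may assume $G$ semisimple --- in which case every exponent satisfies $d_k \geq 2$.

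\textbf{Differential-of-discriminant computation.} Decompose $\DG^\heartsuit = \bigcup_{i=1}^m S_i$ with $S_i := \{a \in \AG : a(C) \subseteq \Dg_{i,\omega}\}$; it suffices to show each irreducible component of each $S_i$ has codimension at least $3$ in $\AG$. The component $\Dg_i \subset \cg$ is cut out by the $W$-invariant weighted-homogeneous polynomial $f_i := \prod_{\alpha \in \Phi_i} d\alpha \in k[\tg]^W \cong k[\cg]$ of weighted degree $|\Phi_i|$, and evaluation on sections defines a polynomial map
$$
\phi_i\colon \AG = \bigoplus_{k=1}^r H^0(C, \omega^{d_k}) \longrightarrow H^0(C, \omega^{|\Phi_i|}), \quad (a_1, \ldots, a_r) \mapsto f_i(a_1, \ldots, a_r),
$$
with $S_i = \phi_i^{-1}(0)$ and $(d\phi_i)_a(b_1, \ldots, b_r) = \sum_k (\partial_{u_k} f_i)(a) \cdot b_k$, where $(\partial_{u_k} f_i)(a) \in H^0(C, \omega^{|\Phi_i| - d_k})$. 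For any $a \in S_i$ one has $\operatorname{codim}_a(S_i) \geq \operatorname{rank}(d\phi_i)_a$, since $T_a S_i \subseteq \ker(d\phi_i)_a$. Taking a generic $a$ in a component of $S_i$ whose image meets the smooth locus of $\Dg_{i,\omega}$, some $(\partial_{u_k} f_i)(a(p_0))$ is nonzero at a meeting point $p_0$, so $(\partial_{u_k} f_i)(a)$ is a nonzero section of $\omega^{|\Phi_i| - d_k}$. Multiplication by a nonzero section of a line bundle is injective on global sections, so the restriction of $(d\phi_i)_a$ to the summand $H^0(C, \omega^{d_k})$ is already injective, and
$$
\operatorname{rank}(d\phi_i)_a \geq h^0(C, \omega^{d_k}) = (2d_k - 1)(g - 1) \geq 3(g - 1) \geq 3,
$$
using $d_k \geq 2$ and $g \geq 2$.

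The main delicate point is the genericity claim that $a(C)$ really meets the smooth locus of $\Dg_{i,\omega}$; if, on the contrary, a whole component of $S_i$ lies in $\{a : a(C) \subseteq \Dg_{i,\omega}^{\op{sing}}\}$, I would apply the same differential argument to the strictly-lower-dimensional irreducible pieces of $\Dg_{i,\omega}^{\op{sing}}$ and iterate on the natural stratification of $\Dg_\omega$. This descent terminates after finitely many steps and yields codimension at least $3$ on every irreducible component of $\DG^\heartsuit$.
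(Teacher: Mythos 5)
Your proof of the first assertion is correct and is a cleaner, coordinate-free packaging of what the paper does: the paper checks in a local trivialization that the initial form of $F\circ a$ forces the leading vector of $a$ into the zero set of the initial form of $F$, which is precisely your inclusion $\mt C_{a(p)}a(C)\subseteq \mt C_{a(p)}\Dg_{\omega}$ for the smooth curve $a(C)$. For the codimension bound your route is genuinely different from the paper's. The paper forms the incidence variety of tuples $(q_1,q_2,q_3,a)$ for which $a$ is ``tangent'' to $\Dg_{\omega}$ at all three points (a condition supplied by the first assertion), proves that the second-order evaluation $\AG\to\bigoplus_{i=1}^{3}\cg\otimes\oo_{C,q_i}/\mathfrak m_{C,q_i}^2$ is surjective for a generic triple --- this is where the case split $g\geq 3$ versus $g=2$ and the further reduction to almost-simple factors occur --- and concludes that the fibre has codimension $6$, hence the projection has codimension $3$. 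You instead bound the Zariski tangent space of $S_i=\{a:\,a(C)\subseteq\Dg_{i,\omega}\}$ by the kernel of the differential of $a\mapsto f_i(a)$ and use that multiplication by a nonzero section is injective on $H^0(C,\omega^{d_k})$, whose dimension $(2d_k-1)(g-1)\geq 3$; this is shorter and uniform in $g\geq 2$.

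Two points in your argument need shoring up. The substantive one is the claim that some $\partial_{u_k}f_i$ is nonzero at a smooth point of $\Dg_{i,\omega}$. Since $\Phi_i$ is a $W$-orbit it contains $\alpha$ and $-\alpha$ simultaneously, so $\prod_{\alpha\in\Phi_i}d\alpha$ is, up to sign, a perfect square in $k[\tg]$; if it were also a square in $k[\cg]=k[\tg]^W$, your differential $(d\phi_i)_a$ would vanish identically along $S_i$ and the rank bound would collapse to $0$. The claim is in fact true, because $\pi:\tg\to\cg$ is ramified to order exactly $2$ at the generic point of each root hyperplane, so $(d\alpha)^2$ descends to a local uniformizer of $\Dg_i$ in $\cg$; alternatively one can replace $f_i$ by the reduced weighted-homogeneous equation of $\Dg_i$, as the paper does in Lemma \ref{L:mult-sect}. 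Either way this deserves an explicit justification, as it is the hinge of the whole rank estimate. The second point is the closing iteration over the stratification $\Dg\supseteq\Dg^{\sing}\supseteq\cdots$: it does work (the chain terminates, each stratum is $\mathbb G_m$-invariant and hence cut out by weighted-homogeneous equations, and at a smooth point of a stratum some such equation has a nonvanishing partial with respect to a coordinate of weight $d_k\geq 2$), but as written it is an outline rather than a proof and should be spelled out.
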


\begin{proof}Locally at a point $p\in C$, the section $a$ is given by a ring-homomorphism
	$
	\varphi:k[p_1,\ldots,p_r]\to k\llbracket t\rrbracket,
	$
	such that if $\Dg$ is the zero-set of a polynomial $F$, we must have $\varphi(F(p_1,\ldots,p_r))=0$. Without loss of generality, we may assume that $p$ corresponds to the ideal $(t)$ and $a(p)$ to the maximal ideal $(p_1,\ldots,p_r)$, i.e. $(p_1,\ldots,p_r)=\varphi^{-1}(t)$. There exists a number $s$ such that
	$$
	\varphi(p_i)=a_it^{s} \operatorname{mod} t^{s+1},\quad\text{ for }i=1,\ldots,r,
	$$
	where $(a_1,\ldots,a_r)\in k^r\setminus\{\underline{0}\}$. If $s>1$, the claim holds because $da_p(T_pC)$ is the zero-vector. Assume $s=1$ and let $F=F_n+F_{n+1}+\ldots$ be the decomposition in homogeneous polynomials of $F$. We then have:
	\begin{align*}
		0&=\varphi(F(p_1,\ldots,p_r))=F_n\left(\scriptstyle{\frac{\varphi(p_1)}{t},\ldots,\frac{\varphi(p_r)}{t}}\right)t^n+F_{n+1}\left(\scriptstyle{\frac{\varphi(p_1)}{t},\ldots,\frac{\varphi(p_r)}{t}}\right)t^{n+1}+\ldots\\
		&= F_n(a_1,\ldots,a_r)t^n+ \text{polynomials in } t \text{ of degree bigger than } n.
	\end{align*}
	The equality implies the vanishing of $F_n\left(a_1\ldots,a_r\right)$. Note that the vector $(a_1,\ldots,a_r)$ is a generator of the image of $da_p$ in $T_{a(p)}\cg_{\omega}$. By definition, the tangent cone at $(p_1,\ldots,p_r)$ is the zero set of the homogeneous polynomial $F_n$ and, so, we have the first statement.

	We now show the second statement. Consider the set $U$ of $4$-uples $(q_1,q_2,q_3,a)\in C^3\times\AG$ such that $a(C)$ meets $\Dg_{\omega}$ and the image of the differential at $a(q_i)$ is contained in the tangent cone $\mt C_{a(q_i)}\Dg_{\omega}$ for $i=1,2,3$. By the previous statement, the set $\DG^{\heartsuit}$ is contained in the image $\op{pr}(U)$ of $U$ along the projection $\op{pr}:C^3\times\AG\to\AG$. Hence, it is enough to show that $\op{pr}(U)$ has codimension at least three in $\AG$.

	We denote by $U_{q_1,q_2,q_3}$ the fiber over a triple $(q_1,q_2,q_3)\in C^3$ along the composition $U\hookrightarrow C^3\times \AG\to\ C^3$. The fiber is canonically equal to a subset of the inverse image of the product $(\mt T\Dg)^3\subset(\mt T \cg)^3$ along the morphism
	\begin{equation}\label{E:ev}
	\AG\xrightarrow{\op{ev}_{2q_1,2q_2,2q_3}}\bigoplus_{i=1}^3(\cg\otimes\oo_{C,q_i}/\mathfrak m_{C,q_i}^2)=(\mt T\cg)^3.
	\end{equation}
	We will treat the cases $g\geq 3$ and $g=2$, separately.\\	
	\fbox{$g\geq 3$} We first assume $G$ semi-simple. In particular, all the degrees are greater or equal than two. The cokernel of the evaluation map \eqref{E:ev} is equal to 
	\begin{equation}\label{E:d}
	\bigoplus_{i=1}^rH^1(C,\omega^{d_i}(-2q_1-2q_2-2q_3))\cong\bigoplus_{i=1}^rH^0(C,\omega^{1-d_i}(2q_1+2q_2+2q_3))^*.
	\end{equation}
	By assumptions on the genus, we have the following inequalities
	 $$
	 \deg(\omega^{1-d_i}(2q_1+2q_2+2q_3))=(1-d_i)(2g-2)+6\leq -4d_i+10.
	 $$
	In particular, the factors in the decomposition \eqref{E:d} vanish, if $d_i\geq 3$. If $d_i=2$, we have $\deg(\omega^{1-d_i}(2q_1+2q_2+2q_3))=8-2g$. In particular, the remaining factors vanish (i.e. the evaluation map is surjective) for either $g\geq 5$ and any triple in $C^3$, or $g=3,4$ and a generic triple in $C^3$. Hence, if $g\geq 3$ and $(q_1,q_2,q_3)\in C^3$ generic, the fiber $U_{q_1,q_2,q_3}$ has codimension at least six in $\AG$. Then, the same holds for $U$ and, so, the projection $\op{pr}(U)$ has codimension at least three in $\AG$. Now, assume $G$ reductive. The morphism $\pi_G:\tg_{G}\to\cg_{G}$ decomposes as follows
	$$
	(\Id,\pi_{\mathscr D(G)}):\tg_{\mathscr Z(G)}\oplus\tg_{\mathscr D(G)}\to\tg_{\mathscr Z(G)}\oplus\cg_{\mathscr D(G)},
	$$
	see Remark \ref{R:decom-pi}. Furthermore, the discriminant divisor $\Dg_G$ is the zero set of a polynomial defined by the hyperplane-roots. It means that $\Dg_{G}$ is the pull-back (along the obvious projection) of the discriminant divisor $\Dg_{\mathscr D(G)}\subset \cg_{\mathscr D(G)}$. In particular, the subset $\DG^{\heartsuit}$ is contained in the inverse image of the product $(\mt T\Dg_{\scr D(G)})^3\subset \cg_{\scr D(G)}^3$ along the composition
	$
	\AG=\AG_G\to\AG_{\scr D(G)}\xrightarrow{\op{ev}_{2q_1,2q_2,2q_3}}(\mt T\cg_{\scr D(G)})^3.
	$
	Arguing as in the semi-simple case, we conclude the proof.\\
	\fbox{$g=2$} Assume $G$ semi-simple. Arguing as above, it can be shown that for generic a generic triple $(q_1,q_2,q_3)\in C^3$, the cokernel of \eqref{E:ev} is isomorphic to the dual of
	$$
	H^0(C,\omega^{-1}(2q_1+2q_2+2q_3))^{\oplus \#\{d_i=2\}}.
	$$
	Moreover, this vector space is isomorphic to cokernel of the evaluation
	$$
	\bigoplus_{i\text{ s.t.} d_i=2}\AG_i=H^0(C,\omega^2)^{\#\{d_i=2\}}\xrightarrow{\op{ev}_{2q_1,2q_2,2q_3}}(\mt T\cg_1)^3,
	$$
	where $\cg_1$ is the linear subspace of $\cg$, where $\mathbb G_m$ acts with weight $2$. We divide the proof in three steps.
	\begin{enumerate}[(a)]
		\item $G$ almost-simple of rank $1$, i.e. $G=\op{SL}_2,\op{PGL}_2$. In this case, $\cg=\cg_1=\Spec k[t]$ and $\Dg=\{t=0\}$. In this case the evaluation \eqref{E:ev} is injective. Hence, the only section in $\DG^{\heartsuit}$ is the zero-section. Since $\dim\AG=3$, we have the assertion.
		\item $G$ almost-simple of rank at least two. It is enough to show that the projection $\mt T\Dg\subset\mt T\cg\to \mt T \cg_1=\Spec(k[p_1,dp_1])$ is surjective. First observe that the projection $\Dg\to \cg_1$ is $\mathbb G_m$-equivariant. Then, it must be surjective. Otherwise, we should have $\Dg\subset\{p_1=0\}$, which is false by Lemma \ref{R:tang-cone-disc} and the assumptions on the rank of $G$. A similar argument shows the surjectivity of the projection $\mt T\Dg\to \mt T\cg_1$.
		\item $G$ semi-simple. Up to isogeny, the group $G$ decomposes as a product $G_1\times \cdots\times G_s$, with $G_i$ almost-simple group for any $i$. Since the curve is irreducible, any section in $\DG^{\heartsuit}$ has image in some irreducible component $\Dg_{i,\omega}$ of the discriminant divisor. Hence, $\DG^{\heartsuit}$ is the union of the loci of sections with image in $\Dg_{i,\omega}$ for any $i\in\{1,\ldots,n\}$. By definition of the components $\Dg_i$, there exists a $j\in \{1,\ldots,s\}$ such that $\Dg_i$ is an irreducible component of the pull-back (along the obvious projection) of the discriminant divisor $\Dg_{G_j}\subset \cg_{G_{j}}$. In particular, the subset $\DG^{\heartsuit}$ is contained in the union of the inverse images of the products $(\mt T\Dg_{G_j})^3\subset (\mt T\cg_{G_j})^3$ along the compositions
		$
		\AG=\AG_G\to\AG_{G_j}\xrightarrow{\op{ev}_{2q_1,2q_2,2q_3}}(\mt T\cg_{G_j})^3,
		$
		for any $j=1,\ldots,s$. Hence, point (c) follows from the previous points.
	\end{enumerate}
The case $G$ reductive follows by arguing exactly as in the case $g\geq 3$.
\end{proof}

\begin{lem}\label{L:int-sect}Let $\mathfrak S$ be a $\mathbb G_m$-invariant irreducible subvariety of $\Dg$. The following facts hold.
\begin{enumerate}[(i)]
	\item\label{L:int-sect1} The locus of pairs $(p,a)\in C\times\AG$, such that $a(p)\in \mathfrak S_{\omega}:=\mathfrak S\times^{\mathbb G_m}\omega^*$, is an irreducible variety of codimension $r-\dim \mathfrak S$ in $C\times\AG$. If $\dim \mathfrak S=\dim \Dg-1$, the projection in $\AG$ is an irreducible variety of codimension $1$.
	\item\label{L:int-sect2} The locus of triples $(p,q,a)\in C\times C\times\AG$, such that $a(p),a(q)\in\mathfrak S_{\omega}$, is an irreducible variety of codimension $2(r-\dim\mathfrak S)$ in $C\times C\times\AG$.
\end{enumerate}
\end{lem}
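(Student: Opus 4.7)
The plan is to obtain both statements by pulling back the irreducible subvariety $\mathfrak{S}_\omega\subset\cg_\omega$ (or its self-product) along suitable multi-point evaluation morphisms, and then exploiting smoothness of these evaluations to transfer irreducibility and dimension counts.

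For part $(\ref{L:int-sect1})$, I will consider the universal evaluation
\begin{equation*}
	\op{ev}\colon C\times\AG\longrightarrow\cg_\omega,\qquad (p,a)\longmapsto a(p),
\end{equation*}
regarded as a morphism over $C$. Since $g\geq 2$, every line bundle $\omega^{d_i}$ is globally generated, so the fiberwise linear map $\AG\twoheadrightarrow\cg_\omega|_p$ is surjective for every $p\in C$. Consequently $\op{ev}$ is smooth surjective with irreducible affine fibers of dimension $\dim\AG-r$, and $\op{ev}^{-1}(\mathfrak{S}_\omega)$ is irreducible of codimension $r-\dim\mathfrak{S}$ in $C\times\AG$. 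For the second part of $(\ref{L:int-sect1})$, under the assumption $\dim\mathfrak{S}=r-2$ the preimage has dimension $\dim\AG-1$; projecting it to $\AG$, the fiber over $a$ is $\{p\in C:a(p)\in\mathfrak{S}_\omega\}$, which is either finite or all of $C$. The latter case forces $a\in\DG^\heartsuit$, a subset of codimension at least $3$ by Lemma~\ref{L:abincuore}, so the generic fiber of the projection must be finite and the (irreducible) image is a hypersurface.

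For part $(\ref{L:int-sect2})$ I will replicate the idea with the bi-evaluation
\begin{equation*}
	\op{ev}_2\colon C\times C\times\AG\longrightarrow\cg_\omega\times_k\cg_\omega,\qquad (p,q,a)\longmapsto(a(p),a(q)),
\end{equation*}
viewed over $C\times C$. Let $W\subset(C\times C)\setminus\Delta$ be the open locus of pairs of distinct points at which the linear bi-evaluation $\AG\to\cg_\omega|_p\oplus\cg_\omega|_q$ is surjective. Over $W$, the restriction of $\op{ev}_2$ is smooth with irreducible fibers, so $\op{ev}_2^{-1}(\mathfrak{S}_\omega\times_k\mathfrak{S}_\omega)|_W$ is irreducible of codimension $2(r-\dim\mathfrak{S})$; its closure in $C\times C\times\AG$ is then the irreducible subvariety of the stated codimension, yielding the main component of the locus in question.

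The hard part will be verifying that $W$ is open and dense in $C\times C$, i.e.\ that the bundles $\omega^{d_i}$ separate generic pairs of distinct points on $C$. For $d_i\geq 2$ this follows from very-ampleness (save for the corner case $(g,d_i)=(2,2)$, which requires a direct check); for $d_i=1$ it reduces to the canonical map being generically injective, which holds for all $g\geq 2$ (failing only on the finite locus of hyperelliptic-conjugate pairs). It also remains to confirm that the diagonal of $C\times C$ and the non-surjective-bi-evaluation locus contribute only pieces of strictly smaller dimension, so that the closure produced above is indeed the main component.
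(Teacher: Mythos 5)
Your proof follows the paper's own argument almost verbatim in part (i), and part (ii) is the same strategy (pull back $\mathfrak S_\omega\times\mathfrak S_\omega$ along the two-point evaluation); the only real difference is how the ``hard part'' you defer gets discharged. The paper's shortcut is that $\mathfrak S\subset\Dg$ and $\Dg$ is pulled back from $\cg_{\mathscr D(G)}$, so one only needs surjectivity of the composite $\AG\to\AG_{\mathscr D(G)}\xrightarrow{\op{ev}_{p,q}}\cg_{\mathscr D(G)}\oplus\cg_{\mathscr D(G)}$; there every degree satisfies $d_i\geq 2$, so the cokernel $\bigoplus_i H^0(C,\omega^{1-d_i}(p+q))^{*}$ vanishes for \emph{every} pair of distinct points as soon as $g\geq 3$ (for $g=2$ and $d_i=2$ it can fail exactly on hyperelliptic-conjugate pairs, a caveat the paper also glosses over). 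With this observation your locus $W$ is all of $(C\times C)\setminus\Delta$, no density or closure argument is needed, and the $d_i=1$ worry disappears entirely.

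One concrete warning about the check you postpone: the diagonal does \emph{not} contribute a piece of strictly smaller dimension. The locus $\{(p,p,a)\,:\,a(p)\in\mathfrak S_\omega\}$ has codimension $1+(r-\dim\mathfrak S)$ in $C\times C\times\AG$ by part (i), which is strictly smaller than $2(r-\dim\mathfrak S)$ precisely in the case the lemma is applied to ($\mathfrak S=\Dg^{\sing}_i$ of codimension $2$ in $\cg$). So if the locus of triples is taken to include $p=q$, the stated codimension is wrong; the statement must be read for distinct points $p\neq q$ (equivalently, it concerns the off-diagonal locus and its image), which is all that the application in Corollary \ref{C:gen-discr} uses. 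Once you adopt that reading, and replace your density argument for $W$ by the Riemann--Roch computation above, your proof is complete and coincides with the paper's.
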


\begin{proof}\fbox{$(\ref{L:int-sect1})$} We denote the locus in the assertion by $V$. Consider the morphism of vector bundles
	$
	\operatorname{ev}:C\times\AG \to\cg_{\omega}
	$.
	Since the line bundle $\omega^j$ is base-point-free for any positive $j$, the evaluation at the fibers $\operatorname{ev_p}:\AG\to \cg\otimes \oo_{C,p}/\mathfrak m_{C,p}=\cg$ is a smooth and surjective morphism with integral fibers of dimension $\dim \AG-r$. In particular, the inverse image $\operatorname{ev}_{p}^{-1}(\mathfrak S)$ is an irreducible variety of dimension $\dim \mathfrak S+\dim\AG-r$. Note that $\operatorname{ev}_{p}^{-1}(\mathfrak S)$ is nothing but the fiber over the point $p$ of the composition $V\hookrightarrow C\times \AG\xrightarrow{\pi_1} C$. This implies that $V$ is an irreducible variety in $C\times\AG$ of dimension $\dim \mathfrak S+\dim \AG+1-r$. If $\dim \mathfrak S=\dim\Dg-1$, the composition $V\hookrightarrow C\times\AG\to \AG$ is generically quasi-finite (by Lemma \ref{L:abincuore}). Hence, the last assertion follows.\\
	\fbox{$(\ref{L:int-sect2})$} Since the composition
	$
	\AG=\AG_G\to\AG_{\scr D(G)}\xrightarrow{\op{ev}_{p,q}}\cg_{\scr D(G)}^2
	$
	is surjective for any pair of points in $C$, the assertion is obtained by arguing as in the previous point.
\end{proof}

\begin{lem}\label{L:mult-sect}Consider the open and connected subset $\mathfrak U_{i,\omega}:=\Dg_{\omega}^{\sm}\cap \Dg_{i,\omega}$ of $\Dg_{i,\omega}$. Then, the locus $U$ of pairs $(p,a)\in C\times\AG$, such that $a(C)$ meets $\mathfrak U_{i,\omega}$ at $a(p)$ with multiplicity $\geq 2$, is an irreducible locally-closed subset of codimension $2$ in $C\times\AG$. The projection in $\AG$ is an irreducible locally-closed subset of codimension $1$.
\end{lem}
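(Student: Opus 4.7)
The plan is to recognise the locus $V$ from the lemma as the preimage of an irreducible locally-closed subvariety of the first jet bundle $\mt J^1(\cg_{\omega})$ of $\cg_{\omega}$ under the universal $2$-jet evaluation, and then invoke Lemma \ref{L:abincuore} to control the projection to $\AG$.

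The defining condition of $V$ is that (i) $a(p)\in\mathfrak U_{i,\omega}$ and (ii) $da_p(\mt T_pC)\subset \mt T_{a(p)}\mathfrak U_{i,\omega}$. Since both (i) and (ii) depend only on the $2$-jet of $a$ at $p$, I can write $V=\Phi^{-1}(\scr T)$, where $\Phi\colon C\times\AG\to \mt J^1(\cg_{\omega})$ sends $(p,a)$ to the $2$-jet of $a$ at $p$, and $\scr T\subset\mt J^1(\cg_{\omega})$ is the locally-closed subset cut out fibrewise by (i)-(ii). Over each $p\in C$, $\scr T_p$ is, up to an $\omega$-twist, the relative tangent bundle of $\mathfrak U_{i,\omega}$: an irreducible variety of dimension $2(r-1)$ inside the $2r$-dimensional fiber $\mt J^1(\cg_{\omega})|_p$. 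Since $\mathfrak U_{i,\omega}$ is an open subset of the irreducible divisor $\Dg_{i,\omega}$, $\scr T$ is irreducible of codimension $2$ in $\mt J^1(\cg_{\omega})$.

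The map $\Phi$ is a morphism of vector bundles over $C$, linear on $\AG$, and its fiberwise cokernel at $p$ is $\bigoplus_i H^1\!\left(C,\omega^{d_i}(-2p)\right)$. For $d_i\geq 2$ this vanishes whenever $\deg\omega^{1-d_i}(2p)<0$, in particular on a dense open of $C$ (always, for $g\geq 3$ in the semisimple case). On this open, $\Phi$ is smooth and surjective, so $\Phi^{-1}(\scr T)$ is irreducible locally-closed of codimension $2$ in $C\times\AG$. The possible degenerate fibres at the remaining points cannot contribute additional components of the same dimension, by the dimension estimate already used in Lemma \ref{L:int-sect} combined with upper-semicontinuity of fibre dimension; in the reductive case one further reduces to $\scr D(G)$ as in Lemma \ref{L:abincuore}.

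For the projection $\op{pr}_2\colon V\to\AG$, the image is irreducible as the image of an irreducible variety. The fibre over a point $a$ in the image is contained in $a^{-1}(\Dg_{\omega})\subset C$, which is finite whenever $a(C)\not\subset\Dg_{\omega}$. By Lemma \ref{L:abincuore}, the locus $\DG^{\heartsuit}=\{a:a(C)\subset\Dg_{\omega}\}$ has codimension at least $3$ in $\AG$, whereas $\op{pr}_2(V)$ has codimension at most $2$; hence a generic $a$ in the image satisfies $a(C)\not\subset\Dg_{\omega}$ and $\op{pr}_2$ is generically finite. Therefore the image has dimension $\dim V=\dim\AG-1$, i.e.\ codimension $1$, as claimed. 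The main obstacle is the last step of the codimension computation of $V$: namely, guaranteeing that the failure of surjectivity of the $2$-jet evaluation at finitely many special points of $C$ does not produce spurious higher-dimensional irreducible components of $V$; this is handled by transferring the computation to the smooth locus of $\Phi$ and bounding the "bad" contribution exactly as in the proof of Lemma \ref{L:int-sect}.
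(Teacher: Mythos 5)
Your proposal follows the same route as the paper: identify the locus fibrewise over $p\in C$ as the preimage of (an open piece of) the tangent bundle $\mt T\Dg_{i,\omega}^{\sm}$, of dimension $2(r-1)$, under the second-order evaluation $\op{ev}_{2p}:\AG\to\cg\otimes\oo_{C,p}/\mathfrak m_{C,p}^2$; deduce irreducibility and codimension $2$ from smoothness and surjectivity of that evaluation; reduce the reductive case to $\scr D(G)$; and get codimension $1$ for the projection from generic quasi-finiteness via Lemma \ref{L:abincuore}. All of that matches the paper's argument.

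The one point where you diverge is the surjectivity of the $2$-jet evaluation, and there your justification does not work as written. The paper asserts surjectivity of $\op{ev}_{2p}$ at \emph{every} $p$ (from very ampleness of $\omega^{d_i}$ for $d_i\geq 2$), so the locus is a locally trivial affine-space fibration over an irreducible base and no special fibres arise. You only establish surjectivity on a dense open of $C$ and then claim the remaining points contribute no extra components ``by upper-semicontinuity of fibre dimension''; but semicontinuity goes the wrong way here -- it is precisely the mechanism by which the fibre \emph{jumps up} at the bad points, which is what would create a spurious component. Nor does Lemma \ref{L:int-sect} supply a bound on those fibres: its dimension count also rests on surjectivity of an evaluation map. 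Concretely, for $G=\op{SL}_2$ and $g=2$ the fibre of your locus over a Weierstrass point $p$ is $H^0(C,\omega^2(-2p))$, which has dimension $\dim\AG-1$ rather than $\dim\AG-2$, and $\{p\}\times H^0(C,\omega^2(-2p))$ is then an extra irreducible component of top dimension. The gap is harmless for $g\geq 3$, where your own degree computation shows the cokernel $\bigoplus_i H^1(C,\omega^{d_i}(-2p))$ vanishes for all $p$ (so there simply are no bad points and the hand-waving is unnecessary), but since the section is stated for $g\geq 2$ you should either prove surjectivity at every point as the paper does, or give an honest bound on the fibres at the finitely many exceptional points.
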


\begin{proof}We first assume $G$ semi-simple. Let $F$ be the (irreducible) polynomial in $k[\tg]^W$ such that $\Spec(k[\tg]^W/F)=\Dg_{i}$. Consider the morphism
	\begin{equation*}
	C\times\AG\xrightarrow{\operatorname{ev}} \cg_{\omega}\xrightarrow{F}\omega^{\deg F}.
	\end{equation*}
	Since $G$ is semi-simple, all the degrees $d_1,\ldots,d_r$ are strictly bigger than $1$. In particular, the line bundle $\omega^{d_i}$ is very ample for any $i=1,\ldots,r$. Hence, the evaluation $\operatorname{ev}_{2p}:\AG\to \cg\otimes \oo_{C,p}/\mathfrak m_{C,p}^2$ is a smooth and surjective homomorphism with integral fibers of dimension $\dim \AG-2r$. Let $\cg^o$ be the open subset of $\cg$, where the morphism $F:\cg\to\AG^1$ is smooth. A section $a$ meets the smooth locus of $\Dg_{i,\omega}$ at $a(p)$ with multiplicity $\geq 2$ if and only if $\operatorname{ev}_{2p}$ factors through $\cg^{o}$ and $\operatorname{ev}_{2p}(a)$ is contained in the fiber over $[m_{C,p}^2]$ of the smooth morphism
	$$
	\cg^o\otimes \oo_{C,p}/\mathfrak m_{C,p}^2\xrightarrow{F_p} \oo_{C,p}/\mathfrak m_{C,p}^2.
	$$
	Observe that the locus $F^{-1}_p([m_{C,p}^2])$ is isomorphic to the tangent bundle $\mt T\Dg_{i,\omega}^{\sm}$ over the smooth locus of $\Dg_{i,\omega}$. In particular, it is isomorphic to an irreducible open subset of the (possibly reducible) affine variety of dimension $2(r-1)$
	$$
	\operatorname*{Spec}k[p_1,\ldots,p_r,dp_1,\ldots,dp_r]/(F,dF)\subset \cg\times \cg\cong \cg\otimes \oo_{C,p}/\mathfrak m_{C,p}^2.
	$$
	By definition, we have the inclusion $\mathfrak U_{i,\omega}\subset \mathfrak D_{i,\omega}^{\sm}$. Then, the set $\operatorname{ev}_{2p}^{-1}(\mt T\mathfrak U_{i,\omega})$ is an irreducible locally-closed subset of $\AG$ of dimension $\dim T\Dg_{i,\omega}^{\sm}+\dim \AG-2r=\dim \AG-2$.\\	
By definition, the set $\operatorname{ev}_{2p}^{-1}(\mt T\mathfrak U_{i,\omega})$ is the fiber over the point $p$ of the composition $U\hookrightarrow C\times\AG\xrightarrow{\pi_1} C$. Hence, $U$ is an irreducible locally-closed subset of $C\times\AG$ of dimension $\dim\AG-1$. Since the composition $U\hookrightarrow C\times\AG\xrightarrow{\pi_2}\AG$ is generically quasi-finite (by Lemma \ref{L:abincuore}), we have the last assertion. Now, assume $G$ reductive. The morphism $\pi_G:\tg_{G}\to\cg_{G}$ decomposes as follows
	$$
	(\Id,\pi_{\mathscr D(G)}):\tg_{\mathscr Z(G)}\oplus\tg_{\mathscr D(G)}\to\tg_{\mathscr Z(G)}\oplus\cg_{\mathscr D(G)},
	$$
	see Remark \ref{R:decom-pi}. Furthermore, the discriminant divisor $\Dg_G$ is the zero set of a polynomial defined by the hyperplane-roots. It means that $\Dg_{G}$ is the pull-back (along the obvious projection) of the discriminant divisor $\Dg_{\mathscr D(G)}\subset \cg_{\mathscr D(G)}$. In particular, $\mt T\Dg_G=\mt T\tg_{\mathscr Z(G)}\oplus \mt T\Dg_{\mathscr D(G)}$. Consider the composition
	$$
	\AG_G\xrightarrow{D}  \AG_{\mathscr D(G)}\xrightarrow{\operatorname{ev_{2p}}}\mt\cg_{\mathscr D(G)}.
	$$
	By previous case, we know that the image $D(U)$ has codimension $1$ in $C\times \AG_{\mathscr D(G)}$. Since $D$ is linear and surjective and $U=D^{-1}(D(U))$, we have that $U$ has codimension $1$ in $C\times\AG_G$. The last assertion follows because the composition $U\hookrightarrow C\times\AG_G\xrightarrow{\pi_2}\AG_G$ is generically quasi-finite (by Lemma \ref{L:abincuore}).\\
\end{proof}

We are finally ready for proving the proposition and its corollary.

\begin{proof}[Proof of Proposition \ref{P:gen-discr}] The first part is exactly the content of \cite[Lemma 4.7.3]{NGO10}. We now focus on the description of the irreducible components of $\DG^{\diamondsuit}$. Consider the loci:
	\begin{enumerate}[(a)]
		\item $U\subset C\times \AG$ of pairs $(p,a)$ such that $a(C)$ meets $\Dg_{\omega}^{\sm}$ at $a(p)$ with multiplicity at least $2$,
		\item $V\subset C\times\AG$ of pairs $(p,a)$ such that $a(C)$ meets $\Dg_{\omega}^{\sing}$.
	\end{enumerate} 
	By definition $\DG^{\diamondsuit}=\pi_2(U)\cup\pi_2(V)$. 
	By Lemma \ref{L:int-sect}, the locus $\widetilde V:=\pi_2(V)$ is the union of (distinct) irreducible closed subsets $\widetilde V_1,\ldots,\widetilde V_n$ of codimension $1$ in $\AG$. Where $\widetilde V_i$ is the locus of sections meeting the irreducible component $\Dg^{\sing}_{\omega,i}$ of the singular locus $\Dg_{\omega}^{\sing}$. In other words, $\widetilde{V}_i=\DG^{\operatorname*{extra}}_i$ for $i=1,\ldots n$.\\	
	Similarly, by Lemma \ref{L:mult-sect}, the closure of the projection of $U$ into $\AG$, i.e. $\widetilde{U}:=\overline{\pi_2(U)}=\pi_2(\overline{U})$, is the union of (distinct) irreducible closed subsets $\widetilde U_1,\ldots,\widetilde U_m$ of codimension $1$ in $\AG$. Where $\widetilde U_i$ is the closure of the locus of sections meeting the smooth locus $\Dg^{\sm}_{\omega}$ through the irreducible component $\Dg_{\omega,i}$ with multiplicity $\geq 2$ at least in one point. It remains to show that $\widetilde{U}_i=\DG^{\operatorname*{ab}}_i$ for $i=1,\ldots,n$. By Lemma \ref{L:claim}, any section in $\DG^{\ab}_i$ is the limit of a section in $\pi_2(U_i)$, i.e. $\DG^{\operatorname*{ab}}_i\subset \widetilde{U}_i$. Conversely, let $a$ be a section in $\widetilde{U}_i$. We distinguish two cases. First case: $a\in\AG^{\heartsuit}\cap \widetilde{U}_i$. Then the condition $a\in\DG_i^{ab}$ follows by Proposition \ref{P:descr-reg} and the upper-semicontinuity of the function $\delta:\AG^{\heartsuit}\to\mathbb Z_{\geq 0}$ (see Remark \ref{R:delta-upsc}). Second case: $a\in \widetilde{U}_i$ and $a(C)\subset \Dg_{\omega}$. Then, the inclusion follows by Lemma \ref{L:abincuore}. 
\end{proof}
Corollary \ref{C:gen-discr} has been already proved in \cite[Lemma C.0.2]{Li20}, when $G$ is semi-simple and $\omega=\mt L$ is a line bundle of degree greater than $2g$. Since our assumptions are slightly different, we include a self-contained proof.

\begin{proof}[Proof of Corollary \ref{C:gen-discr}] \fbox{$(\ref{C:gen-discr1})$} Fix $i=1,\ldots,m$. By Proposition \ref{P:gen-discr}, a generic point in $\DG_i^{\ab}$ corresponds to those sections meeting the irreducible divisor $\Dg_{i,\omega}$ outside of the singular locus of the entire discriminant divisor $\Dg_{\omega}$. Then, it is enough to show that
		\begin{enumerate}[(a)]
			\item the set of pairs $(p,a)\in C\times\AG$, such that $a(C)$ meets the smooth locus $\Dg^{\sm}_{\omega}$ of the discriminant divisor with multiplicity greater or equal than $3$ at $a(p)$, has codimension at least three in $C\times\AG$;
			\item the set of sections $a\in\AG$, such that $a(C)$ meets the smooth locus $\Dg^{\sm}_{\omega}$ of the discriminant divisor with multiplicity greater than $2$ in at least two points, has codimension at least 2 in $\AG$.
		\end{enumerate}
		Point (a): we denote the set in the assertion by $U$. The fiber $U_p$ over a point $p\in C$ along the projection $U\hookrightarrow C\times \AG\to C$ is canonically equal to an open subset of the inverse image of $[\mathfrak m_{C,p}^3]$ along the composition $$\AG=\AG_G\to\AG_{\mathscr D(G)}\xrightarrow{\operatorname{ev}_{3p}} \cg_{\mathscr D(G)}\otimes \oo_{C,p}/\mathfrak m_{C,p}^3\xrightarrow{\prod_{\alpha\in\Phi}d\alpha}\oo_{C,p}/\mathfrak m_{C,p}^3,$$ Using Riemann-Roch, we see that the evaluation $\operatorname{ev}_{3p}$ is surjective (and smooth) for any $p\in C$, if $g\geq 3$ and for $p\in C$ generic, if $g=2$. In particular, for a generic point $p$, the fiber $U_p$ has codimension at least $3$ in $\AG$. Hence, the set $U$ has codimension $3$ in $C\times\AG$.	Point (b): it follows by arguing as in the proof of the second statement of Lemma \ref{L:abincuore}.\\
\fbox{$(\ref{C:gen-discr2})$} Fix $i=1,\ldots,n$. By Proposition \ref{P:gen-discr}, a section $a$ is in $\widetilde{\DG}_i^{\operatorname{extra}}:=\DG_i^{\operatorname{extra}}\setminus(\bigcup_{j\neq i}\DG_j^{\operatorname{extra}}\cup\bigcup_j\DG_j^{\mathrm{ab}})$ if and only if $a(C)$ meets the singular locus of the discriminant divisor $\Dg_{\omega}^{\sing}$ only along the irreducible subvariety $\Dg^{\sing}_{i,\omega}$ and it meets the smooth locus $\Dg_{\omega}^{\sm}$ transversally. Hence, for concluding the proof, we need to show that a generic section in $\widetilde{\DG}_i^{\operatorname{extra}}$ meets the singular locus $\Dg^{\sing}_{\omega}$ in a unique point and transversally. Thus, it is enough to show that
		\begin{enumerate}[(a)]
			\item the set of pairs $(p,a)\in C\times\AG$, such that $a(C)$ meets the singular locus $(\Dg^{\sing}_{\omega})^{\sing}$ of $\Dg^{\sing}_{\omega}$ at $a(p)$, has codimension at least three in $C\times\AG$;
			\item the set of triples $(p,q,a)\in C\times C\times\AG$, such that $a(p),a(q)\in\Dg^{\sing}_{\omega}$, has codimension at least four in $C\times C\times \AG$;
			\item the set of pairs $(p,a)\in C\times\AG$, such that $a(C)$ meets the smooth locus $(\Dg^{\sing}_{\omega})^{\sm}$ of $\Dg^{\sing}_{\omega}$ at $a(p)$ with multiplicity at least two, has codimension at least four in $C\times \AG$.
		\end{enumerate}
Point (a) follows from Lemma \ref{L:int-sect}$(\ref{L:int-sect1})$. Point (b) follows from Lemma \ref{L:int-sect}$(\ref{L:int-sect2})$. Point (c):  we denote the set in the assertion by $U$. The fiber $U_p$ over a point $p\in C$ along the projection $U\hookrightarrow C\times \AG\to C$ is canonically equal to the inverse image of the tangent bundle $\mathcal T ((\Dg^{\sing}_{\omega})^{\sm})$ of $(\Dg^{\sing}_{\omega})^{\sm}$ in $\mathcal T\cg_{\mathscr D(G)}$ along the composition $$\AG=\AG_G\to\AG_{\mathscr D(G)}\xrightarrow{\operatorname{ev}_{2p}} \cg_{\mathscr D(G)}\otimes \oo_{C,p}/\mathfrak m_{C,p}^2=\mathcal T\cg_{\mathscr D(G)}.$$Using Riemann-Roch, we see that the composition above is surjective, for any $p$. Since $\mathcal T ((\Dg^{\sing}_{\omega})^{\sm})$ is a locally closed subscheme of codimension at least four in $\mathcal T\cg_{\mathscr D(G)}$, the fiber $U_p$ has codimension four in $\AG$. Hence, the set $U$ has codimension at least four in $C\times\AG$.
\end{proof}

\subsection{The subset $\AG^{\ab}\subset\AG$.}
In the previous section, we have defined the open subset $\AG^{\diamondsuit}$ of $\AG$ and we have seen that it parameterizes smooth cameral curves. In this subsection, we will define another open subset in $\AG$, containing $\AG^{\diamondsuit}$, and we will show that it parameterizes Hitchin fibers which are isomorphic to abelian stacks.

We denote by $\AG^{\ab}$ the subset in $\AG$, whose sections satisfy condition $(\ref{P:descr-reg4})$ of Proposition \ref{P:descr-reg}, i.e.
$$
\AG^{\ab}:=\{a\in\AG\,|\,\operatorname{Im}(da_p)\not\subset\mt C_{a(p)}\Dg_{\omega} \text{ for any }p\in a^{-1}(\Dg_{\omega})\subset C\}.
$$
We denote $\DG^{\ab}:=\AG\setminus\AG^{\ab}$ its complementary subset. The goal of the section is proving the following proposition.

\begin{prop}\label{P:uni-stack}Fix $\delta\in\pi_1(G)$. The following facts hold.
	\begin{enumerate}[(i)]
		\item\label{P:uni-stack1} For any section $a\in\AG^{\ab}$, the Hitchin fiber $\hit_G^\delta(a)$ is an abelian stack.
		\item\label{P:uni-stack2} For a generic section $a\in\DG^{\ab}$, there exists a finite birational morphism of stacks
		\begin{equation*}\label{E:uni-stack}
		\mathbb P^1\stackrel{\mathbb G_m}{\times}\jator{a}^{\delta}\to \hit_G^{\delta}(a),
\end{equation*}
where the left-hand side is a locally trivial $\mathbb P^1$-fibration over the abelian stack $\jator{a}^{\delta,\ab}$.
	\end{enumerate}
\end{prop}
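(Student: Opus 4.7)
For part (i) I invoke Proposition \ref{P:descr-reg}. First I note $\AG^{\ab}\subset\AG^{\heartsuit}$: if $a\in\DG^{\heartsuit}$, i.e.\ $a(C)\subset \Dg_{\omega}$, then Lemma \ref{L:abincuore} gives $\operatorname{Im}(da_p)\subset\mt C_{a(p)}\Dg_{\omega}$ at every $p$, so $a\in \DG^{\ab}$. For $a\in\AG^{\ab}$ the defining condition is precisely condition (iv) of Proposition \ref{P:descr-reg}, and the equivalence (iv)$\Leftrightarrow$(ii) shows that each connected component of $\hit_G(a)$, and in particular $\hit_G^\delta(a)$, is an abelian stack.

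For part (ii) I first observe that the irreducible components of $\DG^{\ab}$ are precisely $\DG^{\ab}_1,\ldots,\DG^{\ab}_m$ of Proposition \ref{P:gen-discr}(i). The inclusion $\DG^{\ab}\subset\DG^{\diamondsuit}$ is immediate from $\AG^{\diamondsuit}\subset\AG^{\ab}$, while the components $\DG^{\op{extra}}_j$ are ruled out because at the generic section of $\DG^{\op{extra}}_j$ described by Corollary \ref{C:gen-discr}(ii), the image of $da_p$ at $a(p)\in\Dg^{\sing}_{j,\omega}$ generically avoids both branch-tangent hyperplanes, hence lies outside the tangent cone. Now fix a generic $a\in\DG^{\ab}_i$. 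Corollary \ref{C:gen-discr}(i) says that $a$ meets $\Dg_\omega$ transversally except at one point $p$ where it meets $\Dg^{\sm}_{i,\omega}$ with multiplicity $2$. As in the proof of Proposition \ref{P:descr-reg}, the local cameral cover at $p$ has the normal form $\operatorname{Spec} k\llbracket t,x\rrbracket/(t^2-x^2)$, i.e.\ a simple node with trivial stabiliser $W_x$, so the Ng\^o--Bezrukavnikov formula (Proposition \ref{P:delta-for}) yields $\delta_q(a)=0$ for every $q\in B_a\setminus\{p\}$ and $\delta_p(a)=1$. Therefore $\delta(a)=\dim\jator{a}^{\aff}=1$.

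Applying the Product Formula (Theorem \ref{T:prod}), at each transverse point $q\neq p$ the factor $\spr{q}^{\red}=\spreg{q}^{\red}\cong\japtor{a,q}^{\red}$ absorbs into $\jator{a}^{\delta}$, so the formula collapses to a homeomorphism
\[
\spr{p}^{\red}\stackrel{\japtor{a,p}^{\red}}{\times}\jator{a}^{\delta}\longrightarrow \hit_G^{\delta}(a).
\]
The local analysis at the nodal point $p$ is then expected to give $\spr{p}^{\red}\cong \mathbb P^1$ with an action of $\japtor{a,p}^{\red}\cong\mathbb G_m$ whose two fixed points correspond to the two branches of the node. Tracing this through the exact sequence $1\to\jator{a}^{\aff}\to\jator{a}^{\delta}\to\jator{a}^{\delta,\ab}\to 1$ of Proposition \ref{P:aff-ab}, the composition $\jator{a}^{\aff}\hookrightarrow\jator{a}^{\delta}\twoheadrightarrow \japtor{a,p}^{\red}$ is an isomorphism by a dimension count, so the quotient $\mathbb P^1\times^{\mathbb G_m}\jator{a}^{\delta}\to \jator{a}^{\delta,\ab}$ is a locally trivial $\mathbb P^1$-bundle. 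The displayed morphism is finite (as a homeomorphism of finite-type stacks) and birational, since on the dense open regular locus it restricts to the isomorphism $\jator{a}^{\delta}\xrightarrow{\sim}\hit_G^{\delta,\reg}(a)$ guaranteed by the second assertion of Theorem \ref{T:prod}.

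\textbf{Main obstacle.} The crucial local step is the identification $\spr{p}^{\red}\cong \mathbb P^1$ together with its $\mathbb G_m$-action. Since the node of the cameral cover is produced by two branches exchanged by a single hyperplane-root reflection $s_\alpha$, I expect this to reduce to a rank-one calculation in the sub-$\operatorname{SL}_2$ (or $\operatorname{PGL}_2$) attached to the coroot $\alpha^{\vee}$, where equivalued affine Springer fibres are classically isomorphic to $\mathbb P^1$; verifying the compatibility of this local model with the global centraliser group $\japtor{a,p}^{\red}$ is the main technical point.
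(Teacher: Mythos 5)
Part (i) is correct and is exactly the paper's argument (Remark \ref{abincuore} plus the equivalence $(\ref{P:descr-reg4})\Leftrightarrow(\ref{P:descr-reg2})$ of Proposition \ref{P:descr-reg}); likewise the reduction of part (ii) via Corollary \ref{C:gen-discr}$(\ref{C:gen-discr1})$ and the Product Formula to the single non-transverse point $p$, and the computation $\delta(a)=\delta_p(a)=1$, all match the paper.

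The gap is precisely the step you flag as the ``main obstacle'', and your guessed local model is not the correct one. By Ng\^o's description (\cite[\S 8.3]{NGO10}, which is what the paper invokes here), the reduced affine Springer fibre at the node is \emph{not} a single $\mathbb P^1$: it is a disjoint union $\operatorname{Spr}^{\operatorname{red}}_p(a)=\bigsqcup_{j\in\Lambda/\langle\alpha^{\vee}\rangle_{\mathbb Z}}X_j$ of \emph{infinite} chains of rational curves, and the local symmetry group $\japtor{a,p}^{\operatorname{red}}$ is an extension $1\to\mathbb G_m\to\japtor{a,p}^{\operatorname{red}}\to\Lambda\to 1$ of the full cocharacter lattice $\Lambda$ by $\mathbb G_m$, not just $\mathbb G_m$. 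The lattice $\Lambda$ permutes the components $X_j$ and the sublattice $\langle\alpha^{\vee}\rangle_{\mathbb Z}$ translates along each chain; the morphism of the statement is obtained by choosing one $\mathbb P^1$ inside $X_0$ with its residual $\mathbb G_m$-action. This matters for the conclusion: the map $\mathbb P^1\times^{\mathbb G_m}\jator{a}\to\operatorname{Spr}^{\operatorname{red}}_p(a)\times^{\japtor{a,p}^{\operatorname{red}}}\jator{a}$ is finite and birational but not an isomorphism, because the translation by $\alpha^{\vee}$ identifies the two $\mathbb G_m$-fixed points of the chosen $\mathbb P^1$ in the quotient --- this gluing is exactly the non-normality of the singular Hitchin fibre, and it is invisible in your model where $\operatorname{Spr}^{\operatorname{red}}_p(a)\cong\mathbb P^1$ with two distinct fixed points surviving. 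Relatedly, the sentence about the composition $\jator{a}^{\operatorname{aff}}\hookrightarrow\jator{a}^{\delta}\twoheadrightarrow\japtor{a,p}^{\operatorname{red}}$ does not parse: the natural homomorphism goes the other way, $\japtor{a,p}^{\operatorname{red}}\to\jator{a}$, and there is no surjection from $\jator{a}^{\delta}$ onto the local group. So the architecture of your proof is the right one, but the rank-one local computation must be replaced by (or reduced to) Ng\^o's chain description rather than the equivalued $\mathbb P^1$ picture.
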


Before proving this, we need the following fact.

\begin{prop}\label{P:gen-ab} The closed subset $\DG^{\ab}$ is the union of the subsets
	$$
	\DG^{\ab}_{1},\ldots,\DG^{\ab}_{m},
	$$
	where $\DG^{\ab}_{i}$ is the irreducible hypersurface in $\AG$ described in Proposition \ref{P:gen-discr}. Furthermore, we have the chain of inclusions $\AG^{\diamondsuit}\subset\AG^{\ab}\subset\AG^{\heartsuit}$.
\end{prop}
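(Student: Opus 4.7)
The plan is to read the statement directly off the definitions, invoking Lemma \ref{L:abincuore} and Proposition \ref{P:gen-discr} at the few places where a substantive input is required. No single step is hard; the whole proposition is essentially bookkeeping.

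First I would establish the set equality $\DG^{\ab}=\bigcup_{i=1}^{m}\DG^{\ab}_{i}$ by a tautological comparison of definitions. By construction, $a\in\DG^{\ab}$ exactly when there is a point $p\in a^{-1}(\Dg_{\omega})$ with $\operatorname{Im}(da_{p})\subset\mt C_{a(p)}\Dg_{\omega}$. Since $\Dg_{\omega}=\bigcup_{i}\Dg_{i,\omega}$, such a $p$ necessarily satisfies $a(p)\in\Dg_{i,\omega}$ for at least one index $i$, and this is precisely the condition used to characterize $\DG^{\ab}_{i}$ in Proposition \ref{P:gen-discr}(\ref{P:gen-discr1}). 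The reverse containment $\bigcup_{i}\DG^{\ab}_{i}\subset\DG^{\ab}$ is equally immediate from the same characterization.

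For the chain of inclusions, $\AG^{\ab}\subset\AG^{\heartsuit}$ is handled by Lemma \ref{L:abincuore}: if $a\notin\AG^{\heartsuit}$, i.e.\ $a(C)\subset\Dg_{\omega}$, the lemma asserts $\operatorname{Im}(da_{p})\subset\mt C_{a(p)}\Dg_{\omega}$ for every $p\in C$, so $a\in\DG^{\ab}$. The remaining inclusion $\AG^{\diamondsuit}\subset\AG^{\ab}$ is a transversality observation: for $a\in\AG^{\diamondsuit}$, every point $p\in a^{-1}(\Dg_{\omega})$ maps to a smooth point of $\Dg_{\omega}$, where the tangent cone $\mt C_{a(p)}\Dg_{\omega}$ coincides with the tangent hyperplane $\mt T_{a(p)}\Dg_{\omega}$; the hypothesis that $a(C)$ meets $\Dg_{\omega}$ with multiplicity $1$ at $a(p)$ is exactly the statement that the line $\operatorname{Im}(da_{p})$ is not contained in this hyperplane, placing $a$ in $\AG^{\ab}$.

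The only small subtlety worth flagging is that $\AG^{\ab}$ is defined using the tangent \emph{cone} and not the tangent \emph{space} of $\Dg_{\omega}$; this is what makes Lemma \ref{L:abincuore} directly applicable at sections $a$ with $a(C)\subset\Dg_{\omega}$, where $a(p)$ typically lies in the singular locus of $\Dg_{\omega}$ and the naive tangent space would degenerate to the whole ambient space.
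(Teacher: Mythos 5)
Your proposal is correct and follows essentially the same route as the paper: the decomposition of $\DG^{\ab}$ is read off from Proposition \ref{P:gen-discr}(\ref{P:gen-discr1}), and the inclusion $\AG^{\ab}\subset\AG^{\heartsuit}$ comes from Lemma \ref{L:abincuore}. The only (immaterial) difference is that you verify $\AG^{\diamondsuit}\subset\AG^{\ab}$ directly by the transversality/tangent-cone observation at smooth points of $\Dg_{\omega}$, whereas the paper deduces it from the containment $\DG^{\ab}\subset\DG^{\diamondsuit}$ supplied by the component description in Proposition \ref{P:gen-discr}.
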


\begin{proof} The first statement follows from the description of the irreducible components of $\DG^{\diamondsuit}$ (see Proposition \ref{P:gen-discr}). In particular, we get $\DG^{ab}\subset \DG^{\diamondsuit}$ and, so, the first inclusion in the second statement holds. The second inclusion follows by Lemma \ref{L:abincuore}.
\end{proof}

\begin{rmk}By\label{abincuore} Proposition \ref{P:gen-ab}, we know that $\AG^{\ab}\subset\AG^{\heartsuit}$. In particular, $a\in\AG^{\ab}$ if and only $a\in\AG^{\heartsuit}$ and it satisfies one of the equivalent conditions of Proposition \ref{P:descr-reg}.
\end{rmk}

\begin{rmk}Proposition \ref{P:gen-ab} gives a criterion for counting the number of irreducible components of the divisor $\DG^{\ab}$. In particular, we may deduce the following facts.
	\begin{enumerate}[(a)]
		\item The divisor $\DG^{\ab}$ is empty if and only if $G$ is abelian.
		\item The divisor $\DG^{\ab}$ is irreducible and non-empty if and only if the derived subgroup $\mathscr D(G)$ is an almost-simple group of type ADE.
		\item Assume $\mathscr D (G)$ being a non-trivial almost-simple group. The divisor $\DG^{\ab}$ is irreducible, if $\mathscr D (G)$ is of type ADE and reducible with two components, otherwise.
	\end{enumerate}
\end{rmk}
\begin{rmk}Propositions \ref{P:gen-discr} and \ref{P:gen-ab} together explain how far is the inclusion $\AG^{\diamondsuit}\subset \AG^{\ab}$ from being an equality. In particular, the following facts are equivalent.
	\begin{enumerate}[(a)]
		\item The inclusion $\AG^{\diamondsuit}\subsetneq \AG^{\ab}$ is strict.
		\item The discriminant divisor $\Dg\subset\cg$ is singular.
		\item the group $G$ has semi-simple rank greater or equal than $2$, i.e. $\mathscr D(G)\neq \op{SL}_2,\op{PGL}_2$.
	\end{enumerate}
\end{rmk}

Thanks to Proposition \ref{P:gen-ab}, we can now show the main result of the subsection.

\begin{proof}[Proof of Proposition \ref{P:uni-stack}] Point $(\ref{P:uni-stack1})$ follows by Remark \ref{abincuore}. We now show the point $(\ref{P:uni-stack2})$. Let $a$ be a generic section in $\DG^{\ab}$. By Proposition \ref{P:gen-ab} and Corollary \ref{C:gen-discr}$(\ref{C:gen-discr1})$, we may assume that $a(C)$ meets $\Dg_{\omega}$ transversally, with the exception of a unique point in $p\in C$, where it meets the smooth locus $\Dg^{\sm}_{\omega}$ along the irreducible component $\Dg_{\omega,i}$ with multiplicity 2 at the point $\alpha(p)$. In particular, $a\in\AG^{\heartsuit}$. By Product formula (= Theorem \ref{T:prod}), we have a homeomorphism of stacks
	\begin{equation}\label{E:prod-generic}
		\prod_{q\in B_a}\operatorname{Spr}^{\operatorname{red}}_q(a)\stackrel{\prod_{p\in B_a}\japtor{a,q}^{\operatorname{red}}}{\times}\jator{a}\to \hit_G(a),
	\end{equation}
where $B_a$ is the branch locus of the cameral cover $C_a$ corresponding to $a$. Arguing as in the proof of Proposition \ref{P:descr-reg}, we get that all the elements in the affine Springer fiber over the point $q\neq p$ are regular, i.e. $\operatorname{Spr}^{\operatorname{red}}_q(a)=\operatorname{Spr}^{\operatorname{red},\reg}_q(a)$ for any $q\in B_a\setminus\{p\}$. The latter is a torsor with respect to the group $\japtor{a,p}^{\operatorname{red}}$. In particular, the homeomorphism \eqref{E:prod-generic} becomes
\begin{equation}\label{E:prod-generic2}
	\operatorname{Spr}^{\operatorname{red}}_p(a)\stackrel{\japtor{a,p}^{\operatorname{red}}}{\times}\jator{a}\to \hit_G(a).
\end{equation}
We now describe the affine Springer fiber $\operatorname{Spr}^{\operatorname{red}}_p(a)$ following \cite[Section 8.3]{NGO10}. There exists a co-root $\alpha^{\vee}$ such that
$$
\operatorname{Spr}^{\operatorname{red}}_p(a)=\bigsqcup_{j\in\Lambda/\langle\alpha^{\vee}\rangle_{\mathbb Z}}X_j,
$$
where $\langle\alpha^{\vee}\rangle_{\mathbb Z}$ is the sublattice of the cocharacter lattice $\Lambda:=\operatorname{Hom}(\mathbb G_m,T)$ of the maximal torus $T\subset G$ and $X_j=\bigcup_{i\in<\alpha^{\vee}>_{\mathbb Z}}\mathbb P^1$ is an infinite chain of rational lines. Furthermore, there exists an exact sequence of groups
$$
1\to\mathbb G_m\to\japtor{a,p}^{\operatorname{red}}\to\Lambda\to 1,
$$
where $\mathbb G_m$ acts on the affine Springer fibers by scalar multiplication. Fix a projective line $\mathbb P^1$ in $X_0$, we get a finite birational morphism of stacks
\begin{equation}\label{E:mor-comp}
	\mathbb P^1\times^{\mathbb G_m}\jator{a}\to \operatorname{Spr}^{\operatorname{red}}_p(a)\stackrel{\japtor{a,p}^{\operatorname{red}}}{\times}\jator{a}.
\end{equation}
Then, the composition of the morphisms \eqref{E:prod-generic2} and \eqref{E:mor-comp} gives the required morphism.
\end{proof}

\subsection{The moduli space $\hitgd_G$ of semistable $G$-Higgs bundles.}\label{SS:ss}
Here, we present the analogous results of the previous sections for the (good) moduli space of semistable bundles.

\begin{defin}A $G$-bundle $E$ is \emph{simple} if $\Aut(E)=\mathscr Z(G)$. A $G$-bundle $E$ is \emph{stable} (resp. \emph{semistable}) if for any reduction $F$ of $E$ to any parabolic subgroup $P\subset G$, we have 
	$$
	\deg(F\times^{\op{Ad},P}\mathfrak p)<0
	, \quad(\text{resp. }  \deg(F\times^{\op{Ad},P}\mathfrak p)\leq 0),$$ where $\op{Ad}:P\to \op{GL}(\mathfrak p)$ is the adjoint action of $P$ on its Lie algebra $\mathfrak p$.\\
A $G$-Higgs bundle $(E,\theta)$ is \emph{stable} (resp. \emph{semistable}) if for any reduction $F$ of $E$ to any parabolic subgroup $P\subset G$ such that $\theta\in H^0(C,(F\times^{\op{Ad},P}\mathfrak p)\otimes\omega )\subset H^0(C,\ad{E})$, we have
$$
\deg(F\times^{\op{Ad},P}\mathfrak p)<0
, \quad(\text{resp. }  \deg(F\times^{\op{Ad},P}\mathfrak p)\leq 0).$$
\end{defin}
The (semi)stable locus $\BG_{G}^{(s)s}$ is an open substack of the moduli stack $\BG_G$. It is well-known that the moduli stack $\BG_G^{ss}$ admits a good moduli space $\overline{\mt M}_G$ (or $\overline{\mt M}_G(C)$, when the reference to the curve is needed). The next theorem resumes the main properties of this moduli space.

\begin{teo}\label{T:bg-fac}The following facts hold.
\begin{enumerate}[(i)]
	\item The moduli stack $\BG_G^{\operatorname{(s)s}}$ of (semi)stable $G$-bundles is an open and dense substack of finite type over $k$ in $\BG_G$.
	\item The good moduli morphism $\BG_G^{\operatorname{ss}}\to \overline{\mt M}_G$ gives a bijection of connected components $\pi_0(\BG_G)=\pi_0(\overline{\mt M}_G)=\pi_1(G)$.
	\item For any $\delta\in \pi_1(G)$, the connected component $\overline{\mt M}^{\delta}_G$ of semistable $G$-bundle of degree $\delta$ is an irreducible normal projective variety over $k$.
	\item For any $\delta\in \pi_1(G)$, the subset $\mt M^{\delta}_G$ of stable and simple $G$-bundles of degree $\delta$ is contained in the smooth locus of the connected component $\overline{\mt M}^{\delta}_G$.
\end{enumerate}
\end{teo}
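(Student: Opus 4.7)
The statement is essentially a compilation of standard facts about moduli of $G$-bundles, so the plan is to organize the four points in order, reducing each to known results and making the bookkeeping between the stack and its good moduli space explicit.

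For point (i), the plan is to verify that semistability is an open condition and that the locus of semistable bundles is of finite type. Openness will be proved following the argument of Ramanathan: given a family $E \to C \times S$, the locus where $E_s$ is non-semistable is the union, over standard parabolics $P \subset G$ and over positive integers $d$, of the images in $S$ of the (projective) relative schemes of $P$-reductions $\sigma$ with $\deg(\sigma^*(\operatorname{ad}(E)/\operatorname{ad}_P(E))) \geq d$; each such image is closed, and for large enough $d$ one gets constructible stratifications whose union captures the non-semistable locus. Finite type then reduces to the boundedness of semistable $G$-bundles, which is classical (Ramanathan for $G$ semisimple, extended to the reductive case via the decomposition $\mathscr{Z}(G)^{\circ} \times \mathscr{D}(G) \to G$). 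Density of the (semi)stable locus in $\mathbf{Bun}_G$ follows because the unstable locus has positive codimension and $\mathbf{Bun}_G^{\delta}$ is irreducible by Proposition \ref{P:hit-prop}.

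For point (ii), the idea is to combine point (i) with the description of $\pi_0(\mathbf{Bun}_G) = \pi_1(G)$ due to Hoffmann, which was already invoked in the proof of Proposition \ref{P:hit-prop}$(\ref{P:hit-prop1})$. Since density of the semistable locus shows every connected component of $\mathbf{Bun}_G$ meets $\mathbf{Bun}_G^{\mathrm{ss}}$, one gets a surjection $\pi_0(\mathbf{Bun}_G^{\mathrm{ss}}) \twoheadrightarrow \pi_1(G)$, and conversely the connected components of $\mathbf{Bun}_G^{\mathrm{ss}}$ are no finer than those of $\mathbf{Bun}_G$, so the map is a bijection. The good moduli space morphism $\mathbf{Bun}_G^{\mathrm{ss}} \to \overline{\mt M}_G$ is surjective on points (with fibers given by $S$-equivalence classes) and identifies connected components of source and target, finishing this point.

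For point (iii), projectivity and existence of the good moduli space come from the GIT construction of Ramanathan, and the resulting coarse space is projective. Irreducibility of $\overline{\mt M}_G^{\delta}$ is inherited from irreducibility of $\mathbf{Bun}_G^{\delta,\mathrm{ss}}$, which in turn follows from Proposition \ref{P:hit-prop}$(\ref{P:hit-prop1})$ plus density of the semistable locus. Normality is obtained from the fact that $\mathbf{Bun}_G$ is smooth (Proposition \ref{P:hit-prop}$(\ref{P:hit-prop2})$) and that good moduli spaces of smooth Artin stacks with reductive stabilizers are normal (equivalently, the local GIT picture exhibits $\overline{\mt M}_G$ \'etale-locally as $\operatorname{Spec}(\mathcal O_s^{\operatorname{Aut}(E_s)})$ for a semistable point $E_s$, which is a classical normal ring).

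For point (iv), the key observation is that a stable and simple $G$-bundle $E$ has automorphism group exactly $\mathscr Z(G)$, which is the generic stabilizer on $\mathbf{Bun}_G$. Hence the open substack $\mt M_G^{\delta,\mathrm{sim}} \subset \mathbf{Bun}_G^{\delta}$ of stable and simple bundles is a $\mathscr Z(G)$-gerbe over its coarse space $\mt M_G^{\delta} \subset \overline{\mt M}_G^{\delta}$; since $\mathbf{Bun}_G^{\delta}$ is smooth and rigidifying a smooth gerbe yields a smooth stack, the coarse space $\mt M_G^{\delta}$ is smooth. The main (and essentially only) subtle piece of this argument is carefully relating the stacky structure of $\mathbf{Bun}_G^{\delta,\mathrm{ss}}$ to that of its good moduli space near the stable locus; once this is granted, all four points are immediate from the cited results, so I expect no real obstacle beyond keeping track of passage from stacks to coarse spaces and invoking the correct form of boundedness.
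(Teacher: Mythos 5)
Your proposal is correct in outline, but note that the paper itself offers no proof of this theorem at all: it is introduced with the phrase ``the next theorem resumes the main properties of this moduli space'' and is treated purely as a recollection of well-known facts (implicitly resting on Ramanathan's GIT construction and its reductive-group extensions), exactly as the analogous Theorem \ref{T:ss-higgs} for Higgs bundles is disposed of by a one-line citation to Faltings. So there is no ``paper's route'' to compare against; what you have done is reconstruct the standard chain of arguments that the author takes for granted. Your reconstruction is sound: openness and boundedness of semistability \`a la Ramanathan for (i); Hoffmann's computation of $\pi_0(\BG_G)=\pi_1(G)$ combined with density for (ii); GIT projectivity plus normality of invariant rings of smooth schemes under reductive groups for (iii); and the $\mathscr Z(G)$-gerbe observation for (iv). Two small points deserve explicit mention if you were to write this out in full. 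First, your density argument in (i) quietly requires non-emptiness of the (semi)stable locus in each component $\BG_G^{\delta}$ for $g\geq 2$; this is true but is itself a theorem (it follows, as you hint, from the positive-codimension estimate for the unstable strata, which must be invoked rather than merely the irreducibility of $\BG_G^{\delta}$ from Proposition \ref{P:hit-prop}). Second, in (iv) you should justify that the good moduli morphism restricted to the stable simple locus is a coarse space (stable orbits are closed in the semistable locus), after which the gerbe argument for smoothness goes through. Neither issue is a gap in substance, only in explicitness.
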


On the other hand, the (semi)stable locus $\hit_G^{\operatorname{(s)s}}$ is an open substack of the entire moduli stack $\hit_G$. It is well-known that the moduli stack $\hit_G^{\operatorname{ss}}$ admits a good moduli space $\hitgd_G$. The Hitchin morphism factors through the good moduli space
\begin{equation}\label{E:hit-fac}
h:\hit_G\to \hitgd_G \to \AG.
\end{equation}
With abuse of notation, we denote with the same symbol $h$ the second morphism in the composition \eqref{E:hit-fac} and we call it \emph{Hitchin morphism}. Analogously, we call \emph{Hitchin fiber over $a$} the inverse image $\hitgd_G(a):=h^{-1}(a)\subset\hitgd_G$ with respect to the section $a\in \AG$. 
\begin{teo}\label{T:ss-higgs} The following facts hold.
	\begin{enumerate}[(i)]
		\item\label{T:ss-higgs1} The moduli stack $\hit_G^{\operatorname{(s)s}}$ of (semi)stable $G$-Higgs bundles is an open and dense substack of finite type over $k$ in $\hit_G$.
		\item\label{T:ss-higgs2} The good moduli morphism $\hit_G^{\operatorname{(s)s}}\to \hitgd_G$ gives a bijection of connected components $\pi_0(\hit_G)=\pi_0(\hitgd_G)=\pi_1(G)$.
		\item\label{T:ss-higgs3} Fix $\delta\in\pi_1(G)$. The Hitchin morphism $h^{\delta}:\hitgd_G^{\delta}\to \AG$ is projective and faithfully flat with geometrically connected fibers of dimension $\dim\AG=\dim G(g-1)+\dim\mathscr Z(G)$.
	\end{enumerate}
\end{teo}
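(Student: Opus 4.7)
The theorem packages three standard facts, each of which reduces to combining a general result on semistable $G$-Higgs bundles with what we already know about the full moduli stack from Proposition~\ref{P:hit-prop}.

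For part $(\ref{T:ss-higgs1})$, I would first invoke the openness of semistability on the stack $\hit_G$: this is the Higgs analogue of Ramanathan's result for $G$-bundles, proved in this generality by Faltings \cite{Fa93}. Finite-type-ness is a consequence of the boundedness of semistable $G$-Higgs bundles of fixed topological type, which can be extracted from the same reference, or reduced to the $\op{GL}_n$ case by choosing a faithful representation of $G$. For density, I would use the Hitchin--Kostant section $\epsilon_{\kappa}:\AG\to\hit_G$ of \eqref{E:Hit-kos}, whose image consists of regular $G$-Higgs bundles; a regular $G$-Higgs bundle lying over $\AG^{\heartsuit}$ is automatically stable, since by construction $\epsilon_{\kappa}$ lands in the $\jator{a}$-torsor $\hit_G^{\reg}(a)$. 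Hence each connected component of $\hit_G$ (bijective to $\pi_1(G)$ by Proposition~\ref{P:hit-prop}$(\ref{P:hit-prop1})$) meets $\hit_G^{\sm{s}}$ nontrivially; irreducibility of each component (Proposition~\ref{P:hit-prop}$(\ref{P:hit-prop3})$) upgrades this to density.

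Part $(\ref{T:ss-higgs2})$ is then formal: Proposition~\ref{P:hit-prop}$(\ref{P:hit-prop1})$ gives $\pi_0(\hit_G)=\pi_1(G)$, density from $(\ref{T:ss-higgs1})$ gives $\pi_0(\hit_G^{\operatorname{ss}})=\pi_0(\hit_G)$, and the good moduli morphism $\hit_G^{\operatorname{ss}}\to\hitgd_G$ is surjective with connected (in fact $S$-equivalence-class) fibers on closed points, so it induces a bijection of connected components.

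For part $(\ref{T:ss-higgs3})$, I would invoke Faltings' construction \cite{Fa93} of $\hitgd_G^\delta$ as a projective good quotient over $\AG$ via GIT on a suitable Quot scheme of Higgs pairs; this yields properness (in fact projectivity) of $h^\delta$ in one stroke, for any reductive $G$. The dimension $\dim\AG=\dim G(g-1)+\dim\mathscr Z(G)$ was computed just after the definition of $\AG$, and geometric connectedness of the fibers of $h^\delta:\hitgd_G^\delta\to\AG$ descends from the corresponding statement for $\hit_G^\delta\to\AG$ (Proposition~\ref{P:hit-prop}$(\ref{P:hit-prop4})$) via the surjection $\hit_G^{\operatorname{ss},\delta}\to\hitgd_G^\delta$. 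Finally, for flatness on the stable locus: by Theorem~\ref{T:bg-fac}(iv) (and its Higgs analogue, obtained via Serre duality applied to $\ads(E)\otimes\omega$) the locus of stable, simple Higgs bundles is contained in the smooth locus of $\hitgd_G^\delta$; since $\AG$ is smooth and all fibers of $h^\delta$ have the same dimension $\dim G(g-1)+\dim\mathscr Z(G)-\dim\AG=\dim G(g-1)$ (by Proposition~\ref{P:hit-prop}$(\ref{P:hit-prop4})$), miracle flatness gives flatness on this open subset.

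The main technical point to be imported from the literature is the projectivity of the Hitchin morphism for arbitrary reductive $G$; this is exactly what is established in \cite{Fa93} and is by far the deepest input. Every other ingredient is either a formal consequence of Proposition~\ref{P:hit-prop} or a standard application of GIT and miracle flatness.
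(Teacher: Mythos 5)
Your overall strategy coincides with the paper's: its proof of Theorem \ref{T:ss-higgs} is a single sentence deferring everything to Faltings \cite{Fa93} (``the first two points are consequences of the third one, whose proof (in this generality) can be found in [Fa93]''), and you correctly identify the projectivity of the Hitchin morphism and the openness/boundedness of semistability as the inputs to be imported from that reference, with the rest being formal consequences of Proposition \ref{P:hit-prop}. However, two of the supplementary arguments you supply do not work as stated.

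First, in the density argument for $(\ref{T:ss-higgs1})$ you claim that a regular $G$-Higgs bundle over $a\in\AG^{\heartsuit}$ is automatically stable ``since $\epsilon_{\kappa}$ lands in the $\jator{a}$-torsor $\hit_G^{\reg}(a)$''. Regularity is a pointwise condition on the centralizer of the Higgs field and by itself says nothing about the degrees of compatible parabolic reductions, so this is not a justification. The route available inside the paper is to take $a\in\AG^{\diamondsuit}$, where the cameral curve is integral (Corollary \ref{C:irr-cam}), and invoke Lemma \ref{L:irrthenst}, which shows that integrality of the cameral curve forces every Higgs bundle in that fiber to be stable; combined with openness of semistability and the irreducibility of each component $\hit_G^{\delta}$ this gives density. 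Second, in $(\ref{T:ss-higgs3})$ you propose to descend geometric connectedness of the fibers from $\hit_G^{\delta}\to\AG$ to $\hitgd_G^{\delta}\to\AG$ via the surjection $\hit_G^{\operatorname{ss},\delta}\to\hitgd_G^{\delta}$. But the semistable locus is only an \emph{open} substack of the connected stacky fiber, and an open subset of a connected space need not be connected, so nothing descends in that direction. The paper's logic runs the other way: Faltings' result gives $\oo_{\AG}\xrightarrow{\sim}h^{\delta,\operatorname{ss}}_*\oo_{\hit_G^{\delta,\operatorname{ss}}}$ directly on the semistable locus, which yields connectedness of the fibers of $\hitgd_G^{\delta}\to\AG$, and the corresponding statement for the full stack in Proposition \ref{P:hit-prop}$(\ref{P:hit-prop4})$ is \emph{deduced} from it. Both repairs are local; the remaining steps (finite type, the $\pi_0$ bijection for good moduli morphisms, the dimension count, and miracle flatness on the stable locus) are sound.
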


\begin{proof}The first two points are consequences of the third one. With the exception of the flatness, a proof of the third point (in this generality) can be found in \cite{Fa93}. The flatness follows from the flatness of the stacky Hitchin morphism (see Proposition \ref{P:hit-prop}\label{P:hit-prop4}) and the fact that the good moduli morphism $\hit_{G}^{\operatorname{ss}}\to\hitgd_G$ preserves the flatness (see \cite[Theorem 4.16(ix)]{AL}).
\end{proof}
\begin{prop}\label{T:geom-fiber}Fix $\delta\in\pi_1(G)$. The following facts hold.
	\begin{enumerate}[(i)]
		\item\label{T:i1:geom-fiber} For any $a\in \AG^{\ab}$, the Hitchin fiber $\hitgd_G^{\delta}(a)$ is an abelian variety.
		\item\label{T:i2:geom-fiber} For a generic $a\in\DG^{\ab}$, the Hitchin fiber $\hitgd_G^{\delta}(a)$ is a uniruled irreducible variety.
	\end{enumerate}
\end{prop}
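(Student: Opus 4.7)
The plan is to deduce both statements from Proposition \ref{P:uni-stack} by descending to good moduli spaces. The key observation is that an abelian stack $[A/\Gamma]$, with $\Gamma$ diagonalizable acting trivially on an abelian variety $A$, has every point with closed orbit, so every object is GIT-semistable, and its good moduli space is canonically $A$ itself. Combined with the compatibility of good moduli morphisms with the base change $\{a\} \hookrightarrow \AG$, this should translate the stacky statements of Proposition \ref{P:uni-stack} directly into statements about $\hitgd_G^\delta(a)$.

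For part (\ref{T:i1:geom-fiber}), Proposition \ref{P:uni-stack}(\ref{P:uni-stack1}) gives $\hit_G^\delta(a) \cong [A/\Gamma]$ for some abelian variety $A$. Hence $\hit_G^{\delta,\operatorname{ss}}(a) = \hit_G^\delta(a)$, and passing to the good moduli space yields $\hitgd_G^\delta(a) \cong A$. The dimension equality $\dim A = \dim \AG = \dim G(g-1) + \dim \mathscr Z(G)$ is forced by Theorem \ref{T:ss-higgs}(\ref{T:ss-higgs3}), so $\hitgd_G^\delta(a)$ is an abelian variety of the expected dimension.

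For part (\ref{T:i2:geom-fiber}), a generic $a \in \DG^{\ab}$ admits a finite birational morphism of stacks
\[
\mathbb P^1 \stackrel{\mathbb G_m}{\times} \jator{a}^\delta \longrightarrow \hit_G^\delta(a),
\]
whose source is a locally trivial $\mathbb P^1$-bundle over the abelian stack $\jator{a}^{\delta,\ab}$. I would pass to good moduli spaces: the source acquires a good moduli space which is a Zariski-locally trivial $\mathbb P^1$-bundle over the abelian variety $B$ underlying $\jator{a}^{\delta,\ab}$, hence irreducible and uniruled. The finite birational stacky morphism then descends to a finite, surjective, birational morphism from this projective bundle onto $\hitgd_G^\delta(a)$, and both irreducibility and uniruledness are preserved under such morphisms of normal projective varieties.

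The main subtlety I expect is in the last step: ensuring that the descent to good moduli spaces in part (\ref{T:i2:geom-fiber}) preserves the finite birational property. Concretely, one must track how the non-regular Higgs bundles arising at the nodes of the infinite chain of $\mathbb P^1$'s in the affine Springer fiber interact with the semistability condition on Higgs bundles, and verify that the induced map on good moduli spaces is an isomorphism on a dense open subset. Properness of $\hitgd_G^\delta(a)$ from Theorem \ref{T:ss-higgs}(\ref{T:ss-higgs3}) then upgrades a dominant generically one-to-one map into the required finite surjective morphism between irreducible projective varieties, delivering uniruledness of $\hitgd_G^\delta(a)$.
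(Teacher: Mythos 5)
Your strategy --- descend Proposition \ref{P:uni-stack} to good moduli spaces --- is exactly the paper's, but the step on which everything hinges is the identification of the good moduli space of the \emph{full} stacky fiber $\hit_G^{\delta}(a)$ with $\hitgd_G^{\delta}(a)$, and this is where your argument has a genuine gap. The space $\hitgd_G$ is by definition the good moduli space of the open substack $\hit_G^{\op{ss}}$ of \emph{slope}-semistable Higgs bundles (semistability is tested on parabolic reductions compatible with $\theta$), so what must actually be proved is that every Higgs bundle in the stacky fiber is semistable. Your justification in part (\ref{T:i1:geom-fiber}) --- ``every point of $[A/\Gamma]$ has closed orbit, so every object is GIT-semistable'' --- is a non sequitur: closedness of orbits inside the stacky fiber says nothing about whether a given Higgs bundle admits a destabilizing parabolic reduction, i.e.\ whether it lies in $\hit_G^{\op{ss}}$ at all. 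The paper argues differently: the semistable locus of the fiber is an open substack $[A^{\op{ss}}/H]\subset[A/H]$ whose good moduli space $A^{\op{ss}}$ equals $\hitgd_G^{\delta}(a)$, which is proper because the Hitchin morphism on $\hitgd_G^{\delta}$ is projective (Theorem \ref{T:ss-higgs}); a non-empty open subvariety of the irreducible projective variety $A$ which is itself proper must be all of $A$, whence $A^{\op{ss}}=A$.

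In part (\ref{T:i2:geom-fiber}) you correctly flag the same issue (``one must track how the non-regular Higgs bundles \ldots interact with the semistability condition'') but you do not resolve it, and it does not resolve itself: a priori the non-regular Higgs bundles at the nodes of the chains of $\mathbb P^1$'s could fail to be semistable, in which case those $\mathbb P^1$'s would be truncated or collapsed in $\hitgd_G^{\delta}(a)$ and both the surjectivity and the birationality of the descended map could fail. The paper closes this with two inputs absent from your proposal: Corollary \ref{C:irr-cam}$(\ref{C:irr-cam2})$, which gives that for generic $a\in\DG^{\diamondsuit}$ (hence for generic $a\in\DG^{\ab}$) the cameral curve $C_a$ is integral, and Lemma \ref{L:irrthenst}, which shows that when $C_a$ is integral \emph{no} Higgs bundle in $\hit_G(a)$ admits a parabolic reduction compatible with $\theta$ (such a reduction would force $a$ to factor through $\cg_{M,\omega}$ for a proper Levi $M$ and would produce a proper subcurve of $C_a$), so every Higgs bundle in the fiber is in fact stable. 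Only after this is the passage to (tame) moduli spaces legitimate, and the finite birational morphism then descends as you describe.
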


\begin{proof}\fbox{$(\ref{T:i1:geom-fiber})$}
	Fix a section $a\in\AG^{\ab}$. By Proposition \ref{P:uni-stack}$(\ref{P:uni-stack1})$, we know that the stacky Hitchin fiber $\hit_G^{\delta}(a)$ is an abelian stack, i.e. $\hit_G^{\delta}(a)=[A/H]$, where $A$ is an abelian variety and $H$ is a multiplicative group acting trivially on $A$. In particular, the canonical morphism $[A/H]\to A$ makes $A$ a good moduli space of the stacky Hitchin fiber. More precisely, it is a tame moduli space (in the sense of Alper, see \cite[Definition 7.1]{AL}). Consider the open substack $\hit_G^{\delta, ss}(a)\subset\hit_G^{\delta}(a)$ of semistable $G$-Higgs bundles. By the above discussion, we must have $\hit_G^{\delta, \operatorname{ss}}(a)=[A^{\operatorname{ss}}/H]$, where $A^{\operatorname{ss}}$ is an open subset of $A$. By the universal property of the good moduli space, we have
	$
	\hitgd_G^{\delta}(a)=A^{\operatorname{ss}}.
	$
	Since the Hitchin morphism is projective (see Theorem \ref{T:ss-higgs}), we must have $A^{\operatorname{ss}}=A$. Equivalently, any $G$-Higgs bundle in $\hit_G^{\delta}(a)$ is semistable. This proves the point $(\ref{T:i1:geom-fiber})$.\\
\fbox{$(\ref{T:i2:geom-fiber})$} Let $a$ be a generic section in $\DG^{ab}$. By Corollary \ref{C:irr-cam}$(\ref{C:irr-cam2})$, we may assume that the corresponding cameral curve is integral. By Lemma \ref{L:irrthenst} below, all the $G$-Higgs bundles in the stacky Hitchin fiber $\hit_G^{\delta}(a)$ are stable. We then have the following commutative diagram
\begin{equation}\label{E:comm-uniruled}
	\xymatrix{
		\mathbb P^1\times^{\mathbb G_m}\jator{a}^{\delta}\ar[d]\ar[r]^{\nu}& \hit_G^{\delta}(a)\ar[d]\\
		\widetilde{\hitgd_G}^{\delta}(a)\ar[r]^{\overline{\nu}}& \hitgd_G^{\delta}(a)
	}
\end{equation}
where:
\begin{enumerate}[(a)]
	\item $\nu$ is the finite birational morphism of stacks (see Proposition \ref{P:uni-stack}),
	\item  $\widetilde{\hitgd_G}^{\delta}(a)$ is the tame moduli space of $\mathbb P^1\times^{\mathbb G_m}\jator{a}^{\delta}$, which exists because the latter is a locally trivial $\mathbb P^1$-fibration over the abelian stack $\jator{a}^{\delta,\ab}$ (see \emph{loc.cit.}),
	\item the vertical maps are the canonical maps onto the tame moduli spaces,
	\item the existence of the morphism $\overline{\nu}$ follows by the universal property of good moduli spaces. 
\end{enumerate}
The morphism $\overline{\nu}$ is birational, because its counterpart $\nu$ is birational. Furthermore, the moduli space $\widetilde{\hitgd_G}^{\delta}(a)$ is a locally trivial $\mathbb P^1$-fibration over the good moduli space of the abelian stack $\jator{a}^{\delta,\ab}$, which is an abelian variety. In particular, the good moduli space $\hitgd_G^{\delta}(a)$ is a uniruled variety.
\end{proof}

\begin{lem}\label{L:irrthenst}Let $a$ be a section in $\AG$ such that the corresponding cameral curve is integral, then any $G$-Higgs bundle in the Hitchin fiber $\hit_G(a)$ is stable.
\end{lem}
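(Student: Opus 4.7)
The plan is to prove the contrapositive: I will show that if some $(E,\theta)\in\hit_G(a)$ admits a reduction $F$ of $E$ to a proper parabolic $P\subsetneq G$ such that $\theta\in H^0(C,(F\times^{P}\mathfrak p)\otimes\omega)$, then the cameral cover $C_a$ must be reducible, contradicting integrality. In particular, no such destabilising reduction can exist over an integral cameral curve, and the stability condition is automatically satisfied — the degree inequality in the definition of stability never even needs to be checked.

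Using Theorem \ref{T:konstant}$(\ref{T:konstant6})$, the isomorphism $k[\mathfrak p]^{P}\cong k[\tg]^{W_{M}}$ (with $M\subset P$ a Levi subgroup) produces a $P$-invariant, $\mathbb G_{m}$-equivariant morphism $\chi_{P}\colon\mathfrak p\to\cg_{M}:=\Spec k[\tg]^{W_{M}}$ that factors $\chi|_{\mathfrak p}$ as $\mathfrak p\xrightarrow{\chi_{P}}\cg_{M}\xrightarrow{\bar\pi}\cg$. Applying $\chi_{P}$ fibrewise to the Higgs field (which by hypothesis takes values in $(F\times^{P}\mathfrak p)\otimes\omega$) yields a section
\begin{equation*}
a_{M}\colon C\longrightarrow \cg_{M,\omega}:=\cg_{M}\times^{\mathbb G_{m}}\omega^{*}
\end{equation*}
lifting $a$ along the natural projection $\bar\pi_{\omega}\colon\cg_{M,\omega}\to\cg_{\omega}$.

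Next, I would decompose the auxiliary cover $\cg_{M,\omega}\times_{\cg_{\omega}}\tg_{\omega}$. Working over the function field $K:=k(\tg)$, the extension $K/K^{W}$ is Galois with group $W$ and $K^{W_{M}}$ is the subfield fixed by $W_{M}\subsetneq W$. The standard Galois-theoretic identity
\begin{equation*}
K^{W_{M}}\otimes_{K^{W}}K\cong\prod_{[w]\in W/W_{M}}K
\end{equation*}
yields $s:=[W:W_{M}]\geq 2$ pairwise distinct irreducible components $V_{1},\dots,V_{s}$ of $\cg_{M,\omega}\times_{\cg_{\omega}}\tg_{\omega}$, each finite and surjective onto $\cg_{M,\omega}$; explicitly, $V_{i}$ is the image of the closed embedding $\tg_{\omega}\hookrightarrow\cg_{M,\omega}\times_{\cg_{\omega}}\tg_{\omega}$, $t\mapsto(\bar\pi(w_{i}\cdot t),t)$, for coset representatives $w_{1},\ldots,w_{s}$ of $W/W_{M}$. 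Pulling back along $a_{M}$ and using $a=\bar\pi_{\omega}\circ a_{M}$ one gets
\begin{equation*}
C_{a}=C\times_{\cg_{\omega}}\tg_{\omega}=C\times_{\cg_{M,\omega}}\bigl(\cg_{M,\omega}\times_{\cg_{\omega}}\tg_{\omega}\bigr)=\bigcup_{i=1}^{s}\bigl(C\times_{\cg_{M,\omega}}V_{i}\bigr).
\end{equation*}
Each $C\times_{\cg_{M,\omega}}V_{i}$ is a non-empty one-dimensional closed subscheme of $C_{a}$ (because $V_{i}\to\cg_{M,\omega}$ is finite surjective), and the $s$ pieces are pairwise distinct because they project to distinct $V_{i}$. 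Hence $C_{a}$ has at least $s\geq 2$ irreducible components, contradicting integrality.

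The main obstacle is the middle step: ensuring that the generic-point splitting really produces $[W:W_{M}]$ honest, distinct, global irreducible components of $\cg_{M,\omega}\times_{\cg_{\omega}}\tg_{\omega}$, and that all of them survive after pulling back along the possibly special section $a_{M}$. This control is afforded by the fact that both $\tg_{\omega}\to\cg_{\omega}$ and $\cg_{M,\omega}\to\cg_{\omega}$ are finite, flat, and generically \'etale, with the explicit Galois/Chevalley description recalled in Theorem \ref{T:konstant}$(\ref{T:konstant4})$, $(\ref{T:konstant6})$: the graph embeddings realise the $V_{i}$ globally, and the finiteness of the projections to $\cg_{M,\omega}$ guarantees that non-emptiness and distinctness of components are preserved under pullback along $a_{M}$.
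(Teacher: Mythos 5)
Your argument is correct and is essentially the paper's own proof: both use Theorem \ref{T:konstant}$(\ref{T:konstant6})$ to lift $a$ to a section $a_M$ of $\cg_{M,\omega}$ and then contradict integrality of $C_a$ by exhibiting inside it a closed subcurve finite of degree $|W_M|<|W|$ over $C$ (the paper's $\widehat C_a$ is your $C\times_{\cg_{M,\omega}}V_1$). Note that a single branch $V_i$ already suffices — an integral curve admits no one-dimensional closed subscheme of strictly smaller degree over $C$ — so the distinctness of the $s$ pullbacks, which is the only slightly delicate point in your write-up, is not actually needed.
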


\begin{proof}We will show a stronger result: if the cameral curve $C_a$ is integral, then any $G$-Higgs bundle $(E,\theta)$ in the Hitchin fiber $\hit_G(a)$ does not have a reduction to a parabolic subgroup $P\subset G$ compatible with $\theta$. 
	
	We proceed by contradiction. Let $(E,\theta)$ be a $G$-Higgs bundle in $\hit_G(a)$ with a reduction $(F,\theta)$ to a parabolic subgroup $P\subset G$ compatible with the section $\theta$. Let $M$ be a Levi-subgroup of $P$ and $W_M\subset W$ be the Weyl group of $M$. Without loss of generality, we may assume that $\tg\subset\mathfrak m\subset \mathfrak g$. Note that having a $G$-Higgs bundle is the same as having a morphism of stacks $C\to [\g_{\omega}/G]$. Then, having the reduction to a parabolic subgroup is equivalent to have a commutative diagram of stacks 
	\begin{equation}\label{E:8}
			\xymatrix{
				&[\mathfrak p_{\omega}/P]\ar[r]\ar[d]& [\g_{\omega}/G]\ar[d]\\
				C\ar[ru]\ar[r]^b\ar@/_2pc/[rr]^a& \cg_{M,\omega}\ar[r]&\cg_{G,\omega}
				}
		\end{equation}
	The first vertical arrow follows from Theorem \ref{T:konstant}$(\ref{T:konstant6})$. In particular, the section $a$ factors through $\cg_{M,\omega}$. By definition of cameral cover, we have a commutative diagram of schemes
	\begin{equation}\label{E:9}
			\xymatrix{
				C_b\ar@{->>}[d]^{\pi_b}\ar@{^{(}->}[r]^\iota&	C_a\ar@{^{(}->}[r]\ar@{->>}[d]&\tg_{\omega}\ar@{->>}[d]\\
				C\ar@{^{(}->}[r]^-\sigma\ar@{=}[rd]&	\cg_{M,\omega}\times_C\cg_{G,\omega}\ar@{->>}[d]^q\ar@{^{(}->}[r]&\cg_{M,\omega}=\tg_\omega/W_M\ar@{->>}[d]\\
				&	C\ar@{^{(}->}[r]^-a&\cg_{G,\omega}=\tg_\omega/W\\
				}
	\end{equation}
	where the squares are cartesian and the composition of the vertical arrows in the middle is the cameral cover $\pi_a:C_a\to C$ corresponding to $a$. Now observe that $\pi_b$ is a cameral cover  (with respect to the reductive group $M$) given by the section $b$ in Diagram \eqref{E:8}. In particular, $C_b$ is a projective curve over $k$. On the other hand, the morphism $\sigma$ is a closed embedding (because it is a section of the morphism $q$). Hence, $\iota$ is a closed embedding. It follows from the construction that $\iota(C_b)\subsetneq C_a$: indeed, for each $p\in C$, the fiber $\pi_b^{-1}(p)$ is a zero-dimensional scheme of lenght $|W_M|$ and the fiber $\pi_a^{-1}(p)$ over $p$ of the composition of the vertical arrows in the middle of Diagram \eqref{E:9} is a zero-dimensional scheme of lenght $|W|$. Since $C_a$ is reduced, we may choose $p$ such that $\pi_a^{-1}(p)$ is reduced. Hence, the inclusion $\iota(\pi_b^{-1}(p))\subset \pi_a^{-1}(p)$ is strict because $W_M\subsetneq W$. 
	
	In particular, $C_a$ contains the proper subcurve $\iota(C_b)$. Then, $C_a$ is not irreducible and so we reach a contradiction.
\end{proof}

\begin{rmk}From the proof of Theorem \ref{T:geom-fiber}, one can also deduce that the $G$-Higgs bundles in the stacky Hitchin fibers over $\AG^{\ab}$ are always stable. Furthermore, one can also show that the morphism $\overline{\nu}:\widetilde{\hitgd_G}(a)\to\hitgd_G(a)$ in \eqref{E:comm-uniruled} is the normalization of $\hitgd_G(a)$.
\end{rmk}

\begin{cor}\label{C:geom-fiber}Fix $\delta\in\pi_1(G)$. The following facts hold.
	\begin{enumerate}[(i)]
		\item\label{C:i1:geom-fiber} The cotangent bundle $\thig_G^\delta$ of the moduli space $\mt M_G^\delta$ of simple and stable $G$-bundles of degree $\delta$ is an open and dense subset of the moduli space $\hitgd_G^\delta$.
		\item\label{C:i2:geom-fiber} For any $a\in \AG^{\ab}$, the intersection $\thig_G^\delta\cap \hitgd_G(a)$ is a non-empty open subset of the abelian variety $\hitgd_G^\delta(a)$.
		\item\label{C:i3:geom-fiber} For a generic $a\in\DG^{\ab}$, the intersection $\thig_G^\delta\cap \hitgd_G(a)$ is a non-empty open subset of the uniruled variety $\hitgd_G^\delta(a)$.
	\end{enumerate}
\end{cor}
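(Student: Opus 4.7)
\emph{Plan.} For (i), the moduli space $\mt M_G^\delta$ is a non-empty smooth open subvariety of the irreducible projective variety $\overline{\mt M}_G^\delta$ (Theorem \ref{T:bg-fac}), hence open and dense. Serre duality identifies the cotangent space to $\mt M_G^\delta$ at a stable and simple bundle $E$ with $H^0(C,\ads(E)\otimes\omega)$, so $\thig_G^\delta=T^*\mt M_G^\delta$ parametrizes pairs $(E,\theta)$ with $E$ stable and simple. Any such pair is automatically a stable $G$-Higgs bundle, since a parabolic reduction of $E$ compatible with $\theta$ would already violate stability of the underlying $G$-bundle $E$; and because $\Aut(E,\theta)=\Aut(E)=\mathscr Z(G)$, the induced morphism $\thig_G^\delta\to\hitgd_G^\delta$ is an open immersion. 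Density follows from the irreducibility of $\hitgd_G^\delta$ (Theorem \ref{T:ss-higgs}, Proposition \ref{P:hit-prop}) together with the non-emptiness of $\thig_G^\delta$ for $g\geq 2$.

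\emph{Parts (ii) and (iii).} Openness of $\thig_G^\delta\cap\hitgd_G^\delta(a)$ in the irreducible coarse Hitchin fiber is immediate from (i); the heart of the argument is non-emptiness. Fix a theta-characteristic $\kappa$ and let $(E_a,\theta_a):=\epsilon_\kappa(a)\in\hit_G^{\delta_\kappa}(a)$ be the regular Higgs bundle produced by the Hitchin--Kostant section. In case (ii), Proposition \ref{P:descr-reg} yields $\hit_G(a)=\hit_G^{\reg}(a)$, so the stacky fiber is a trivial $\jator{a}$-torsor and, after passing to coarse moduli, the neutral component of $\jator{a}^{\ab}$ (an abelian variety by Proposition \ref{P:aff-ab}) acts transitively on each connected component of $\hitgd_G(a)$. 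Translating $(E_a,\theta_a)$ by a $J_a$-torsor of degree $\delta-\delta_\kappa$ places us in $\hit_G^\delta(a)$, and varying this torsor within its abelian orbit, combined with the density of $\thig_G^\delta$ from (i), ensures that a generic element of the orbit has stable simple underlying bundle. In case (iii), Corollary \ref{C:irr-cam}(\ref{C:irr-cam2}) makes the cameral curve integral for generic $a\in\DG^{\ab}$, whence Lemma \ref{L:irrthenst} renders every element of the stacky fiber stable; the same Hitchin--Kostant plus twist argument goes through via the birational morphism $\overline{\nu}$ of \eqref{E:comm-uniruled} to produce a stable simple underlying bundle in the coarse uniruled fiber.

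\emph{Main obstacle.} The delicate step is propagating non-emptiness of the intersection to \emph{every} $a\in\AG^{\ab}$ in (ii), rather than merely to a generic $a$. A bare dimension count only places the bad locus $\{a:\thig_G^\delta\cap\hitgd_G^\delta(a)=\emptyset\}$ inside a closed subset of $\AG$ of codimension $\geq 1$, which is insufficient on its own. The argument genuinely requires the transitivity of the action of the neutral component of $\jator{a}^{\ab}$ on the coarse fiber (coming from Proposition \ref{P:descr-reg} together with Propositions \ref{P:aff-ab} and \ref{T:geom-fiber}(\ref{T:i1:geom-fiber})), combined with the Hitchin--Kostant section and the density established in (i), in order to cover uniformly every section of $\AG^{\ab}$.
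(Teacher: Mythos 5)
Part (i) of your proposal is essentially the paper's argument: deformation theory plus Serre duality identifies the cotangent fibre with $H^0(C,\ad{E})$, stability of $E$ forces stability of $(E,\theta)$ for every $\theta$, so $\thig_G^\delta$ is open in $\hitgd_G^\delta$, and density follows from irreducibility of the component. That part is fine.

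The gap is in the non-emptiness claim of (ii) and (iii). The statement to be proved is that the fibre $\hitgd_G^\delta(a)$ contains a Higgs bundle whose \emph{underlying $G$-bundle} is stable and simple. Your mechanism — start from the Hitchin--Kostant point $(E_a,\theta_a)$, twist by a $J_a$-torsor into the right component, and then ``vary the torsor within its abelian orbit, combined with the density of $\thig_G^\delta$'' — does not establish this. First, density of $\thig_G^\delta$ in the total space $\hitgd_G^\delta$ says nothing about any individual fibre: a dense open subset can miss entire fibres of a fibration, as you yourself note in the ``Main obstacle'' paragraph. Second, transitivity of the $\jator{a}$-action on the fibre does not help, because the action twists the underlying bundle; the locus of Higgs bundles in the fibre with stable simple underlying bundle is open but is \emph{not} preserved by the action, so transitivity cannot propagate non-emptiness — indeed if it could, one point with stable underlying bundle would be needed to start with, which is exactly what is missing. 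Note also that the underlying bundle of the Hitchin--Kostant section is typically unstable (e.g.\ for $\op{SL}_2$ it is an extension built from $\kappa^{\pm1}$), so the starting point of your orbit argument does not lie in $\thig_G^\delta$. For (iii), Lemma \ref{L:irrthenst} gives stability of the \emph{Higgs bundles} in the fibre, not of their underlying $G$-bundles, so the same issue persists. The paper closes this gap by invoking an external input, \cite[Theorem II.5(ii)]{Fa93}, which asserts precisely that the relevant Hitchin fibres contain Higgs bundles with stable underlying bundle; some such substantive input (or a genuine argument replacing it) is required, and your proposal does not supply one.
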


\begin{proof} We remove the reference to the group $G$ from the notation. \fbox{$(\ref{C:i1:geom-fiber})$} By standard deformation theory, we know that the fiber of tangent bundle $T\mt M\to \mt M$ over a simple and stable $G$-bundle $E$ is canonically identified with $H^1(C,\ads(E))$. By Serre duality, the fiber of the cotangent bundle $\thig\to\mt M$ over the same point is the vector space $H^0(C,\ad{E})$. It is easy to check that if a $G$-bundle $E$ is stable, then $(E,\theta)$ is a stable $G$-Higgs bundle for any section $\theta\in H^0(C,\ad{E})$. In particular, $\thig$ is an open subset in $\hitgd$. The density follows because the inclusion $\thig\subset \hitgd$ gives a bijection between the connected components, i.e. $\pi_0(\mt M)=\pi_0(\thig)=\pi_0(\hitgd)=\pi_1(G)$. \fbox{$(\ref{C:i2:geom-fiber})+(\ref{C:i3:geom-fiber})$} By previous point and Theorem \ref{T:geom-fiber}, we have the statement, except for the non-emptiness of the intersection. This last property follows by \cite[Theorem II.5(ii)]{Fa93}.
\end{proof}

\begin{prop}\label{P:irr-hd}Let $\DG^{\ab}_i$ be an irreducible component of $\DG^{\ab}$, for some $i=1,\ldots,m$. Then the inverse image $(h^{\delta})^{-1}(\DG^{\ab}_i)$ is an irreducible divisor of  $\hitgd_G^{\delta}$.
\end{prop}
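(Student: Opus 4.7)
The plan is first to show that $(h^{\delta})^{-1}(\DG^{\ab}_i)$ has pure codimension one in $\hitgd_G^{\delta}$ (hence is a divisor), and then to derive its irreducibility from the irreducibility of the base $\DG^{\ab}_i$ together with the irreducibility of the Hitchin fiber over its generic point. Set $Y := (h^{\delta})^{-1}(\DG^{\ab}_i)$. By Proposition \ref{P:gen-ab}, $\DG^{\ab}_i$ is an irreducible hypersurface in $\AG$, and by Theorem \ref{T:ss-higgs}(\ref{T:ss-higgs3}) the morphism $h^{\delta}$ is projective and surjective with geometrically connected fibers of pure dimension $\dim\AG$; in particular $\dim\hitgd_G^{\delta} = 2\dim\AG$. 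The theorem on fibers of a morphism then forces every irreducible component of $Y$ to have dimension exactly $2\dim\AG - 1$, so $Y$ is a Weil divisor in $\hitgd_G^{\delta}$.

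To upgrade ``divisor'' to ``irreducible divisor,'' I first want to verify that every irreducible component $Y_j \subset Y$ dominates $\DG^{\ab}_i$. Otherwise, $h^{\delta}(Y_j)$ would be a proper closed subset of the irreducible variety $\DG^{\ab}_i$, hence of dimension at most $\dim\AG - 2$; combined with the fact that every fiber of $h^{\delta}$ has dimension $\dim\AG$, this yields $\dim Y_j \leq 2\dim\AG - 2$, contradicting the preceding paragraph. So every irreducible component of $Y$ dominates $\DG^{\ab}_i$.

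Next, by Proposition \ref{T:geom-fiber}(\ref{T:i2:geom-fiber}), the Hitchin fiber $\hitgd_G^{\delta}(a)$ is irreducible for a generic closed point $a \in \DG^{\ab}$; since $\DG^{\ab}_i$ is one of the irreducible components of $\DG^{\ab}$, the same holds for a generic $a \in \DG^{\ab}_i$. By upper semicontinuity of the number of geometric irreducible components applied to the restriction of $h^{\delta}$ over $\DG^{\ab}_i$, the scheme-theoretic generic fiber $Y_\eta$ over the generic point $\eta$ of $\DG^{\ab}_i$ is then geometrically irreducible. Since each irreducible component of $Y$ dominates $\DG^{\ab}_i$, each one contributes a nonempty closed subscheme of $Y_\eta$; the irreducibility of $Y_\eta$ forces that exactly one such component exists, proving that $Y$ is irreducible.

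The main technical point I expect to have to be careful with is the passage from ``the Hitchin fiber is irreducible over a generic closed point of $\DG^{\ab}_i$'' to ``the scheme-theoretic generic fiber is geometrically irreducible.'' This is a standard constructibility/upper-semicontinuity argument — in characteristic zero, geometric and absolute irreducibility of fibers coincide on a dense open subset, so passing to a generic closed point in that open subset does the job — but it is the only non-formal ingredient. Everything else is dimension-counting together with the fiberwise connectivity already supplied by Theorem \ref{T:ss-higgs}(\ref{T:ss-higgs3}).
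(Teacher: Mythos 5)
Your argument is correct and follows essentially the same route as the paper: both proofs rest on the equidimensionality of the Hitchin fibers (to get a pure codimension-one preimage) together with the irreducibility of the fibers over generic points of $\DG^{\ab}_i$ supplied by Proposition \ref{T:geom-fiber}. The only cosmetic difference is the gluing step — the paper shows that the preimage of the good open locus $\DG^{*}$ is irreducible and that its complement has codimension two, while you pass to the scheme-theoretic generic fibre; note that the correct justification there is the constructibility of the locus of geometrically irreducible fibres (EGA IV, 9.7.7), not a genuine upper semicontinuity, as you yourself essentially acknowledge.
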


\begin{proof} To make the notation easier to follow, we write $\DG$ instead of $\DG^{\ab}_i$. Since the Hitchin morphism $h^{\delta}$ is faithfully flat (see Theorem \ref{T:ss-higgs}$\eqref{T:ss-higgs3}$), the inverse image $(h^{\delta})^{-1}(\DG)$ is a subscheme of $\hitgd_G^{\delta}$ of pure codimension one. It remains to prove the irreducibility. Let $\DG^*\subset \DG$ be the open subset of generic sections in the sense of Proposition \ref{T:geom-fiber}$(\ref{T:i2:geom-fiber})$. The restriction $(h^{\delta})^{-1}(\DG^{*})\to \DG^{*}$ is a projective morphism with geometrically integral fibers. In particular, $(h^{\delta})^{-1}(\DG^*)$ is an irreducible open subset of the divisor $(h^{\delta})^{-1}(\DG^{\ab})$. To conclude the proof, it is enough to show that $(h^{\delta})^{-1}(\DG^*)$ is also dense in the divisor $(h^{\delta})^{-1}(\DG^{\ab})$. Since $(h^{\delta})^{-1}(\DG)$ has pure codimension one, the statement is equivalent to showing that the complement 
	$$
	(h^{\delta})^{-1}(\DG)\setminus(h^{\delta})^{-1}(\DG^{*})=(h^{\delta})^{-1}(\DG\setminus\DG^*)
	$$
has codimension at least two in $\hitgd_G^\delta$. And this follows because $h^{\delta}$ have equi-dimensional fibers and $\DG\setminus\DG^*$ have codimension at least two in $\AG$.
\end{proof}

\begin{teo}\label{T:hit-disc}The inverse image $(h^{\delta})^{-1}(\DG^{\ab})$ is the closure of the union of projective rational curves in $\hitgd_G^\delta$. 
\end{teo}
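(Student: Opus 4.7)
The plan is to establish the equality by a double inclusion, using the fact that the Hitchin basis $\AG$ is affine (so the fibers of $h^{\delta}$ are the only place where projective curves can live) together with the geometric description of the fibers provided by Proposition \ref{T:geom-fiber}.

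For the inclusion ``every projective rational curve lies in $(h^{\delta})^{-1}(\DG^{\ab})$'', let $R \subset \hitgd_G^{\delta}$ be any projective rational curve. Since $\AG$ is affine, the restriction $h^{\delta}|_R : R \to \AG$ must be constant, so $R$ is contained in a single Hitchin fiber $\hitgd_G^{\delta}(a)$. If it happened that $a \in \AG^{\ab}$, then by Proposition \ref{T:geom-fiber}$(\ref{T:i1:geom-fiber})$ the fiber $\hitgd_G^{\delta}(a)$ would be an abelian variety, and abelian varieties contain no non-constant rational curves. Hence $a \in \DG^{\ab}$, i.e.\ $R \subset (h^{\delta})^{-1}(\DG^{\ab})$.

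For the reverse inclusion, I would argue one irreducible component $(h^{\delta})^{-1}(\DG^{\ab}_i)$ at a time. By Proposition \ref{P:irr-hd}, this is an irreducible divisor, so to show it is the closure of a union of projective rational curves it suffices to exhibit such curves through a dense open subset. Pick a generic $a \in \DG^{\ab}_i$. By Proposition \ref{T:geom-fiber}$(\ref{T:i2:geom-fiber})$ (and its proof), there is a birational morphism
\[
\overline{\nu} : \widetilde{\hitgd_G}^{\delta}(a) \longrightarrow \hitgd_G^{\delta}(a),
\]
where $\widetilde{\hitgd_G}^{\delta}(a)$ is a locally trivial $\mathbb{P}^1$-fibration over an abelian variety. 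Since $\overline{\nu}$ is birational, it is an isomorphism on a dense open subset, and therefore the images of the $\mathbb{P}^1$-fibers passing through this open subset are genuine projective rational curves in $\hitgd_G^{\delta}(a)$ sweeping out a dense open subset of the fiber. Letting $a$ vary over the (dense) generic locus of $\DG^{\ab}_i$ and using the irreducibility of $(h^{\delta})^{-1}(\DG^{\ab}_i)$, the union of all such rational curves is dense in $(h^{\delta})^{-1}(\DG^{\ab}_i)$. Taking the union over $i=1,\ldots,m$ gives the claim.

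The only point requiring a little care is verifying that the $\mathbb{P}^1$-fibers of $\widetilde{\hitgd_G}^{\delta}(a)$ really map to non-constant rational curves under $\overline{\nu}$; but this is immediate from birationality, since a birational morphism cannot contract the generic fiber of a $\mathbb{P}^1$-fibration (the image would otherwise lie in a proper closed subset). No genuinely hard step is involved: the theorem is essentially a repackaging of Propositions \ref{T:geom-fiber} and \ref{P:irr-hd} together with the affineness of the Hitchin base.
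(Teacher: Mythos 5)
Your proof is correct and follows essentially the same route as the paper: the forward inclusion via affineness of $\AG$ plus the absence of rational curves on abelian varieties (Proposition \ref{T:geom-fiber}$(\ref{T:i1:geom-fiber})$), and the reverse inclusion via the irreducibility of each $(h^{\delta})^{-1}(\DG^{\ab}_i)$ from Proposition \ref{P:irr-hd} together with the uniruledness of the generic fibers from Proposition \ref{T:geom-fiber}$(\ref{T:i2:geom-fiber})$. Your extra remark that the birational map cannot contract the $\mathbb{P}^1$-fibers is a slightly more explicit justification of the step the paper summarizes as ``a family of irreducible uniruled varieties is a union of projective rational curves.''
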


\begin{proof}Let $R$ be the union of projective rational curves in $\hitgd_G$. We want to show that $(h^{\delta})^{-1}(\DG^{\ab})=\overline{R}.$\\
\fbox{$\supset$} It is enough to show $h^{\delta}(R)\subset \DG^{\ab}$. Let $X=\mathbb P^1$ be a projective line and $f:X\to\hitgd_G^\delta$ be a (finite) morphism. Since $X$ is integral and projective and $\AG$ is affine, the image $a:=h^\delta(f(X))\subset \AG$ must be a point. In particular, the curve $f(X)$ is contained in the Hitchin fiber $\hitgd_G^\delta(a)$. Since an abelian variety does not contain rational curves, we must have $a\in\DG^{\ab}$ (see Proposition \ref{T:geom-fiber}).\\
\fbox{$\subset$} By Proposition \ref{P:irr-hd}, it is enough to show $(h^\delta)^{-1}(\DG^{\ab,*})\subset R$, where $\DG^{\ab,*}\subset\DG^{\ab}$ is the open subset of generic sections in the sense of Proposition \ref{T:geom-fiber}$(\ref{T:i2:geom-fiber})$. By \emph{loc.cit.} $(h^\delta)^{-1}(\DG^{\ab,*})\to\DG^{\ab,*}$ is a family of irreducible uniruled varieties. In particular, $(h^\delta)^{-1}(\DG^{\ab,*})$ is a union of projective rational curves, i.e. $(h^\delta)^{-1}(\DG^{\ab,*})\subset R$.
\end{proof}

The same proof also gives the following corollary, which was already known for $G=\op{SL}_n$ \cite[Theorem 3.3]{BGMsl}, $G=\op{Sp}_{2n}$ \cite[Theorem 3.3]{BGMsym} and $G$ simply-connected of type either $E_6$ or $F_4$ \cite[Proposition 5]{Sanc}. Following \emph{loc.cit.}, we call \emph{Hitchin discriminant} the intersection $(h^{\delta})^{-1}(\DG^{\ab})\cap T^*\mt M_G^\delta$.

\begin{cor}\label{C:hit-disc}The Hitchin discriminant $(h^{\delta})^{-1}(\DG^{\ab})\cap T^*\mt M_G^\delta$ is the closure of the union of projective rational curves in $T^*\mt M_G^\delta$. 
\end{cor}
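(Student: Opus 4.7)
My plan is to mimic the proof of Theorem~\ref{T:hit-disc} within the open dense subset $T^*\mt M_G^\delta \subset \hitgd_G^\delta$, using Corollary~\ref{C:geom-fiber} in place of Proposition~\ref{T:geom-fiber}. Let $R'$ denote the union of projective rational curves contained in $T^*\mt M_G^\delta$; the goal is to show $\overline{R'} = (h^{\delta})^{-1}(\DG^{\ab}) \cap T^*\mt M_G^\delta$.

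First I would establish $\overline{R'} \subset (h^{\delta})^{-1}(\DG^{\ab}) \cap T^*\mt M_G^\delta$. Any morphism $f \colon \mathbb P^1 \to T^*\mt M_G^\delta$ composed with the Hitchin map $h^{\delta}$ factors through a point of $\AG$ (projective source, affine target), so $f(\mathbb P^1)$ lies in some intersection $\hitgd_G^\delta(a) \cap T^*\mt M_G^\delta$. If $a \in \AG^{\ab}$, Corollary~\ref{C:geom-fiber}(ii) describes this as an open subset of an abelian variety, and composing with the open inclusion into the abelian variety we would obtain a non-constant morphism $\mathbb P^1 \to A$, which is impossible. Hence $a \in \DG^{\ab}$, giving the inclusion after taking closures.

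For the reverse inclusion, I would use Proposition~\ref{P:irr-hd} to reduce to showing $(h^\delta)^{-1}(\DG^{\ab, *}) \cap T^*\mt M_G^\delta \subset \overline{R'}$, where $\DG^{\ab, *} \subset \DG^{\ab}$ is the dense open subset of generic sections in the sense of Proposition~\ref{T:geom-fiber}(ii). For each $a \in \DG^{\ab, *}$, Proposition~\ref{P:uni-stack}(ii) supplies a finite birational morphism $\overline{\nu} \colon \widetilde{\hitgd_G}^\delta(a) \to \hitgd_G^\delta(a)$, where $\pi \colon \widetilde{\hitgd_G}^\delta(a) \to A_a$ is a $\mathbb P^1$-bundle over an abelian variety; the images $\overline{\nu}(\pi^{-1}(b))$ for $b \in A_a$ form a covering family of projective rational curves in the fiber. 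Setting $U_a := \{b \in A_a \mid \overline{\nu}(\pi^{-1}(b)) \subset T^*\mt M_G^\delta\}$, each curve parameterized by $U_a$ is a projective rational curve lying in $T^*\mt M_G^\delta$. I would then argue that $U_a$ is open and dense in $A_a$, so that the union of the corresponding curves is a dense open subset of $T^*\mt M_G^\delta \cap \hitgd_G^\delta(a)$ sitting inside $R'$; letting $a$ vary over $\DG^{\ab, *}$ then gives the desired density.

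The main obstacle is verifying that $U_a$ is open and dense, equivalently that the complementary closed subset $\pi(\overline{\nu}^{-1}(Z_a))$, with $Z_a := \hitgd_G^\delta(a) \setminus T^*\mt M_G^\delta$, is a proper subset of $A_a$. The natural route is a codimension estimate: since the locus of semistable Higgs bundles whose underlying $G$-bundle fails to be stable and simple has codimension at least two in $\hitgd_G^\delta$ (exploiting $g \geq 4$), the closed subset $Z_a$ should have codimension $\geq 2$ in the generic fiber $\hitgd_G^\delta(a)$. Finiteness of $\overline{\nu}$ then preserves this codimension, so $\overline{\nu}^{-1}(Z_a)$ has codimension $\geq 2$ in the $\mathbb P^1$-bundle $\widetilde{\hitgd_G}^\delta(a)$, and flatness of $\pi$ forces $\dim \pi(\overline{\nu}^{-1}(Z_a)) \leq \dim A_a - 1$, making $U_a$ open and dense as required. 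Nailing down this codimension-two bound (which controls how the non-stable/non-simple locus intersects a generic uniruled Hitchin fiber) is the delicate ingredient that goes beyond the literal transcription of the theorem's proof.
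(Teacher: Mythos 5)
Your proposal is correct and follows the same route as the paper, whose own proof simply declares itself to be a copy of the proof of Theorem \ref{T:hit-disc} with Corollary \ref{C:geom-fiber} substituted for Proposition \ref{T:geom-fiber}. The one point where you go beyond that one-line argument --- verifying that a generic curve $\overline{\nu}(\pi^{-1}(b))$ of the covering family stays inside the open subset $T^*\mt M_G^\delta\cap\hitgd_G^\delta(a)$, since an open subset of a uniruled variety need not contain any complete rational curves --- is a genuine subtlety the paper elides, and the codimension-two bound you flag as the delicate ingredient is exactly the fibrewise estimate on the complement of $\thig_G^\delta$ in $\hitgd_G^\delta$ already invoked (via \cite[Theorem II.6]{Fa93}) in the proof of Lemma \ref{L:glob-func}, so your argument closes without new input.
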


\begin{proof}The proof is a copy of the proof of Theorem \ref{T:hit-disc}. The only difference is that, instead of using Proposition \ref{T:geom-fiber}, we need to use Corollary \ref{C:geom-fiber} as reference.
\end{proof}

\section{Torelli Theorem.}\label{Sec:torelli}
In this section, we show a Torelli-type theorem for the moduli space of $G$-bundles. Our proof is a replica of the proof of Biswas, G\'omez and Mu\~noz in \cite{BGMsl}, for $G=\operatorname{SL}_n$, in \cite{BGMsym}, for $G=\operatorname{Sp}_{2n}$, and later adapted by Ant\'on Sancho  \cite{Sanc} for the cases $G=\mathbb E_6^{\operatorname{sc}}, \mathbb F_4$. We also remark that this result has been already established with different methods and in greater generality in \cite{BH12}. However, later in the paper, we will need some of the results of this section (especially Corollary \ref{C:var-l}) for the computation of the automorphisms of the moduli spaces $\overline{\mt M}_G$.

Here, we will always assume $G$ almost-simple and $C$ connected, smooth projective curve of genus $g\geq 2$, with the only exception of the cases where $g=2$ and $G$ is either $\op{SL_2}$ or $\op{PGL}_2$, which are excluded.

\begin{lem}\label{L:glob-func}Let $h^\delta:\thig_G^\delta\to \AG$ be the Hitchin morphism. Then, the pull-back of global regular functions 
	$$
	(h^\delta)^*:H^0(\AG,\mathcal O_\AG)\cong H^0(\thig_G^\delta,\oo_{\thig_G^\delta})
	$$
	is an isomorphism of $k$-algebras.
\end{lem}

\begin{proof}We remove the references to $G$ and to $\delta$ from the notation. Consider the following commutative diagram
\begin{equation}\label{E:0}
\xymatrix{
	\widetilde{\thig}\ar[r]^{\mu}\ar@{^{(}->}[d]^\iota&\thig\ar@{^{(}->}[d]\ar[dr]^h\\
\widetilde{\hitgd}\ar[r]^{\nu}&\hitgd\ar[r]^H&\AG
}
\end{equation}	
where the square is cartesian, the arrows $\mu$ and $\nu$ are the normalization morphisms, the vertical arrows are the natural open embeddings and $H$ is the Hitchin morphism from the entire moduli space $\hitgd(=\hitgd_G^\delta)$ of semistable $G$-Higgs bundles.

The Hitchin morphism $H$ is proper and over the open and dense subset $\AG^{\mathrm{ab}}\subset\AG$ is a fibration of abelian varieties. In particular, the generic fiber of the proper morphism $H\circ \nu$ is an abelian variety. By  \cite[\href{https://stacks.math.columbia.edu/tag/0AY8}{Tag 0AY8}]{stacks-project}, the homomorphism of structure sheaves $\mathcal O_{\AG}\to (H\circ\nu)_*\mathcal O_{\widetilde{\hitgd}}$ is an isomorphism. In particular, the pull-back of global regular functions
\begin{equation}\label{E:I}
(H\circ \nu)^*:H^0(\AG,\mathcal O_\AG)\cong H^0(\widetilde{\hitgd},\mathcal O_{\widetilde{\hitgd}})
\end{equation}
is an isomorphism. On the other hand, the complement $\hitgd\setminus\thig$ has codimension at least two in $\hitgd$ (see \cite[Theorem II.6]{Fa93}(ii)+(iii)). Since $\nu$ is finite, the same statement holds for $\widetilde{\hitgd}\setminus\widetilde{\thig}$ in $\widetilde{\hitgd}$. Since $\widetilde{\hitgd}$ is normal, the restriction morphism of global regular functions
\begin{equation}\label{E:II}
	\iota^*:H^0(\widetilde{\hitgd},\mathcal O_{\widetilde{\hitgd}})\cong H^0(\widetilde{\thig},\mathcal O_{\widetilde{\thig}})
\end{equation}
is an isomorphism. Observe that $\iota^*\circ(H\circ \nu)^*=\mu^*\circ h^*$ (see Diagram \eqref{E:0}) is an isomorphism (by \eqref{E:I} and \eqref{E:II}). Since $\mu$ is birational, the pull-back $\mu^*$ is injective. In particular, $h^*$ is an isomorphism and the assertion follows.
\end{proof}

\begin{prop}\label{P:dual-curve}The intersection $C^{*}:=\DG^{\ab}\cap \AG_r$ is irreducible and $\mathbb P(C^*)\subset \mathbb P(\AG_r)$ is the dual variety of the curve $C\subset\mathbb P(\AG_r^*)$ for the embedding given by the line bundle $\omega^{d_r}$.
\end{prop}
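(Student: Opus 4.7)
The plan is to identify $C^*$ with the affine cone over the classical projective dual of $C \subset \mathbb{P}(\AG_r^*)$, by combining an explicit computation of the tangent cone $\mt C_0 \Dg$ with the characterisation of $\DG^{\ab}$ provided by Propositions~\ref{P:gen-discr}(i) and~\ref{P:gen-ab}.

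\emph{Step 1: the tangent cone of the discriminant at the origin.} Since $G$ is almost-simple, the Coxeter number $h = d_r$ is strictly larger than $d_1, \ldots, d_{r-1}$ (a standard property of the exponents of an irreducible Weyl group). Combining this with Remark~\ref{R:tang-cone-disc} and the weighted-homogeneity of $F := \prod_{\alpha \in \Phi} d\alpha$ (of weighted degree $|\Phi| = r d_r$), I would observe that any monomial $p_1^{e_1} \cdots p_r^{e_r}$ in $F$ satisfies $\sum d_i e_i = r d_r$ and hence has usual degree $\sum e_i \geq r$, with equality if and only if the monomial equals $p_r^r$. Thus the initial form of $F$ at $0$ is $c p_r^r$ with $c \neq 0$ (by Remark~\ref{R:tang-cone-disc}), and set-theoretically
$$
\mt C_0 \Dg = \{p_r = 0\} \subset \cg.
$$

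\emph{Step 2: double-zero characterisation.} Given $a = (0, \ldots, 0, a_r) \in \AG_r$, Step~1 shows that $\Dg$ meets the line $\{p_1 = \cdots = p_{r-1} = 0\}$ only at the origin, so $a(p) \in \Dg_\omega$ if and only if $a_r(p) = 0$. At such a zero $p$, choose a local coordinate $x$ at $p$ trivialising $\omega$ and write $a_r = f(x)(dx)^{d_r}$ with $f(0) = 0$. Since $\Dg_\omega$ is locally a product, $\mt C_{(p, 0)}\Dg_\omega = \mt T_p C \oplus \mt C_0 \Dg = \mt T_p C \oplus \{p_r = 0\}$, while the image of $da_p$ is spanned by the vector $(\partial_x, (0, \ldots, 0, f'(0)))$. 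Hence $\operatorname{Im}(da_p) \subset \mt C_{(p,0)} \Dg_\omega$ iff $f'(0) = 0$, i.e.\ iff $a_r$ vanishes to order $\geq 2$ at $p$. Together with Propositions~\ref{P:gen-discr}(i) and~\ref{P:gen-ab}, this identifies
$$
C^* = \left\{ a_r \in H^0(C, \omega^{d_r}) : a_r \text{ has a zero of order } \geq 2 \text{ somewhere on } C \right\}.
$$

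\emph{Step 3: projective duality and irreducibility.} Under the standing hypotheses ($g \geq 2$; $d_r \geq 3$ whenever $g = 2$, since $G = \op{SL}_2, \op{PGL}_2$ are excluded in that range), $\omega^{d_r}$ is very ample, and $|\omega^{d_r}|$ gives a non-degenerate embedding $\phi: C \hookrightarrow \mathbb{P}(\AG_r^*)$. A hyperplane $[s] \in \mathbb{P}(\AG_r)$ is tangent to $\phi(C)$ at $\phi(p)$ iff the section $s \in H^0(C, \omega^{d_r})$ vanishes to order $\geq 2$ at $p$, so by Step~2 the projectivisation $\mathbb{P}(C^*) \subset \mathbb{P}(\AG_r)$ coincides with the classical dual variety $\phi(C)^\vee$. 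The latter is an irreducible hypersurface, being the projective dual of a smooth, irreducible, non-degenerate projective curve. Since $C^* \subset \AG_r$ is $\mathbb{G}_m$-stable (the $\mathbb{G}_m$-action on $\AG_r$ is by scaling), it is the affine cone over $\mathbb{P}(C^*)$, and is therefore itself irreducible. The crucial step is Step~1: identifying the tangent cone of $\Dg$ at the origin with a single hyperplane. Once this is in hand, the rest is a direct translation into the language of projective duality of a smooth curve embedded by a very ample complete linear system.
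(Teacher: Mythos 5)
Your proof is correct and follows essentially the same route as the paper's: restrict to $\AG_r$, use Remark~\ref{R:tang-cone-disc} to reduce the tangent-cone condition defining $\DG^{\ab}$ to the condition that $a_r$ has a zero of order at least two, and identify the resulting locus with the classical dual variety of $C$ embedded by $|\omega^{d_r}|$. You spell out several points the paper leaves implicit (the weighted-degree computation of $\mt C_0\Dg$ using that the top degree $d_r=h$ occurs with multiplicity one, the very-ampleness of $\omega^{d_r}$ under the standing hypotheses, and the irreducibility of the dual of a smooth non-degenerate curve), but these are elaborations rather than a different argument.
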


\begin{proof} Fix a section $a=(0,0,\ldots,0,a_r)\in \AG_r\subset \AG$. Note that the restriction of the discriminant $\Dg_{\omega}$ to $\Spec(k[p_1,\ldots,p_r]/(p_1,\ldots,p_{r-1}))\subset \cg$ is given by the solution of the polynomial $p_r^r$, see Lemma \ref{R:tang-cone-disc}. Then, the section $a$ is contained in $\DG^{\ab}$ if and only if $a(p)=(0,\ldots,0)$ and $\operatorname{Im}(da_p)=(0,\ldots,0)$ for some $p\in C$. The last statement is equivalent to the condition that the section $a_r\in \AG_r$ vanishes at $p$ with order at least two for some $p\in C$, i.e. $a_r\in H^0(C,\omega^{d_r}(-2p))\subset H^0(C,\omega^{d_r})=\AG_r$. By varying the point $p$, we have the equality
	$$
	C^{*}=\bigcup_{p\in C}H^0(C,\omega^{d_r}(-2p))\subset \AG_r.
	$$
In particular, the projectivization $\mathbb P(C^{*})\subset \mathbb P(\AG_r)$ is exactly the dual variety of the curve $C$ with respect to the linear system $\AG_r$.
\end{proof}

\begin{teo}[Torelli Theorem]\label{T:torelli}Let $C_1$ and $C_1$ be two smooth projective curves. If $\mt M^{\delta_1}_{G}(C_1)\cong \mt M^{\delta_2}_{G}(C_2)$ for some $\delta_1,\delta_2\in\pi_1(G)$, then $C_1\cong C_2$. 
\end{teo}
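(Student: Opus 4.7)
The plan is to recover $C$ canonically from $\mt M_G^{\delta}(C)$ as the projective dual of its pluricanonical image under the linear system $|\omega^{d_r}|$, where $d_r$ is the top exponent of the ring of $W$-invariants. The bridge is the $\mathbb G_m$-equivariant Hitchin fibration on the cotangent bundle. Given $\varphi: \mt M^{\delta_1}_G(C_1) \xrightarrow{\sim} \mt M^{\delta_2}_G(C_2)$, the dimension count $\dim \mt M^{\delta_i}_G(C_i) = \dim G \cdot (g_i - 1)$ forces $g(C_1) = g(C_2)$. The codifferential of $\varphi$ is a $\mathbb G_m$-equivariant isomorphism between the cotangent bundles, which by Corollary \ref{C:geom-fiber} are dense open subsets of $\hitgd_G^{\delta_i}$. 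Taking global regular functions and invoking Lemma \ref{L:glob-func} then yields a $\mathbb G_m$-equivariant isomorphism of affine spaces $\varphi_{\AG}: \AG(C_1) \xrightarrow{\sim} \AG(C_2)$ intertwining the two Hitchin morphisms.

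I would next extract two rigidifying pieces of structure from $\varphi_{\AG}$. First, for $G$ almost-simple, a case-by-case inspection of the classification shows $d_r > d_{r-1}$, so $\AG_r = H^0(C, \omega^{d_r})$ is the unique highest-weight summand of $\AG$ under the $\mathbb G_m$-action. Expanding $\varphi_{\AG}$ in weight components and comparing degrees of homogeneity: a polynomial component that lands in the weight-$d_j$ summand of the target must be homogeneous of degree $d_j/d_r$ in the coordinates on $\AG_r(C_1)$, so since $d_j \leq d_r$ with equality only at $j = r$, only a single linear term survives. This forces the restriction $\varphi_{\AG}|_{\AG_r(C_1)}: \AG_r(C_1) \to \AG_r(C_2)$ to be a linear isomorphism. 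Second, because an isomorphism of quasi-projective varieties sends projective rational curves to projective rational curves and hence preserves the closure of their union, Corollary \ref{C:hit-disc} forces $\varphi_{\AG}(\DG^{\ab}(C_1)) = \DG^{\ab}(C_2)$.

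Combining these, the linear isomorphism $\varphi_{\AG}|_{\AG_r(C_1)}$ restricts to a bijection $C_1^* := \DG^{\ab}(C_1) \cap \AG_r(C_1) \xrightarrow{\sim} C_2^* := \DG^{\ab}(C_2) \cap \AG_r(C_2)$. By Proposition \ref{P:dual-curve}, each $\mathbb P(C_i^*)$ is the projective dual of $C_i$ with respect to the complete linear system $|\omega_{C_i}^{d_r}|$. The excluded pairs $(g,G) = (2, \op{SL}_2)$ and $(2, \op{PGL}_2)$ are precisely those in which $d_r = h = 2$ and $\omega^2$ fails to be very ample on $C$; in every remaining case $\omega^{d_r}$ is very ample (since either $d_r \geq 3$ or $g \geq 3$), embedding $C_i$ as a smooth non-degenerate projective curve. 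Biduality for smooth projective varieties in characteristic zero gives $(C_i^*)^* = C_i$, and dualizing the projective-linear isomorphism $\mathbb P(C_1^*) \cong \mathbb P(C_2^*)$ produces the desired isomorphism $C_1 \cong C_2$. The main obstacle in this plan is the intrinsic identification of $\DG^{\ab}$: one needs $\varphi_{\AG}$ to preserve this particular divisor rather than some other divisor in $\AG$, and this is exactly what Corollary \ref{C:hit-disc} supplies via the uniruledness of the Hitchin fibers over $\DG^{\ab}$ (Proposition \ref{T:geom-fiber} and Corollary \ref{C:geom-fiber}).
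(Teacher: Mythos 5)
Your proposal is correct and follows essentially the same route as the paper: pass to the $\mathbb G_m$-equivariant codifferential on cotangent bundles, descend to the Hitchin base via Lemma \ref{L:glob-func}, restrict to the unique highest-weight summand $\AG_r$, observe that the non-abelian locus $\DG^{\ab}$ is preserved, and recover $C$ from $\mathbb P(\DG^{\ab}\cap\AG_r)$ by Proposition \ref{P:dual-curve} and biduality. The only (inessential) variations are that you invoke Corollary \ref{C:hit-disc} rather than arguing directly from Corollary \ref{C:geom-fiber} that an abelian variety cannot be birational to a uniruled one, and that you spell out the weight-decomposition argument for linearity of $\varphi_{\AG}|_{\AG_r}$ and the role of the excluded cases $(g,G)=(2,\op{SL}_2),(2,\op{PGL}_2)$, which the paper leaves implicit.
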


\begin{proof}By hypothesis, there exists an isomorphism $\varphi:\mt M_{G}^{\delta_1}(C_1)\to \mt M_{G}^{\delta_2}(C_2)$. Then, the codifferential $d\varphi^*:\mt T^*\mt M^{\delta_2}_{G}(C_2)\cong\mt  T^*\mt M^{\delta_1}_{G}(C_1)$ is an isomorphism of cotangent bundles. By Lemma \ref{L:glob-func}, we have a cartesian diagram
	\begin{equation*}\label{E:iso-cart}
		\xymatrix@R=0.5cm{
			\mt T^*\mt M^{\delta_2}_G(C_2)\ar[d]\ar[r]^{d\varphi^*}&	\mt T^*\mt M^{\delta_1}_G(C_1)\ar[d]\\
			\Spec(H^0(\mt T^*\mt M^{\delta_2}_G(C_2),\mathcal O))\ar[r]^\cong\ar[d]^\cong&\Spec(H^0(\mt T^*\mt M^{\delta_2}_G(C_2),\mathcal O))\ar[d]^\cong\\
			\AG\ar[r]^\cong_{\varphi_{\AG}}&\AG
		}
	\end{equation*}
where the compositions of the vertical maps are the Hitchin morphisms. Note that the linearity of $d\varphi^*$ makes the isomorphism $\varphi_{\AG}$ $\mathbb G_m$-equivariant, where the action on $\AG$ is defined as follows 
$$
\lambda.(a_1,\ldots,a_r)=(\lambda^{d_1}a_1,\ldots,\lambda^{d_r}a_r),\text{ for }(a_1,\ldots,a_r)\in\AG\text{ and }\lambda \in \mathbb G_m.
$$
In particular, it preserves the linear subspace in $\AG$ of highest weight, i.e. $\AG_r$. Thus, we get a linear isomorphism
$$
\varphi_{\AG_r}:=(\varphi_{\AG})_{|_\AG}:\AG_r\to\AG_r.
$$
By Corollary \ref{C:geom-fiber}, the divisor $\DG^{\ab}$ is preserved by $\varphi_{\AG_r}$, otherwise you will get an abelian variety birational to a uniruled variety. Hence, the morphism $\varphi_{\AG_r}$ preserves the intersection $\DG^{\ab}\cap\AG_r$. By proposition \ref{P:dual-curve}, we have an isomorphism of the dual varieties of the curves $C_2$ and $C_1$ with respect to the embedding into $\mathbb P(\AG_r)$. Since the double dual of a curve is the curve itself, we get the desired isomorphism $\sigma:C_1\to C_2$ of curves.
\end{proof}

\begin{cor}\label{C:torelli}Let $C_1$ and $C_2$ be two smooth projective curves. If $\overline{\mt M}^{\delta_1}_{G}(C_1)\cong \overline{\mt M}^{\delta_2}_{G}(C_2)$, then $C_1\cong C_2$. 
\end{cor}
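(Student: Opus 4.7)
The plan is to deduce Corollary \ref{C:torelli} from Theorem \ref{T:torelli} by restricting the given isomorphism to the stable-and-simple loci. By Theorem \ref{T:bg-fac}, each $\overline{\mt M}^{\delta_i}_G(C_i)$ is an irreducible normal projective variety, and the open subset $\mt M^{\delta_i}_G(C_i)$ is contained in its smooth locus; being open and nonempty in an irreducible variety, it is also dense. Since any isomorphism of varieties preserves the smooth locus, setting
\[
V_1 := \mt M^{\delta_1}_G(C_1)\cap \Phi^{-1}\!\left(\mt M^{\delta_2}_G(C_2)\right),\qquad V_2 := \Phi(V_1),
\]
we obtain an isomorphism $\Phi|_{V_1}\colon V_1\xrightarrow{\sim} V_2$ between open dense subsets of $\mt M^{\delta_1}_G(C_1)$ and $\mt M^{\delta_2}_G(C_2)$.

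I would then rerun the proof of Theorem \ref{T:torelli} with $V_i$ in place of $\mt M^{\delta_i}_G(C_i)$. Taking codifferentials gives an isomorphism $d(\Phi|_{V_1})^*\colon T^*V_2\xrightarrow{\sim} T^*V_1$, and by Corollary \ref{C:geom-fiber}(i) each $T^*V_i$ is an open dense subvariety of the irreducible Higgs moduli space $\hitgd^{\delta_i}_G$ (using Theorem \ref{T:ss-higgs}(ii) and Proposition \ref{P:hit-prop}(iii)). Normality of $\hitgd^{\delta_i}_G$ ensures that global regular functions on $T^*V_i$ extend to $\hitgd^{\delta_i}_G$, so the Stein factorization of Lemma \ref{L:glob-func} still recovers $\AG$ from $T^*V_i$. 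In this way the codifferential induces a $\mathbb G_m$-equivariant automorphism $\varphi_\AG\colon \AG\to\AG$ compatible with the two Hitchin morphisms.

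From here the conclusion is immediate. The $\mathbb G_m$-equivariance forces $\varphi_\AG$ to preserve the top-weight summand $\AG_r$, and preservation of the divisor $\DG^{\ab}$ follows from the dichotomy of Proposition \ref{T:geom-fiber} and Corollary \ref{C:geom-fiber}: a generic Hitchin fiber over $\AG^{\ab}$ is an abelian variety while one over a generic point of $\DG^{\ab}$ is uniruled, and these are not birational. Consequently $\varphi_\AG$ sends $\DG^{\ab}\cap \AG_r$ to itself, which by Proposition \ref{P:dual-curve} is the affine cone over the dual variety of $C_i\subset \mathbb P(\AG_r^*)$; double-dualization then yields $C_1\cong C_2$. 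The main obstacle is the second step, namely verifying that no information about the Hitchin morphism is lost by passing from $\mt M^{\delta_i}_G(C_i)$ to its open dense subset $V_i$; this ultimately reduces to normality of $\hitgd^{\delta_i}_G$, which guarantees that global regular functions extend uniquely across the closed complement of $T^*V_i$.
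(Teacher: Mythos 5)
Your proposal is correct and takes essentially the same route as the paper: restrict the isomorphism to an open dense subset of the stable-and-simple locus, check that the cotangent bundle of that open set still recovers the Hitchin base as in Lemma \ref{L:glob-func}, and then rerun the proof of Theorem \ref{T:torelli} verbatim. The one point you state too loosely is the extension step: normality of $\hitgd^{\delta_i}_G$ gives extension of regular functions only across closed subsets of codimension at least two, so you also need that $\overline{\mt M}^{\delta_2}_G(C_2)\setminus\mt M^{\delta_2}_G(C_2)$ (hence $\mt M^{\delta_1}_G(C_1)\setminus V_1$, via the isomorphism $\Phi$) has codimension at least two --- which does hold under the standing hypotheses but is not a consequence of normality alone; the paper sidesteps this by working with $\varphi(\mt M^{\delta_1}_G(C_1))$ itself, whose cotangent bundle is identified with $T^*\mt M^{\delta_1}_G(C_1)$ and therefore satisfies the statement of Lemma \ref{L:glob-func} without any further extension argument.
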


\begin{proof}Given an isomorphism $\varphi:\overline{\mt M}^{\delta_1}_{G}(C_1)\cong \overline{\mt M}^{\delta_2}_{G}(C_2)$, we obtain an isomorphism $\mt M^{\delta_1}_G(C_1)\cong \varphi(\mt M^{\delta_1}_G(C_1))$. Both sides are smooth varieties, by Theorem \ref{T:bg-fac}. Furthermore, the tangent space $\mt T^*(\varphi(\mt M^{\delta_1}_G(C_1)))$ satisfies the analogous statement of Lemma \ref{L:glob-func}. Then, by repeating the same argument of Theorem \ref{T:torelli}, we get the corollary. 
\end{proof}
In the proof of Torelli Theorem (= Theorem \ref{T:torelli}), we constructed a morphism of sets
\begin{equation}\label{E:morf}
	\operatorname{Aut}(\M_G^{\delta})\to \operatorname{Aut}(\AG):\varphi\mapsto\varphi_{\AG}.
\end{equation}
A priori, we know that $\varphi_{\AG}$ is $\mathbb{G}_m$-equivariant with respect to the canonical $\mathbb{G}_m$-action on $\AG$, induced by the one of $\cg_{\omega}$. We will show later that this isomorphism comes from the composition of an isomorphism of the vector bundle $\cg_{\omega}$ together with the pull-back along an automorphism of $C$. At the moment, we are only able to show the following fact.

\begin{cor}\label{C:var-l}Let $\varphi$ be an automorphism of $\mt M^{\delta}_G(C)$ and $\varphi_{\AG}$ be the induced automorphism on the Hitchin basis, given by \eqref{E:morf}. Then there exists $\lambda\in k\setminus\{0\}$ and $\sigma\in\operatorname*{Aut}(C)$ such that the restriction $$\varphi_{\AG_r}:=(\varphi_{\AG})_{|_{\AG_r}}:\AG_r\to \AG_r$$ is equal to $(\lambda\cdot Id_{\AG_r})\circ \sigma^*$, where $\sigma^*$ is the isomorphism on $\AG_r$ given by the pull-back of the line bundle $\omega^{d_r}$ along $\sigma$.
\end{cor}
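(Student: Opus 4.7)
The plan is to sharpen the argument used in the proof of Theorem \ref{T:torelli} so as to exhibit $\varphi_{\AG_r}$ explicitly as a pullback by a curve automorphism, up to scalar. The starting point is exactly what was established in that proof: $\varphi_{\AG}$ is $\mathbb{G}_m$-equivariant for the weight action with weights $d_1\leq\cdots\leq d_r$, hence preserves the top weight piece $\AG_r=H^0(C,\omega^{d_r})$, and the resulting linear isomorphism $\varphi_{\AG_r}$ preserves the locus $C^*:=\DG^{\ab}\cap\AG_r$. Projectivising gives $\overline{\varphi}_{\AG_r}\in\op{PGL}(\AG_r)$ preserving $\mathbb{P}(C^*)$, which by Proposition \ref{P:dual-curve} is the dual variety of the image of the morphism $\iota\colon C\to \mathbb{P}(\AG_r^*)$ furnished by the complete linear system $|\omega^{d_r}|$.

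The next step is to apply projective biduality. Under the running hypotheses, one checks that $\deg\omega^{d_r}=d_r(2g-2)\geq 2g+1$ (the only way this could fail is the excluded case $g=2$ with $d_r=2$, i.e.\ $G=\op{SL}_2$ or $\op{PGL}_2$), so $\omega^{d_r}$ is very ample and $\iota(C)$ is a smooth, non-degenerate projective curve. Since $k$ has characteristic zero, reflexivity of duality gives $(\mathbb{P}(C^*))^\vee=\iota(C)$. Taking transposes, $\overline{\varphi}_{\AG_r}$ induces a projective automorphism $\overline{\varphi}_{\AG_r}^{\,T}$ of $\mathbb{P}(\AG_r^*)$ that preserves $\iota(C)$ setwise, which restricts to some $\sigma\in\op{Aut}(C)$.

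Finally, the residual scalar is extracted by comparing $\varphi_{\AG_r}$ with the pullback $\sigma^*\colon \AG_r\to\AG_r$. A direct check shows that the transpose of $\sigma^*$ acts on $\iota(p)=[\op{ev}_p]$ by $\ell\mapsto \ell\circ\sigma^*$, i.e.\ by $\iota(p)\mapsto\iota(\sigma(p))$; hence it induces the same automorphism $\sigma$ on $\iota(C)$ as $\overline{\varphi}_{\AG_r}^{\,T}$ does. Therefore the composition $\psi:=\varphi_{\AG_r}\circ(\sigma^*)^{-1}$ is a linear automorphism of $\AG_r$ whose transpose fixes $\iota(C)$ pointwise. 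Non-degeneracy of $\iota(C)$ in $\mathbb{P}(\AG_r^*)$ forces this transpose to be the identity in $\op{PGL}(\AG_r^*)$, so $\psi=\lambda\cdot\op{Id}_{\AG_r}$ for some $\lambda\in k^{\times}$, yielding $\varphi_{\AG_r}=(\lambda\cdot\op{Id}_{\AG_r})\circ\sigma^*$.

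The only non-routine ingredient is the biduality argument, and this is the step where care is required: one must verify that the embedding given by $\omega^{d_r}$ is genuinely into a non-degenerate smooth curve (so that reflexivity applies as stated) and keep straight the conventions relating the linear pullback $\sigma^*$ on sections with the dual projective action on $\iota(C)$. Everything else is a formal consequence of the proof of Theorem \ref{T:torelli} and Proposition \ref{P:dual-curve}.
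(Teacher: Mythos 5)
Your proof is correct and follows the same route as the paper's: it identifies $\mathbb{P}(\DG^{\ab}\cap\AG_r)$ with the dual variety of $C$ via Proposition \ref{P:dual-curve}, applies biduality in characteristic zero to recover $\sigma\in\Aut(C)$, and uses non-degeneracy of the embedded curve to reduce the residual linear map to a scalar. The paper's own proof is essentially a compressed version of yours, so your explicit verification of the very-ampleness bound (which indeed fails only in the excluded case $g=2$, $d_r=2$) and of the scalar-extraction step merely fills in details the paper leaves implicit.
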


\begin{proof}In the proof of Theorem \ref{T:torelli}, we deduced that the the projective isomorphism
	$
	\overline{\varphi}_{\AG}:\mathbb P(\AG_r)\to\mathbb P(\AG_r)
	$
preserves the dual variety of $C$. Furthermore, the dual map $\overline{\varphi}^*_{\AG}$ preserves the embedding $C\hookrightarrow\mathbb P(\AG^*_r)$. Since $C$ is not contained in any hyperplane of $\mathbb P(\AG^*_r)$ and $\overline{\varphi}^*_{\AG}$ is linear, we get $\overline{\varphi}^*_{\AG}\in\operatorname{Aut}(C)$.
\end{proof}
\section{The tautological automorphisms of $\overline{\mt M}_G$.}\label{Sec:taut-aut}

The aim of this section is to define the homomorphism of Theorem \ref{T:A} and proving its injectivity. Unless otherwise stated, in this section, $G$ is a reductive group and $C$ is a smooth projective curve of genus $g\geq 2$.
\vspace{0.1cm}

Given a $G$-bundle $E$, we may construct some new (possibly isomorphic) $G$-bundles, by using the automorphisms of the curves, the automorphisms of the group and the $\mathscr Z(G)$-bundles on $C$. More precisely, given a $G$-bundle $E$:
\begin{enumerate}[(a)]
	\item if $\sigma:C\cong C$ is an isomorphism, the pull-back $\sigma^*E$ is a $G$-bundle,
	\item if $\rho:G\cong G$ is a group-isomorphism, then we define the $G$-bundle $\rho(E):=E\times^{\rho, G}G$,
	\item if $Z$ is a $\mathscr Z(G)$-bundle, the tensorization by $Z$, denoted by $E\otimes Z$, is a $G$-bundle, where $E\otimes Z$ is the fibered product $E\times_C Z$ modulo the equivalence relation $(et,z)\sim (e,zt)$, for any $e\in E$, $z\in Z$ and $t\in\scr Z(G)$.
\end{enumerate}
In particular, they define automorphisms for the moduli stack $\BG_G$ of $G$-bundles. We remark that some of these automorphisms may be trivial, as the next lemma shows.
\begin{lem}If $\rho\in \operatorname{Aut}(G)$ is an inner automorphism (i.e. $\rho\in\operatorname{Inn}(G)$), then $E\cong\rho(E)$ for any $G$-bundle.
\end{lem}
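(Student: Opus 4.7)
The plan is to exhibit a direct isomorphism $E \xrightarrow{\sim} \rho(E)$ of $G$-bundles. Since $\rho$ is inner, choose $g_0 \in G$ such that $\rho(h) = g_0 h g_0^{-1}$ for every $h \in G$; the element $g_0$ is determined by $\rho$ up to an element of $\mathscr{Z}(G)$.

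The key map I would consider is
$$\psi : E \longrightarrow \rho(E) = (E \times G)/\sim, \qquad e \longmapsto [e, g_0].$$
It is obviously a morphism over $C$, and the only thing to verify is that $\psi$ is $G$-equivariant. The right $G$-action on $\rho(E)$ is $[e, g'] \cdot h = [e, g' h]$. Thus, using the defining relation $(eh, g') \sim (e, \rho(h) g')$,
$$\psi(eh) = [eh, g_0] = [e, \rho(h) g_0] = [e, g_0 h g_0^{-1} g_0] = [e, g_0 h] = \psi(e) \cdot h.$$
Since $\psi$ is a $G$-equivariant morphism over $C$ between two $G$-torsors over $C$, it is automatically an isomorphism of $G$-bundles.

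There is no genuine obstacle: the proof reduces to this single equivariance check. The minor ambiguity in the choice of $g_0$ (up to $\mathscr{Z}(G)$) does not affect the argument, since modifying $g_0$ by a central element $z$ replaces $\psi$ by a composition with the automorphism of $E$ given by the right action of $z$, which still yields an isomorphism $E \cong \rho(E)$.
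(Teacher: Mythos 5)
Your proof is correct and is essentially the paper's argument: the paper uses the map $e\mapsto[eh,1]$, which under the defining relation equals $[e,\rho(h)]=[e,h]$, i.e. exactly your $\psi$ with $g_0=h$, and verifies the same equivariance identity. The additional remarks (automatic invertibility of equivariant torsor maps, independence of the choice of $g_0$ up to the centre) are accurate and harmless.
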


\begin{proof}By hypothesis, there exists $h\in G$ such that $hgh^{-1}=\rho(g)$ for any $g\in G$. Then, the map
	\begin{align*}
		E&\xrightarrow{f}\rho(E):=E\times^{\rho,G}G\\
		e&\mapsto [eh,1]
	\end{align*}
is a well-defined morphism of $G$-bundles and it gives the desidered isomorphism. Indeed, for any $e\in E$ and $g\in G$, we have
$
f(eg)=[egh,1]=[ehh^{-1}gh,1]=[eh\rho^{-1}(g),1]=[eh,g]\stackrel{\operatorname{def}}{=}f(e)g.
$
\end{proof}
In particular, the isomorphism $\BG_G\to \BG_G:E\mapsto\rho(E)$, induced by an automorphism $\rho\in \Aut(G)$, depends only on the class $[\rho]$ in the quotient $\Out(G):=\Aut(G)/\operatorname{Inn}(G)$. With abuse of notation, we denote by the same symbol $\rho$ an automorphism $\rho$ of $G$ and its equivalence class $[\rho]$ in $\Out(G)$. Next, we want to show that all these automorphisms of $\BG_G$ actually descend to automorphisms of the moduli space $\overline{\mt M}_G$ of semistable bundles.
\begin{lem}Let\label{L:sss} $E$ be a simple, resp. stable, resp. semistable, $G$-bundle. Then the following $G$-bundles are simple, resp. stable, resp. semistable.
	\begin{enumerate}[(i)]
		\item\label{L:sss1} $\sigma^*(E)$, where $\sigma\in \Aut(C)$,
		\item\label{L:sss2} $\rho(E)$, where $\rho \in \Out(G)$,
		\item\label{L:sss3} $E\mapsto E\otimes Z$, where $Z$ is a $\mathscr Z(G)$-bundle over $C$. 
	\end{enumerate}
\end{lem}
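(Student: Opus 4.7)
The plan is to verify the three stability/simplicity properties for each of the three operations by reducing them to the defining numerical conditions and checking them functorially.

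First, I would record two functorial facts that apply uniformly. Given any group-homomorphism $G \to H$ (in particular the adjoint representation $\operatorname{Ad}:G\to \operatorname{GL}(\g)$), the associated-bundle construction $E\mapsto E\times^G H$ commutes with all three operations: pull-back along $\sigma$, the twist $\rho(-)$, and tensoring with a central bundle $Z$. Moreover, there is a natural bijection between reductions of $E$ to a parabolic $P$ and reductions of the transformed bundle to a possibly transformed parabolic. Concretely: (i) a $P$-reduction $F$ of $\sigma^*E$ is the same as a $P$-reduction of $E$, pulled back by $\sigma$; (ii) a $P$-reduction of $\rho(E)$ corresponds to a $\rho^{-1}(P)$-reduction of $E$, since $\rho$ permutes the set of parabolics; (iii) because $\mathscr{Z}(G)\subset P$ for every parabolic $P$, the tensor $E\otimes Z$ has a canonical reduction $F\otimes Z$ to $P$ for every $P$-reduction $F$ of $E$, and conversely.

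Second, I would check that in each case the degree of the associated parabolic adjoint bundle $F\times^{\operatorname{Ad},P}\mathfrak p$ is preserved, so that the (semi)stability inequality transfers verbatim. For (i), $\sigma^*(F\times^P \mathfrak p)=\sigma^*F\times^P \mathfrak p$, and $\deg$ is invariant under pull-back by an automorphism of $C$. For (ii), the differential $d\rho:\g\to\g$ carries $\mathfrak p$ isomorphically onto $\mathfrak{\rho(p)}$, inducing an isomorphism of vector bundles $F\times^P\mathfrak p\cong (F\times^{\rho,P}\rho(P))\times^{\rho(P)}\rho(\mathfrak p)$, preserving degree. For (iii), the center acts trivially on $\g$, so $(F\otimes Z)\times^P\mathfrak p\cong F\times^P\mathfrak p$ as vector bundles. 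In the Higgs-bundle case the Higgs field $\theta$ transports along the same natural identifications, so the constraint $\theta\in H^0(C,(F\times^P\mathfrak p)\otimes\omega)$ passes across each operation. This handles stability and semistability simultaneously.

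For simplicity, I would show that the three operations induce group isomorphisms $\operatorname{Aut}(E)\xrightarrow{\sim}\operatorname{Aut}(\sigma^*E)$, $\operatorname{Aut}(\rho(E))$, $\operatorname{Aut}(E\otimes Z)$ which, under the natural identification of each transformed object's center with $\mathscr{Z}(G)$, restrict to the identity on $\mathscr{Z}(G)$. For (i), pull-back is functorial on $\operatorname{Hom}$-sets and is an equivalence since $\sigma$ is invertible. For (ii), the functor $E\mapsto E\times^{\rho,G}G$ is an equivalence of the groupoid of $G$-bundles with itself, with inverse $\rho^{-1}(-)$, so it induces an isomorphism on automorphism groups compatible with the centers. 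For (iii), an automorphism $\phi:E\to E$ induces $\phi\otimes\operatorname{id}_Z$ on $E\otimes Z$, giving a homomorphism $\operatorname{Aut}(E)\to\operatorname{Aut}(E\otimes Z)$; its inverse is obtained by tensoring with $Z^{-1}$, so it is an isomorphism. In each case the condition $\operatorname{Aut}(E)=\mathscr{Z}(G)$ transfers.

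No single step is truly an obstacle; the only point to be careful about is making the isomorphisms of associated parabolic adjoint bundles canonical so that both the degree comparison and the compatibility with the Higgs field hold on the nose. Once the three functorial identifications above are written out, all nine implications (three operations $\times$ three properties) follow.
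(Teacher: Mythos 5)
Your proposal is correct and follows essentially the same route as the paper: a bijection between parabolic reductions under each operation, a canonical degree-preserving identification of the associated bundles $F\times^{P}\mathfrak p$, and the observation that each operation induces an isomorphism of automorphism groups compatible with the center (the paper phrases this last point more briefly by noting the three operations are automorphisms of the moduli stack $\BG_G$). The extra remarks about Higgs fields are not needed here, since the lemma concerns only $G$-bundles.
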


\begin{proof}We recall that the operations above define automorphisms of the moduli stack $\BG_G$. In particular, all the bundles in the list have automorphism groups equal to $\Aut(E)$. Hence, if $E$ simple, the same holds for the other bundles in the list. We now focus on the (semi)stable case. Fix a parabolic subgroup $P\subset G$.\\	
\fbox{$(\ref{L:sss1})$} The $P$-reductions of $E$ are in bijection with the $P$-reductions of $\sigma^*E$. Indeed, $F$ is a $P$-reduction of $E$ if and only if $\sigma^*F$ is a $P$-reduction of $\sigma^*E$. Furthermore, the pull-back along $\sigma$ commutes with the formation of the adjoint bundles and preserves the degree, i.e.
$$
\deg\left( F\times^P\mathfrak p\right)=\deg \left( \sigma^*F\times^P\mathfrak p\right).
$$
Putting all together, we get that $E$ is (semi)stable if and only if $\sigma^*E$ is (semi)stable.\\
\fbox{$(\ref{L:sss2})$} Since $\rho$ is an isomorphism, it induces a bijection between the parabolic subgroups of $G$, i.e. $P$ is parabolic if and only if $\rho(P)$ is parabolic. Moreover, similarly to the previous point, the $P$-reductions of $E$ are in bijection with the $\rho(P)$-reductions of $\rho(E)$. Indeed, $F$ is a $P$-reduction of $E$ if and only if $\rho(F):=F\times^{\rho, P}\rho(P)$ is a $\rho(P)$-reduction of $\rho(E)$. Let $d\rho:\mathfrak p\to d\rho(\mathfrak p)$ be the isomorphism of Lie algebras given by $\rho:P\to\rho(P)$. Then, the homomorphism of vector bundles 
$$F\times^P\mathfrak p\to \rho(F)\times^{\rho(P)}d\rho(\mathfrak p)=F\times^{\rho,P}d\rho(\mathfrak p):[(f,x)]\mapsto [(f,d\rho(x))]$$ is an isomorphism and, so, they have the same degree. Hence $E$ is (semi)stable if and only if $\rho(E)$ is (semi)stable.\\
\fbox{$(\ref{L:sss3})$} As in the point $(i)$, the $P$-reductions of $E$ are in bijection with the $P$-reductions of $E\otimes Z$. Indeed, $F$ is a $P$-reduction of $E$ if and only if $F\otimes Z$ is a $P$-reduction of $E\otimes Z$. Furthermore, the adjoint bundles 
$$
F\times^P\mathfrak p \cong (F/\mathscr Z(G))\times^{P/\mathscr Z(G)}\mathfrak p\cong (F\otimes Z)\times ^P\mathfrak p
$$ are isomorphic and so they have the same degree. Hence, $E$ is (semi)stable if and only if $E\otimes Z$ is (semi)stable.
\end{proof}

By Lemma \ref{L:sss}, all these automorphisms preserve the locus $\BG_G^{(s)s}$ of (semi)stable bundles. By the universal property of the good moduli spaces, the automorphisms descend to the moduli space $\overline{\mt M}_G$ of semistable $G$-bundles. Putting together all the results, we have the following proposition.

\begin{prop}\label{P:aut-m} We have a homomorphism of groups
\begin{equation}\label{E:aut-m}
	\begin{array}{cll}
			H^1(C,\mathscr Z(G))\rtimes\left(\Aut(C)\times \Out(G)\right)&\longrightarrow \Aut\left(\overline{\mt M}_G\right)\\
			\nonumber(Z,\sigma,\rho)&\longmapsto \left(E\mapsto \rho((\sigma^{-1})^*E)\otimes Z\right)
		\end{array}
\end{equation}
where:
\begin{enumerate}[(i)]
	\item $\Aut(C)$ acts on $H^1(C,\mathscr Z(G))$ by pull-back, i.e. $Z\mapsto (\sigma^{-1})^*Z$,
	\item $\Out(G)$ acts on $H^1(C,\mathscr Z(G))$ as follows $\rho.Z=Z\times^{\rho,\mathscr Z(G)}\mathscr Z(G)$.
\end{enumerate} 
The same holds if we replace $\overline{\mt M}_G$ with the locus $\mt M_G$ of simple and stable $G$-bundles.
\end{prop}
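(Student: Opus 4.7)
The plan is to assemble the homomorphism from the three elementary constructions (a), (b), (c) introduced in the Introduction, verifying that each descends to $\overline{\mt M}_G$ and that they combine into a single homomorphism from the semidirect product.

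First, I would note that each of the three operations defines an automorphism of the moduli stack $\BG_G$: pullback by $\sigma\in\Aut(C)$ has inverse $(\sigma^{-1})^{*}$; outer twisting $E\mapsto \rho(E)$ has inverse $E\mapsto \rho^{-1}(E)$, and is well-defined on $\Out(G)$ because inner automorphisms act trivially by the lemma preceding the proposition; twisting by a $\mathscr Z(G)$-bundle $Z$ has inverse given by twisting with $Z^{-1}$. By Lemma \ref{L:sss}, each operation preserves the (semi)stable locus $\BG_G^{ss}$ and the stable-simple locus. By the universal property of good moduli spaces, the three automorphisms descend to automorphisms of $\overline{\mt M}_G$ (and restrict to $\mt M_G$), which handles the statement ``the same holds'' for the stable-simple locus.

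Next, I would verify that $\Aut(C)\times \Out(G)$ indeed acts on $H^1(C,\mathscr Z(G))$, which amounts to checking that the pullback action of $\Aut(C)$ and the outer twisting action of $\Out(G)$ commute: this is immediate because one acts on the base curve while the other acts on the fiber group, and the associated-bundle construction is functorial in both variables. With the semidirect product structure in place, the homomorphism property is a direct computation using three natural compatibilities:
\begin{equation*}
\sigma^{*}(E\otimes Z)\cong \sigma^{*}E\otimes \sigma^{*}Z,\quad \rho(E\otimes Z)\cong \rho(E)\otimes \rho.Z, \quad \rho(\sigma^{*}E)\cong \sigma^{*}\rho(E).
\end{equation*}
Unraveling the composition $\rho_1(\sigma_1^{*}(\rho_2(\sigma_2^{*}E)\otimes Z_2))\otimes Z_1$ via these three canonical isomorphisms produces $\rho_1\rho_2((\sigma_1\sigma_2)^{*}E)\otimes \bigl(Z_1\cdot \sigma_1^{*}(\rho_1.Z_2)\bigr)$, which is precisely the image of the semidirect product multiplication.

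The only genuine subtlety is bookkeeping: pullback is contravariant, so $(\sigma_1\sigma_2)^{*}=\sigma_2^{*}\sigma_1^{*}$, and one must fix a convention (either reading $\Aut(C)$ as acting on the right, or equivalently identifying the map with its opposite on the $\Aut(C)$-factor) so that the semidirect product composition matches. Once this convention is pinned down consistently, the remaining verifications are entirely formal consequences of functoriality of the associated bundle construction and of the commutative diagrams relating pullback, outer twisting, and tensoring with a central bundle. I do not expect any substantial obstacle: the real content of Section \ref{Sec:taut-aut} is injectivity (which is not claimed in this proposition), and the surjectivity of Theorem \ref{T:A} is proved later.
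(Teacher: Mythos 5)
Your proposal is correct and follows essentially the same route as the paper: the three operations are shown to be automorphisms of $\BG_G$ (with inner automorphisms acting trivially), Lemma \ref{L:sss} gives preservation of the (semi)stable and simple loci, and the universal property of good moduli spaces gives the descent to $\overline{\mt M}_G$. You are in fact slightly more careful than the paper, which leaves the compatibility isomorphisms and the contravariance convention for $\sigma^*$ implicit rather than spelling them out.
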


We remark that these moduli spaces are disconnected, if $G$ is not simply-connected. So, a priori, these automorphisms may move the connected components of the moduli space $\overline{\mt M}_G$. We are interested in those automorphisms, which fix these components. Before studying them, we need to fix some notations. Any element in $\rho\in \Out(G)$ gives a (possibly trivial) automorphism of the fundamental group of $G$
$$\begin{array}{ccc}
\pi_1(G)&\xrightarrow{\pi_1(\rho)}&\pi_1(G)\\
	\delta&\longmapsto&	\pi_1(\rho)(\delta).
\end{array}$$
On the other hand, the group $\mathscr Z(G)$ is a (possibly disconnected) multiplicative group. It is isomorphic to $F\times \mathbb G_m^{s}$, where $F$ is a finite group over $k$ and $s\geq 0$. In particular, the connected components of the moduli stacks of $\scr Z(G)$-bundles are in bijection with $\widehat{F}^{2g}\times\pi_1(\mathbb G_m^s)=\operatorname{Hom}(F,\mathbb G_m)^{2g}\times \operatorname{Hom}(\mathbb G_m,\scr Z(G))$. Note that the factor $\operatorname{Hom}(\mathbb G_m,\scr Z(G))$ is independent of the choice of the splitting. 

\begin{defin}\label{D:degree}Fix a maximal torus $T_G\subset G$. We say that a $\mathscr Z(G)$-bundle $Z$ is \emph{of degree $z\in \operatorname{Hom}(\mathbb G_m,\scr Z(G))$}, if its image $Z\times^{\mathscr Z(G)}T_G$ along the morphism $\BG_{\scr Z(G)}\to \BG_{T_G}$ is contained in the connected component $\BG_{T_G}^{\pi_1(\iota)(z)}$, where $\pi_1(\iota):\operatorname{Hom}(\mathbb G_m,\scr Z(G))\hookrightarrow \operatorname{Hom}(\mathbb G_m,T_G)=\pi_1(T_G)$ is the obvious inclusion.
\end{defin}

Since $\scr Z(G)$ is contained in any maximal torus of $G$, the definition of degree above is independent from the choice of the maximal torus. The next lemma explains how the automorphisms in the image of \eqref{E:aut-m} move the connected components of $\overline{\mt M}_G$.
\begin{lem}\label{L:mov-con}The following fact holds.
	\begin{enumerate}[(i)]
		\item Let $\sigma\in \Aut(C)$, then the map $E\mapsto \sigma^*E$ gives an automorphism of varieties $\overline{\mt M}^{\delta}_G\cong \overline{\mt M}^{\delta}_G$.
		\item Let $\rho\in\Out(G)$, then the map $E\mapsto \rho(E)$ gives an automorphism of varieties $\overline{\mt M}^{\delta}_G\cong \overline{\mt M}^{\pi_1(\rho)(\delta)}_G$.
		\item Let $Z$ be a $\mathscr Z(G)$-bundle of degree $z\in\operatorname{Hom}(\mathbb G_m,\scr Z(G))$, then the map $E\mapsto E\otimes Z$ gives an automorphism of varieties $\overline{\mt M}^{\delta}_G\cong \overline{\mt M}^{\delta+[\pi_1(\iota)(z)]}_G$, where $\pi_1(\iota):\operatorname{Hom}(\mathbb G_m,\scr Z(G))\hookrightarrow\pi_1(T_G)$ is the obvious inclusion.
	\end{enumerate}
\end{lem}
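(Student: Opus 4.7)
By Lemma \ref{L:sss}, each of the three operations preserves (semi)stability and so descends, via the universal property of good moduli spaces, to an isomorphism of $\overline{\mt M}_G$ (and of $\mt M_G$); being invertible (with inverse $\sigma^{-1}$, $\rho^{-1}$, or tensoring by $Z^{-1}$), each permutes the components $\overline{\mt M}^{\delta}_G$ indexed by $\delta\in\pi_1(G)$. The task is to identify these permutations of $\pi_1(G)$, which I plan to do by reducing each bundle, over a dense open $U\subset C$, to a fixed maximal torus $T\subset G$.

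Concretely, every $G$-bundle $E$ admits a reduction $E_B$ to the Borel $B$ on a dense open $U\subset C$ (because $G/B$ is projective and the generic fibre of $E\times^G G/B\to C$ is rationally connected, hence has a section over the function field), and then $E_T:=E_B\times^B T$ is a $T$-reduction of $E$ on $U$. The class $\delta(E)\in \pi_1(G)=X_*(T)/\langle\Phi^{\vee}\rangle$ is the image of the cocharacter $c\in X_*(T)$ such that $\deg\bigl(E_T\times^{T,\chi}\mathbb G_m|_U\bigr)=\langle\chi,c\rangle$ for every $\chi\in X^*(T)$. With this in hand, each part reduces to a direct check. For (i), $\sigma^*E$ has the $T$-reduction $\sigma^*E_T$ on $\sigma^{-1}(U)$, and because pull-back along an automorphism of a smooth projective curve preserves line-bundle degrees, the associated cocharacter of $\sigma^*E_T$ coincides with $c$, so $\delta(\sigma^*E)=\delta(E)$. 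For (ii), after replacing $\rho$ by a representative with $\rho(T)=T$ (possible after modification by an inner automorphism, which does not change the class in $\Out(G)$), the $T$-reduction $E_T\times^{\rho,T}T$ of $\rho(E)$ has cocharacter $\pi_1(\rho)(c)$, since $\bigl(\rho(E_T)\bigr)\times^{T,\chi}\mathbb G_m=E_T\times^{T,\chi\circ\rho|_T}\mathbb G_m$ and $\langle\chi\circ\rho|_T,c\rangle=\langle\chi,\pi_1(\rho)(c)\rangle$; passing to the quotient gives $\delta(\rho(E))=\pi_1(\rho)(\delta)$. For (iii), since $\mathscr Z(G)\subset T$ the bundle $Z$ is naturally a $T$-bundle, and $E_T\otimes Z$ is a $T$-reduction of $E\otimes Z$ on $U$ with cocharacter $c+\pi_1(\iota)(z)$, because tensoring adds line-bundle degrees; this yields $\delta(E\otimes Z)=\delta+[\pi_1(\iota)(z)]$.

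The main obstacle I anticipate is the verification that the cocharacter $c$ produced above is well-defined modulo the coroot lattice and coincides with the canonical identification $\pi_0(\BG_G)=\pi_1(G)$. Independence from the chosen $E_B$ is the crucial point: any two rational $T$-reductions differ, over their common domain, by an element of $W=N_G(T)/T$, and the $W$-action on $X_*(T)$ is generated by reflections in the coroots, which descend to the identity on $X_*(T)/\langle\Phi^{\vee}\rangle=\pi_1(G)$. Compatibility with the canonical identification $\pi_0(\BG_G)=\pi_1(G)$ (cf.\ \cite{Ho10}) then follows from the functoriality of both descriptions under the morphism $\BG_T\to\BG_G$, reducing the whole bookkeeping to the torus case, where the statement is manifest.
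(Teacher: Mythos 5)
Your overall strategy (reduce everything to a degree computation on a maximal torus) is the right one and is close in spirit to the paper's proof, and parts (i)--(iii) are correct direct checks \emph{granted} your description of the component map. But the justification you give for the crucial well-definedness step is wrong. It is not true that two rational $T$-reductions of $E$ differ by an element of $W=N_G(T)/T$: two $B$-reductions correspond to two sections of $E/B$ over the generic point, i.e.\ to two $k(C)$-points of a form of $G/B$, and these are related by an element of $G(k(C))$, not of the Weyl group. Already for $G=\op{GL}_2$ and $E=\oo\oplus\oo(1)$, the sub-line-bundles $\oo(1)$ and a degree $-1$ subbundle give $T$-degrees $(1,0)$ and $(-1,2)$, which are \emph{not} in the same $W$-orbit; they differ by an element of the coroot lattice, which is the correct (and strictly weaker than ``$W$-conjugate'') statement, but it does not follow from the argument you give. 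The fact that the degree of $E_B\times^BT$ is well defined in $X_*(T)/\langle\Phi^\vee\rangle$ independently of the $B$-reduction is exactly the nontrivial content of the identification $\pi_0(\BG_G)\cong\pi_1(G)$, so as written your argument is either circular or rests on a false lemma. (A secondary imprecision: $\deg\bigl(E_T\times^{T,\chi}\mathbb G_m|_U\bigr)$ is meaningless for a non-proper $U$; you must first extend the $B$-reduction from $U$ to all of $C$ using the properness of $G/B$ before taking degrees --- the extension to $C$ exists and is unique, but a rational $T$-reduction itself does not extend, since $G/T$ is not proper.)

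The repair is essentially what the paper does, and it also shows why your Weyl-group lemma is not needed. Since each of the three operations is an automorphism of $\overline{\mt M}_G$, it permutes the components, so it suffices to track a single bundle in each component; because $\pi_1(T)\to\pi_1(G)$ is surjective on components of $\BG_T\to\BG_G$, one may take that bundle to be induced from a $T$-bundle of known degree, compute the effect of $\sigma^*$, $\rho$, $\otimes Z$ on the torus (where it is the elementary degree bookkeeping you carry out), and then invoke the functoriality of $\pi_0(\BG_{\bullet})=\pi_1(\bullet)$ under extension of structure group (\cite[Theorem 5.8]{Ho10}, which the paper cites for exactly this purpose). This sidesteps generic reductions and the independence question entirely. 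If you prefer to keep your ``Kottwitz-style'' description of $\delta(E)$ via a generic $B$-reduction, you must justify its well-definedness by this same functoriality applied to $\BG_B\to\BG_T$ and $\BG_B\to\BG_G$, not by the Weyl group.
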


\begin{proof}It is enough to show the analogous statements for the moduli stack $\BG_G$ of $G$-bundles. Fix a maximal torus $T$. Point $(i)$: the natural map $
	\BG_{T}\to\BG_G: E\mapsto E\times ^{T}G
	$
induces a surjection from the point of view of connected components $\pi_1(T)\twoheadrightarrow\pi_1(G)$. Then, we may assume $G=T$ torus. After choosing a splitting of the torus, we may assume $T=\mathbb G_m$. So, we have reduced the problem to check if the pull-back along $\sigma$ preserves the degree of a line bundle. The latter statement is well-known. Point $(ii)$: it is a consequence of \cite[Theorem 5.8]{Ho10}. Point $(iii)$: arguing as in point $(i)$, it is enough to show that the isomorphism
	\begin{align}\label{E:torus-tensor}
		\BG_{T}&\xrightarrow{\sim}\BG_{T}\\
		\nonumber E&\mapsto E\otimes Z
	\end{align}
sends the connected component corresponding to $d\in\pi_1(T)$ to the one corresponding to $d+\pi_1(\iota)(z)\in \pi_1(T)$. Let $m:T\times T\to T:(t,s)\mapsto ts$ be the product homomorphism. We then have the following isomorphisms of $T$-bundles
$$
\def\arraystretch{1.5}\begin{array}{ccccc}
 E\otimes Z&\xrightarrow{\sim}& (E\times_C Z)\times^{m,T\times\mathscr Z(G)}T&\xrightarrow{\sim}&\Big(E\times_C(Z\times^{\mathscr Z(G)}T)\Big)\times^{m,T\times T}T\\
\, [e,z]&\mapsto & [(e,z),1] &&\\
 & &[(e,z),t] &\mapsto &[(e,[z,1]),t]
\end{array}
$$
Hence, the isomorphism \eqref{E:torus-tensor} is the restriction to the substack $\BG_{T}\times \{Z\times^{\mathscr Z(G)}T\}$ of the morphism of moduli stacks
$$
\BG_{T}\times\BG_{T}\cong\BG_{T\times T}\to \BG_T:(E,F)=(E\times_CF)\mapsto (E\times_C F)\times^{m,T\times T}T.
$$
associated to the product homomorphism $m:T\times T\to T$. From this, the assertion follows by \cite[Theorem 5.8]{Ho10}.
\end{proof}

\begin{rmk}The notation $E\otimes Z$ is not accidental. Indeed, if $G=\op{GL}_n$ (and so $\scr Z(G)=\mathbb G_m$), the vector bundle attached to the tensor product $E\otimes Z$ is nothing but the usual tensor product of $\oo_C$-modules between the vector bundle attached to $E$ and the line bundle attached to $Z$.
\end{rmk}

By Lemma \ref{L:mov-con}, we have a complete description of the automorphisms in the image of the homomorphism described in Proposition \ref{P:aut-m}, which fix a connected component. 
\begin{prop}\label{P:aut-m}Let $G$ be a semi-simple group and fix $\delta\in \pi_1(G)$. Then, we have an injective homomorphism of groups
	\begin{equation}\label{E:aut-delta}
		\def\arraystretch{1.5}\begin{array}{cll}
			H^1(C,\mathscr Z(G))\rtimes\left(\Aut(C)\times \Out(G,\delta)\right)&\lhook\joinrel\longrightarrow & \Aut(\overline{\mt M}_G^{\delta})\\
			\nonumber(Z,\sigma,\rho)&\longmapsto &\left(E\mapsto \rho((\sigma^{-1})^*E)\otimes Z\right)
		\end{array}
	\end{equation}
where:
\begin{enumerate}[(i)]
	\item $\Aut(C)$ acts on $H^1(C,\mathscr Z(G))$ by pull-back, i.e. $Z\mapsto(\sigma^{-1})^*Z$,
	\item $\Out(G,\delta)=\{\rho\in\Out(G)\vert \pi_1(\rho)(\delta)=\delta\}$ acts on $H^1(C,\mathscr Z(G))$ as follows: $\rho.Z=Z\times^{\rho,\mathscr Z(G)}\mathscr Z(G)$.
\end{enumerate}
The same holds if we replace $\overline{\mt M}_G^{\delta}$ with the locus $\mt M_G^{\delta}$ of simple and stable $G$-bundles.
\end{prop}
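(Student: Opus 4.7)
The plan is to check well-definedness of the map into $\Aut(\overline{\mt M}_G^\delta)$ as a group homomorphism, and then its injectivity.

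For well-definedness, I would verify that each of the three building blocks preserves both semistability and the fixed component. Semistability is handled uniformly by Lemma \ref{L:sss}. Preservation of the component is given by Lemma \ref{L:mov-con}: part (i) covers $\sigma^*$; part (ii) covers $\rho\in\Out(G,\delta)$ by the very definition $\pi_1(\rho)(\delta)=\delta$; part (iii) covers $-\otimes Z$, since $G$ semisimple implies $\scr Z(G)$ finite, so the ``degree'' appearing in Definition \ref{D:degree} vanishes and tensoring preserves every component. The semidirect-product structure gives a homomorphism because of the formal identities $\sigma^*(E\otimes Z)\cong \sigma^*E\otimes \sigma^*Z$ and $\rho(E\otimes Z)\cong \rho(E)\otimes \rho(Z)$.

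For injectivity, I would take $(Z,\sigma,\rho)$ in the kernel and pass to the cotangent bundle of the smooth dense open subset $\mt M_G^\delta\subset \overline{\mt M}_G^\delta$. By Lemma \ref{L:glob-func}, there is an induced automorphism of the Hitchin base $\AG$, which by hypothesis must be the identity. I would decompose this induced map as follows: the $-\otimes Z$ part acts trivially on $\AG$ (a central twist does not affect the adjoint bundle, hence not the Hitchin invariants); the $\sigma^*$ part acts by the natural pullback on each summand $\AG_i = H^0(C,\omega^{d_i})$; and the $\rho$ part acts through the graded linear automorphism $\rho_\cg$ of $\cg$, which is scalar multiplication by some $\lambda_i\in k^\times$ on each $\AG_i$. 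On the top summand $\AG_r$, the equation $\lambda_r\cdot \sigma^*|_{\AG_r} = \operatorname{id}$ shows that $\sigma$ acts as a scalar on the linear system $|\omega^{d_r}|$; since $\omega^{d_r}$ is very ample (because $d_r\geq 2$ and $g\geq 2$, omitting the small exceptions of Theorem \ref{T:torelli}), this forces $\sigma$ to preserve every effective divisor of $|\omega^{d_r}|$ and hence $\sigma=\operatorname{id}_C$, with $\lambda_r=1$. Repeating on each $\AG_i$ gives $\lambda_i=1$ for all $i$, so $\rho_\cg=\operatorname{id}$; since the representation $\Out(G)\to \Aut(\cg)$ is faithful for semisimple $G$ (verified type-by-type on the fundamental invariants of Theorem \ref{T:konstant}), this forces $\rho=\operatorname{id}$.

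It remains to show that $Z$ is trivial. At this stage the automorphism reduces to $E\mapsto E\otimes Z$, and is required to be the identity on $\overline{\mt M}_G^\delta$. For a generic stable simple $E$ with $\Aut(E)=\scr Z(G)$, an isomorphism $E\otimes Z\cong E$ produces a global section of an associated $\scr Z(G)$-torsor over $C$ canonically attached to $E$ and $Z$; letting $E$ vary in a positive-dimensional family while $Z$ stays fixed then forces this torsor to be trivial for every such $E$, which is only possible when $Z$ itself is trivial in $H^1(C,\scr Z(G))$. The step I expect to be the main obstacle is precisely this last one, since it amounts to establishing that the translation action of $H^1(C,\scr Z(G))$ on $\overline{\mt M}_G^\delta$ has trivial generic stabilizer; the cleanest route is a deformation argument that exploits the identification of $\Aut(E)$ with $\scr Z(G)$ for the generic stable simple $G$-bundle.
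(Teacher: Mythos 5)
Your well-definedness argument and your elimination of $\sigma$ coincide with the paper's: semistability is Lemma \ref{L:sss}, component-preservation is Lemma \ref{L:mov-con}, and $\sigma=\Id_C$ follows because the induced map on $\AG_r$ is $\lambda\cdot\sigma^*$ (Corollary \ref{C:var-l}) and $\omega^{d_r}$ is very ample. Your treatment of $\rho$, however, is a genuinely different route. The paper instead shows that for $\rho\neq 1$ the fixed locus $\{E \mid E\cong\rho(E)\}$ is a proper subvariety of $\mt M_G^{\delta}$, by reducing such $E$ to the twisted-fixed-point subgroups $G^{\theta,s}$ following \cite{OrRa}. Your alternative --- detecting $\rho$ on the Hitchin base through the induced automorphism of $\cg$ --- is sound and arguably more uniform, but two assertions need repair: the graded piece of $\cg$ in a given degree can have dimension greater than one (type $D_{2n}$ has two fundamental invariants of degree $2n$), and the induced automorphism of $\cg$ in the coordinates $p_1,\ldots,p_r$ is weighted-homogeneous but need not be linear (e.g.\ $p_j\mapsto ap_j+bp_1^2$ is allowed when $d_j=2d_1$), so ``scalar multiplication on each $\AG_i$'' is false as stated. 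What is true, and suffices, is that the composite preserves $\AG_r$ (the top degree has multiplicity one for $G$ almost-simple) and that $\Out(G)\to\Aut(\cg)$ is injective; for the latter no type-by-type check is needed: a nontrivial pinned automorphism permutes the simple roots nontrivially while preserving the dominant chamber, hence cannot agree on $\tg$ with any element of $W$, hence acts nontrivially on $\tg/W=\cg$.

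The genuine gap is the final step, $Z=\oo_C$, which you flag but do not close. What is needed is that the translation action of $H^1(C,\scr Z(G))$ on $\mt M_G^{\delta}$ has trivial generic stabilizer, i.e.\ that for nontrivial $Z$ the locus $\{E\mid E\otimes Z\cong E\}$ is a proper subvariety; the paper does not reprove this but imports it from \cite[Example 5.14]{BHLB}. Your intermediate claim --- that an isomorphism $E\otimes Z\cong E$ yields a global section of a $\scr Z(G)$-torsor attached to $(E,Z)$ --- is not the right formulation: twists of $E$ are classified by $H^1(C,E\times^{G}G)$ (conjugation action), $E\otimes Z$ is the twist by the image of $[Z]$ under $H^1(C,\scr Z(G))\to H^1(C,E\times^{G}G)$, and $E\otimes Z\cong E$ means that this image vanishes --- which for special $E$ does happen with $Z$ nontrivial (for $G=\op{SL}_n$ these are exactly the bundles pushed forward from the cyclic cover determined by $Z$). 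So ``letting $E$ vary in a positive-dimensional family'' must be converted into an actual dimension count or a reduction-of-structure-group argument showing that such $E$ are non-generic; as written, this step is an assertion rather than a proof.
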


\begin{proof}The only fact, which has not been proved yet, is the injectivity. We need to show that for any stable and simple $G$-bundle $E$ there exists an isomorphism of $G$-bundles
\begin{equation}\label{E:assump}
	\psi: E\cong \rho(\sigma^*E)\otimes Z,
\end{equation}
then $(Z,\rho,\sigma)=(\oo_C,\Id_G,\Id_C)$. We divide the proof in three steps.\\
\underline{Reduction from $(Z,\rho,\sigma)$ to $(Z,\rho,\Id_C)$.} From the results of Section \ref{Sec:torelli} (see, for instance, Corollary \ref{C:var-l}), we get that if $\sigma\neq \Id_C$, then the automorphism of $\mt M_G^{\delta}$ is always non-trivial.\\
\underline{Reduction from $(Z,\rho,\Id_C)$ to $(\oo_C,\rho,\Id_C)$.} It is enough to show that the action of $H^1(C,\mathscr Z(G))$ is effective on $\mt M_G^{\delta}$. For the statement with respect to the moduli stack $\BG_G$, see \cite[Example 5.1.4]{BHLB}. As a corollary, we get the analogous statement for the (coarse) moduli space $\mt M_G$ of simple and stable $G$-bundles.\\
\underline{Case $(\oo_C,\rho,\Id_C)$}. It is enough to show that the fixed-point loci
$$
\mt M_G(\rho):=\{E\in\mt M_G\vert\, E\cong\rho(E)\}
$$
is a proper subvariety of $\mt M_G$. This statement follows by the results of \cite{OrRa}. Here, we briefly recall the necessary steps. First, we need to fix some notations. It is well-known that $\Out(G)$ is a finite group and the quotient map $\Aut(G)\to \Out(G)$ admits a (non-canonical) splitting. Then, if $\rho\in\Out(G)$ is an element of order $n$, we fix a lift $\theta\in\Aut(G)$ such that $\theta^n=\Id_G$. Let $E$ be a stable and simple $G$-bundle such that there exists an isomorphism $\psi:E\cong\rho(E)$ of $G$-bundles. Since $\rho$ has order $n$, we have an equality of $G$-bundles $\rho^n(E)=E$. Furthermore, the $n$-composition $\psi^n$ is an automorphism of $E$. Since we assumed $E$ simple, we must have $\psi^n=z\in \mathscr Z(G)$. We then define the following subset
$$
S(z):=\{s\in G\vert \,s\theta(s)\cdots\theta^{n-1}(s)=z\in \mathscr Z(G)\}.
$$
The group $G$ acts on $S(z)$ by twisted conjugation, i.e. $s.g=g^{-1}s\theta(g)$. By \cite[Proposition 3.9]{OrRa}, the bundle $E$ admits a $G^{\theta,s}$-reduction, where $G^{\theta,s}$ is the proper subgroup of $G$ of fixed points under the automorphism 
$$
G\xrightarrow{\theta}G\xrightarrow{s(-)s^{-1}}G,
$$
where $s$ is any element in a certain unique $G$-orbit of $S(z)$. We remark that if $E$ is stable, then its $G^{\theta,s}$-reduction remains stable. In particular, the fixed-point locus $\mt M_G(\rho)$ is contained in the (closure of the) image of the natural morphisms of moduli spaces $$f_{s,z}:\mt M_{G^{\theta,s}}\to\mt M_G\text{ for any }s\in S(z) \text{ and for any }z\in\mathscr Z(G).$$
By the above discussion, the intersections $\mt M_G(\rho)\cap \op{Im}(f_{s,z})$ and $\mt M_G(\rho)\cap \op{Im}(f_{s',z'})$ coincide if $z=z'$ and $s$ and $ s'$ are in the same $G$-orbit of $S(z)$. Since $\mathscr Z(G)$ is a finite group and the set of $G$-orbits of $S(z)$ is finite, the fixed-point locus $\mt M_G(\rho)$ is contained in a finite union of proper subvarieties of $\mt M_G$.
\end{proof}

\begin{rmk}Using Lemma \ref{L:mov-con}, one could generalize the homomorphism of Proposition \ref{P:aut-m} to any reductive group $G$. However, in this generality, the description of the subgroup of the left-hand side of \eqref{E:aut-m} is more involved. Namely, it consists of triples $(Z,\sigma,\rho)$ such that $\pi_1(\rho)(\delta)+[\pi_1(\iota)(\deg Z)]=\delta\in\pi_1(G)$. 
	
We remark that, in this generality, the injectivity may fail. For example when $G=\mathbb G_m$ and $C$ hyperelliptic, the outer isomorphism $\mathbb G_m\to\mathbb G_m:t\mapsto t^{-1}$ and the hyperelliptic involution $\sigma:C\cong C$ define the same automorphism $\mt J\to \mt J:L\mapsto L^{-1}$ on the jacobian $\mt J=\overline{\mt M}_{\mathbb G_m}$ of the curve.
\end{rmk}

\section{Nilpotent Cone and Flag Varieties.}\label{Sec:nilp}
Before completing the proof of Theorem \ref{T:A}, we recall some facts about the nilpotent cone of a Lie algebra and the cohomology of the tangent bundle on the flag variety. They will be used in the proof of Theorem \ref{T:T}, where we will prove the surjectivity of the homomorphism in Theorem \ref{T:A}.
\vspace{0.1cm}

The \emph{nilpotent cone $\mt N$} is the subset of nilpotent elements in $\g$, i.e.
$$
\mt N:=\{x\in\g\vert\, \text{the adjoint endomorphism }[x,-]:\g\to\g \text{ is nilpotent}\}.
$$
Note that any nilpotent element in $\g$ is contained in at least one Borel subalgebra. If $G$ is a reductive group with Lie algebra $\g$, the Borel subalgebras are parametrized by the flag variety $G/B$. The Lie algebra $\mathfrak b=\tg\oplus\mathfrak n$, of the Borel group $B$, is a Borel subalgebra and it splits as direct sum of the Cartan algebra $\tg$ and a nilpotent Lie algebra $\mathfrak n$.

\begin{lem}Let\label{L:nc} $G$ be a reductive group and $\g$ be its reductive Lie algebra. The following facts hold.
	\begin{enumerate}[(i)]
		\item\label{L:nc1} $\mt N$ is the fiber over the zero-element of the morphism $\g\xrightarrow{\chi}\cg$ introduced in Theorem \ref{T:konstant}$(\ref{T:konstant3})$.
		\item\label{L:nc2} $\nc$ is an irreducible variety of dimension $|\Phi|=\#\{\operatorname{roots}\}=\dim G-\dim T_G$.
		\item\label{L:nc3} the morphism $\widetilde\nc=\{(x, \mathfrak l)\in \nc\times G/B\vert\, x\in \mathfrak l\}\xrightarrow{\operatorname{pr}_1}\nc$ is a resolution of singularities.
		\item\label{L:nc4} there exists a commutative diagram of schemes
		\begin{equation}
			\xymatrix{
				\widetilde\nc\ar[r]^{\sim}\ar[dr]^{\operatorname{pr_2}}&G\times^B\mathfrak n\ar[d]^q&\mt T^*(G/B)\ar[l]_{\sim}\ar[ld]^{p}\\
				&G/B&
			}
		\end{equation}
		where the horizontal rows are $G$-equivariant isomorphisms and $p$ is the obvious morphism from the co-tangent bundle of $G/B$ to $G/B$.
		\item\label{L:nc5} A point $x\in \nc$ is smooth if and only if it is regular (i.e. the adjoint homomorphism $[x,-]:\g\to\g$ has kernel of dimension $r$).
		\item\label{L:nc6} There exists a $G$-equivariant fibration $\nc^{\sm}\to G/B$ with fibers isomorphic to 
		$
		\mathbb G_m^{l}\times \mathbb A^{\frac{|\Phi|}{2}-l}
		$, where $l$ is the number of simple roots (i.e. the rank of the semi-simple Lie algebra $[\g,\g]$).
	\end{enumerate}
\end{lem}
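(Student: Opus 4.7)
The strategy is to deduce everything from the Chevalley morphism $\chi:\g\to\cg$ of Theorem \ref{T:konstant}, the Jacobson--Morozov theorem, and the classical Springer resolution, with Point (vi) derived as a direct consequence of (iv) and (v).

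For Point (i), I would use that $\chi$ is $\mathbb G_m$-equivariant with strictly positive weights on $\cg$. Given a nilpotent $x\in\nc$, the Jacobson--Morozov theorem supplies a cocharacter $\lambda:\mathbb G_m\to G$ with $\operatorname{Ad}(\lambda(t))x=t^{2}x$; combining $G$-invariance with homogeneity of the generators yields $p_i(x)=t^{2d_i}p_i(x)$ for all $t$, forcing $\chi(x)=0$. Conversely, if $\chi(x)=0$ then all $G$-invariant polynomials vanish on $x$, so $0\in\overline{G\cdot x}$ by affine invariant theory, and the Hilbert--Mumford criterion forces $x$ to be nilpotent. Point (ii) then follows from Kostant's result \cite{Ko63}: $\chi$ is flat with irreducible fibers of pure dimension $\dim\g-r=|\Phi|$, and $\nc=\chi^{-1}(0)$ is the closure of the (open dense) regular nilpotent orbit.

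For (iii) and (iv), the key observation is that any nilpotent element of a Borel subalgebra $\operatorname{Ad}(g)\mathfrak b$ automatically lies in the nilradical $\operatorname{Ad}(g)\mathfrak n$, so the assignment $[g,y]\mapsto(\operatorname{Ad}(g)y,gBg^{-1})$ yields a $G$-equivariant isomorphism
\[
G\times^{B}\mathfrak n\xrightarrow{\sim}\widetilde{\nc},
\]
realizing $\widetilde\nc$ as a rank-$|\Phi^{+}|$ vector bundle over the smooth projective variety $G/B$. Hence $\widetilde\nc$ is smooth of dimension $|\Phi|$, and the projection $\widetilde\nc\to\nc$ is a resolution because the regular nilpotent orbit is dense in $\nc$ and each of its elements is contained in a unique Borel. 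The identification with the total space of $\mt T(G/B)$ comes from the Killing form, which gives a $B$-equivariant identification $\mathfrak n=\mathfrak b^{\perp}\cong(\g/\mathfrak b)^{*}$.

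For (v), I would note that $\chi$ is smooth on $\g^{\reg}$ by Kostant, so $\nc\cap\g^{\reg}$ is smooth as a scheme-theoretic fiber. Conversely, $\nc$ is a normal complete intersection in $\g$, and the Springer map $\widetilde\nc\to\nc$ is proper birational and an isomorphism exactly over $\nc\cap\g^{\reg}$; if a non-regular point of $\nc$ were smooth, Zariski's main theorem would extend the isomorphism across it, contradicting the positive-dimensionality of the corresponding Springer fiber. Finally, (vi) follows by combining (iv) and (v): $\nc^{\sm}\cong G\times^{B}\mathfrak n^{\reg}$, and a standard computation identifies $\mathfrak n^{\reg}=\mathfrak n\cap\g^{\reg}$ with $\{\sum_{\alpha\in\Phi^{+}}c_{\alpha}x_{\alpha}:c_{\alpha_i}\neq 0\text{ for every simple root }\alpha_i\}\cong\mathbb G_m^{l}\times\mathbb A^{|\Phi|/2-l}$. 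The most delicate step I anticipate is the reverse inclusion in (v): while smoothness on the regular locus is formal, excluding smoothness at non-regular nilpotents requires combining normality of $\nc$, the birationality of the Springer resolution, and the jump in dimension of the Springer fibers.
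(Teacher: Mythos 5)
Your arguments for (i)--(iv) and (vi) are sound and essentially reconstruct, with actual proofs, what the paper handles by citation to \cite{CG}: Jacobson--Morozov plus homogeneity of the $p_i$ for one inclusion of (i), separation of closed orbits plus Hilbert--Mumford for the other, Kostant's flatness theorem for (ii), the identification $G\times^B\mathfrak n\cong\widetilde\nc$ for (iii)--(iv), and the explicit description of $\mathfrak n^{\reg}$ for (vi). One remark on (iv): your Killing-form identification $\mathfrak n=\mathfrak b^{\perp}\cong(\g/\mathfrak b)^{*}$ produces the \emph{cotangent} bundle $\mt T^*(G/B)$, which is what \cite[Lemma 3.2.2]{CG} actually asserts; this is consistent with the mathematics you wrote but not with the label $\mt T(G/B)$ you attach to it, so make the two agree.

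The genuine gap is in the reverse implication of (v). You claim that if a non-regular point $x\in\nc$ were smooth, then Zariski's main theorem would ``extend the isomorphism across it,'' contradicting the positive-dimensionality of the Springer fibre over $x$. This is not a valid use of ZMT: a proper birational morphism onto a normal (even smooth) variety can have positive-dimensional fibres over smooth points of the target --- the blow-up of a point on a smooth surface is the standard counterexample --- so smoothness of $\nc$ at $x$ tells you nothing about the fibre of $\widetilde\nc\to\nc$ over $x$, and no contradiction arises. The paper's argument avoids the resolution altogether: by (i) and (ii), $\nc=\Spec k[\g]/(p_1,\ldots,p_r)$ is a reduced complete intersection of codimension $r$ in $\g$, so the Jacobian criterion gives that $\nc$ is smooth at $x$ if and only if $dp_1,\ldots,dp_r$ are linearly independent at $x$, and by \cite[Theorem 9]{Ko63} this independence is exactly Kostant's characterization of regularity. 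Your forward direction (smoothness of $\chi$ on $\g^{\reg}$) is fine; replace the converse by the Jacobian-criterion argument. If you insist on arguing through the resolution, you would need the strictly stronger input that it is crepant (e.g.\ that $\widetilde\nc\cong\mt T^*(G/B)$ is symplectic), since a nontrivial resolution of a smooth point always has positive discrepancy --- considerably more machinery than the two lines above.
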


\begin{proof}Point $(\ref{L:nc1})$: see \cite[Proposition 3.2.5]{CG}. Point $(\ref{L:nc2})$: see \cite[Corollary 3.2.8]{CG}. Point $(\ref{L:nc3})$: it is a consequence of \cite[Proposition 3.2.14]{CG} and the discussion after its proof. Point $(\ref{L:nc4})$: see \cite[Lemma 3.2.2]{CG}. Point $(\ref{L:nc5})$: observe that $\nc=\chi^{-1}(0)=\Spec k[\g]/(p_1,\ldots,p_r)$ (see point $(\ref{L:nc1})$) and $\dim \nc=\dim \g-r$ (see \eqref{L:nc2}). Hence, $\nc$ is smooth at $x$ if and only the differentials $dp_1,\ldots,dp_r$ are linearly independent at $x$. The latter condition is equivalent to $x$ being regular (see \cite[Theorem 9]{Ko63}). Point $(\ref{L:nc6})$: our candidate is the composition of $G$-equivariant morphisms
	\begin{equation}\label{E:fibration}
		\nc^{\sm}\cong pr_1^{-1}(\nc^{\sm})\cong G\times^B\mathfrak n^{\reg}\subset G\times^B\mathfrak n\xrightarrow{q}G/B.
	\end{equation}
	It remains to give the description of the fibers. By $G$-equivariance, it is enough to study the fiber over $[1]=[\mathfrak b]\in G/B$, which is isomorphic to $\mathfrak n^{\reg}$. For any root $\alpha$, fix a non-zero vector $v_{\alpha}$ in the eigenspace $\g_{\alpha}$. By \cite[Lemma 3.2.12]{CG}, an element $x\in \g$ is contained in $\mathfrak n^{\reg}$ if and only if
	$
	x=\sum_{\alpha>0}c_{\alpha} v_{\alpha},
	$
	where $c_{\alpha}\in k$ and $c_{\alpha}\neq 0$ if $\alpha$ simple root. In other words, $\mathfrak n^{\reg}\cong \mathbb G_m^{l}\times \mathbb A^{\frac{|\Phi|}{2}-l}$.
\end{proof}
\begin{rmk}A more direct proof of the above lemma for the case $G=\op{Sp}_{2n}$ can be found in \cite[Lemma 5.1]{BGMsym}.
\end{rmk}
We also will need the description of the cohomology of the tangent sheaf of the flag varieties.
\begin{lem}\label{L:aut-flag}Let $G$ be an almost-simple group and $\mt T(G/B)$ be the tangent bundle of the flag variety $G/B$. Then, we have
	\begin{enumerate}[(i)]
		\item an isomorphism $\g\cong H^0(G/B,\mathcal T(G/B))$ of Lie algebras, where the Lie bracket on the right hand side is the bracket of vector fields;
		\item $H^i(G/B,\mathcal T(G/B))=0$, for $i>0$.
	\end{enumerate}
\end{lem}

\begin{proof}Without loss of generality, we may assume $G$ adjoint. The statement, with the exception of the injectivity of the homomorphism at the first point, is the content of \cite[Proposition 2, p. 182]{D77}. Note that the statement in \emph{loc.cit.} holds for non exceptional (in the sense of \cite[\S 2]{D77}) parabolic subgroups of $G$. However, as explained at the end of \cite[\S 2]{D77}, a Borel subgroup is always non exceptional. The injectivity follows by observing that the surjective homomorphism of Lie algebras $L(\varphi):\g\to H^0(G/B,\mathcal T(G/B))$ in \cite[Proposition 2, p. 182]{D77} is the derivation of the isomorphism $\varphi^o:G=Aut(G)^o\cong Aut(G/B)^o$, where $\varphi^o$ is the isomorphism $\varphi$ in \cite[Theorem 1, p. 182]{D77} restricted to the connected components of the identity element. 
\end{proof}

\section{The automorphism group of $\overline{\mt M}_G$.}\label{Sec:last}
The aim of this section is to conclude the proof of Theorem \ref{T:A}, by showing the surjectivity of the homomorphism in \emph{loc.cit.}. As in Section \ref{Sec:torelli}, our strategy is similar to the one given in \cite{BGMsl}, for $G=\operatorname{SL}_n$, in \cite{BGMsym}, for $G=\operatorname{Sp}_{2n}$, and in \cite{Sanc} for the cases $G=\mathbb E_6^{\operatorname{sc}}, \mathbb F_4$.  Unless otherwise stated, we will always assume $G$ almost-simple and $C$ smooth projective curve of genus $g\geq 4$.\\

Any automorphism of $\overline{\mt M}^\delta_G$ preserves its smooth locus. Under the assumption $g\geq 3$, the smooth locus coincides with the locus of simple and stable bundles (see \cite[Corollary 3.4]{BH12}). Hence, we have a well-defined restriction group homomorphism $\Aut(\overline{\mt M}_G^\delta)\to \Aut(\mt M_G^\delta)$. We have the following

\begin{prop}\label{P:aut-finite}Let $\delta\in\pi_1(G)$. The following facts hold true.
	\begin{enumerate}[(i)]
		\item\label{P:aut-finite1} The restriction homomorphism $\Aut(\overline{\mt M}_G^\delta)\to \Aut(\mt M_G^\delta)$ is an isomorphism.
		\item\label{P:aut-finite2} The moduli space $\overline{\mt M}_G^\delta$ does not have non-trivial global vector fields, i.e. $H^0(\overline{\mt M}_G^\delta,\mt T\overline{\mt M}_G^\delta)=0$.
		\item\label{P:aut-finite3} The automorphism group $\Aut(\overline{\mt M}_G^\delta)$ is a finite group. The same holds for the open locus $\mt M_G^\delta$ of stable and simple $G$-bundles. 
	\end{enumerate}
\end{prop}

\begin{proof} \fbox{\eqref{P:aut-finite1}} The injectivity follows because $\mt M^\delta$ is a dense open subset of $\overline{\mt M}^\delta$. We now focus on the surjectivity. We recall that $\overline{\mt M}^\delta$ is a Gorenstein projective variety (see \cite[Proposition 13.6]{BLS}). Moreover, its Picard group is equal to $\mathbb Z$ (see \cite[Theorem (b)]{BLS}, \cite{Sor}) and its dualising line bundle $\omega_{\overline{\mt M}^\delta}$ is anti-ample (see \cite[Proposition 4.3]{BH12}). In particular, there exists a negative integer $n$ such that $\omega^n_{\overline{\mt M}^\delta}$ is a very ample line bundle. Since complement of $\mt M^\delta$ in the normal variety $\overline{\mt M}^\delta$ is precisely the singular locus, it has codimension at least two. Hence, the restriction homomorphism
	\begin{equation}\label{E:l-restr}
	H^0(\overline{\mt M}^\delta, \omega^n_{\overline{\mt M}^\delta})\to H^0(\mt M^\delta,\omega^n_{\mt M^\delta})
	\end{equation}
is an isomorphism. Any automorphism $\varphi:\mt M^\delta\to\mt M^\delta$ fixes the dualising line bundle $\omega_{\mt M^\delta}$. By the isomorphism \eqref{E:l-restr}, $\varphi$ acts on $H^0(\overline{\mt M}^\delta, \omega^n_{\overline{\mt M}^\delta})$. Hence, it is the restriction of an automorphism of $\mathbb P(H^0(\overline{\mt M}^\delta, \omega^n_{\overline{\mt M}^\delta}))$, which must fix the image of $\overline{\mt M}^\delta$. In other words, the automorphism $\varphi$ extends to an automorphism on the whole variety $\overline{\mt M}^\delta$.

\noindent\fbox{\eqref{P:aut-finite2}} By definition $\mt T\overline{\mt M}^\delta=Hom(\mt T^*\overline{\mt M}^\delta,\mt O_{\overline{\mt M}^\delta})$, where here $\mt T^*\overline{\mt M}_G^\delta$ denotes the cotangent sheaf. In particular, a global vector field is equivalent to a morphism $s:\mt T^*\overline{\mt M}^\delta\to \mt O_{\overline{\mt M}^\delta}$ of  $\mt O_{\overline{\mt M}^\delta}$-modules. Since the moduli space $\overline{\mt M}_G^\delta$ is integral, we could check the triviality of $s$ on a (dense) open subset. By \cite[Corollary III.3]{Fa93}, the open locus of stable bundles does not have global vector fields.

\noindent\fbox{\eqref{P:aut-finite3}} Since $\overline{\mt M}^\delta$ is projective, the automorphism group $\Aut(\overline{\mt M}^\delta)$ is the group $G(k)$ of $k$-valued points of some group scheme $G$ over $k$ of dimension $\dim\mathrm{Lie}(G)=\dim H^0(\overline{\mt M}_G^\delta,\mt T\overline{\mt M}_G^\delta)$. By point \eqref{P:aut-finite2}, the group $G$ is zero-dimensional, i.e. $G=\pi_0(G)$, where $\pi_0(G)$ denotes the group scheme of the connected components of $G$. On the other hand, from the proof of point \eqref{P:aut-finite1}, we also have that $G$ fixes the class $[\omega^n_{\overline{\mt M}^\delta}]$ in the (torsion-free) N\'eron-Severi group $\mathrm{NS}(\overline{\mt M}^\delta)=\Pic(\overline{\mt M}^\delta)=\bbZ$. By \cite[Theorem 2.10]{BrionNotes}, the group scheme $G=\pi_0(G)$ (and so the group $G(k)$) is finite. The statement about the locus of stable and simple bundles follows from point \eqref{P:aut-finite1}.
\end{proof}

The fact that the automorphism group of our moduli spaces is finite implies that the $\mathbb G_m$-equivariant automorphism of the Hitchin basis induced by an automorphism of $\mt M_G^\delta$ is linear. More precisely,

\begin{lem}\label{L:vp=linear}Let $\varphi$ be an automorphism of $\Md_G$ and  $\varphi_{\AG}$ be the induced automorphism on the Hitchin basis, given by \eqref{E:morf}. Then there exists a $\mathbb G_m$-equivariant linear isomorphism $L_\varphi\in \op{GL}(\AG)$ and $\sigma \in\operatorname*{Aut}(C)$ such that 
	$
	\varphi_{\AG}=L_\varphi\circ\sigma^*
	$
and $L_\varphi$ restricted to $\AG_r$ is equal to $\lambda\cdot \mathrm{Id}_{\AG_r}$ for some $\lambda\in k\setminus\{0\}$.
\end{lem}
\begin{proof}
As in in the proof of Torelli Theorem \ref{T:torelli}, we have a cartesian diagram
\begin{equation}\label{E:iso-cartbis}
	\xymatrix@R=0.6cm{
		\thigd\ar[d]^-{h^{\delta}}\ar[r]^{d\varphi^*}&\thigd\ar[d]^-{h^{\delta}}\\
		\AG\ar[r]^{\varphi_{\AG}}&\AG
	}
\end{equation}
By Corollary \ref{C:var-l}, the restriction of $\varphi_{\AG}$ to $\AG_r$ is equal to $(\lambda\cdot \Id_{\AG_r})\circ \sigma^*$ for some $\lambda\in k\setminus\{0\}$ and $\sigma\in\operatorname{Aut}(C)$. After composing $\varphi$ with $(\sigma^{-1})^*$, we may assume $(\varphi_{\AG})_{|\AG_r}=\lambda\cdot\Id_{\AG_r}$. It remains to show that $\varphi_\AG$ is linear. By Proposition \ref{P:aut-finite}\eqref{P:aut-finite3}, we know that $\varphi^n=\mathrm{Id}_{\mt M^\delta}$ for some integer $n$. In particular, the same holds for the co-differential $d\varphi^*$ and, so, $\varphi_\AG^n=\mathrm{Id}_{\AG}$. Hence, the morphism $\varphi_\AG$ is an $n$-torsion element of the group $\Aut_{\mathbb{G}_m}(\AG)$ of $\mathbb{G}_m$-equivariant automorphisms of the affine space $\AG$. This group coincides with the automorphism group of the weighted projective stack $[(\AG\setminus\{0\})/\mathbb G_m]$ (see \cite[Proposition 7.2]{Noohi}). This group may be written as a semidirect product
$$
\Aut_{\mathbb{G}_m}(\AG)=V\rtimes G,
$$
where $G$ is the subgroup of $\mathbb G_m$-equivariant linear isomorphisms of $\AG$ and $V$ is a normal subgroup admitting a subnormal series ${1}\subset V_1\subset \cdots V_t=V$ such that $V_{i+1}/V_i\cong \mathbb G_a^{n_i}$ for some integer $n_i$ (see \cite[Theorem 7.7]{Noohi}). In particular, $V$ does not have torsion elements and, so, $\varphi_\AG\in G$, i.e. $\varphi_\AG$ is linear.
\end{proof}

We want to show that the linear isomorphism $L_\varphi$ defined in Lemma \ref{L:vp=linear} is induced by an automorphism of the vector bundle $\mathfrak c_{\omega}$. Before proving this, we need some preparation.
\begin{lem}\label{L:gen-st}For a generic $G$-bundle $E$, we have  $H^0(C,\ads(E)(p))=0$, for any point $p\in C$.
\end{lem}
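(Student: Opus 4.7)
Plan. The plan is to combine upper semi-continuity with a codimension estimate. First, the function $(E,p)\mapsto h^0(C,\ads(E)(p))$ is upper semi-continuous on $\BG_G\times C$, and since $C$ is projective the first projection $\BG_G\times C\to\BG_G$ is proper; hence the bad locus
$$
Y := \{E\in\BG_G : H^0(C,\ads(E)(p))\neq 0 \text{ for some } p\in C\}
$$
is a closed substack of $\BG_G$, and the assertion reduces to showing that $Y$ is proper.

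I then reformulate the vanishing. From $0\to\ads(E)\to\ads(E)(p)\to\ads(E)|_p\to 0$ and the long exact sequence in cohomology, combined with the self-duality $\ads(E)\cong\ads(E)^\vee$ given by the Killing form (non-degenerate since $G$ is almost-simple), it follows that $H^0(C,\ads(E)(p))=0$ if and only if $H^0(C,\ads(E))=0$ and the evaluation $\operatorname{ev}_p:H^0(C,\ads(E)\otimes\omega)\to\g$ at $p$ is surjective. The first condition cuts out a dense open substack by the "very good" property of $\BG_G$ for $G$ semisimple, recalled from \cite[Proposition 1.1.1]{BD} in the proof of Proposition \ref{P:hit-prop}. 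The remaining content of the lemma is thus the global generation of $\ads(E)\otimes\omega$ for generic $E$.

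To bound the locus where this fails, translate as follows: a non-zero $s\in H^0(C,\ads(E)(p))$ is equivalent to a sheaf-morphism $\oo_C(-p)\hookrightarrow\ads(E)$. Its saturation is a sub-line-bundle $L\subseteq\ads(E)$ with $L\supseteq\oo_C(-p)$, so $\deg L\geq -1$. By Ramanan-Ramanathan, $\ads(E)$ is a semistable vector bundle of slope $0$ whenever $E$ is semistable, forcing $\deg L\leq 0$; combined with the previous bound, $L$ is isomorphic either to $\oo_C(-p)$ or to $\oo_C(q-p)$ for some $q\in C$. Hence $Y$ is contained in the image in $\BG_G$ of the stack
$$
\mathcal Z := \{(E,L,\iota) : L\in\Pic^{-1}(C)\cup\Pic^0(C),\ \iota:L\hookrightarrow\ads(E)\text{ subbundle}\}\big/k^*.
$$

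Finally, a dimension count shows that the image of $\mathcal Z$ in $\BG_G$ has positive codimension under the standing hypothesis $g\geq 4$. For each fixed $L$ of degree $\delta\in\{-1,0\}$, the twisted adjoint bundle $L^\vee\otimes\ads(E)$ has Euler characteristic $\dim G(-\delta+1-g)\leq -\dim G(g-2)$, and a twisted analogue of the "very good" argument of \cite[Proposition 1.1.1]{BD}, applied to the universal twisted adjoint bundle on $\BG_G\times\Pic^{\leq 0}(C)$, shows that the locus $\{E:H^0(C,L^\vee\otimes\ads(E))\neq 0\}$ has codimension strictly bigger than $g+1$ in $\BG_G$. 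Since $L$ varies in a $g$-dimensional family, the projection of $\mathcal Z$ to $\BG_G$ is then a proper closed substack, and hence so is $Y$. The main technical obstacle is this last dimension estimate, uniform in $L$: it requires adapting the argument of \cite{BD} to the twisted setting, with the numerical bounds made possible precisely by the standing hypothesis $g\geq 4$.
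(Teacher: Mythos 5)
Your overall strategy is genuinely different from the paper's: you reduce the lemma to the statement that a generic semistable $E$ admits no line subbundle $L\subseteq\ads(E)$ with $\deg L\geq -1$ (via saturation and Ramanan--Ramanathan semistability of the adjoint bundle), and then try to kill the bad locus by a dimension count over the family of possible $L$'s. The reformulation steps are all correct, and this is an attractive outline. The paper instead reduces to $B$-bundles of minuscule degree and runs through the explicit filtration of the adjoint bundle into rank-one and rank-two pieces from Biswas--Hoffmann, checking the vanishing piece by piece with Abel--Jacobi and extension-class arguments. However, your proof has a genuine gap exactly at the step you yourself flag as ``the main technical obstacle'': the claim that, for each fixed $L$ of degree $0$ or $-1$, the locus $\{E\,:\,H^0(C,L^{\vee}\otimes\ads(E))\neq 0\}$ has codimension $>g+1$ in $\BG_G$ is asserted, not proved, and it does not follow from the tool you invoke. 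The ``very good'' property of Beilinson--Drinfeld bounds the codimension of the stratum $\{E\,:\,h^0=n\}$ from below by $n$; its most optimistic twisted analogue would therefore give codimension $\geq 2$ on the locus $\{h^0\geq 1\}$, nowhere near $g+1$. Since your $L$ ranges (after saturation) over the images of $C\to\operatorname{Pic}^{-1}(C)$ and $C\times C\to\operatorname{Pic}^0(C)$, i.e.\ a $2$-dimensional family, a uniform bound of ``codimension $\geq 2$'' is exactly on the boundary of uselessness: the union over the family could a priori be all of $\BG_G$. What you actually need is a Brill--Noether-type estimate for adjoint bundles, uniform in $L$, and that is precisely the hard content of the lemma; no argument for it is given.

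A second, smaller point: even granting some twisted analogue of the very good property, the quantity it controls is $\dim H^0(C,\ads(E))$ (equivalently $\dim\operatorname{Aut}(E)$) via the specific relationship between automorphisms and the stabilizer stratification of $\BG_G$; the group-theoretic input that makes that stratification argument work (reduction to parabolics measuring instability of $E$ itself) has no evident counterpart for the condition $\operatorname{Hom}(L,\ads(E))\neq 0$ when $L$ is a nontrivial line bundle, so ``adapting the argument of \cite{BD} to the twisted setting'' is not a routine modification. To complete your route you would need either a deformation-theoretic bound on the incidence stack of pairs $(E,\,L\hookrightarrow\ads(E))$ fibred over the space of $L$'s, with control of the obstruction space, or a direct construction of one $E$ in each connected component with no such subbundle --- which is essentially what the paper's proof does by hand.
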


\begin{proof}We will prove the lemma with the help of the results contained in \cite[Sections $3$ and $4$]{BH10}. Without loss of generality, we may assume $G$ of adjoint type, i.e. $\scr Z(G)=\{1\}$. Consider the entire moduli stack $\mathbf{Bun}_G$ of $G$-bundles. Since the vanishing in the assertion is an open condition on $\mathbf{Bun}_G$, it is enough to find one $G$-bundle with that vanishing property, for any connected component $\mathbf{Bun}^{\delta}_G$, with $\delta\in\pi_1(G)$. Fix a $\delta\in\pi_1(G)$ and let $B$ be a Borel subgroup and $U$ its unipotent radical. The inclusion $B\subset G$ gives a morphism of moduli stacks $\mathbf{Bun}^d_B\to\mathbf{Bun}^{\delta}_G:E\mapsto E\times^BG$, where $d\in\pi_1(B)=\pi_1(T)$ is a lift of $\delta$. Then, it is enough to show that there exists a lift $d\in\pi_1(B)$ of $\delta$ and a $B$-bundle $E$ in $\mathbf{Bun}^d_B$ such that $H^0(C,\ads(E)(p))=0$ for any $p\in C$, where $\ads(E):=E\times^B\g$. The adjoint action gives an inclusion of vector bundles
	$$\begin{array}{rcl}
	\ads(E)=E\times^B\g&\hookrightarrow&\displaystyle \frac{End(\ads(E))}{k.\Id}=E\times^{B}\left(\frac{\operatorname{End}(\g)}{k.\Id}\right)\\
	x&\mapsto& \ads(x)=[x,-].
\end{array}$$
Then, it is enough to show that the vanishing of the global sections of the sheaf $$(End(\ads(E))/k.\Id)(p),$$ for any point $p\in C$. By \cite[Chapitre
	VIII, \S7, Proposition 8]{Bu}, we know that if $\delta\neq 0$, it admits a unique lift to a minuscule element $d\in\pi_1(B)$, i.e. a co-character $d$ such that $\alpha(d)\in \{0,1\}$ for any positive root $\alpha$. Let $d\in\pi_1(B)$ such that either $-d$ is minuscule or $d=0$.  We will show that there exists a $B$-bundle $E$ in $\mathbf{Bun}_{B}^{d}$ such that $H^0(C,(End(\ads(E))/k.\Id)(p))=0$ for any $p\in C$. By the above discussion, this fact will give the lemma.
	
Let $E$ be an arbitrary $B$-bundle. By \cite[Corollary 4.5]{BH10} the vector bundle $End(\ads(E))/k.\Id$ is a successive extension of vector bundles of the following type:
\begin{itemize}
	\item line bundles $E(\alpha,1):=E\times^{B} \mathbb A^1$, where the action on $\mathbb A^1$ is given by a root $B\xrightarrow{\alpha}\mathbb G_m$,
	\item line bundles $E(\alpha-\beta,1):=E\times^{B} \mathbb A^1$, where the action on $\mathbb A^1$ is given by the difference of two roots $B\xrightarrow{\alpha-\beta}\mathbb G_m$,
	\item vector bundles $E(\alpha,2)=E\times^{B} \mathbb A^2$, where the action on $\mathbb A^2$ is given by the (unique) homomorphism of groups $B\xrightarrow{\alpha,u_{\alpha}} \mathbb G_m\ltimes\mathbb G_a\hookrightarrow GL_2$ such that $\alpha$ is a simple root and the composition $\g_{\alpha}\hookrightarrow\g\xrightarrow{du_{\alpha}} \operatorname{Lie}(\mathbb G_a)$ is an isomorphism.
\end{itemize}
Since the vanishing of $H^0$ is an open condition, it is enough to show that the loci in $\mathbf{Bun}_B^{d}$ 
\begin{align*}
\mt E(\alpha,1):=&\{E\in \mathbf{Bun}_B^{d}\vert H^0(C,E(\alpha,1)(p))=0\text{ for any } p\in C\},\\
\mt E(\alpha-\beta,1):=&\{E\in \mathbf{Bun}_B^{d}\vert H^0(C,E(\alpha-\beta,1)(p))=0\text{ for any } p\in C\},\\
\mt E(\alpha,2):=&\{E\in \mathbf{Bun}_B^{d}\vert H^0(C,E(\alpha,2)(p))=0\text{ for any } p\in C\}
\end{align*}
are non-empty.

\underline{Case $\mt E(\alpha,1)$, $\mt E(\alpha-\beta,1)$}. Any character $\chi:B\to\mathbb G_m$ gives a faithfully-flat morphism of stacks $\mathbf{Bun}^d_B\to \mathbf{Bun}^{\pi_1(\chi)(d)}_{\mathbb G_m}$. So, without loss of generality, we may assume $B=\mathbb G_m$ and $d\in \{-2,-1,0,1,2\}$ (here we are using $-d$ minuscule). If $d=-2$ any line bundle $L(p)$ has negative degree, hence there are no sections. Let us assume $-1\leq d\leq 2$. Consider the Abel-Jacobi map 
\begin{align*}
C^{d+1}&\to J^{d+1}\\
D'&\mapsto \oo(D').
\end{align*}The image is the locus of line bundles of degree $d+1$ with non-zero global sections. Equivalently, the image may be described as the locus of line bundles of the form $L(p)$, where $L$ is a line bundle of degree $d$ and $p$ point in $C$ such that $H^0(C,L(p))\neq 0$. The codimension of the image in $J^{d+1}$ is $\geq g- d-1\geq g-3\geq 1$. Hence, the sets $\mt E(\alpha,1)$, $\mt E(\alpha-\beta,1)$ are non-empty.

\underline{Case $\mt E(\alpha,2)$}. Any character $\chi:B\to\mathbb G_m\ltimes\mathbb G_a$ gives a faithfully flat morphism of stacks $\mathbf{Bun}^d_B\to \mathbf{Bun}^{\pi_1(\chi)(d)}_{\mathbb G_m\ltimes\mathbb G_a}$. So, without loss of generality, we may assume $B=\mathbb G_m\ltimes\mathbb G_a$, $\alpha:B\to\mathbb G_m:(t,u)\mapsto t$ and $d=\alpha(d)\in\{0,-1\}$ (here we are using $-d$ minuscule). Furthermore, a $B$-bundle $E$ is equivalent to the data of an exact sequence
\begin{equation}\label{E:ex-seq}
	0\to L\to F\to \oo_C\to 0
\end{equation}
of vector bundles, where $L=E(\alpha,1)$ and $F=E(\alpha,2)$. Twisting by $p$ and taking the global sections, we get an exact sequence of vector spaces
$$
0\to H^0(C,L(p))\to H^0(C,F(p))\to H^0(C,\oo_C(p))=k\xrightarrow{\partial} H^1(C,L(p)).
$$
By previous point, we may assume $H^0(C,L(p))=0$ for any $p\in C$. Note that $H^0(C,F(p))= 0$ if and only if the co-boundary map $\partial$ is injective. The latter condition holds if the element $v\in H^1(C,L)$, corresponding to the original extension \eqref{E:ex-seq}, is not in the kernel of the morphism $H^1(C,L)\to H^1(C,L(p))$. The kernel coincides with the line $L(p)_{|_p}$. Hence, the injectivity of $\partial$ holds if $v\notin\bigcup_{p\in C}L(p)_{|_p}=: \mathcal L$. The dimension of $\mathcal L$ is at most $2$. On the other hand, the vector space $H^1(C,L)$ has dimension $-\deg L+g-1\geq g-1\geq 3$. Hence, there exists $v\in H^1(C,L)\setminus \mathcal L$. Concluding the proof.
\end{proof}

\begin{lem}\label{L:boh}Let $E$ be a $G$-bundle in $\Md_G$ which is generic in the sense of Lemma \ref{L:gen-st}. Consider the composition
	$$
	h^{\delta}_{E,r}:H^0(C,\ad{E})=\mt T^*_E\mt M_G\subset \thig_G\xrightarrow{h} \AG\to \AG_r.
$$
	Let $\theta$ be a section in $H^0(C,\ad{E})$ and $p$ be a point in $C$. Then the following conditions are equivalent:
	\begin{enumerate}[(i)]
		\item the section $\theta$ vanishes at $p$,
		\item the section $h_{E,r}^{\delta}(\theta +\zeta)$ in $\omega^{d_r}$ vanishes at $p$, for any other global section $\zeta$ in $\ad{E}$ such that the image $h_{E,r}^{\delta}(\zeta)$ in $\omega^{d_r}$ vanishes at $p$.
	\end{enumerate}
	In symbols:
	$$
	H^0(C,\ad{E}(-p))=\left\{\theta\in H^0(C,\ad{E})\middle\vert\begin{array}{l} h_{E,r}^{\delta}(\theta +\zeta)\in H^0(C,\omega^{d_r}(-p))\\\forall\,\zeta\in  (h_{E,r}^{\delta})^{-1}\left(H^0(C,\omega^{d_r}(-p))\right)\end{array}\right\}.
	$$
\end{lem}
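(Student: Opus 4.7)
The implication $(i) \Rightarrow (ii)$ is immediate: if $\theta(p) = 0$, then in the fiber $(\ads(E)\otimes\omega)_p$ one has $(\theta+\zeta)(p) = \zeta(p)$, so $h^\delta_{E,r}(\theta+\zeta)(p) = p_r(\zeta(p)) = h^\delta_{E,r}(\zeta)(p)$, which vanishes by hypothesis.

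For the nontrivial direction $(ii) \Rightarrow (i)$, my plan is to reduce the problem to a statement about invariant polynomials on the Lie algebra $\g$. Taking $\zeta = 0$ in $(ii)$ already yields $p_r(\theta)(p) = 0$. The next step is to show that for generic $E \in \Md_G$ the evaluation map
\[
\operatorname{ev}_p \colon H^0(C,\ads(E)\otimes\omega) \longrightarrow (\ads(E)\otimes\omega)_p
\]
is surjective. From the short exact sequence
\[
0 \to \ads(E)\otimes\omega(-p) \to \ads(E)\otimes\omega \to (\ads(E)\otimes\omega)_p \to 0
\]
this amounts to the vanishing $H^1(C,\ads(E)\otimes\omega(-p)) = 0$; by Serre duality combined with the self-duality $\ads(E) \cong \ads(E)^*$ provided by the Killing form, this is equivalent to $H^0(C, \ads(E)(p)) = 0$, which is exactly Lemma \ref{L:gen-st}.

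With surjectivity in hand, I would fix trivializations identifying $(\ads(E)\otimes\omega)_p$ with $\g$ and $(\omega^{d_r})_p$ with $k$, and set $x := \theta(p) \in \g$. Then $(ii)$ becomes the purely Lie-algebraic condition that, for every $\xi \in \g$ with $p_r(\xi) = 0$, also $p_r(x+\xi) = 0$. Writing $Z := \{p_r = 0\} \subset \g$, this says $Z + x \subset Z$. Iterating gives $y + nx \in Z$ for every $y \in Z$ and $n \in \mathbb Z_{\geq 0}$; by Zariski density of an infinite subset of $\mathbb A^1$ the entire affine line $\{y + tx : t \in k\}$ lies in $Z$, hence $Z - x \subset Z$, and therefore $Z + x = Z$. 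Thus $x$ lies in the translation stabilizer $V := \{v \in \g : Z + v = Z\}$.

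To conclude, I argue $V = 0$. As a closed subgroup of the vector group $(\g, +)$, in characteristic zero $V$ is automatically a linear subspace, and it is $G$-invariant: for $g \in G$ and $v \in V$, the $G$-invariance of $Z$ gives $Z + gv = g(Z+v) = gZ = Z$. Since $G$ is almost simple, $\g$ is a simple Lie algebra and hence an irreducible $G$-module under the adjoint action, so $V \in \{0, \g\}$; the option $V = \g$ would force $Z = \g$, contradicting that $p_r$ is a nonzero polynomial (indeed $d_r = h \geq 2$ by Remark \ref{R:h}). Hence $V = 0$, so $x = 0$ and $\theta$ vanishes at $p$. The only step that relies on the genericity of $E$ is the surjectivity of $\operatorname{ev}_p$, which depends crucially on Lemma \ref{L:gen-st}; the remainder is a clean geometric argument about the translation-symmetry of the hypersurface $Z \subset \g$.
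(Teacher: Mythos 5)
Your proof is correct and follows essentially the same route as the paper: reduce via the exact sequence $0\to H^0(C,\ads(E)\otimes\omega(-p))\to H^0(C,\ads(E)\otimes\omega)\to\ads(E)_p\otimes\omega_p\to 0$ (surjectivity of $\operatorname{ev}_p$ coming from Lemma \ref{L:gen-st}) to the pointwise statement that $p_r(x+y)=0$ for all $y$ with $p_r(y)=0$ forces $x=0$. The only difference is in that last step, where the paper disposes of it with a terse remark that $\{p_r=0\}$ is a cone with vertex $x$ and $p_r$ is homogeneous, whereas you spell out the translation-stabilizer argument using $G$-invariance of $\{p_r=0\}$ and irreducibility of the adjoint representation — a welcome, more complete justification of the same point.
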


\begin{proof}The sequence of vector spaces
	$$
	0\to H^0(C,\ad{E}(-p))\to H^0(C,\ad{E})\xrightarrow{\operatorname{ev}_p} \ads(E_p)\to 0
	$$
is exact, by Lemma \ref{L:gen-st}. In particular, $h_{E,r}^{\delta}(\theta)|_p=p_r(\operatorname{ev}_p(\theta))=p_r(\theta(p))$. Since the restriction $\operatorname{ev}_p$ is surjective, the statement is a consequence of the following fact: if $x\in\g$ satisfies $p_r(x+y)=0$ for any $y\in \g$ such that $p_r(y)=0$, then $x=0$. Indeed, if $p_r(x+y)=0$ for any $y\in \g$ with $p_r(y)=0$, then the zero-set $\{p_r=0\}$ is a cone in $\g$ with vertex $x$. Since $p_r$ is homogeneous, we must have $x=0$.
\end{proof}

The next result describes the Hitchin morphism when restricted to the locus of $G$-Higgs bundles with sections vanishing at a fixed point of the curve.

\begin{lem}\label{L:bohi}Fix $\delta\in\pi_1(G)$ and $p\in C$. Let $U_p^\delta$ be the locally closed subscheme in $\mathcal T^*\mathcal M^\delta_G$ of those pairs $(E,\theta)$ such that $H^0(C,\mathrm{ad}(E)(p))=0$ and the section $\theta$ vanishes at $p$. Then, the following facts hold true.
\begin{enumerate}[(i)]
	\item $U_p^\delta$ is a connected smooth scheme of finite type over $k$.
	\item the image of $U_p^\delta$ along 
	the Hitchin morphism $h^\delta:\mathcal T^*\mathcal M^\delta_G\to \AG$ is dense in $\oplus_{i=1}^rH^0(C,\omega^{d_i}(-d_ip))$.
\end{enumerate}	
In particular, $\oplus_{i=1}^rH^0(C,\omega^{d_i}(-d_ip))$ is equal to the linear subspace generated by the image $h^\delta((U^\delta_p)')$, where $(U^\delta_p)'$ is any open subscheme of $U_p^\delta$.
\end{lem}

\begin{proof} Consider the morphism $F:U_p^\delta\to\mathcal M^\delta_G:(E,\theta)\mapsto E$, which forgets the section. The vanishing $H^0(C,\mathrm{ad}(E)(p))=0$ implies that the morphism $F$ makes the source a vector bundle over its image, which is an open and dense (by Lemma \ref{L:gen-st}) subscheme of the connected smooth variety $\mathcal M_G^\delta$. In particular, we have the first point. We now focus on the second point. Set $\mathcal L:=\omega(-p)$ and $\mathfrak c_\mathcal L:=\mathfrak c\times^{\mathbb G_m}\mathcal L^*$.  Let $\hit_{G,\mathcal L}$ be the moduli stack of $\mathcal L$-twisted $G$-Higgs bundles introduced in Subsection \ref{SS:twisted}. Consider the $\mathcal L$-twisted Hitchin fibration
$$
h_\mathcal L:\hit_{G,\mathcal L}\to \AG_{\mathcal L}=\oplus_{i=1}^rH^0(C,\omega^{d_i}(-d_ip)).
$$
We denote by $V^\delta_p$ the open substack of those $\mathcal L$-twisted $G$-Higgs bundles $(E,\theta)$ such that $E$ is a $G$-bundle in $\BG_G^\delta$ and $H^0(C,\mathrm{ad}(E)(p))\cong H^1(C,\mathrm{ad}(E)\otimes\mathcal L)=0$. Arguing as for $U_p^\delta$, one can show that $V_p^\delta$ is a vector bundle over an open and dense subset of the smooth and connected stack $\BG_G^\delta$. In particular, $V_p^\delta$ is a connected smooth algebraic stack. For proving the point, it is enough to show that the image $h^\delta_\mathcal L(V_p^\delta)$ is dense in $\AG_\mathcal L$. Without loss of generality, we may assume $G$ adjoint. We make the following claim
\begin{equation}\label{E:claim-V}
h_\mathcal L(V^\delta_p)\cap \AG^\heartsuit_{\mathcal L}\neq \emptyset,
\end{equation}
where $\AG^\heartsuit_\mathcal L:=\{a\in\AG_\mathcal L|\,a(C)\nsubseteq \Dg_{\mathcal L}\}$. The claim \eqref{E:claim-V} implies the lemma. Indeed, by \eqref{E:claim-V}, the intersection  $V_p^{\delta}\cap \hit_{G,\mathcal L}^{\heartsuit}$ and $\AG_\mathcal L^\heartsuit$ are non-empty (where $\hit_{G,\mathcal L}^{\heartsuit}:=h^{-1}_\mathcal L(\AG_\mathcal L^\heartsuit)$). By Corollary \ref{C:twisted}$(\ref{i:bohi2})$, the intersection $V_p^\delta\cap\hit_{G,\mathcal L}^{\mathrm{reg},\heartsuit}$ is dense in $V_p^{\delta}\cap \hit_{G,\mathcal L}^{\heartsuit}$. By Corollary \ref{C:twisted}$(\ref{i:bohi1})$, the composition $V_p^\delta\cap\hit_{G,\mathcal L}^{\mathrm{reg},\heartsuit}\hookrightarrow\hit_{G,\mathcal L}^{\mathrm{reg},\heartsuit}\to \AG^{\heartsuit}_{\mathcal L}$ is smooth (hence dominant) and, so, we have the lemma. It remains to show \eqref{E:claim-V}. Let $B$ a Borel subgroup of $G$ containing the maximal torus $T$. We claim that there exists a $B$-bundle $E$ in $\BG_B^d$ and $q\in C$ such that \begin{enumerate}[(a)]
	\item\label{i:ev1} $[d]=\delta\in\pi_1(G)$ and $H^0(C,(E\times^B\g)(p))=0$;
	\item\label{i:ev3} the morphism $$\nu_q:H^0(C,(E\times^B\mathfrak{b})\otimes\mathcal L)\xrightarrow{\mathrm{ev}_{q}} ((E\times^B\mathfrak{b})\otimes\mathcal L)_q=\mathfrak{b}\twoheadrightarrow((E\times^B(\mathfrak{b}/\mathfrak n))\otimes\mathcal L)_q=\mathfrak b/\mathfrak n=\mathfrak t$$ is surjective.
\end{enumerate}
Assume that this $B$-bundle $E$ exists. By Point \eqref{i:ev3}, there exists a section $\theta\in H^0(C,(E\times^B\mathfrak{b})\otimes\mathcal L)\subset H^0(C,(E\times^B\mathfrak{g})\otimes\mathcal L)$ such that $\nu_q(\theta)\in\tg^{\mathrm{reg}}:=\{t\in\tg|\,\chi(t)\in \mathfrak c\setminus\mathfrak{D}\}$, where $\chi:\g\to\mathfrak c=\g/G$ is the morphism introduced in Theorem \ref{T:konstant}.

By Point \eqref{i:ev1}, $(E\times^BG,\theta)\in V_p^\delta$. By Theorem \ref{T:konstant}$(\ref{T:konstant6})$, we have that $\mathfrak b/B\cong \mathfrak t$. In particular, for any $x\in\tg$ and $y\in\mathfrak n$, the elements $x+y$ and $x$ are in the closure of the same $G$-orbit. Hence, $\chi(x+y)=\chi(y)$. If we set $x+y=\mathrm{ev}_q(\theta)\in\mathfrak b$ and $x= \nu_q(\theta)\in\mathfrak t$, we have $\chi(\nu_q(\theta))=\chi(\mathrm{ev}_q(\theta))$. By construction, the section $a:=h^\delta(E\times^BG,\theta)$ is in $\AG_\mathcal L^{\heartsuit}$ (because $a(q)=\chi(\mathrm{ev}_q(\theta))=\chi(\nu_q(\theta))\notin \mathfrak D$) and, so, we have \eqref{E:claim-V}. 

It remains to show the existence of a $B$-bundle with the required properties. From the proof of Lemma \ref{L:gen-st}, we know that there exists a lift $d$ of $\delta$ and an open substack $\mathcal S$ in $\BG_B^d$ such that any $B$-bundle in $\mt S$ satisfies \eqref{i:ev1}. We now show that there exists a point $q\in C$ and a generic $B$-bundle in $\mathcal S$ satisfying \eqref{i:ev3}. Fix a point $q\in C$ and a $B$-bundle $E$ in $\mathcal S$. The nilpotent subalgebra $\mathfrak n\subset\mathfrak b$ is preserved by the action of $B$ and the induced $B$-action on the quotient $\mathfrak b/\mathfrak n=\mathfrak t$ is trivial. In particular, we obtain an exact sequence of vector bundles
\begin{equation}\label{E:nt}
0\to (E\times^B\mathfrak n)\otimes\mathcal L\to (E\times^B\mathfrak b)\otimes \mathcal L\to (E\times^B(\mathfrak b/\mathfrak n))\otimes \mathcal L=\mathfrak t\otimes_k\mathcal L\to 0.
\end{equation}
Following the notation in the proof of Lemma \ref{L:gen-st}, the first vector bundle in \eqref{E:nt} is an iterated extension of line bundles of type $E(\alpha,1)\otimes\mathcal L$, where $\alpha$ is a positive root. By the proof of loc.cit and Serre duality, we may assume that $H^1(E(\alpha,1)\otimes\mathcal L)=H^0(E(-\alpha,1)(p))=0$ and, so, $H^1((E\times^B\mathfrak n)\otimes\mathcal L)=0$. The latter vanishing implies that the morphism of global sections $H^0(C,E\times^B\mathfrak b)\otimes \mathcal L)\to H^0(C,\mathfrak t\otimes_k\mathcal L)$, induced by the sequence \eqref{E:nt}, is surjective. Hence, it is enough to show the surjectivity of the evaluation map $\mathrm{ev}_q^{\mathfrak t}:H^0(C,\mathfrak t\otimes_k\mathcal L)\to \tg\otimes_k\mathcal L_q$. By definition of the bundle $\mathcal L=\omega(-p)$, the surjectivity of $\mathrm{ev}_q^{\mathfrak t}$ follows from the surjectivity of the evaluation morphism $H^0(C,\omega)\to \omega_p\oplus\omega_q$ of the canonical line bundle at the points $q$ and $p$. If $C$ is non-hyperelliptic the morphism is surjective for any choice of two points in $C$. If $C$ is hyperelliptic and $p$ is a fixed point, the surjectivity still holds for a generic choice of $q$.
\end{proof}

We are now finally ready to show that the linear isomorphism $L_\varphi:\AG\to\AG$ introduced in Lemma \ref{L:vp=linear} is induced by an isomorphism of the vector bundle $\mathfrak c_\omega$. More precisely,

\begin{prop}\label{P:vp=scalar}Let $\varphi$ be an automorphism of $\mt M^{\delta}_G$ and  $\varphi_{\AG}$ be the induced automorphism on the Hitchin basis, given by \eqref{E:morf}. Then there exists an isomorphism $\psi$ of $\cg_{\omega}$ (as vector bundle $\oplus_{i=1}^r \omega^{d_i}$) and $\sigma \in\operatorname*{Aut}(C)$ such that 
		$$
		\varphi_{\AG}=H^0(\psi)\circ\sigma^*,
		$$
where $H^0(\psi)$ is the isomorphism on the global sections of $\cg_{\omega}$ induced by $\psi$.
\end{prop}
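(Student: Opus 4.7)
The plan is to first reduce to the case $\sigma=\Id_C$. By Proposition~\ref{P:vp=linear}, I can write $\varphi_\AG=M\circ\sigma^*$ for some linear automorphism $M\in\op{GL}(\AG)$ and some $\sigma\in\Aut(C)$. Composing $\varphi$ with the automorphism of $\mt M_G^\delta$ induced by pull-back along $\sigma^{-1}$ (Proposition~\ref{P:aut-m}), I may assume $\sigma=\Id_C$. The task then becomes to exhibit a vector bundle isomorphism $\psi$ of $\cg_\omega=\bigoplus_{i=1}^r\omega^{d_i}$ whose induced map on global sections is the linear automorphism $M$.

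For a generic $E\in\mt M_G^\delta$, set $E':=\varphi^{-1}(E)$. The proof of Proposition~\ref{P:vp=linear} provides a vector bundle isomorphism $\Psi\colon\ads(E)\otimes\omega\xrightarrow{\sim}\ads(E')\otimes\omega$ inducing the codifferential $d\varphi^*_E$. Writing $M=(M^{i,j})$ with $M^{i,j}\colon\AG_j\to\AG_i$, the commutativity of $M$ with the Hitchin maps, evaluated at any $x\in C$, yields the pointwise identity
\[
\sum_{j=1}^r\bigl(M^{i,j}(p_j(\theta))\bigr)(x)=p_i\bigl(\Psi_x(\theta(x))\bigr),\qquad\forall\,\theta\in H^0(C,\ads(E)\otimes\omega).
\]
The right-hand side is a polynomial in $\theta$ of pure degree $d_i$, while the $j$-th summand on the left is homogeneous of degree $d_j$ in $\theta$. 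Comparing contributions of each degree, and using that for generic $E$ the restrictions of $\{p_j\}_{d_j=d}$ to $H^0(C,\ads(E)\otimes\omega)$ (a basis of $k[\g]^G_d$) are linearly independent in the vector-valued sense, meaning the joint image of $\theta\mapsto(p_j(\theta))_{j:\,d_j=d}$ has linear span equal to $\bigoplus_{j:\,d_j=d}H^0(C,\omega^d)$, I deduce $M^{i,j}=0$ whenever $d_j\neq d_i$.

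For the remaining same-degree blocks, fixing $\theta(x)=v$ and varying $\theta$ by $\eta\in H^0(C,\ads(E)\otimes\omega(-x))$ leaves the right-hand side of the pointwise identity unchanged, so extracting the linear-in-$\eta$ part via polarization gives
\[
\sum_{j:\,d_j=d_i}\bigl(M^{i,j}(Dp_j|_\theta(\eta))\bigr)(x)=0.
\]
Verifying that, for generic $\theta$, the map $\eta\mapsto(Dp_j|_\theta(\eta))_{j:\,d_j=d_i}$ has image spanning $\bigoplus_{j:\,d_j=d_i}H^0(C,\omega^{d_j}(-x))$ forces each such $M^{i,j}$ to preserve $H^0(C,\omega^{d_j}(-x))$ for every $x\in C$. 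Since a linear endomorphism of $H^0(C,\omega^d)$ preserving the vanishing subspace at every point of $C$ is scalar multiplication by an element of $H^0(C,\oo_C)=k$, the resulting scalars $c_{i,j}$ assemble into a block-diagonal vector bundle automorphism $\psi$ of $\cg_\omega$ satisfying $H^0(\psi)=M=\varphi_\AG$.

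The main technical obstacle lies in the two spanning claims invoked above. Both are consequences of the genericity of $E$ (surjectivity of the fiberwise evaluation $H^0(C,\ads(E)\otimes\omega)\to\g$ at a generic point of $C$), the algebraic independence of $\{p_j\}_{d_j=d}$ as $G$-invariant polynomials on $\g$, and a cohomological vanishing for the kernel of the fiberwise linearization $\bigoplus_{j:\,d_j=d_i}Dp_j|_\theta\colon\ads(E)\otimes\omega\to\bigoplus_{j:\,d_j=d_i}\omega^{d_j}$ twisted by $\oo_C(-x)$; these hold under the genus assumption $g\geq 4$.
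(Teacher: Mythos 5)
Your overall architecture (reduce to $\sigma=\Id_C$ via Proposition \ref{P:vp=linear}, then analyse the blocks $M^{i,j}\colon\AG_j\to\AG_i$ of the linear part) is reasonable, and the final step --- a linear endomorphism of $H^0(C,\omega^{d})$ preserving $H^0(C,\omega^{d}(-x))$ for every $x$ is a scalar --- is correct. But the argument hinges on two spanning claims that you flag as ``the main technical obstacle'' and then only assert. The first one (joint spanning of $\theta\mapsto(p_j(\theta))_{j:d_j=d}$) is actually unnecessary: the vanishing of $M^{i,j}$ for $d_i\neq d_j$ already follows from the $\mathbb G_m$-equivariance of $\varphi_{\AG}$ established in the proof of Theorem \ref{T:torelli}, since $M^{i,j}\lambda^{d_j}=\lambda^{d_i}M^{i,j}$ for all $\lambda$. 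The second claim, however, is essential to your argument and is a genuine gap: you need the linear map $\eta\mapsto\bigl(Dp_j|_\theta(\eta)\bigr)_{j:d_j=d_i}$ to carry $H^0(C,\ad{E}(-x))$ \emph{onto} $\bigoplus_{j:d_j=d_i}H^0(C,\omega^{d_j}(-x))$. Writing $K$ for the kernel of the associated surjection of sheaves $\ad{E}\to\bigoplus_j\omega^{d_j}$, surjectivity on twisted global sections is equivalent to $H^1(C,K(-x))=0$, and none of the ingredients you list delivers this: a slope computation gives $\mu(K^{\vee}\otimes\omega(x))=(2g-2)\frac{|S|(d-1)}{\dim\g-|S|}+1>0$, so Serre duality plus semistability cannot force the vanishing, and for $\mathfrak{sl}_2$ a dimension count ($\dim H^0(C,\ad{E}(-x))=3(g-2)<3g-4=\dim H^0(C,\omega^{2}(-x))$) shows the map cannot be surjective at all. (That particular block happens to be covered by Corollary \ref{C:var-l}, but it shows the claimed surjectivity is not a formal consequence of genericity, algebraic independence and $g\geq 4$.) Without this surjectivity you only learn that $M^{i,j}$ preserves a proper subspace of $H^0(C,\omega^{d}(-x))$, which is not enough to conclude scalarity.

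It is worth noting that the paper's proof is organised precisely to avoid such surjectivity statements, and proves only what the proposition asserts (some vector bundle automorphism $\psi$ of $\cg_\omega$, not a constant block-diagonal one). It uses Lemma \ref{L:boh} and Lemma \ref{L:bohi} to show that $\varphi_{\AG}$ preserves the subspace $\bigoplus_{i=1}^rH^0(C,\omega^{d_i}(-d_ip))$ for every $p$ --- note the twist by $-d_ip$, i.e.\ only the \emph{linear span} of the image of $H^0(C,\ad{E}(-p))$ is needed, never surjectivity of a linearization --- and then packages this into the exact sequence of osculating bundles and bundles of principal parts \eqref{E:comm-osc}, defining $\psi:=\pi\circ\widetilde\psi\circ\delta$ and checking by hand that it is $\oo_C$-linear and fibrewise surjective. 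If you want to salvage your stronger conclusion (constant block-diagonal $\psi$), you would either have to prove the $H^1$-vanishing for the kernel of $\bigoplus_jDp_j|_\theta$ for generic $(E,\theta)$, or derive the scalarity of the same-degree blocks a posteriori from the paper's $\psi$ (e.g.\ from the fact that a vector bundle automorphism of $\bigoplus_i\omega^{d_i}$ commuting with the $\mathbb G_m$-action is constant on each isotypic block). As written, the proof is incomplete.
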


\begin{proof}Arguing as in Lemma \ref{L:vp=linear}, we may assume $\varphi_{\AG_r}=\lambda\circ \Id_{\AG_r}$ for some $\lambda\in k\setminus\{0\}$ and $\varphi_\AG=L_\varphi$ linear isomorphism. Let $E$ be a generic simple and stable $G$-bundle and set $E':=\varphi^{-1}(E)$. By Lemma \ref{L:gen-st}, we may assume that $H^0(C,\mathrm{ad}(E)(p))=H^0(C,\mathrm{ad}(E')(p))=0$, for any $p\in C$. By Lemma \ref{L:boh}, the co-differential
	$$
	d\varphi^*_E:H^0(C,\ad{E})\to H^0(C,\ad{E'})
	$$
identifies the linear subspaces $H^0(C,\ad{E}(-p))$ and $H^0(C,\ad{E'}(-p))$ for any point $p\in C$. In particular, for any point $p\in C$, $d\varphi^*$ preserves an open and dense subset of the locally closed subscheme $U_p^\delta\subset T^*\mathcal M_G^\delta$ of those pairs $(E,\theta)$ such that $H^0(C,\mathrm{ad}(E)(p))=0$ and the section $\theta$ vanishes at $p$. By Lemma \ref{L:bohi}, the linear isomorphism $\varphi_{\AG}:\AG\to\AG$ preserves the linear subspace $
\oplus_{i=1}^rH^0(C,\omega^{d_i}(-d_ip))\subset \AG
$, for any $p\in C$.
By moving the point $p$, we get a direct sum of osculating bundles
$$
\bigoplus_{i=1}^r\op{Osc}(d_i)=\bigoplus_{i=1}^r\left(\bigcup_{p\in C}H^0(C,\omega^{d_i}(-d_ip))\right).
$$
It is a vector bundle of rank $\sum_i\dim\AG_i-\sum_i d_i=\dim\AG- (r+|\Phi|/2)$ (here we used that $g\geq 3$). Furthermore, we have a commutative diagram of vector bundles
\begin{equation*}
	\xymatrix{
		0\ar[r]& \bigoplus_{i=1}^r\op{Osc}(d_i)\ar[d]\ar[r]&\AG\otimes\oo_C\ar[d]^{\varphi_{\AG}\otimes \Id_{\oo_C}}\ar[rr]^-{(\phi^{d_1},\ldots,\phi^{d_r})}&&\bigoplus_{i=1}^r\mt P^{d_i-1}(\omega^{d_i})\ar[r]\ar[d]^{\widetilde\psi}&0\\
				0\ar[r]& \bigoplus_{i=1}^r\op{Osc}(d_i)\ar[r]&\AG\otimes\oo_C\ar[rr]^-{(\phi^{d_1},\ldots,\phi^{d_r})}&&\bigoplus_{i=1}^r\mt P^{d_i-1}(\omega^{d_i})\ar[r]&0
	}
\end{equation*}
where the rows are exact and the columns are isomorphisms. The sheaf $\mt P^{d_i-1}(\omega^{d_i})$ is the bundle of principal parts of degree $d_i-1$ of the line bundle $\omega^{d_i}$. It is a vector bundle of rank $d_i$. The morphism $\phi^{d_i}$ is defined locally, on a formal disc $\Delta_p:=\Spec(\widehat\oo_{C,p})\cong\Spec(\disk{t})$ around a point $p$ of $C$, as follows:
\begin{equation}\label{E:bun-parts}
\def\arraystretch{1.5}\begin{array}{ccl}
	\disk{t}^{\oplus\dim\AG_i}&\longrightarrow& \disk{t}^{\oplus d_i}\\
(b_1,\ldots,b_{\dim\AG_i})&\longmapsto&\displaystyle \left(\sum_{j=1}^{\dim\AG_i}b_jf_j,\sum_{j=1}^{\dim\AG_i}\frac{d(b_jf_j)}{dt},\ldots,\sum_{j=1}^{\dim\AG_i}\frac{d^{d_i-1}(b_jf_j)}{dt^{d_i-1}}\right),
\end{array}
\end{equation}
where the map $(b_1,\ldots,b_{\dim\AG_i})\mapsto \sum_{i}b_if_i$ is the local expression of the evaluation morphism $\op{ev}:\AG_i\otimes\oo_C\twoheadrightarrow\omega^{d_i}$. The bundle of principal parts admits a canonical surjective morphism
$
\pi_i:\mt P^{d_i-1}(\omega^{d_i})\to \omega^{d_i}
$ of vector bundles.
With the local interpretation in \eqref{E:bun-parts}, the morphism $\pi_i$ is nothing but the projection on the first factor, i.e. $\disk{t}^{\oplus d_i}\to\disk{t}:(t_1,\ldots,t_{d_i})\mapsto t_1$. We denote by $\pi$ the morphism $(\pi_1,\ldots,\pi_{r}):\oplus_i\mt P^{d_i-1}(\omega^{d_i})\to \cg_{\omega}$. On the other hand, each projection $\pi_i$ admits a canonical section $
\delta_i:\omega^{d_i}\to \mt P^{d_i-1}(\omega^{d_i})
$, which is a $k$-linear morphism of abelian sheaves (not necessarily of $\oo_C$-modules). With the local interpretation in \eqref{E:bun-parts}, the section sends a polynomial in $\disk{t}$ in his first $d_i-1$ derivatives, i.e.
$$
\disk{t}\to\disk{t}^{\oplus d_i}:f\mapsto\left(f,\frac{df}{dt},\ldots, \frac{d^{d_i-1}f}{dt^{d_i-1}}\right).
$$ 
We denote by $\delta$ the inclusion $(\delta_1,\ldots,\delta_r):\cg_{\omega}\hookrightarrow \oplus_{i}\mt P^{d_i-1}(\omega^{d_i})$. We claim that the diagram
\begin{equation}\label{E:claim-princ}
	\xymatrix{
		0\ar[r]& \ker(\op{ev})\ar[d]\ar[r]&\AG\otimes\oo_C\ar[d]^{\varphi_{\AG}\otimes \Id_{\oo_C}}\ar[r]^{\operatorname{ev}}&\cg_{\omega}\ar[r]\ar[d]^{\psi:=\pi\circ \widetilde\psi\circ \delta}&0\\
		0\ar[r]&\ker(\op{ev})\ar[r]&\AG\otimes\oo_C\ar[r]^{\operatorname{ev}}&\cg_{\omega}\ar[r]&0
	}
\end{equation}
commutes, the rows are exact and the columns are isomorphisms of vector bundles. Note that if such a diagram exists, we must have $H^0(\psi)=\varphi_{\AG}$, and so the assertion follows.

It remains to prove the claim. The exactness of the rows and the commutativity of the diagram are obvious. We now check that the composition $\psi$ is well-defined and bijective. A priori, $\psi$ is a $k$-linear morphism of abelian sheaves. Using the local descriptions of $\pi_i$ and $\delta_i$, one may check that it is also $\oo_C$-linear. Since the column in the middle is an isomorphism, the morphism $\psi$ is surjective. Hence, it is also injective, because $\mathfrak c_\omega$ is a vector bundle. By snake lemma, we get that also the left-hand side column is an isomorphism and, so, we have the claim.
\end{proof}

\begin{teo}\label{T:T}Fix $\delta\in\pi_1(G)$. The (injective) homomorphism of groups of Proposition \ref{P:aut-m}
		\begin{equation}
		\def\arraystretch{1.5}\begin{array}{cll}
			H^1(C,\mathscr Z(G))\rtimes\left(\Aut(C)\times \Out(G,\delta)\right)&\lhook\joinrel\longrightarrow& \Aut(\overline{\mt M}_G^{\delta}(C))\\
			\nonumber(Z,\sigma,\rho)&\longmapsto &\left(E\mapsto \rho((\sigma^{-1})^*E)\otimes Z\right)
		\end{array}
	\end{equation}
is surjective. The same holds if we replace $\overline{\mt M}_G^{\delta}$ with the locus $\mt M_G^{\delta}$ of simple and stable $G$-bundles.
\end{teo}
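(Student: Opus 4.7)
The plan is to take $\varphi\in\Aut(\overline{\mt M}^\delta_G(C))$ and use the information about the induced automorphism $\varphi_{\AG}$ on the Hitchin base (Proposition \ref{P:vp=scalar}) to reconstruct $\varphi$ itself as a combination of the three tautological types of automorphisms. After composing $\varphi$ with pull-back along $\sigma^{-1}$ (a member of the source group), Proposition \ref{P:vp=scalar} allows me to assume $\varphi_{\AG}=H^0(\psi)$ for a vector bundle automorphism $\psi\colon \cg_\omega\to \cg_\omega$. Combining Lemmas \ref{L:boh} and \ref{L:bohi} yields, for every $p\in C$, the equality
$$
\varphi_{\AG}\Big(\bigoplus_{i=1}^{r}H^0(C,\omega^{d_i}(-p))\Big)=\bigoplus_{i=1}^{r}H^0(C,\omega^{d_i}(-p)).
$$

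The first main step is to produce, for a generic stable simple bundle $E$ with $E':=\varphi^{-1}(E)$, a vector bundle isomorphism $\Psi\colon \ads(E)\xrightarrow{\sim}\ads(E')$. Indeed, Lemma \ref{L:boh} shows that the codifferential $d\varphi_E^*\colon H^0(\ad{E})\to H^0(\ad{E'})$ preserves the subspaces $H^0(\ad{E}(-p))$ for every $p$, and the evaluation sequence of Lemma \ref{L:gen-st} together with a diagram analogous to \eqref{E:mtE-st} yields the desired $\Psi$. Because the Chevalley map $\chi$ recovers $\varphi_{\AG}=H^0(\psi)$ on global sections (Lemma \ref{L:bohi}) and generic sections of $\ads(E)$ span each fibre (Lemma \ref{L:gen-st} applied to twists by $-p$), the identity $\psi\circ\chi_E=\chi_{E'}\circ\Psi$ holds pointwise; in particular $\Psi$ sends $\mt N_E\subset\ads(E)$ isomorphically onto $\mt N_{E'}\subset\ads(E')$.

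The second main step is to promote $\Psi$ to a genuine isomorphism of principal bundles, up to the ambiguities accounted for by $\Out(G,\delta)$ and $H^1(C,\mathscr Z(G))$. By Lemma \ref{L:nc}\,$(\ref{L:nc3})$, the Springer resolution $\widetilde{\mt N}=G\times^B\mathfrak n\to \mt N$ globalises to resolutions $E\times^B\mathfrak n\to\mt N_E$ and $E'\times^B\mathfrak n\to\mt N_{E'}$, so $\Psi$ lifts uniquely to an isomorphism of these Springer varieties, and composing with the canonical projection to $G/B$ produces an isomorphism $\widehat\Psi\colon E/B\xrightarrow{\sim} E'/B$ of $G/B$-fibrations over $C$. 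Applying Demazure's theorem (Lemma \ref{L:aut-flag}) fibrewise identifies $\ads(E)$ with the relative tangent bundle of $E/B\to C$ as a Lie algebra bundle, so the relative differential $d\widehat\Psi$ is a Lie algebra bundle isomorphism $\ads(E)\to\ads(E')$, which must agree with $\Psi$ up to a section of the centre.

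Once this Lie-algebra-bundle identification is in hand, structure theory for adjoint bundles reconstructs $E'$ from $E$ up to an outer automorphism of $G$ acting on $E$ and a $\mathscr Z(G)$-twist, exhibiting $\varphi$ on a dense open subset as a composition of tautological operations; the irreducibility of $\overline{\mt M}^\delta_G(C)$ then propagates the identification globally. The main obstacle I expect is in the second step: verifying that the Springer-based lift of $\Psi$ is an isomorphism of $G/B$-torsors over $C$ (rather than merely of the underlying schemes) and matching $d\widehat\Psi$ with $\Psi$ modulo the central ambiguity that becomes the $H^1(C,\mathscr Z(G))$ factor; the rest of the argument is assembly of results already established in the paper.
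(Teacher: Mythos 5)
Your proposal follows essentially the same route as the paper's proof: reduce to $\sigma=\Id_C$ via Proposition \ref{P:vp=scalar}, extract the bundle isomorphism $\Psi\colon\ads(E)\to\ads(E')$ from the codifferential, observe that it preserves the nilpotent-cone bundles, pass through the Springer resolution to an isomorphism of flag bundles $E/B\cong E'/B$, apply Demazure's theorem to get a Lie-algebra-bundle isomorphism, and conclude via the classification of $G$-reductions of a Lie algebra bundle (the paper's Lemma \ref{L:ad-tau}) together with genericity. The steps you flag as delicate are handled in the paper at essentially the same level of detail, so the proposal is correct and not a genuinely different argument.
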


\begin{rmk}Before presenting the proof, we would like to comment about the hypotheses of the theorem. In this section (and, so, in Theorem \ref{T:T}), we assumed that the genus of the curve must be at least $4$. 
The reason of the bound is merely technical: it has been used for showing the vanishing of $H^0(C,\ads(E)(p))$ in Lemma \ref{L:gen-st}. The proofs of the other results in this section, apart from citing that lemma, are still correct, if we assume $g\geq 3$. In other words, if one can show the Lemma \ref{L:gen-st} under these weaker hypotheses, the Theorem \ref{T:T} will extend immediately.
\end{rmk}

\begin{proof} Let $\varphi:\mt M^{\delta}\to\mt M^{\delta}$ be an automorphism. Fix a generic $G$-bundle $E$ and set $E'=\varphi^{-1}(E)$. By Proposition \ref{P:vp=scalar}, we have a cartesian diagram of affine spaces
\begin{equation}
	\xymatrix{
		H^0(C,\ad{E})\ar[rr]^{d\varphi_E^*}\ar[d]^{h_E}&&H^0(C,\ad{E'})\ar[d]^{h_E}\\
		\AG\ar[rr]^{\varphi_{\AG}=H^0(\psi)\circ\sigma^*}&&\AG	
	}
\end{equation}
where $\psi:\cg_{\omega}\cong\cg_{\omega}$ is an isomorphism of vector bundles. Without loss of generality, we may assume $\sigma=\Id_C$.  In particular, the codifferential $d\varphi^*_E$ identifies the subsets
\begin{eqnarray}
	\label{E:sub1}(h_{E}^{\delta})^{-1}\left(H^0(C,\cg_{\omega}(-p))\right)\subset H^0(C,\ad{E}),\\
	\label{E:sub2}(h_{E'}^{\delta})^{-1}\left(H^0(C,\cg_{\omega}(-p))\right)\subset H^0(C,\ad{E'}).
\end{eqnarray}
Observe that the subset \eqref{E:sub1} is the inverse image of the nilpotent cone 
$$
\mt N_{E,p}=\{x\in\ad{E_p}_p=\g\vert\text{ the adj. end. } [x,-]: \g\to\g\text{ is nilpotent}\}\equiv\mt N,
$$
under the evaluation $H^0(C,\ad{E})\xrightarrow{\operatorname{ev_p}} \ad{E_p}_p$ at the fiber over $p$ (the same holds for $E'$). On the other hand, we have a commutative diagram of vector bundles
	\begin{equation}\label{E:mtE-st}
		\xymatrix{
			0\ar[r] &\mt E\ar[r]\ar[d]^{\cong} &H^0(C,\ad{E})\otimes \oo_C\ar[d]^{\cong}\ar[r] &\ad{E}\ar[r]\ar[r]\ar@{-->}[d]^{\Psi}&0\\
			0\ar[r] &\mt E\ar[r] &H^0(C,\ad{E'})\otimes \oo_C\ar[r] &\ad{E'}\ar[r]&0.
		}
	\end{equation}
	where:
	\begin{itemize}
		\item the rows are exact (see Lemma \ref{L:gen-st}); 
		\item the column in the middle is given by the co-differential $d\varphi_E^*$ at $E$;
		\item the left-hand side column exists because $d\varphi_E^*$ sends the vector space $\mt E_p=H^0(C,\ad{E}(-p))$ to $\mt E'_p=H^0(C,\ad{E'}(-p))$ (see Lemma \ref{L:boh});
		\item the existence of the right-hand side column $\Psi$ is guaranteed by the previous morphisms.
	\end{itemize} 
	In particular, the linear morphism $d\varphi^*_E$ is induced by the isomorphism of vector bundles
	\begin{equation}\label{E:coddif-vecE}
		\Psi:\ads(E)\to \ads(E').
	\end{equation}
By the previous discussion, the isomorphism $\Psi$ preserves the bundles of nilpotent cones of $\ad{E}$ and $\ad{E'}$. In symbols, we have a commutative diagram
\begin{equation*}\label{E:nc}
	\xymatrix{
	\mt N_E\ar[dr]\ar[rr]_{\Psi}^{\sim}&&\mt N_{E'}\ar[dl]\\
	&C&	
	}
\end{equation*}
By Lemma \ref{L:nc}, we get an isomorphism of flag bundles
\begin{equation*}\label{E:flg}
	\xymatrix{
		E/B=E\times^GG/B\ar[dr]_{f}\ar[rr]_{\widehat{\Psi}}^{\sim}&&E'\times^GG/B=E'/B\ar[dl]^{f'}\\
		&C&
	}
\end{equation*}
Taking the relative tangent bundles, we get an isomorphism of vector bundles
\begin{equation}\label{E:tg}
	\xymatrix{
		\ads(E)=\mt T_f\ar[dr]\ar[rr]_{d\widehat{\Psi}}^{\sim}&&\mt T_{f'}=\ads(E')\ar[dl]\\
		&C&
	}
\end{equation}
The equalities between the adjoint bundles and the tangent bundles follows from Lemma \ref{L:aut-flag}. Note that, the isomorphism in \eqref{E:tg} is an isomorphism of Lie-algebra bundles, i.e. it commutes with the Lie-brackets. By Lemma \ref{L:ad-tau} below, we must have $E'\cong \rho(E)\otimes Z$ for some $Z\in H^1(C,\mathscr Z(G))$ and $\rho\in \Out(G,\delta)$. Since $E$ is generic, the equality holds for any other simple and stable $G$-bundle. The result for the entire moduli space $\overline{\mt M}^{\delta}$ follows because the restriction homomorphism 
$
\Aut(\overline{\mt M}^{\delta})\to \Aut(\mt M^{\delta})
$ is bijective (see Proposition \ref{P:aut-finite}).
\end{proof}

\begin{lem}\label{L:ad-tau} Let $E$ and $E'$ be two $G$-bundles in the same connected component $\mt M^{\delta}_G$, for some $\delta\in\pi_1(G)$, such that $	\ads(E)$ and $\ads(E')$ are isomorphic as Lie-algebra bundles over $C$. Then there exists $Z\in H^1(C,\mathscr Z(G))$ and $\rho\in\Out(G,\delta)$ such that $E'\cong \rho(E)\otimes Z$. 
\end{lem}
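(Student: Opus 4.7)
The plan is to exploit the structure of the Lie algebra automorphism group. Since $G$ is almost-simple, $\g$ is a simple Lie algebra and there is a canonical isomorphism $\Aut(\g)\cong G^{\ads}\rtimes\Out(G)$, the splitting being provided by the choice of a pinning of $G$. Any Lie algebra bundle on $C$ with fiber $\g$ is an $\Aut(\g)$-torsor; for a $G$-bundle $E$, the adjoint bundle $\ads(E)$ is such a torsor and is equipped with a canonical reduction of structure group to $G^{\ads}$, namely the quotient $E^{\ads}:=E/\mathscr Z(G)$. The same holds for $E'$, and the given Lie algebra isomorphism $\Psi\colon\ads(E)\xrightarrow{\sim}\ads(E')$ is the same datum as an isomorphism of $\Aut(\g)$-torsors on $C$.

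First I would extract an outer automorphism $\rho\in\Out(G)$ from $\Psi$. Pushing along the projection $\Aut(\g)\twoheadrightarrow\Out(G)$, the isomorphism $\Psi$ descends to an isomorphism of the associated $\Out(G)$-torsors on $C$. The $G^{\ads}$-reductions coming from $E$ and $E'$ canonically trivialize both of these torsors, so the induced map is an automorphism of the trivial $\Out(G)$-torsor on $C$; since $C$ is connected and $\Out(G)$ is finite, it is encoded by a single global element $\rho\in\Out(G)$.

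Next I would show that, after replacing $E$ by $\rho(E)$, the map $\Psi$ refines to an isomorphism of $G^{\ads}$-bundles $\rho(E)^{\ads}\xrightarrow{\sim}(E')^{\ads}$. For this, choose an étale open cover trivializing both $E^{\ads}$ and $(E')^{\ads}$, with cocycles $\{g_{ij}\}$ and $\{g'_{ij}\}$ in $G^{\ads}$, and write $\Psi$ locally as $\psi_i=h_i\cdot s(\rho)$ with $h_i\in G^{\ads}$ and $s\colon\Out(G)\to\Aut(\g)$ the chosen splitting. Expanding the compatibility condition $\psi_j=(g'_{ij})^{-1}\psi_i g_{ij}$ inside $\Aut(\g)=G^{\ads}\rtimes\Out(G)$ yields $g'_{ij}=h_i\,\bar\rho(g_{ij})\,h_j^{-1}$, where $\bar\rho\in\Aut(G^{\ads})$ is the image of $\rho$; this is precisely the statement that $\{h_i\}$ realizes an isomorphism between $(E')^{\ads}$ and $\rho(E)^{\ads}$, whose cocycle is $\bar\rho(g_{ij})$. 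This cocycle computation is the technical heart of the argument.

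Finally, $\rho(E)$ and $E'$ are two lifts to $G$ of the same $G^{\ads}$-bundle. Since the fibers of $\BG_G\to\BG_{G^{\ads}}$ are torsors under $\BG_{\mathscr Z(G)}$ via the operation $E\mapsto E\otimes Z$, there exists $Z\in H^1(C,\mathscr Z(G))$ with $E'\cong\rho(E)\otimes Z$. To conclude $\rho\in\Out(G,\delta)$, invoke Lemma \ref{L:mov-con}: the assumptions $E,E'\in \mt M^\delta_G$ translate into $\delta=\pi_1(\rho)(\delta)+[\pi_1(\iota)(\deg Z)]$; since $G$ is almost-simple, $\mathscr Z(G)$ is finite, so $\deg Z=0$ and $\pi_1(\rho)(\delta)=\delta$, as required.
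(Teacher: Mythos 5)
Your proof follows the same route as the paper's: extract an outer automorphism from the two $G^{\ads}$-reductions of the common Lie-algebra bundle, observe that two $G$-lifts of one $G^{\ads}$-bundle differ by a twist by a $\mathscr Z(G)$-torsor, and use the connected-component hypothesis to land in $\Out(G,\delta)$. The cocycle computation in your second step is a concrete version of what the paper phrases abstractly via the exact sequence $1\to\op{Inn}(\g)\to\Aut(\g)\to\Out(\g)\to 1$.

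There is, however, one genuine gap. The splitting you invoke is $\Aut(\g)\cong G^{\ads}\rtimes\Out(\g)$, not $G^{\ads}\rtimes\Out(G)$: the component group of $\Aut(\g)$ is the full outer automorphism group of the \emph{Lie algebra} (equivalently of $G^{\op{sc}}$ or $G^{\ads}$), and $\Out(G)$ is in general a proper subgroup of it, namely the classes preserving the kernel of $G^{\op{sc}}\to G$ — compare $\op{SemiSpin}_{4n}$ (where $\Out(G)=\{1\}$ but $\Out(\g)=\bbZ/2\bbZ$) or $\op{SO}_8$ (where $\Out(G)=\bbZ/2\bbZ$ but $\Out(\g)=S_3$). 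Consequently the element $\rho$ your first step produces lives only in $\Out(\g)$, and your third step silently uses that $\rho(E)$ is again a $G$-bundle, which requires $\rho$ to descend to an automorphism of $G$, i.e.\ $\rho\in\Out(G)$. Your argument never establishes this, and it is exactly the point to which the paper devotes the third step of its proof (the discussion of $\Out(G)\subset\Out(\g)=\Out(G^{\op{sc}})$ and of which outer automorphisms of $\g$ act on $\mt M_G$ at all). For $G$ simply connected or adjoint the two groups coincide and your argument is complete; for the intermediate isogeny types you need an additional argument showing the class $\rho$ relating $E^{\ads}$ and $(E')^{\ads}$ can be taken in $\Out(G)$.
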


\begin{proof}We first study the set of $G$-reductions of the Lie algebra bundle $\ads(E)$. Note that having a Lie-algebra bundle (with respect to $\g$) is the same as having an $\operatorname{Aut}(\g)$-bundle, where $\Aut(\g)$ is the group of $k$-linear isomorphisms of $\g$ commuting with the Lie bracket. We divide the study in three steps, based on the following homomorphism of groups
	\begin{equation*}
		G\twoheadrightarrow G^{\operatorname{ad}}=\operatorname{Inn}(\g)=\operatorname{Inn}(G)\hookrightarrow\operatorname{Aut}(\g),
	\end{equation*}
	where $\operatorname{Inn}(\g)$ is the connected component of the identity of $\operatorname{Aut}(\g)$ and it coincides with the subgroup of inner automorphisms of $\g$.\\
	\underline{First Step:} $G^{\operatorname{ad}}$-reductions of the $\operatorname{Aut}(\g)$-bundle $\ads(E)$. We have an exact sequence of groups
	\begin{equation*}
		1\to G^{\ads}=\operatorname{Inn}(\g)\to \operatorname{Aut}(\g)\to \operatorname{Out}(\g)\to 1,
	\end{equation*}
	where the right-hand side is the group of outer automorphisms of $\g$. The $G^{\operatorname{ad}}$-reductions of $\ads(E)$ are in bijection with the sections of the $\operatorname{Out}(\g)$-bundle $E\times^G\Out(\g)\equiv \ads(E)/\operatorname{Inn}(\g)\to C$. Note that the $\operatorname{Out}(\g)$-bundle admits sections (because $\ads(E)$ has a $G$-reduction: namely $E$), then it must be trivial. In particular, two $G^{\operatorname{ad}}$-reductions of $F$ differs by an element of $\operatorname{Out}(\g)$.\\
	\underline{Second Step:} $G$-reductions of an arbitrary $G^{\operatorname{ad}}$-bundle. From the exact sequence of groups
	\begin{equation*}
		1\to\mathscr Z(G)\to G\to G^{\operatorname{ad}}\to 1,
	\end{equation*}
	we obtain an exact sequence of pointed sets
	\begin{align*}
		1&\to H^1(C,\mathscr Z(G))\to H^1(C,G)\to H^1(C,G^{\operatorname{ad}})
	\end{align*}
	In particular, if the set of $G$-reductions of a $G^{\operatorname{ad}}$-bundle is non-empty, it is a torsor under the group $H^1(C,\mathscr Z(G))$. More precisely, if $E$ is a $G$-reduction for an arbitrary $G^{\operatorname{ad}}$-bundle $E^{\operatorname{ad}}$, the other $G$-reductions of $E^{\operatorname{ad}}$ have the form $
	E\otimes Z
	$, where $Z$ is $\mathscr Z(G)$-torsor over the curve $C$.\\
	\underline{Third Step:} The action of $\Out(\g)$ on $\mt M_G$. We recall that $\Out(G)\subset \Out(\g)=\Out(G^{\operatorname{sc}})$, where $G^{\op{sc}}\to G$ is the universal cover of $G$. More precisely, if $\rho\in \Out(G^{\operatorname{sc}})\setminus \Out(G)$, then $\rho:G^{\operatorname{sc}}\to G^{\operatorname{sc}}$ does not descend to an automorphism of $G$. In particular, it does not induce an automorphism of the moduli space $\M_G$. Moreover, the group $\Out(G)$ acts (possibily non-trivially) on the sets of connected components of $\mt M_G$ (see Lemma \ref{L:mov-con}). The subgroup fixing the connected component $\mt M^{\delta}$ is precisely the subgroup $\Out(G,\delta)$. \\
	Putting all together, the assertion follows immediately.
\end{proof}

\section{The automorphism group of $\overline{\mt M}_G$: an explicit description.}\label{Sec:comp}
In this section, using Theorem \ref{T:A}, we provide the explicit presentations of the automorphism group of $\smt_G^{\delta}$ contained in Corollary \ref{T:conti}. The key point is the explicit computation of the following objects:
\begin{enumerate}[(a)]
	\item the character group $\op{Hom}(\mathscr Z(G),\mathbb G_m)$ of the center,
	\item the fundamental group $\pi_1(G)$,
	\item the outer automorphism group $\Out(G)$,
	\item the action of $\Out(G)$ on $\op{Hom}(\mathscr Z(G),\mathbb G_m)$ and $\pi_1(G)$.
\end{enumerate}
We will complete this task in the next subsections. Our references are \cite{Frin} for (a), (b) and \cite{BuLie} for (c), (d). We now briefly explain the strategy.

The group $\Out(G)$, when $G$ is simply-connected and almost-simple, is equal to the automorphism group of its Dynkin diagram, which is well-known. Roughly speaking, each element in $\Out(G)$ admits a representative $\rho\in\Aut(G)$, which fixes a Borel subgroup $B$, its maximal torus and an additional structure given by a certain kind of basis of the nilpotent subalgebra $\mathfrak n$ in the Lie algebra $\mathfrak b$ of $B$ (the reductive groups together with these data are called \emph{pinned reductive groups}). 

Because of these properties, the isomorphism $\rho$ induces an isomorphism on the (co)character lattices of the maximal torus and it preserves the set of roots, coroots and simple roots. Hence, it induces an automorphism of the (based) root data and also an automorphism of the Dynkin diagram. Then, we use the description in \cite{BuLie} of what these automorphisms do on the root data, for describing the action of $\rho$ on the $\op{Hom}(\mathscr Z(G),\mathbb G_m)$ and $\pi_1(G)$.

More in general, if $G$ is an almost-simple group, it admits an isogeny $G^{\operatorname{sc}}\to G$, with kernel $\mu$, from the (unique) simply-connected, almost-simple group of the same type. Furthermore, the isogeny gives the following identification of groups
\begin{equation}\label{E:act-out}
	\Out(G)=\{\rho\in\Out(G^{\operatorname{sc}})\vert \rho(\mu)\subset\mu\}\subset\Out(G^{\operatorname{sc}}).
\end{equation}
In particular, $\Out(G^{\ads})=\Out(G^{\operatorname{sc}})$. Hence, knowing the action of $\Out(G^{\operatorname{sc}})$ on the center of $G^{\operatorname{sc}}$ will give the description of $\Out(G)$. On the other hand, knowing the action of $\Out(G)$ on $\pi_1(G)$ will give the description of $\Out(G,\delta)$ for any $\delta\in\pi_1(G)$.

In this section, $G$ is an almost-simple group and $C$ is a smooth projective curve of genus $g\geq 4$. For any positive integer $l$, we denote by $\Pic(C)[l]$ the group of $l$-torsion line bundles over $C$. Note that, there exists an isomorphism of groups $\Pic(C)[l]\cong (\mathbb Z/l\mathbb Z)^{2g}$.

\subsection{Types $A_{n-1}$}
The unique simply-connected and almost-simple group of type $A_{n-1}$, $n\geq 2$, is $\op{SL}_n$. It is well-known that $\Out(\op{SL}_n)$ is trivial if $n=2$ and cyclic of order two (i.e. $\mathbb Z/2\mathbb Z$) if $n\geq 3$. In the latter case, a representative of the generator is the inverse-transpose isomorphism: \begin{equation}\label{E:trin}
	\op{SL}_n\to\op{SL}_n:M\mapsto (M^{-1})^t.
\end{equation}
All the almost-simple groups of type $A_{n-1}$ are isomorphic to the quotient $\op{SL}_n/\mu_r$, where $r$ divides $n$. Hence, the non-trivial outer automorphism \eqref{E:trin} descends to an element in $\Out(\op{SL}_n/\mu_r)$ for any $r$ dividing $n$. From \cite[Section 7.1]{Frin}, we deduce the following table.
$$
{\small
\setlength\arraycolsep{1.5pt}\begin{array}{|c|c|c|c|c|}
	\hline
	\g&G&\Out(G)&\operatorname{Hom}(\mathscr Z(G),\mathbb G_m)&\pi_1(G)\\
	\hline
	A_1& \operatorname{SL}_{2}=\op{SL}_2/\mu_1& \{1\}& \mathbb Z/2\mathbb Z& \{0\}\\
	& \operatorname{PSL}_{2}=\op{SL_2}/\mu_2& \{1\}& \{0\}& \mathbb Z/2\mathbb Z\\
	\hline
	A_{n-1},\;\scriptstyle{n\geq 3}& \operatorname{SL}_{n}/\mu_r,\;\scriptstyle{\text{s.t. }r|n}& \mathbb Z/2\mathbb Z& \mathbb Z/\frac{n}{r}\mathbb Z& \mathbb Z/r\mathbb Z\\
	\hline
\end{array}}
$$
Furthermore, the group $\Out(G)=\bbZ/2\bbZ$ acts on $\operatorname{Hom}(\mathscr Z(G),\mathbb G_m)$ and $\pi_1(G)$ by taking the inverse, i.e. $[1].x=-x$. Moreover, there exists an $\Out(G)$-equivariant isomorphism
$
H^1(C,\mathscr Z(\op{SL_n}/\mu_r))=\Pic(C)[n/r],
$
where $\Out(G)$ acts on the right-hand side by taking the dual, i.e. $[1].L=L^{-1}$. Putting all together, we have the following theorem.

\begin{teo}\label{T:An}Let $G$ be an almost-simple group of type $A_{n-1}$ ($n\geq 2$), i.e. $G=\op{SL}_n/\mu_r$, for some $r$ dividing $n$. 
		\begin{enumerate}[(i)]
		\item Type $A_1$. The isomorphism of Theorem \ref{T:A} gives the following equalities
			\begin{equation*}
			\arraycolsep=0.3pt\Aut(\overline{\mt M}_{G}(C)^{\delta})=\left\{\begin{array}{ll}\Pic(C)[2]\rtimes\Aut(C),& \text{if }G=\op{SL}_2\text{ and }\delta\in\{0\},\\
				\Aut(C),& \text{if }G=\op{PGL}_2\text{ and }\delta\in\bbZ/2\bbZ,
			\end{array}\right.
		\end{equation*}
		where $\Aut(C)$ acts on $\Pic(C)[2]$ by pull-back. 
		\item Type $A_{n-1}$, $n\geq 3$. The isomorphism of Theorem \ref{T:A} gives the following equalities
			\begin{equation*}
			\arraycolsep=0pt\Aut(\overline{\mt M}_{\frac{\operatorname{SL}_n}{\mu_r}}(C)^{\delta})=\left\{\begin{array}{ll}
				\Pic(C)[n/r]\rtimes \left(\mathbb Z/2\times\Aut(C)\right),&\text{ if } 2\delta=0\in\mathbb Z/r\mathbb Z,\\
				\Pic(C)[n/r]\rtimes\Aut(C),&\text{ if }2\delta\neq0\in \mathbb Z/r\mathbb Z,
			\end{array}\right.
		\end{equation*}
		where $\Aut(C)$ acts on $\Pic(C)[n/r]$ by pull-back and $\mathbb Z/2\mathbb Z$ acts on $\Pic(C)[n/r]$ by taking the dual, i.e. $[1].L=L^{-1}=L^{\frac{n}{r}-1}$. 
	\end{enumerate}
The same holds for the locus $\mt M_G$ of simple and stable bundles.
\end{teo}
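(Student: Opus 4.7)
The plan is to apply Theorem \ref{T:A} and compute each factor of the semidirect product
$$H^1(C,\mathscr Z(G))\rtimes\bigl(\Out(G,\delta)\times\Aut(C)\bigr)$$
using the explicit tables and discussion preceding the theorem. Since type $A_{n-1}$ classifies the groups $\op{SL}_n/\mu_r$ with $r\mid n$, the center is $\mathscr Z(G)=\mu_{n/r}$, so the universal coefficient / Kummer sequence identifies $H^1(C,\mathscr Z(G))=H^1(C,\mu_{n/r})=\Pic(C)[n/r]$, with $\Aut(C)$ acting by pull-back. This handles the abelian factor uniformly.

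For the $A_1$ case the center is either $\mu_2$ or trivial and $\Out(G)=\{1\}$, so the computation is immediate: for $\op{SL}_2$ we obtain $\Pic(C)[2]\rtimes\Aut(C)$ (the unique component $\delta=0$ gives no outer restriction), and for $\op{PGL}_2$ the center is trivial, yielding $\Aut(C)$. For $A_{n-1}$ with $n\geq 3$ the table gives $\Out(G)=\mathbb Z/2\mathbb Z$ generated by the inverse-transpose automorphism, acting on both $\op{Hom}(\mathscr Z(G),\mathbb G_m)$ and $\pi_1(G)=\mathbb Z/r\mathbb Z$ by $x\mapsto -x$. Consequently the subgroup $\Out(G,\delta)\subset\Out(G)$ is nontrivial precisely when $-\delta=\delta$ in $\mathbb Z/r\mathbb Z$, i.e.\ when $2\delta=0$. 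Compatibility of the action on $H^1(C,\mathscr Z(G))=\Pic(C)[n/r]$ with the outer automorphism (which sends a $\mu_{n/r}$-torsor to its inverse, since on $\mu_{n/r}$ the automorphism induced by inverse-transpose is inversion) gives the description of the semidirect product.

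Putting the pieces together with Theorem \ref{T:A} yields the two formulas in part (2) and the two formulas in part (1). The final clause about $\mathcal M^{\delta}_G(C)$ follows because the injectivity in Proposition \ref{P:aut-m} and the surjectivity established in Theorem \ref{T:T} are stated for the stable-simple locus as well; equivalently, the restriction map $\Aut(\overline{\mt M}^{\delta}_G(C))\to\Aut(\mt M^{\delta}_G(C))$ is injective since $\mt M^{\delta}_G(C)$ is open and dense in the normal variety $\overline{\mt M}^{\delta}_G(C)$.

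The computation is essentially bookkeeping rather than a deep argument; the only point requiring care is checking that the $\Out(G)$-action on $H^1(C,\mathscr Z(G))$ really does coincide with dualization of $(n/r)$-torsion line bundles. This follows from the fact that the inverse-transpose automorphism of $\op{SL}_n$ restricts on the diagonal center $\mu_n$ to inversion, descends to inversion on $\mu_{n/r}=\mathscr Z(\op{SL}_n/\mu_r)$, and functoriality of $H^1(C,-)$ then translates inversion on coefficients into $L\mapsto L^{-1}$ on $\Pic(C)[n/r]$. With this identification in hand, all the assertions in Theorem \ref{T:An} follow by direct substitution into the formula of Theorem \ref{T:A}.
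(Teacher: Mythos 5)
Your proposal is correct and follows essentially the same route as the paper: identify $\mathscr Z(\op{SL}_n/\mu_r)=\mu_{n/r}$ so that $H^1(C,\mathscr Z(G))=\Pic(C)[n/r]$, read off $\Out(G)$ and its inversion action on $\pi_1(G)=\mathbb Z/r\mathbb Z$ and on the center from the classification, deduce that $\Out(G,\delta)$ is nontrivial exactly when $2\delta=0$, and substitute into Theorem \ref{T:A}. This matches the paper's bookkeeping argument, including the identification of the $\Out(G)$-action on $\Pic(C)[n/r]$ with dualization.
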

\subsection{Types $B_{n}$ and $C_n$}
An almost-simple group of type $B_n$, $n\geq 2$, is isomorphic to either the (simply-connected) spin group $\op{Spin_{2n+1}}$ or the (adjoint) special orthogonal group $\op{SO}_{2n+1}$. On the other hand, an almost-simple group of type $C_n$, $n\geq 3$, is isomorphic to either the (simply-connected) symplectic group $\op{Sp_{2n+1}}$ or the (adjoint) projective symplectic group subgroup $\op{PSp}_{2n+1}$. All the automorphisms of these groups are inner, see \cite[Chap.VI, \S 4.5, 4.6]{BuLie}. A description of their centers and their fundamental groups can be found in \cite[Section 7.2]{Frin}. Putting all together, we get the following table.
$$
{\small
	\setlength\arraycolsep{1.5pt}
\begin{array}{|c|c|c|c|c|}
	\hline
	\g&G&\Out(G)&\operatorname{Hom}(\mathscr Z(G),\mathbb G_m)&\pi_1(G)\\
	\hline
	B_n,\;\scriptstyle{n\geq 2}& \operatorname{Spin}_{2n+1} & \{1\} & \mathbb Z/2\mathbb Z & \{0\}\\
	& \operatorname{SO}_{2n+1}	&	\{1\} & \{0\} & \mathbb Z/2\mathbb Z\\
	\hline
	C_n,\;\scriptstyle{n\geq 3}& \operatorname{Sp}_{2n} & \{1\} & \mathbb Z/2\mathbb Z & \{0\}\\
	& \operatorname{PSp}_{2n}	&	\{1\} & \{0\} & \mathbb Z/2\mathbb Z\\
	\hline
\end{array}}
$$
Arguing as in the previous case, from the table we deduce the following result.
\begin{teo}\label{T:BnCn} Let $G$ be an almost-simple group of type $B_n$ ($n\geq 2$) or $C_n$ ($n\geq 3$). The isomorphism of Theorem \ref{T:A} gives the following equalities
	$$
	\arraycolsep=0pt\Aut(\smt_G^{\delta}(C))=\left\{\begin{array}{lll}
		\Pic(C)[2]\rtimes\Aut(C),&\text{ if }G=\op{Spin}_{2n+1},\op{Sp}_{2n} &\text{ and }\delta\in\{0\},\\
		\Aut(C),&\text{ if }G=\op{SO}_{2n+1},\op{PSp}_{2n}&\text{ and }\delta\in\mathbb Z/2\mathbb Z,
	\end{array}\right.
	$$
	where $\Aut(C)$ acts on $\Pic(C)[2]$ by pull-back. The same holds for the locus $\mt M_G$ of simple and stable bundles.
\end{teo}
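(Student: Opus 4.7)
The plan is to apply Theorem \ref{T:A} directly, reading off each factor from the table above and then identifying $H^1(C,\mathscr Z(G))$ with a concrete Picard group. Since Theorem \ref{T:A} gives the isomorphism
$$H^1(C,\mathscr Z(G))\rtimes\bigl(\Out(G,\delta)\times\Aut(C)\bigr)\xrightarrow{\sim}\Aut(\overline{\mt M}_G^\delta(C)),$$
I only need to compute the three building blocks on the left for $G$ of type $B_n$ or $C_n$.

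First, I would observe from the table (which summarizes \cite[Chap.VI, \S 4.5, 4.6]{BuLie} together with \cite[Section 7.2]{Frin}) that $\Out(G)=\{1\}$ for every almost-simple group of type $B_n$ or $C_n$. Consequently $\Out(G,\delta)=\{1\}$ automatically, regardless of $\delta\in\pi_1(G)$, and the semidirect product collapses to $H^1(C,\mathscr Z(G))\rtimes\Aut(C)$.

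Next, I would split into the two cases according to the isogeny class. When $G=\Spin_{2n+1}$ or $\op{Sp}_{2n}$ (the simply-connected ones), the center is $\mu_2$, so $H^1(C,\mathscr Z(G))=H^1(C,\mu_2)\cong\Pic(C)[2]$, where the isomorphism sends a $\mu_2$-torsor to the associated $2$-torsion line bundle (via the inclusion $\mu_2\hookrightarrow\mathbb G_m$); the $\Aut(C)$-action on $\mathscr Z(G)$-bundles by pull-back, as described in Theorem \ref{T:A}(i), is transported to pull-back of line bundles. When $G=\SO_{2n+1}$ or $\op{PSp}_{2n}$ (the adjoint ones), the center is trivial, so $H^1(C,\mathscr Z(G))=0$ and the semidirect product reduces to $\Aut(C)$ alone. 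In the simply-connected case $\pi_1(G)=0$ forces $\delta=0$, while in the adjoint case $\pi_1(G)=\mathbb Z/2\mathbb Z$ and the answer is independent of $\delta$ since the $\Out(G,\delta)$ factor is already trivial.

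There is no genuine obstacle here: once Theorem \ref{T:A} and the table of invariants $(\operatorname{Hom}(\mathscr Z(G),\mathbb G_m),\pi_1(G),\Out(G))$ are in hand, the statement is a one-line bookkeeping exercise. The only point requiring minor care is the identification $H^1(C,\mu_2)\cong\Pic(C)[2]$ as $\Aut(C)$-modules, which follows from functoriality of $H^1(C,-)$ in both $C$ and the coefficient group and the standard Kummer sequence $1\to\mu_2\to\mathbb G_m\xrightarrow{(\cdot)^2}\mathbb G_m\to 1$ on $C$.
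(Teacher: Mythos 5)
Your proposal is correct and takes essentially the same route as the paper: the paper likewise records $\Out(G)=\{1\}$, $\mathscr Z(G)\cong\mu_2$ (resp.\ trivial) and $\pi_1(G)$ in a table and then deduces the statement by plugging these into Theorem \ref{T:A}. The identification $H^1(C,\mu_2)\cong\Pic(C)[2]$ as $\Aut(C)$-modules is precisely the bookkeeping the paper leaves implicit, and your treatment of it is fine.
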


\subsection{Types $D_n$}
This case is the most delicate. Let us first recall some properties of the root system $D_{n}$, following \cite[\S 7.3]{Frin}. Consider the vector space $\mathbb R^n$ endowed with the standard scalar product $(-,-)$ and with the canonical bases $\{\epsilon_1,\ldots, \epsilon_n\}$.  
We will freely identify $\mathbb R^n$ with its dual vector space by mean of the (restriction of the) standard scalar product $(-,-)$. 
The root (resp. coroot) lattices and  the weight (resp. coweight) lattices of $D_{n}$ are given by 
\begin{equation*}
	Q(D_{n})=Q(D_{n}^\vee)=\{\xi\in \bbZ^n \: : (\xi,\xi)\: \text{ is even}\}  \subset P(D_{n})=P(D_{n}^\vee)= \bbZ^n +\left\langle \frac{\sum_i \epsilon_i}{2}\right\rangle.
\end{equation*}
We set 
$$
w_n:=\frac{\epsilon_1+\ldots +\epsilon_n}{2} \quad \text{ and } \quad w_{n-1}:=\frac{\epsilon_1+\ldots+\epsilon_{n-1}-\epsilon_n}{2}.
$$
The group $P(D_n)/Q(D_n)$ is equal to 
\begin{equation*} 
	\left\{\begin{array}{llll}
		\bbZ/4\bbZ&=&\{0,\epsilon_1,w_n,w_{n-1}\vert\, 2\epsilon_1=0,\,2w_n=\epsilon_1, \,3w_n=w_{n-1}\},& \text{if } n \: \text{ is odd}, \\
		 \bbZ/2\bbZ\times\bbZ/2\bbZ&=&\{0,\epsilon_1,w_n,w_{n-1}\vert\, 2\epsilon_1=0,\,w_n+w_{n-1}=\epsilon_1\},& \text{if } n \: \text{ is even.} 
	\end{array}\right.
\end{equation*}
An almost-simple group $G$ of type $D_{n}$ is isomorphic to either the (simply-connected) spin group $\Spin_{2n}$, or the orthogonal group $\SO_{2n}$,   or the (adjoint) projective orthogonal group $\PSO_{2n}$ 
or, if $n$ is even, to one of the two almost-simple groups  
\begin{equation}\label{E:semispin}\Spin_{2n}/\langle w_{n-1}\rangle,\quad \Spin_{2n}/\langle w_{n}\rangle.
\end{equation}
Note that the character group $\op{Hom}(\mathscr Z(\Spin_{2n}),\mathbb G_m)$ is canonically isomorphic to $P(D_n)/Q(D_n)$. 

We first assume $n\neq 4$. By \cite[Chap.VI, \S 4.8]{BuLie}, the outer automorphism group $\Out(\Spin_{2n})$ is cyclic of order two. A generator $\gamma$ is given by an automorphism which acts on $P(D_n^{(\vee)})/Q(D_n^{(\vee)})$ by permuting $w_{n-1}$ and $w_n$. We recall that $\gamma$ descends to an outer automorphism of a quotient $\Spin_{2n}/\mu$, if it preserves the subgroup $\mu$. In particular, when $n$ is even, the isomorphism $\gamma$ does not descend to an isomorphism on both groups \eqref{E:semispin}. Furthermore, this automorphism identifies these groups, we denote their isomorphism class by $\op{SemiSpin}_{2n}$. Using the computations in \cite[\S 7.3]{Frin}, we deduce the table.
$$
{\small
	\setlength\arraycolsep{1.5pt}
\begin{array}{|c|c|c|c|c|}
	\hline
	\g&G&\Out(G)&\operatorname{Hom}(\mathscr Z(G),\mathbb G_m)&\pi_1(G)\\
	\hline
	D_{2l},& \operatorname{Spin}_{4l} &\mathbb Z/2\mathbb Z=\langle\gamma\rangle&(\mathbb Z/2\mathbb Z)^2=\langle w_{2l-1}\rangle\times \langle w_{2l}\rangle& \{0\}\\
	\scriptstyle{\text{s.t. }l\geq 3}& \operatorname{SemiSpin}_{4l} & \{1\} & \mathbb Z/2\mathbb Z& \mathbb Z/2\mathbb Z\\
	& \operatorname{SO}_{4l} &\mathbb Z/2\mathbb Z =\langle\gamma\rangle&\mathbb Z/2\mathbb Z &\mathbb Z/2\mathbb Z\\
		& \operatorname{PSO}_{4l} &\mathbb Z/2\mathbb Z=\langle\gamma\rangle & \{0\} & (\mathbb Z/2\mathbb Z)^2=\langle w_{2l-1}\rangle\times \langle w_{2l}\rangle\\
		\hline
	D_{2l+1},& \operatorname{Spin}_{4l+2} &\mathbb Z/2\bbZ=\langle\gamma\rangle& \mathbb Z/4\bbZ=\langle w_{2l}\rangle=\langle w_{2l+1}\rangle& \{0\}\\
	\scriptstyle{\text{s.t. }l\geq 2}& \operatorname{SO}_{4l+2} &\mathbb Z/2\bbZ =\langle\gamma\rangle&\mathbb Z/2\mathbb Z&\mathbb Z/2\mathbb Z\\
	& \operatorname{PSO}_{4l+2} &\mathbb Z/2\mathbb Z=\langle\gamma\rangle & \{0\} & \mathbb Z/4\mathbb Z=\langle w_{2l+1}\rangle=\langle w_{2l}\rangle\\
	\hline 
\end{array}}
$$
We remark that $\Out(G)$ acts trivially on those groups equal to $\mathbb Z/2\bbZ$, by permuting the coordinates to those ones equal to $(\mathbb Z/2\bbZ)^2$ and by taking the inverse in those ones equal to $\bbZ/4\bbZ$.

We now focus on the case $D_4$. By \cite[Chap.VI, \S 4.8]{BuLie}, the outer automorphism of $\Out(\Spin_{8})$ is isomorphic to $S_3$ and it acts on the (co)weight lattice $P(D_4^{(\vee)})$ by permuting the elements $\{\epsilon_1,w_3,w_4\}$. In particular, the almost-simple groups
$$
\op{SemiSpin}_{8}^+=\Spin_{8}/\langle w_{3}\rangle,\quad \op{SO}_8=\Spin_{8}/\langle \epsilon_1\rangle,\quad \op{SemiSpin}_{8}^-=\Spin_{8}/\langle w_{4}\rangle,
$$
are isomorphic via an outer automorphism. So, we may identify the semispin groups with $\SO_8$. Note that the automorphism $\gamma$ (defined for the cases $n\neq 4$) is contained in $S_3$: it corresponds to the cycle permuting the set $\{w_3,w_4\}$ and fixing $\epsilon_1$. Using the computations in \cite[\S 7.3]{Frin}, we get the following table.
$$
{\small
	\setlength\arraycolsep{1.5pt}\begin{array}{|c|c|c|c|c|}
	\hline
	\g&G&\Out(G)&\operatorname{Hom}(\mathscr Z(G),\mathbb G_m)&\pi_1(G)\\
	\hline
	D_4 & \operatorname{Spin}_8 & S_3=\operatorname{GL}_2(\mathbb Z/2\mathbb Z) & (\mathbb Z/2\mathbb Z)^2=\langle w_3\rangle\times\langle w_4\rangle & \{0\}\\
	& \operatorname{SO}_{8} &\mathbb Z/2\mathbb Z=\langle\gamma\rangle&\mathbb Z/2\mathbb Z &\mathbb Z/2\mathbb Z\\
	& \operatorname{PSO}_{8} &S_3=\operatorname{GL}_2(\mathbb Z/2\mathbb Z) & \{0\} & (\mathbb Z/2\mathbb Z)^2=\langle w_3\rangle\times\langle w_4\rangle\\
\hline
\end{array}}
$$
We remark that $\Out(G)$ acts trivially on those groups equal to $\bbZ/2\bbZ$ and it acts as $S_3=\op{GL}_2(\bbZ/2\bbZ)$ on those groups equal to $(\bbZ/2\bbZ)^2$. Using these tables, the next theorem follows immediately.
\begin{teo}Let\label{T:Dn} $G$ be an almost-simple group of type $D_n$ ($n\geq 4$).
		\begin{enumerate}[(i)]
			\item Type $D_4$. The isomorphism of Theorem \ref{T:A} gives an identification between the auomotphism group $\Aut(\smt^{\delta}_G(C))$ and the group
			$$
			\arraycolsep=0pt
			\left\{\begin{array}{lll}
				\left(\Pic(C)[2]\right)^2\rtimes(S_3\times\Aut(C)), &\text{ if }G=\op{Spin}_{8}&\text{ and }\delta\in\{0\},\\
				\Pic(C)[2]\rtimes (\bbZ/2\bbZ\times\Aut(C)), &\text{ if }G=\op{SO}_{8}&\text{ and }\delta\in\mathbb Z/2\mathbb Z,\\
				S_3\times\Aut(C), &\text{ if }G=\op{PSO}_8&\text{ and }\delta=(0,0)\in(\mathbb Z/2\mathbb Z)^2,\\
				\bbZ/2\bbZ\times\Aut(C), &\text{ if }G=\op{PSO}_8&\text{ and }\delta\neq (0,0)\in(\mathbb Z/2\mathbb Z)^2,
			\end{array}\right.
			$$
			where $\Aut(C)$ acts on $(\Pic(C)[2])^2$ and $\Pic(C)[2]$ by pull-back, $\bbZ/2\bbZ$ acts trivially on $\Pic(C)[2]$, and $S_3=\op{GL}_2(\mathbb Z/2\mathbb Z)$ acts on $(\Pic(C)[2])^2$ as follows
			$$
			\left(\begin{array}{cc}
				a&b\\
				c&d
			\end{array}\right).(L,M)=(L^a\otimes M^b,L^c\otimes M^d),
			$$
			with $a,b,c,d\in\bbZ/2\bbZ$ such that $ad-bc=1\in\bbZ/2\bbZ$.
		\item Type $D_{2l}$, $l\geq 3$.The isomorphism of Theorem \ref{T:A} gives an identification between the auomotphism group $\Aut(\smt^{\delta}_G(C))$ and the group
		$$
			\arraycolsep=0pt
		\left\{\begin{array}{lll}
			(\Pic(C)[2])^2\rtimes (\mathbb Z/2\mathbb Z\times \Aut(C)), &\text{ if }G=\op{Spin}_{4l}&\text{ and }\delta\in\{0\},\\
			\Pic(C)[2]\rtimes \Aut(C), &\text{ if }G=\op{SemiSpin}_{4l}&\text{ and }\delta\in\mathbb Z/2\mathbb Z,\\
			\Pic(C)[2]\rtimes (\bbZ/2\bbZ\times\Aut(C)), &\text{ if }G=\op{SO}_{4l}&\text{ and }\delta\in\mathbb Z/2\mathbb Z,\\
			\mathbb Z/2\mathbb Z\times \Aut(C), &\text{ if }G=\op{PSO}_{4l}&\text{ and }\delta=(0,0),(1,1)\in(\mathbb Z/2\mathbb Z)^2,\\
			\Aut(C), &\text{ if }G=\op{PSO}_{4l}&\text{ and }\delta= (1,0),(0,1)\in(\mathbb Z/2\mathbb Z)^2,
		\end{array}\right.
		$$
		where $\Aut(C)$ acts on $(\Pic(C)[2])^2$ and $\Pic(C)[2]$ by pull-back and $S_2=\mathbb Z/2\mathbb Z$ acts trivally on $\Pic(C)[2]$ and by permutation on $(\Pic(C)[2])^2$, i.e. $
		(12).(L,M)=(M,L).
		$
		\item Type $D_{2l+1}$, $l\geq 2$. The isomorphism of Theorem \ref{T:A} gives an identification between the auomotphism group $\Aut(\smt^{\delta}_G(C))$ and the group
		$$
	\arraycolsep=0pt
		\left\{\begin{array}{lll}
			\Pic(C)[4]\rtimes (\mathbb Z/2\mathbb Z\times \Aut(C)), &\text{ if }G=\op{Spin}_{4l+2}&\text{ and }\delta\in\{0\},\\
		\Pic(C)[2]\rtimes (\bbZ/2\bbZ\times\Aut(C)), &\text{ if }G=\op{SO}_{4l+2}&\text{ and }\delta\in\mathbb Z/2\mathbb Z\\
			\mathbb Z/2\mathbb Z\times \Aut(C), &\text{ if }G=\op{PSO}_{4l+2}&\text{ and }\delta=0,2\in\mathbb Z/4\mathbb Z\\
			\Aut(C), &\text{ if }G=\op{PSO}_{4l+2}&\text{ and }\delta=1,3\in\mathbb Z/4\mathbb Z
		\end{array}\right.
		$$
		where $\Aut(C)$ acts on $\Pic(C)[2]$ and $\Pic(C)[4]$ by pull-back and $\mathbb Z/2\mathbb Z$ acts trivally on $\Pic(C)[2]$ and by taking the dual on $(\Pic(C)[4])^2$, i.e. $
		[1].(L,M)=(L^{-1},M^{-1})=(L^3,M^3).
		$
		\end{enumerate}
	The same holds for the locus $\mt M_G$ of simple and stable bundles.
\end{teo}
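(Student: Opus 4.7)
\noindent\textbf{Proof proposal for Theorem \ref{T:Dn}.} The plan is to apply Theorem \ref{T:A} to each pair $(G,\delta)$ with $G$ of type $D_n$, and to read off the three pieces of data the formula requires: (i) the group $H^1(C,\mathscr Z(G))$, (ii) the subgroup $\Out(G,\delta)\subset\Out(G)$ stabilizing $\delta\in\pi_1(G)$, and (iii) the induced action of $\Out(G,\delta)$ on $H^1(C,\mathscr Z(G))$. All three are essentially tabulated in the subsection preceding the statement; the proof consists of plugging them into the isomorphism of Theorem \ref{T:A} and simplifying.

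For (i), since $\mathscr Z(G)$ is a finite diagonalizable group, Cartier duality and Kummer theory give $H^1(C,\mu_m)\cong \Pic(C)[m]$ naturally in $m$, so I would decompose $\mathscr Z(G)$ into cyclic factors using the character tables listed and read off $H^1(C,\mathscr Z(G))$: $(\Pic(C)[2])^2$ for $\Spin_8$ and $\Spin_{4l}$; $\Pic(C)[2]$ for $\SO_{2n}$ and $\SemiSpin_{4l}$; $\Pic(C)[4]$ for $\Spin_{4l+2}$; and $0$ for $\op{PSO}_{2n}$. The naturality also identifies the pull-back action of $\Aut(C)$ on $H^1(C,\mathscr Z(G))$ with its action on the appropriate torsion of $\Pic(C)$, giving one of the two semidirect factors.

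For (ii) and (iii) I would follow the description in the tables before the statement, which records the action of $\Out(G^{\op{sc}})$ on the based root datum. Using the identification \eqref{E:act-out}, $\Out(G)$ is the stabilizer of $\ker(G^{\op{sc}}\to G)$ in $\Out(G^{\op{sc}})$, and $\Out(G,\delta)$ is then the stabilizer of $\delta$ for the induced action on $\pi_1(G)=P(D_n)/(Q(D_n)+\ker)$. Concretely: for $D_4$, $S_3=\op{GL}_2(\mathbb Z/2\mathbb Z)$ acts naturally on $(\mathbb Z/2\mathbb Z)^2$, so stabilizes $(0,0)$ with full $S_3$ and acts transitively on the three non-zero elements with $\mathbb Z/2\mathbb Z$-stabilizer; for $D_{2l}$ with $l\ge 3$, $\langle-\gamma\rangle$ swaps $w_{n-1}\leftrightarrow w_n$, fixing $(0,0),(1,1)$ and swapping $(1,0)\leftrightarrow (0,1)$, and moreover does not preserve either $\langle w_{n-1}\rangle$ or $\langle w_n\rangle$, whence $\Out(\SemiSpin_{4l})=\{1\}$; for $D_{2l+1}$, $-\gamma$ acts as inversion on $\mathbb Z/4\mathbb Z$, fixing $0,2$ and swapping $1\leftrightarrow 3$. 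The corresponding action of $\Out(G,\delta)$ on $H^1(C,\mathscr Z(G))$ is then induced, by functoriality of $H^1(C,-)$, from its action on the character tables listed: trivially on all copies of $\Pic(C)[2]$ coming from a single $\mu_2$, by coordinate swap on $(\Pic(C)[2])^2$ under $\langle -\gamma\rangle$, by $\op{GL}_2(\mathbb Z/2\mathbb Z)$ under the $S_3$ of $D_4$, and by inversion $L\mapsto L^{-1}$ on $\Pic(C)[4]$ for $\Spin_{4l+2}$.

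Assembling these three ingredients case by case through Theorem \ref{T:A} produces the list in the statement. The last sentence of the theorem, about the stable and simple locus, is automatic from the corresponding ``same holds'' statement in Theorem \ref{T:A}. The only slightly delicate point I anticipate is keeping track of the triality action for $D_4$ and verifying that the induced $S_3$-action on $(\Pic(C)[2])^2$ really is the natural $\op{GL}_2(\mathbb Z/2\mathbb Z)$-action described in the statement; this is a direct check once one notes that the $S_3$-action on $(\mathbb Z/2\mathbb Z)^2=\langle w_3\rangle\times\langle w_4\rangle$ from \cite[Chap.~VI, \S 4.8]{BuLie} permutes the three non-zero elements $\{w_3, w_4, w_3+w_4=\epsilon_1\}$, and this is precisely the standard $\op{GL}_2(\mathbb Z/2\mathbb Z)$-action on $(\mathbb Z/2\mathbb Z)^2$.
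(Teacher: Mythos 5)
Your proposal is correct and follows essentially the same route as the paper: both reduce the theorem to reading off $H^1(C,\mathscr Z(G))\cong\Pic(C)[m]$ (via Kummer theory), the groups $\Out(G)$, $\pi_1(G)$, and the $\Out$-actions from the tabulated root-datum data for $D_n$, computing $\Out(G,\delta)$ as the stabilizer of $\delta$, and substituting into Theorem \ref{T:A}. All the case-by-case computations you give (the $S_3=\op{GL}_2(\mathbb Z/2\mathbb Z)$ triality action for $D_4$, the swap $w_{n-1}\leftrightarrow w_n$ under $-\gamma$ forcing $\Out(\op{SemiSpin}_{4l})=\{1\}$, and inversion on $\mathbb Z/4\mathbb Z$ for $D_{2l+1}$) agree with the paper's tables.
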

\subsection{Types $E_6$}
Let us first recall some properties of the root system $E_6$. Consider the vector space $\mathbb R^8$ endowed with the standard scalar product $(-,-)$ and with the canonical bases $\{\epsilon_1,\ldots, \epsilon_8\}$.  
Inside $\mathbb R^8$, we will consider the following subvector space \begin{equation*}
	\begin{aligned}
		V(E_6):=\{\xi=(\xi_1,\ldots, \xi_8)\in \mathbb R^8\: : \xi_8=-\xi_6, \xi_7=\xi_6\}.
	\end{aligned}
\end{equation*}
We will freely identify $V(E_6)$ with its dual vector spaces by mean of the (restriction of the) standard scalar product $(-,-)$. 
The root (resp. coroot) lattices and  the weight (resp. coweight) lattices of $E_6$ are given by 
\begin{equation}\label{E:QP-E}
	\begin{aligned}
		Q(E_6)=Q(E_6^{\vee})=\left(\{\xi\in \bbZ^l \: : (\xi,\xi)\: \text{ is even}\} +\left\langle \frac{\sum_i \epsilon_i}{2}\right\rangle\right)\cap V(E_6)\subset\\
		\subset P(E_6)=P(E_6^\vee)=Q(E_6)+\left\langle w_1:=\frac{2}{3}(\epsilon_8-\epsilon_7-\epsilon_6)\right\rangle.
	\end{aligned}
\end{equation}
In particular, the group $P(E_6)/Q(E_6)$ is equal to $\mathbb Z/3\bbZ\cong\langle w_1\rangle$. An explicit basis of the lattices $Q(E_6)$ is given by 
\begin{equation*}
	\begin{aligned}
		& \alpha_1=\frac{\epsilon_1-\epsilon_2-\epsilon_3-\epsilon_4-\epsilon_5-\epsilon_6-\epsilon_7+\epsilon_8}{2}, \alpha_2=\epsilon_1+\epsilon_2, \\&\alpha_3=\epsilon_2-\epsilon_1, \alpha_4=\epsilon_3-\epsilon_2,
		\alpha_5=\epsilon_4-\epsilon_3, \alpha_6=\epsilon_5-\epsilon_4.
	\end{aligned}
\end{equation*}
An almost-simple group of type $E_6$ is either the simply-connected group $\mathbb E_6^{\operatorname{sc}}$ or the adjoint group $\mathbb E_6^{\ads}$. The character lattice $\op{Hom}(\scr Z(\mathbb E_6^{\op{sc}}),\mathbb G_m)$ is canonically isomorphic to $P(E_6)/Q(E_6)$. By \cite[Chap.VI, \S 4.12]{BuLie}, the group $\Out(\mathbb E_6^{\op{sc}})$ is cyclic of order two. A generator is given by an automorphism of $\mathbb E_6^{\op{sc}}$, which acts on the lattice $Q(E_6^{\vee})$ as $-\gamma$, where $\gamma$ is the (unique) element in the Weyl group $W_{\mathbb E^{\op{sc}}_6}$ which transforms $\alpha_1,\alpha_3,\alpha_4,\alpha_5,\alpha_6,\alpha_2$ to $-\alpha_6,-\alpha_5,-\alpha_4,-\alpha_3,-\alpha_1,-\alpha_2$, respectively. A direct computation shows that $-\gamma$ sends $w_1\op{mod}Q(E_6)$ to $-w_1\op{mod}Q(E_6)$. From \cite[Section 7.4]{Frin}, we deduce the following table.
$$
{\small
	\setlength\tabcolsep{1.5pt}
\begin{array}{|c|c|c|c|c|}
	\hline
	\g&G&\Out(G)&\operatorname{Hom}(\mathscr Z(G),\mathbb G_m)&\pi_1(G)\\
	\hline
	E_6 & \mathbb E_6^{\operatorname{sc}} & \mathbb Z/2\mathbb Z &\mathbb Z/3\mathbb Z=\langle w_1\rangle &\{0\}\\
	& \mathbb{E}_6^{\ads} & \mathbb Z/2\mathbb Z & \{0\} &\mathbb Z/3\mathbb Z=\langle w_1\rangle\\
	\hline
\end{array}}
$$
We remark that $\Out(G)$ acts on the non-trivial groups by taking the inverse. Putting all together, we have proved the following result.

\begin{teo}\label{T:E6} Let $G$ be an almost-simple group of type $E_6$. The isomorphism of Theorem \ref{T:A} gives the following equalities
	$$
\arraycolsep=0pt	\Aut(\smt_G^{\delta}(C))=\left\{\begin{array}{lll}
\Pic(C)[3]\rtimes\left(\mathbb Z/2\mathbb Z\times \Aut(C)\right),&\text{ if }G=\mathbb{E}_6^{\operatorname{sc}}&\text{ and }\delta\in\{0\},\\
\mathbb Z/2\mathbb Z\times \Aut(C),&\text{ if }G=\mathbb{E}_6^{\operatorname{ad}}&\text{ and }\delta=0\in\mathbb Z/3\mathbb Z,\\
\Aut(C),&\text{ if }G=\mathbb{E}_6^{\operatorname{ad}}&\text{ and }\delta\neq 0\in\mathbb Z/3\mathbb Z,
	\end{array}\right.
	$$
where $\Aut(C)$ acts on $\Pic(C)[3]$ by pull-back and $\mathbb Z/2\mathbb Z$ acts on $\Pic(C)[3]$ by taking the dual, i.e. $[1].L=L^{-1}=L^2$. The same holds for the locus $\mt M_G$ of simple and stable bundles.
\end{teo}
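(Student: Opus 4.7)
The plan is to apply Theorem \ref{T:A} and unpack each ingredient of the isomorphism
\[
H^1(C,\mathscr Z(G))\rtimes\bigl(\Out(G,\delta)\times\Aut(C)\bigr)\xrightarrow{\sim}\Aut(\smt^{\delta}_G(C))
\]
for the two almost-simple groups of type $E_6$, using the data collected in the table preceding the theorem.

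First, I would compute the cohomology group $H^1(C,\mathscr Z(G))$. For $G=\mathbb E_6^{\operatorname{sc}}$, the center is the finite cyclic group $\mu_3$ (canonically dual to $P(E_6)/Q(E_6)=\bbZ/3\bbZ$), and since $\mu_3\subset \mathbb G_m$, the Kummer sequence gives $H^1(C,\mu_3)\cong \Pic(C)[3]$ as $g\geq 4$ and $k$ is algebraically closed of characteristic zero. For $G=\mathbb E_6^{\operatorname{ad}}$ the center is trivial, so the cohomology group vanishes and the semidirect-product factor disappears.

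Second, I would compute the stabilizer $\Out(G,\delta):=\{\rho\in\Out(G)\mid \pi_1(\rho)(\delta)=\delta\}$. In both cases $\Out(G)=\bbZ/2\bbZ$, generated (up to inner automorphisms) by the element $-\gamma$ described in the subsection. For the simply-connected group $\pi_1(G)=0$, so the only value is $\delta=0$ and $\Out(G,0)=\bbZ/2\bbZ$. For the adjoint group $\pi_1(G)=\bbZ/3\bbZ$, and the table records that $-\gamma$ acts by $w_1\mapsto -w_1$, i.e.\ by inversion on $\bbZ/3\bbZ$; hence $\Out(\mathbb E_6^{\operatorname{ad}},0)=\bbZ/2\bbZ$ whereas for $\delta=1,2\in\bbZ/3\bbZ$ we have $-\delta\neq \delta$ and the stabilizer is trivial.

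Third, I would identify the action of $\Out(G,\delta)$ on $H^1(C,\mathscr Z(G))$ in the only nontrivial case $G=\mathbb E_6^{\operatorname{sc}}$. By Theorem \ref{T:A}(ii) the action is by $L\mapsto L\times^{\rho,\mu_3}\mu_3$; since $-\gamma$ acts on the character lattice of $\mathscr Z(\mathbb E_6^{\operatorname{sc}})$ by inversion, the induced automorphism of $\mu_3$ is inversion, so on $\Pic(C)[3]$ it is $L\mapsto L^{-1}=L^2$. Finally I would assemble the pieces: inserting these data into the isomorphism of Theorem \ref{T:A} yields the three cases stated, with the $\Aut(C)$-action on $\Pic(C)[3]$ being pull-back (Theorem \ref{T:A}(i)). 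The statement for the open locus $\mt M_G^\delta$ of stable and simple bundles follows from the same isomorphism, which by Theorem \ref{T:A} holds verbatim on that locus.

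There is essentially no obstacle here: all the serious work, in particular the computation of $\Out(\mathbb E_6^{\operatorname{sc}})$ and its action on $P(E_6)/Q(E_6)$, has already been done in the preceding discussion of the $E_6$ root system and in \cite{Frin}. The proof is a direct transcription of the tabulated data into the isomorphism of Theorem \ref{T:A}.
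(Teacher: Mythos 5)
Your proposal is correct and follows essentially the same route as the paper: the theorem is obtained by plugging the tabulated data for the two groups of type $E_6$ (center, fundamental group, outer automorphism group, and the inversion action of $-\gamma$ on $P(E_6)/Q(E_6)$ drawn from \cite{BuLie} and \cite{Frin}) into the isomorphism of Theorem \ref{T:A}. The only cosmetic difference is that you make the identification $H^1(C,\mu_3)\cong\Pic(C)[3]$ explicit via the Kummer sequence, which the paper leaves implicit.
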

\subsection{Types $E_7$, $E_8$, $F_4$ and $G_2$.} We follow \cite[\S7.4, \S7.5]{Frin} closely. An almost-simple group of type $E_7$ is either the simply-connected group $\mathbb E_7^{\operatorname{sc}}$ or the adjoint group $\mathbb E_7^{\ads}$. For the other cases, there exists a unique group for each type. We denote by $\mathbb{E}_8$, resp. $\mathbb{F}_4$, resp. $\mathbb{G}_2$, the unique (simply-connected) almost-simple group of type $E_8$, resp. $F_4$, resp. $G_2$. From \cite[Chap.VI, \S 4.9, \S4.10, \S4.11, \S4.13]{BuLie} and \cite[\S7.4, \S7.5]{Frin}, we deduce the following table.
$$
{\small
	\setlength\tabcolsep{1.5pt}
\begin{array}{|c|c|c|c|c|}
	\hline
	\g&G&\Out(G)&\operatorname{Hom}(\mathscr Z(G),\mathbb G_m)&\pi_1(G)\\
	\hline
	E_7 &\mathbb E_7^{\operatorname{sc}} & \{1\} & \mathbb Z/2\bbZ & \{0\}\\
	&\mathbb E_7^{\ads} & \{1\} & \{0\} & \mathbb Z/2\mathbb Z\\
	\hline 
	E_8 &\mathbb E_8 & \{1\} &\{0\}& \{0\}\\
	
	\hline 
	F_4 &\mathbb F_4 & \{1\} &\{0\}& \{0\}\\
	
	\hline 
	G_2 &\mathbb G_2 & \{1\} &\{0\}& \{0\}\\
	\hline
\end{array}
}
$$
From the table, we deduce the following theorem.
\begin{teo}\label{T:resto} Let $G$ be an almost-simple group of type $E_7$, $E_8$, $F_4$ and $G_2$. The isomorphism of Theorem \ref{T:A} gives the following equalities
	$$
\arraycolsep=0pt
	\Aut(\smt_G^{\delta}(C))=\left\{\begin{array}{lll}
		\Pic(C)[2]\rtimes\Aut(C),&\text{ if }G=\mathbb{E}_7^{\operatorname{sc}}&\text{ and }\delta\in\{0\},\\
		\Aut(C),&\text{ if }G=\mathbb{E}_7^{\operatorname{ad}}&\text{ and }\delta\in\mathbb Z/2\mathbb Z,\\
		\Aut(C), &\text{ if }G=\mathbb E_8,\mathbb F_4,\mathbb G_2&\text{ and }\delta\in\{0\},
	\end{array}\right.
	$$
	where $\Aut(C)$ acts on $\Pic(C)[2]$ by pull-back. The same holds for the locus $\mt M_G$ of simple and stable bundles.
\end{teo}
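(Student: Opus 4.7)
The plan is to deduce Theorem \ref{T:resto} as a direct specialization of Theorem \ref{T:A} (the main isomorphism), using the group-theoretic data for the exceptional types recorded in the table immediately preceding the statement. Theorem \ref{T:A} tells us that for any almost-simple $G$ and any $\delta \in \pi_1(G)$, we have
\[
\operatorname{Aut}(\overline{\mt M}_G^\delta(C)) \;\cong\; H^1(C, \mathscr Z(G)) \rtimes \bigl(\operatorname{Out}(G,\delta) \times \operatorname{Aut}(C)\bigr),
\]
so the task reduces to computing, for each $G \in \{\mathbb E_7^{\op{sc}}, \mathbb E_7^{\ads}, \mathbb E_8, \mathbb F_4, \mathbb G_2\}$ and each $\delta \in \pi_1(G)$, the three ingredients $H^1(C,\mathscr Z(G))$, $\operatorname{Out}(G,\delta)$, and the action of $\operatorname{Out}(G,\delta)$ on $H^1(C,\mathscr Z(G))$.

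First, I would invoke the table (whose entries come from \cite[Chap.~VI, \S4.9--\S4.13]{BuLie} for $\operatorname{Out}(G)$ and from \cite[\S7.4, \S7.5]{Frin} for $\mathscr Z(G)$ and $\pi_1(G)$) to read off that for each of $\mathbb E_8,\mathbb F_4,\mathbb G_2$ the center, outer automorphism group, and fundamental group are all trivial, and for $\mathbb E_7^{\op{sc}}$ one has $\mathscr Z = \mathbb Z/2\mathbb Z$, $\operatorname{Out} = 1$, $\pi_1 = \{0\}$, while for $\mathbb E_7^{\ads}$ one has $\mathscr Z = \{0\}$, $\operatorname{Out} = 1$, $\pi_1 = \mathbb Z/2\mathbb Z$. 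Since $\operatorname{Out}(G)$ is trivial in every case here, the subgroup $\operatorname{Out}(G,\delta)$ is automatically trivial, so no analysis of the $\operatorname{Out}$-action on $\pi_1(G)$ is required --- this is the feature that makes these types much easier than $A_n$, $D_n$, $E_6$.

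Next, I would identify the $H^1$-factor. Whenever $\mathscr Z(G) = \{1\}$ (the cases $\mathbb E_7^{\ads}, \mathbb E_8, \mathbb F_4, \mathbb G_2$), $H^1(C,\mathscr Z(G)) = 0$. For $\mathbb E_7^{\op{sc}}$ with $\mathscr Z(G) = \mu_2$, the standard identification $H^1(C,\mu_2) \cong \Pic(C)[2]$ (Kummer sequence, using $\op{char}(k) = 0$) supplies the desired description. The $\operatorname{Aut}(C)$-action on $H^1(C,\mu_2)$ by pull-back then corresponds, under this identification, to the pull-back action on $2$-torsion line bundles.

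Assembling the pieces, Theorem \ref{T:A} yields $\Pic(C)[2] \rtimes \operatorname{Aut}(C)$ for $\mathbb E_7^{\op{sc}}$ (where $\delta = 0$ is forced by $\pi_1 = 0$), $\operatorname{Aut}(C)$ for $\mathbb E_7^{\ads}$ at each of the two values $\delta \in \mathbb Z/2\mathbb Z$, and $\operatorname{Aut}(C)$ for each of $\mathbb E_8,\mathbb F_4,\mathbb G_2$ at $\delta = 0$, exactly matching the statement. The very last sentence --- that the same description holds on the open locus $\mt M_G^\delta$ of simple and stable bundles --- follows because Theorem \ref{T:A} is proved for both $\overline{\mt M}_G^\delta$ and $\mt M_G^\delta$ simultaneously. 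There is no real obstacle here beyond bookkeeping; the only subtlety to double-check is that the conventions for $\pi_1(G)$ in \cite{Frin} and for the action of $\operatorname{Out}$ on the center in \cite{BuLie} match those used in Theorem \ref{T:A}, but since these conventions have already been fixed and verified for the harder cases ($D_n$, $E_6$) in the preceding theorems, no new verification is required.
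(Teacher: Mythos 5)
Your proposal is correct and follows exactly the paper's argument: the paper likewise just compiles the table of $\operatorname{Out}(G)$, $\operatorname{Hom}(\mathscr Z(G),\mathbb G_m)$ and $\pi_1(G)$ from \cite{BuLie} and \cite{Frin}, observes that $\operatorname{Out}(G)$ is trivial in every case so that $\operatorname{Out}(G,\delta)$ requires no further analysis, and reads the result off from Theorem \ref{T:A}. Nothing is missing.
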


\vspace{1cm}
\noindent {\bf Acknowledgments.} We are grateful to the anonymous referee for pointing out a crucial mistake in a earlier version of the paper. We thank Eduardo Esteves, Andres Fernandez Herrero, Inder Kaur, Martina Lanini, Antonio Rapagnetta
e Filippo Viviani for helpful conversations. The author acknowledges PRIN2017 CUP E84I19000500006,
and the MIUR Excellence Department Project awarded to the Department of Mathematics, University of Rome Tor Vergata, CUP E83C18000100006. 

\bibliographystyle{abbrv}
\bibliography{BiblioTorelli}
\end{document}